\newcounter{tmp}
\newtheorem{theorem}{Theorem}[subsection]
\theoremstyle{definition}
\newtheorem*{question*}{Motivating Question}
\renewcommand{\k}{\mathfrak{k}}
\renewcommand{\t}{\mathfrak{t}}
\newcommand{\G}{\mathcal{G}}
\renewcommand{\H}{\mathcal{H}}
\newcommand{\too}{\longrightarrow}
\newcommand{\mtoo}{\longmapsto}
\newcommand{\mto}{\mapsto}
\newcommand{\tto}{\rightrightarrows}
\newcommand{\sss}{\mathsf{s}}
\newcommand{\ttt}{\mathsf{t}}
\newcommand{\dfn}[1]{\textit{\textbf{#1}}}
\newcommand{\sll}[1]{\mkern-4mu\mathbin{/\mkern-5mu/}_{\mkern-4mu{#1}}}
\newcommand{\s}{\subseteq}
\renewcommand{\c}{\mathfrak{c}}
\renewcommand{\sl}{\mathfrak{sl}}
\newcommand{\rk}{\operatorname{rk}}
\newcommand{\im}{\operatorname{im}}
\renewcommand{\O}{\mathcal{O}}
\newcommand{\I}{\mathcal{I}}
\newcommand{\Lie}{\operatorname{Lie}}
\newcommand{\R}{\mathbb{R}}
\newcommand{\C}{\mathbb{C}}
\newcommand{\g}{\mathfrak{g}}
\newcommand{\h}{\mathfrak{h}}
\newcommand{\SLn}{\mathrm{SL}}
\newcommand{\ad}{\mathrm{ad}}
\newcommand{\Ad}{\mathrm{Ad}}
\numberwithin{equation}{section}
\theoremstyle{theorem}
\newtheorem{thm}{Theorem}[section]
\newtheorem{lem}[thm]{Lemma}
\newtheorem{cor}[thm]{Corollary}
\newtheorem{prop}[thm]{Proposition}
\theoremstyle{definition}
\newtheorem{rem}[thm]{Remark}
\newtheorem{ex}[thm]{Example}
\newtheorem{defn}[thm]{Definition}
\newtheorem{conj}[thm]{Conjecture}
\title[Symplectic reduction along a submanifold]{Symplectic reduction along a submanifold}
\author[Peter Crooks]{Peter Crooks}
\author[Maxence Mayrand]{Maxence Mayrand}
\address[Peter Crooks]{Department of Mathematics \\ Northeastern University \\ 360 Huntington Avenue \\ Boston, MA 02115, USA}
\email{p.crooks@northeastern.edu}
\address[Maxence Mayrand]{Department of Mathematics \\ University of Toronto \\ 40 St.\ George St.\ \\ Toronto, Ontario \\ M5S 2E4 \\ Canada}
\email{mayrand@math.toronto.edu}
\subjclass{53D20 (primary); 14J42, 53D17 (secondary)}
\keywords{Moore--Tachikawa variety, symplectic reduction, symplecic groupoid}
\begin{document}

\begin{abstract}
We introduce the process of symplectic reduction along a submanifold as a uniform approach to taking quotients in symplectic geometry. This construction holds in the categories of smooth manifolds, complex analytic spaces, and complex algebraic varieties, and has an interpretation in terms of derived stacks in shifted symplectic geometry. It also encompasses Marsden--Weinstein--Meyer reduction, Mikami--Weinstein reduction, the pre-images of Poisson transversals under moment maps, symplectic cutting, symplectic implosion, and the Ginzburg--Kazhdan construction of Moore--Tachikawa varieties in TQFT. A key feature of our construction is a concrete and systematic association of a Hamiltonian $G$-space $\mathfrak{M}_{G, S}$ to each pair $(G,S)$, where $G$ is any Lie group and $S\subseteq\mathrm{Lie}(G)^*$ is any submanifold satisfying certain non-degeneracy conditions. The spaces $\mathfrak{M}_{G, S}$ satisfy a universal property for symplectic reduction which generalizes that of the universal imploded cross-section. While these Hamiltonian $G$-spaces are explicit and natural from a Lie-theoretic perspective, some of them appear to be new.               

\end{abstract}

\maketitle
\begin{scriptsize}
\tableofcontents
\end{scriptsize}

\section{Introduction}
\subsection{Informal context}

Noether's approach to symmetries and conserved quantities in classical mechanics naturally gives rise to quotient constructions in symplectic geometry. The most basic of these constructions is Marsden--Weinstein--Meyer reduction \cite{marsden-weinstein,meyer} for Hamiltonian Lie group actions, a cornerstone of symplectic geometry \cite{guillemin-sternberg-invent,sjamaar-lerman,sjamaar,meinrenken-sjamaar} with wide-ranging implications for algebraic geometry \cite{losev,hitchin,atiyah-bott,kirwan-1984} and geometric representation theory \cite{balibanu,bezrukavnikov-finkelberg-mirkovic,gan-ginzburg-2,kazhdan-kostant-sternberg,nakajima,etingof-ginzburg,braverman-finkelberg-nakajima}. This construction applies to a Hamiltonian $G$-space, i.e.\ a symplectic manifold $M$ equipped with a Hamiltonian action of a Lie group $G$ and a moment map $\mu:M\longrightarrow\g^*$, where $\g$ is the Lie algebra of $G$. Each element $\xi\in\g^*$ has a $G$-stabilizer $G_{\xi}\subseteq G$ and determines a topological quotient 
\[
M \sll{\xi} G \coloneqq \mu^{-1}(\xi) / G_\xi,
\] called the \textit{Marsden--Weinstein--Meyer reduction of $M$ by $G$ at level $\xi$}.
The space $M \sll{\xi} G$ is a symplectic manifold under certain hypotheses.

Motivated by recent advances in topological quantum field theory \cite{moore-tachikawa, ginzburg-kazhdan}, we introduce and study a generalization of Marsden--Weinstein--Meyer reduction. Our starting point is the well-known equivalence between Hamiltonian $G$-spaces and Hamiltonian spaces for the cotangent groupoid $T^*G\tto\g^*$ \cite{mikami-weinstein}. One then imagines upgrading the level $\xi\in\g^*$ and stabilizer subgroup $G_{\xi}\subseteq G$ to a submanifold $S\subseteq\g^*$ and a ``stabilizer subgroupoid" $G_S\subseteq T^*G$, respectively, and considering
\[
M \sll{S} G \coloneqq \mu^{-1}(S) / G_S.
\]
A potential for further generalization then comes from the notion of Hamiltonian actions by arbitrary symplectic groupoids \cite{mikami-weinstein}; one might hope to replace $T^*G$, $S\subseteq\g^*$, and $G_S\subseteq T^*G$ with a symplectic groupoid $G\tto X$, a submanifold $S\subseteq X$, and a ``stabilizer subgroupoid" $\mathcal{H}\tto S$, respectively.    

To be somewhat more precise, let us address the class of submanifolds $S\subseteq X$ for which our construction can be performed; these are the so-called \emph{pre-Poisson submanifolds} of $X$, as defined and studied by Cattaneo and Zambon \cite{cattaneo-zambon-2007,cattaneo-zambon-2009}. Poisson transversals and Poisson submanifolds are automatically pre-Poisson, and there is a reasonable sense in which generic submanifolds of $X$ are pre-Poisson. At the same time, each pre-Poisson submanifold $S\subseteq X$ canonically determines a Lie subalgebroid $L_S \longrightarrow S$ of the Lie algebroid of $\mathcal{G}$. The notion of a \emph{stabilizer subgroupoid} for $S$ is then transparent; it refers to any isotropic immersed Lie subgroupoid $\mathcal{H}\tto S$ of $\mathcal{G}$ integrating $L_S$. One has a non-empty, discrete family of stabilizer subgroupoids for $S$, exactly one of which is source-connected and source-simply-connected.

Our generalization of Marsden--Weinstein--Meyer reduction amounts to the topological space \[
M \sll{S,\mathcal{H}}\mathcal{G}\coloneqq \mu^{-1}(S) / \mathcal{H}
\] being a symplectic manifold under reasonable hypotheses, where $\mu:M\longrightarrow X$ is the moment map. Any two source-connected choices of $\mathcal{H}\tto S$ yield the same quotient $M \sll{S,\mathcal{H}}\mathcal{G}$, allowing us to define \begin{equation}\label{Equation: Definition}M\sll{S}\mathcal{G}\coloneqq M \sll{S,\mathcal{H}}\mathcal{G}\quad\text{for any source-connected }\mathcal{H}\tto S.\end{equation} We use the nomenclature \textit{symplectic reduction along a submanifold} for the construction described in these last two sentences. In very general terms, the following are some features that it enjoys. The reader is referred to \S\ref{Subsection: Statement of main results} for more precise descriptions of these features. 

\subsubsection*{Categories}
Our construction has analogues in the categories of complex analytic spaces, complex algebraic varieties, and derived Artin stacks, all of which are developed and proved in this paper.

\subsubsection*{Relation to other constructions}
Several well-studied quotient constructions in symplectic geometry can be realized as special cases of symplectic reduction along a submanifold. Such constructions include Marsden--Weinstein--Meyer reduction, Mikami--Weinstein reduction for Hamiltonian symplectic groupoid actions \cite{mikami-weinstein}, the pre-images of Poisson transversals under moment maps \cite{frejlich-marcut,bielawski97,crooks-roeser,crooks-vanpruijssen}, and the Ginzburg--Kazhdan presentation \cite{ginzburg-kazhdan} of Moore--Tachikawa varieties \cite{moore-tachikawa}.
We also find that symplectic cutting \cite{lerman,lerman-meinrenken-tolman-woodward,weitsman,martens-thaddeus,fisher-rayan} and symplectic implosion \cite{guillemin-jeffrey-sjamaar,dancer-kirwan-swann,dancer-kirwan-roeser,safronov} may be described more simply as reduction along a polyhedral set and reduction along a closed Weyl chamber, respectively. Our work also has an interpretation in shifted symplectic geometry \cite{ptvv} as a derived intersection of two Lagrangians.

\subsubsection*{Universal reduced spaces}\label{Subsubsection: Universal reduced spaces}
Our construction yields a very simple, systematic, Lie-theoretic technique for producing Hamiltonian Lie group spaces. One begins with a Lie group $G$ and pre-Poisson submanifold $S\subseteq\g^*$, e.g. $S$ could be any $G$-invariant submanifold or any Poisson transversal. If the pair $(G,S)$ satisfies some non-degeneracy conditions, we show that it determines a \textit{universal} Hamiltonian $G$-space $\mathfrak{M}_{G, S}$; the precise meaning of ``universal" is given in \S\ref{Subsection: Statement of main results}. 

We realize several Hamiltonian $G$-spaces in this way, including products of coadjoint orbits, open Moore--Tachikawa varieties \cite{bielawskipreprint,ginzburg-kazhdan}, universal imploded cross-sections \cite{guillemin-jeffrey-sjamaar}, and some spaces that appear to be new. We also show $\mathfrak{M}_{G,S}$ to be a symplectic groupoid when $S\subseteq\g^*$ is $G$-invariant and appropriate non-degeneracy conditions are imposed. 

\subsection{Statement of main results}\label{Subsection: Statement of main results}
We now give precise formulations of our main results.

\subsubsection{The smooth category}\label{Subsubsection: The smooth category}
Let $(X,\sigma)$ be a Poisson manifold with Poisson bivector field $\sigma:T^*X\longrightarrow TX$. A submanifold $S\subseteq X$ is called \textit{pre-Poisson} if $\sigma^{-1}(TS)\cap TS^{\circ}$ has constant rank over $S$, where $TS^{\circ}\subseteq T^*X$ is the annihilator of $TS\subseteq TX$. Any symplectic groupoid $\mathcal{G}\tto X$ has the property that $X$ is Poisson, and we may therefore consider a pre-Poisson submanifold $S\subseteq X$. One then has a Lie subalgebroid $$L_S\coloneqq\sigma^{-1}(TS)\cap TS^{\circ}$$ of the Lie algebroid of $\mathcal{G}\tto X$. We use the term \textit{stabilizer subgroupoid of} $S$ for any isotropic Lie subgroupoid\footnote{Our convention is that a Lie subgroupoid is a Lie groupoid with a possibly non-injective immersion; see \S\ref{sratijkn}.} $\mathcal{H}\tto S$ of $\mathcal{G}\tto X$ having $L_S$ as its Lie algebroid. 

On the other hand, Mikami and Weinstein \cite{mikami-weinstein} define what it means for $\mathcal{G}\tto X$ to act in a Hamiltonian fashion on a symplectic manifold $(M,\omega)$. Any stabilizer subgroupoid $\mathcal{H}\tto S$ then acts on $N\coloneqq\mu^{-1}(S)$, where $\mu:M\longrightarrow X$ is the moment map realizing the Hamiltonian action of $\mathcal{G}\tto X$ on $M$. We call the quotient topological space $$M\sll{S,\mathcal{H}}\mathcal{G}\coloneqq N/\mathcal{H}$$ the \textit{symplectic reduction of $M$ by $\mathcal{G}$ along $S$ with respect to $\mathcal{H}$}. This space features in the following result.

\begingroup
\setcounter{tmp}{\value{theorem}}% store current value of theorem counter
\setcounter{theorem}{0} %assign desired value to theorem counter
\renewcommand\thetheorem{\Alph{theorem}}% locally redefine the representation of the theorem counter
\begin{theorem}[Smooth category]\label{Theorem: Smooth category}
	Suppose that a symplectic groupoid $\mathcal{G}\tto X$ acts on a symplectic manifold $(M,\omega)$ in a Hamiltonian fashion with moment map $\mu:M\longrightarrow X$. Let $\mathcal{H}\tto S$ be a stabilizer subgroupoid of a pre-Poisson submanifold $S\subseteq X$ and set $N\coloneqq\mu^{-1}(S)$.
	\begin{itemize}
		\item[\textup{(i)}] Assume that $S$ intersects $\mu$ cleanly and that $M\sll{S,\mathcal{H}}\mathcal{G} \coloneqq N / \H$ has a smooth manifold structure for which the quotient map $\pi:N\longrightarrow M\sll{S,\mathcal{H}}\mathcal{G}$ is a submersion. The manifold $M\sll{S,\mathcal{H}}\mathcal{G}$ then carries a unique symplectic form $\bar{\omega}$ for which
		$$\pi^*\bar{\omega}=i^*\omega,$$
		where $i:N\longrightarrow M$ is the inclusion map.
		\item[\textup{(ii)}] If $\mathcal{H}$ acts freely on $N$, then $S$ is transverse to $\mu$. The hypotheses of \textup{Part (i)} are therefore satisfied if $\mathcal{H}$ acts freely and properly on $N$.
		\item[\textup{(iii)}] Let $\sigma:T^*X\longrightarrow TX$ denote the Poisson bivector field on $X$. Assume that the hypotheses of \textup{Part (i)} are satisfied and that $S/\mathcal{H}$ has a smooth manifold structure for which the quotient map $\rho:S\longrightarrow S/\mathcal{H}$ is a submersion. The manifold $S/\mathcal{H}$ then has a unique Poisson structure satisfying the following condition: if $x\in S$, $f,g$ are smooth functions on $S/\mathcal{H}$ defined near $\rho(x)$, and $F,G$ are smooth functions on $X$ defined near $x$ with $dF(\sigma(TS^{\circ}))=0=dG(\sigma(TS^{\circ}))$, then $$\rho^*\{f,g\}_{\mathcal{S}/\mathcal{H}}=\{F,G\}_X\big\vert_S.$$ Furthermore, the moment map $\mu:M\longrightarrow X$ descends to a Poisson map $M\sll{S,\mathcal{H}}\mathcal{G}\longrightarrow S/\mathcal{H}$.
	\end{itemize}
\end{theorem}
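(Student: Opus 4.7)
The plan is to reduce each claim to a pointwise computation at $p \in N$, using only the two defining properties of a Mikami--Weinstein moment map: the pairing identity $\omega_p(\alpha \cdot p, v) = \langle \alpha, d\mu_p(v)\rangle$ for $\alpha \in T_{\mu(p)}^* X$ and $v \in T_p M$, and the anchor identity $d\mu_p(\alpha \cdot p) = \sigma(\alpha)$. These two facts together yield $(\ker d\mu_p)^\omega = T_{\mu(p)}^* X \cdot p$, and iterating them will carry both (i) and (ii).

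For (i), clean intersection yields $T_p N = d\mu_p^{-1}(T_{\mu(p)} S)$. I would compute $(T_p N)^\omega$ as follows: any $v \in (T_p N)^\omega$ lies in $(\ker d\mu_p)^\omega = T_{\mu(p)}^* X \cdot p$, so $v = \alpha \cdot p$, and the condition $\omega(\alpha \cdot p, w) = 0$ for every $w \in T_p N$ becomes $\alpha \in T_{\mu(p)} S^\circ$ (modulo the kernel of the action map, which contributes nothing to $v$). Imposing further that $v \in T_p N$, i.e.\ $\sigma(\alpha) = d\mu_p(\alpha\cdot p) \in T_{\mu(p)} S$, gives $\ker(i^* \omega)_p = L_S\vert_{\mu(p)} \cdot p$. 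This last space is the tangent space at $p$ to the $\mathcal{H}$-orbit through $p$, and coincides with $\ker d\pi_p$ because $\pi$ is a submersion whose fibres are $\mathcal{H}$-orbits. Hence $i^*\omega$ descends to a unique non-degenerate $2$-form $\bar\omega$ on $M \sll{S,\mathcal{H}} \mathcal{G}$ satisfying $\pi^* \bar\omega = i^* \omega$, and closedness follows from $\pi^* d\bar\omega = d(i^*\omega) = i^* d\omega = 0$ together with injectivity of $\pi^*$ on forms.

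For (ii), the transversality $d\mu_p(T_p M) + T_{\mu(p)} S = T_{\mu(p)} X$ at each $p\in N$ is equivalent, after annihilating, to $(\im d\mu_p)^\circ \cap T_{\mu(p)} S^\circ = 0$. The pairing identity identifies $(\im d\mu_p)^\circ$ with the kernel of the action map $\alpha \mapsto \alpha \cdot p$. If $\alpha$ lies in this intersection, then $\alpha \in T_{\mu(p)}S^\circ$ and $\sigma(\alpha) = d\mu_p(\alpha \cdot p) = 0 \in T_{\mu(p)} S$, so $\alpha \in L_S\vert_{\mu(p)}$; freeness of the $\mathcal{H}$-action at $p$ then forces $\alpha = 0$. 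The addendum that the hypotheses of (i) hold under properness follows because a free and proper Lie groupoid action produces a smooth manifold quotient with submersive orbit map, and transversality certainly implies clean intersection.

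For (iii), the pre-Poisson hypothesis yields constant rank of $L_S$ and the identity $T_xS \cap \sigma(T_xS^\circ) = \sigma(L_S\vert_x)$, under which the $\mathcal{H}$-invariance of $\rho^*f$ is precisely the compatibility needed to locally extend it to a function $F$ on $X$ with $dF(\sigma(TS^\circ))=0$, a condition equivalent to $\sigma(dF)$ being tangent to $S$ by skew-symmetry of the Poisson bivector. Given $f,g \in C^\infty(S/\mathcal{H})$ and such extensions $F,G$, I would define $\rho^*\{f,g\}_{S/\mathcal{H}} := \{F,G\}_X\vert_S$ and check three things: independence of the extension, which reduces to showing that replacing $F$ by $F + F'$ with $F'\vert_S = 0$ does not change $\{F,G\}_X\vert_S$ and uses tangency of $\sigma(dG)$ to $S$; $\mathcal{H}$-invariance on $S$, which follows from the Jacobi identity together with the identity $\sigma(\beta)(F) = -\beta(\sigma(dF)) = 0$ for $\beta \in L_S \subseteq TS^\circ$ (since $\sigma(dF) \in TS$), giving $L_S$-invariance of $F$ and $G$ on $S$; and the Jacobi identity and Leibniz rule on $S/\mathcal{H}$, both inherited from $X$. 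Finally, $\bar\mu$ is Poisson because $\bar\mu^*f$ lifts through $\pi$ to $i^*\mu^*F$, while the Mikami--Weinstein moment map condition gives $\{\mu^*F, \mu^*G\}_M = \mu^*\{F,G\}_X$, whose restriction to $N$ and descent to $M \sll{S,\mathcal{H}} \mathcal{G}$ matches $\{f,g\}_{S/\mathcal{H}}$. The main obstacle lies in (iii): constructing the extensions $F$ with the correct tangency and verifying that the ensuing bracket descends to a genuine Poisson structure on $S/\mathcal{H}$ is where the pre-Poisson hypothesis is genuinely used, and where the argument departs most from the classical Marsden--Weinstein template.
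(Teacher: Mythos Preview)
Your arguments for (i) and (ii) are correct and track the paper's approach closely; you use the infinitesimal moment-map identities directly, whereas the paper routes the same computation through several lemmas on the tangent space $T_x\H$ and the Hamiltonian vector fields $X_{\sss^*f}$, but the content is the same.

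The gap is in (iii), specifically in your claim that $\{F,G\}_X\big\vert_S$ is $\mathcal{H}$-invariant. Your argument shows that $F\big\vert_S$ and $G\big\vert_S$ are annihilated by $\sigma(\beta)$ for $\beta \in L_S$, and you appeal to the Jacobi identity to conclude the same for the bracket. But Jacobi gives you control of $\{H,\{F,G\}\}$ only through $\{\{H,F\},G\}$ and $\{F,\{H,G\}\}$, and the vanishing of $\{H,F\}$ at a single point of $S$ does not force these iterated brackets to vanish there; you would need $\{H,F\}$ to vanish to first order along $S$, which your hypothesis on $F$ does not supply. Even if you patch this, you obtain only $L_S$-invariance, hence invariance under the source-connected stabilizer subgroupoid; the theorem as stated allows $\mathcal{H}$ to be non-source-connected, and infinitesimal invariance does not integrate in that case.

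The paper handles this by invoking the Cattaneo--Zambon result that $S$ embeds coisotropically in a Poisson transversal $Y \subseteq X$. The restriction $\G\big\vert_Y$ is then a symplectic subgroupoid in which $\mathcal{H}$ is \emph{Lagrangian}, not merely isotropic. Using this, the paper shows that the vector field $X_{\sss^*F} - X_{\ttt^*F}$ is tangent to $\mathcal{H}$, from which the equality $j^*\sss^*\{F,G\}_X = j^*\ttt^*\{F,G\}_X$ follows by a direct computation. This is exactly where the isotropy hypothesis in the definition of a stabilizer subgroupoid enters, and it is the step your sketch does not supply.
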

\endgroup   
This result appears in the main text as Theorems \ref{main-theorem-smooth}, \ref{1gjd75z2}, and \ref{omd5dx3w}.

Let us suppose that $S=\{x\}$ is a singleton and that $\mathcal{H}$ is the isotropy group at $x$. In this case, $M\sll{S,\mathcal{H}}\mathcal{G}$ is precisely the Mikami--Weinstein reduction of $M$ at level $x$ \cite{mikami-weinstein}. In other words, Mikami--Weinstein reduction is a special case of Theorem \ref{Theorem: Smooth category}. The same is therefore true of Marsden--Weinstein--Meyer reduction, as it is a special case of Mikami--Weinstein reduction.

It is also illuminating to apply Theorem \ref{Theorem: Smooth category} in the case of a Poisson transversal $S\subseteq X$. The trivial groupoid $\mathcal{H}\tto S$ is then a stabilizer subgroupoid of $S$ in $\mathcal{G}$, and one obtains the symplectic submanifold $M\sll{S,\mathcal{H}}\mathcal{G}=\mu^{-1}(S)\subseteq M$.

\subsubsection{The complex analytic and algebraic categories}
Theorem \ref{Theorem: Smooth category} and its consequences have natural counterparts in the category of complex analytic spaces. To this end, call a complex analytic space $(X,\mathcal{O}_X)$ \textit{Poisson} if $\mathcal{O}_X$ is a sheaf of complex Poisson algebras. Now assume that $X$ is a holomorphic Poisson manifold and denote its Poisson bivector field by $\sigma:T^*X\longrightarrow TX$. We call a complex submanifold $S\subseteq X$ a \textit{pre-Poisson complex submanifold} if $\sigma^{-1}(TS)\cap TS^{\circ}$ has constant rank over $S$. The base space of a holomorphic symplectic groupoid $\mathcal{G}\tto X$ is a holomorphic Poisson manifold, in which context we may consider a pre-Poisson complex submanifold $S\subseteq X$. In analogy with \S\ref{Subsubsection: The smooth category}, we may define what it means for $\mathcal{H}\tto S$ to be a \textit{holomorphic stabilizer subgroupoid of} $S$. Hamiltonian actions are also defined analogously, i.e.\ one has an analogous notion of $\mathcal{G}\tto X$ acting on a holomorphic symplectic manifold $M$ in a Hamiltonian fashion with moment map $\mu:M\longrightarrow X$. Let us assume that $N\coloneqq\mu^{-1}(S)$ is reduced. Let us also consider a \textit{complex analytic quotient} $\pi:N\longrightarrow Q$ of $N$ by $\mathcal{H}$; by this, we mean that $Q$ is a complex analytic space, that $\pi$ is holomorphic, and that the canonical map $\mathcal{O}_{Q}\longrightarrow(\pi_*\mathcal{O}_N)^{\mathcal{H}}$ is an isomorphism. We refer to $Q$ as the \textit{symplectic reduction of $M$ by $\mathcal{G}$ along $S$ with respect to $\mathcal{H}$ and $\pi$} and write
$$M\sll{S,\mathcal{H},\pi}\mathcal{G}\coloneqq Q.$$
These considerations yield the following counterpart of Theorem \ref{Theorem: Smooth category}.

\begingroup
\setcounter{tmp}{\value{theorem}}% store current value of theorem counter
\setcounter{theorem}{1} %assign desired value to theorem counter
\renewcommand\thetheorem{\Alph{theorem}}% locally redefine the representation of the theorem counter
\begin{theorem}[Complex analytic category]\label{Theorem: Complex analytic category}
	Let a holomorphic symplectic groupoid $\mathcal{G}\tto X$ act on a holomorphic symplectic manifold $(M,\omega)$ in a Hamiltonian fashion with moment map $\mu:M\longrightarrow X$. Suppose that $\mathcal{H}\tto S$ is a holomorphic stabilizer subgroupoid of a pre-Poisson complex submanifold $S\subseteq X$  and that $N\coloneqq\mu^{-1}(S)$ is reduced. Let us also suppose that $\pi:N\longrightarrow Q$ is a complex analytic quotient of $N$ by $\mathcal{H}$.
	\begin{itemize}
\item[\textup{(i)}] If $p\in N$ and $f\in\mathcal{O}_{Q,\pi(p)}$, then there exists $F\in\mathcal{O}_{M,p}$ satisfying $\pi^*f=F\big\vert_N$ and $dF(TN^{\omega})=0$, where $TN^\omega$ is the annihilator of $TN$ with respect to $\omega$.
\item[\textup{(ii)}] The complex analytic space $Q$ has a unique Poisson structure with the following property: if $p\in N$, $f,g\in\mathcal{O}_{Q,\pi(p)}$, and $F,G\in\mathcal{O}_{M,p}$ satisfy $\pi^*f=F\big\vert_N$, $\pi^*g=G\big\vert_N$, and $dF(TN^{\omega})=0=dG(TN^{\omega})$, then 
$$\pi^*\{f,g\}_Q=\{F,G\}_M\big\vert_N.$$
\item[\textup{(iii)}] Assume that there exists $p\in N$ such that $d\pi_p$ is surjective, $\pi^{-1}(\pi(p))$ is an $\mathcal{H}$-orbit, and $\pi(p)$ is a smooth point of $Q$. The Poisson structure in \textup{Part (ii)} is then non-degenerate on an open dense subset of the smooth locus of $Q$ containing $\pi(p)$. 
\item[\textup{(iv)}] Suppose that $S$ intersects $\mu$ cleanly, and that the topological quotient $Q = N/\H$ has a complex manifold structure such that $\pi : N \longrightarrow Q$ is a holomorphic submersion. The Poisson structure on $Q$ is then induced by a holomorphic symplectic form $\bar{\omega}$ satisfying $$\pi^*\bar{\omega} = i^*\omega,$$where $i : N \longrightarrow M$ is the inclusion map.
\item[\textup{(v)}] If $\H$ acts freely on $N$, then $S$ is transverse to $\mu$.
The hypotheses of \textup{Part (iv)} are therefore satisfied if $\mathcal{H}$ acts freely and properly on $N$.
\item[\textup{(vi)}] Let $\sigma:T^*X\longrightarrow TX$ denote the Poisson bivector field on $X$. If $\rho : S \longrightarrow R$ is a complex analytic quotient of $S$ by $\H$, then there is a unique holomorphic Poisson structure on $R$ such that 
\[
\rho^*\{f, g\}_R = \{F, G\}_X\big\vert_S
\]
for all $x \in S$, $f, g \in \O_{R, \rho(x)}$, and $F, G \in \O_{X, x}$ satisfying $\rho^*f = F\big\vert_S$, $\rho^*g = G\big\vert_S$, and $dF(\sigma(TS^{\circ}))=0=dG(\sigma(TS^{\circ}))$.
\item[\textup{(vii)}] Assume that the hypothesis of \textup{Part (vi)} holds. If $\mu : M \longrightarrow X$ descends to a holomorphic map $\bar{\mu} : Q \too R$, then $\bar{\mu}$ is Poisson with respect to the Poisson structures in \textup{Parts} \textup{(ii)} and \textup{(vi)}.
\end{itemize} 
\end{theorem}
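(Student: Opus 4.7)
The plan is to prove the seven parts in order, modelling the approach on Theorem~\ref{Theorem: Smooth category} but working sheaf-theoretically to accommodate the possible singularities of $Q$ and $R$. Part (i) is the technical foundation that gives meaning to the bracket formula in Part (ii); Parts (iii)--(v) are local in nature and reduce to holomorphic counterparts of standard symplectic-reduction arguments; and Parts (vi)--(vii) treat the Poisson base and moment map descent, and can be proved by applying the method of (i)--(ii) with $(M, \omega, \mu)$ replaced by $(X, \sigma, \mathrm{id}_X)$.

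The crux is Part (i). I would first choose any local holomorphic extension $\tilde{F} \in \mathcal{O}_{M, p}$ of $\pi^* f$, which exists because $N$ is a reduced closed complex-analytic subspace of $M$ and holomorphic functions on such a subspace lift locally by Cartan's Theorem B or an explicit local normal form. The obstruction to $d\tilde{F}(T_p N^\omega) = 0$ is the restriction $d\tilde{F}|_{T_p N^\omega} \in (T_p N^\omega)^*$. I would kill it by subtracting a function $H$ in the ideal sheaf $\mathcal{I}_{N, p}$ with appropriate $dH|_p$. The key observation is that $\pi^* f$ is $\mathcal{H}$-invariant, so $d\tilde F|_p$ vanishes on the anchor image of $\mathcal{H}$ at $p$; for pre-Poisson $S$ and a stabilizer subgroupoid, this anchor image equals $T_pN^\omega \cap T_pN$ (the characteristic distribution of the reduction). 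The obstruction therefore descends to a functional on $T_pN^\omega/(T_pN^\omega \cap T_pN)$, which injects into $T_pM/T_pN$; since $T_pN^\circ = \{dH|_p : H \in \mathcal{I}_{N,p}\}$ is precisely the dual of the latter quotient, the needed $H$ exists.

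For Part (ii), I would define $\pi^*\{f, g\}_Q := \{F, G\}_M|_N$; well-definedness reduces to a short symplectic-linear-algebra check. If $H = F_1 - F_2$ vanishes on $N$ and satisfies $dH(TN^\omega) = 0$, and $G$ similarly satisfies $dG(TN^\omega) = 0$, then the latter condition forces $X_G|_N \in (TN^\omega)^\omega = TN$, so $\{H, G\}_M|_N = X_G(H)|_N = 0$ because $H|_N \equiv 0$ and $X_G|_N$ is tangent to $N$. That the resulting bracket descends through $\mathcal{O}_Q = (\pi_* \mathcal{O}_N)^\mathcal{H}$ follows from $\mathcal{H}$-invariance of the extensions, and Jacobi, Leibniz, and skew-symmetry transfer from $M$. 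Part (iii) follows from a local Darboux--Weinstein model at a point where $\pi$ is a submersion and the fibre is a single $\mathcal{H}$-orbit: a neighborhood of $\pi(p)$ is then holomorphically isomorphic to a standard symplectic reduction, which is symplectic and forces the bracket from (ii) to be non-degenerate there. Parts (iv) and (v) mirror the proofs of Theorem~\ref{Theorem: Smooth category}(i)--(ii): under clean intersection and submersion of $\pi$, the form $i^*\omega$ is basic with respect to $\pi$ and descends to the desired $\bar\omega$; and a free $\mathcal{H}$-action forces the anchor of $L_S$ to surject onto $TS$, yielding transversality of $\mu$ and $S$. Parts (vi) and (vii) follow by applying the method of (i)--(ii) to the pair $(X, S)$ with bivector $\sigma$, and then a direct definition-chase shows that $\bar\mu$ intertwines the two Poisson brackets.

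The principal obstacle is the holomorphic extension in Part (i). In the smooth category one may average over $\mathcal{H}$-orbits or use partitions of unity; neither is available holomorphically. I expect the cleanest route is to construct a local $\mathcal{H}$-adapted holomorphic tubular neighborhood of $N$ in $M$, so that $\pi^* f$ may be extended by declaring it constant along the normal directions, making the condition $dF(TN^\omega) = 0$ automatic. A secondary subtlety is that $N$ itself may be singular when $\mu$ does not meet $S$ cleanly, so the reducedness hypothesis and Cartan's theorems are essential to guarantee that holomorphic functions on $N$ lift to $M$.
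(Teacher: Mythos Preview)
Your proposal has a genuine gap in Part~(i), which is indeed the foundation of the whole theorem. Your correction argument---subtracting $H \in \mathcal{I}_{N,p}$ with prescribed $dH|_p$---only kills the obstruction at the single point $p$, whereas the statement requires $dF(TN^\omega)=0$ as a germ, i.e.\ on a neighbourhood of $p$ in $N$. You recognise this difficulty at the end and propose an $\mathcal{H}$-adapted holomorphic tubular neighbourhood of $N$ in $M$, but this fails precisely in the case of interest: when $\mu$ does not meet $S$ cleanly, $N$ can be singular, and singular analytic subspaces do not admit tubular neighbourhoods.

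The paper circumvents this by invoking the Cattaneo--Zambon structural result (Proposition~\ref{i9vtsb2f}): locally, $S$ embeds as a coisotropic submanifold of a Poisson transversal $Y\subseteq X$. Since Poisson transversals are automatically transverse to Poisson maps, $P\coloneqq\mu^{-1}(Y)$ is a \emph{smooth} symplectic submanifold of $M$ containing $N$ as a coisotropic subspace. One first extends $\pi^*f$ arbitrarily to $\tilde f$ on $P$, then uses a tubular neighbourhood of the smooth $P$ in $M$ (Lemma~\ref{961mx9ov}) to extend to $F\in\mathcal{O}_{M,p}$ with $dF(TP^\omega)=0$. Coisotropy of $N$ in $P$ gives $TN^\omega = D \oplus TP^\omega|_N$, and since $d(\pi^*f)(D)=0$ by $\mathcal{H}$-invariance, one concludes $dF(TN^\omega)=0$. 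This Poisson-transversal trick is the missing ingredient.

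There is a second, smaller gap in Part~(ii): you assert that the bracket descends to $\mathcal{O}_Q=(\pi_*\mathcal{O}_N)^{\mathcal{H}}$ ``from $\mathcal{H}$-invariance of the extensions'', but the extensions $F,G$ are not $\mathcal{H}$-invariant---only their restrictions to $N$ are. Showing that $\{F,G\}_M|_N$ is itself $\mathcal{H}$-invariant requires a separate argument (the paper does this in Theorem~\ref{zogb0t7j} via a local holomorphic section of $\ttt:\mathcal{H}\to S$, using the isotropy of $\mathcal{H}$ in $\mathcal{G}$ in an essential way). Finally, your mechanism for Part~(v) is not quite right: freeness of the $\mathcal{H}$-action does not make the anchor of $L_S$ surjective onto $TS$; rather, it forces $\ker(d\varphi_p)=0$, and transversality then follows from the identity $T_xS+\operatorname{im}d\mu_p = (\ker d\varphi_p)^\circ$ (Proposition~\ref{pxamcx75}).
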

\endgroup

This result appears in the main text as Theorem \ref{ytazg95b}, Proposition \ref{ugr3y1lq}, Proposition \ref{45y1ghc8}, and Theorem \ref{6hr3szsv}.

Theorem \ref{Theorem: Complex analytic category} has incarnations in complex algebraic geometry. The setup is entirely analogous to that outlined in the paragraph preceding Theorem \ref{Theorem: Complex analytic category}; one simply replaces each complex analytic notion with its algebro-geometric counterpart, e.g. holomorphic symplectic groupoids with algebraic symplectic groupoids, holomorphic Poisson manifolds with smooth Poisson varieties, and complex analytic quotients with algebraic quotients\footnote{We refer the reader to Definition \ref{Definition: algebraic quotient} for the precise meaning of this term.}. The algebro-geometric definition of $M\sll{S,\mathcal{H},\pi}\mathcal{G}$ is then analogous to that given in the complex analytic setting.          

\begingroup
\setcounter{tmp}{\value{theorem}}% store current value of theorem counter
\setcounter{theorem}{2} %assign desired value to theorem counter
\renewcommand\thetheorem{\Alph{theorem}}% locally redefine the representation of the theorem counter
\begin{theorem}[Complex algebraic category]\label{Theorem: Algebraic category}
	Let an algebraic symplectic groupoid $\mathcal{G}\tto X$ act on a symplectic variety $(M,\omega)$ in a Hamiltonian fashion with moment map $\mu:M\longrightarrow X$. Suppose that $\mathcal{H}\tto S$ is a stabilizer subgroupoid of a pre-Poisson subvariety $S\subseteq X$ and that $N\coloneqq\mu^{-1}(S)$ is reduced. Suppose also that $\pi:N\longrightarrow Q$ is an algebraic quotient of $N$ by $\mathcal{H}$. Let $\mathcal{O}_M^{\emph{an}}$ denote the structure sheaf of $M$ as a complex analytic space.
	\begin{itemize}
		\item[\textup{(i)}] If $p\in N$ and $f\in\mathcal{O}_{Q,\pi(p)}$, then there exists $F$ in $\mathcal{O}_{M,p}^{\emph{an}}$ satisfying $\pi^*f=F\big\vert_N$ and $dF(TN^{\omega})=0$.
		\item[\textup{(ii)}] The variety $Q$ has a unique algebraic Poisson structure with the following property: if $p\in N$, $f,g\in\mathcal{O}_{Q,\pi(p)}$, and $F,G\in\mathcal{O}_{M,p}^{\emph{an}}$ satisfy $\pi^*f=F\big\vert_N$, $\pi^*g=G\big\vert_N$, and $dF(TN^{\omega})=0=dG(TN^{\omega})$, then 
		$$\pi^*\{f,g\}_Q=\{F,G\}_M\big\vert_N.$$ 
		\item[\textup{(iii)}] Assume that there exists $p \in N$ such that $d\pi_p$ is surjective, $\pi^{-1}(\pi(p))$ is an $\mathcal{H}$-orbit, and $\pi(p)$ is a smooth point of $Q$.
		The Poisson structure in \textup{Part (ii)} is then non-degenerate on a Zariski-open subset of the smooth locus of $Q$ containing $\pi(p)$.   
	\end{itemize} 
\end{theorem}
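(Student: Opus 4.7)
The plan is to reduce Theorem C to the complex analytic statement (Theorem B) via analytification, and then to verify that the resulting Poisson structure on $Q$ is algebraic rather than merely holomorphic.

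First, I would analytify all relevant structures. The algebraic symplectic groupoid $\G\tto X$ together with its Hamiltonian action on $(M,\omega)$ yields a holomorphic symplectic groupoid $\G^{\mathrm{an}}\tto X^{\mathrm{an}}$ acting Hamiltonianly on $(M^{\mathrm{an}},\omega^{\mathrm{an}})$ with moment map $\mu^{\mathrm{an}}$. The pre-Poisson condition is a constant-rank condition on an algebraic morphism of vector bundles, so it is preserved by analytification; hence $S^{\mathrm{an}}\s X^{\mathrm{an}}$ is a pre-Poisson complex submanifold, and $\H^{\mathrm{an}}\tto S^{\mathrm{an}}$ is a holomorphic stabilizer subgroupoid. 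One must also verify that $\pi^{\mathrm{an}}:N^{\mathrm{an}}\too Q^{\mathrm{an}}$ is a complex analytic quotient of $N^{\mathrm{an}}$ by $\H^{\mathrm{an}}$ in the sense used in Theorem B, which should follow from the defining properties of an algebraic quotient together with GAGA applied to the sheaf of $\H$-invariants.

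With these identifications, Part (i) is essentially immediate: viewing the algebraic germ $f\in\O_{Q,\pi(p)}$ as a holomorphic germ on $Q^{\mathrm{an}}$, Theorem B(i) produces $F\in\O_{M,p}^{\mathrm{an}}$ with $\pi^*f=F\big\vert_N$ and $dF(TN^{\omega})=0$. For Part (ii), the formula $\pi^*\{f,g\}_Q=\{F,G\}_M\big\vert_N$ defines $\{f,g\}_Q$ as a well-defined holomorphic germ on $Q^{\mathrm{an}}$ by Theorem B(ii), and the crucial remaining step is to show that this germ is in fact algebraic. My plan is to give a direct algebro-geometric construction: using the local Cattaneo--Zambon model of a pre-Poisson subvariety, which is available in the algebraic category, one constructs an algebraic Poisson structure on $Q$ from the algebraic Poisson bivector on $X$ and the $\H$-invariance provided by the algebraic quotient $\pi$, then verifies that it satisfies the defining formula of Theorem B(ii). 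Uniqueness in Theorem B(ii) then forces this algebraic structure to coincide, after analytification, with the holomorphic one, which gives the algebraicity.

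Finally, Part (iii) follows from Theorem B(iii) combined with the observation that nondegeneracy of an algebraic Poisson structure at a point is a Zariski-open condition: Theorem B(iii) yields an analytically open subset of the smooth locus of $Q^{\mathrm{an}}$ on which the Poisson structure is nondegenerate, so the (algebraic) locus of nondegeneracy is a nonempty Zariski-open subset of the smooth locus of $Q$ containing $\pi(p)$. The main obstacle is the algebraicity claim in Part (ii); once that is settled, everything else is a routine combination of Theorem B with standard analytification arguments.
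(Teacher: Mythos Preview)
Your overall strategy for Parts (i) and (iii) is fine and matches the paper. The real issue is Part (ii), where your plan has a gap.

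You write that the Cattaneo--Zambon embedding of $S$ coisotropically into a Poisson transversal ``is available in the algebraic category.'' It is not: producing a submanifold $Y$ with $TY\big\vert_S = TS \oplus R$ requires a tubular neighbourhood of $S$, which does not exist Zariski-locally. The paper confronts this directly. It chooses an \emph{algebraic} complement $R$ to $TS + \sigma(TS^\circ)$ in $TX\big\vert_S$, but the Poisson transversal $Y$ it builds from $R$ is only \emph{analytic} (via the Stein version of the tubular neighbourhood theorem). The key observation is that, although $Y$ and $P \coloneqq \mu^{-1}(Y)$ are merely analytic, the bundle $E \coloneqq TP\big\vert_N$ has the algebraic description $E = d\mu^{-1}(TS \oplus R)$, and hence so does the symplectic-orthogonal splitting $TM\big\vert_N = E \oplus E^\omega$. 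One then extends the projection onto $E$ to an algebraic endomorphism $\theta$ of $TM$ on an affine neighbourhood, picks \emph{algebraic} extensions $\tilde F, \tilde G$ of $\pi^*f, \pi^*g$, and proves the identity
\[
\{F,G\}\big\vert_N \;=\; \omega\bigl(\theta(X_{\tilde F}),\,\theta(X_{\tilde G})\bigr)\big\vert_N,
\]
whose right-hand side is visibly algebraic. This is the heart of the argument, and your proposal does not supply it.

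It is also worth knowing that the obvious shortcut fails: the paper gives an explicit example showing that for a given algebraic $f$ there may be \emph{no} algebraic $F$ with $F\big\vert_N = \pi^*f$ and $dF(TN^\omega)=0$. So one cannot hope to prove algebraicity of $\{f,g\}_Q$ by simply finding algebraic $F,G$ satisfying the conditions of Part (i); the detour through $\theta$ and $\tilde F,\tilde G$ is genuinely needed. Finally, your appeal to GAGA to show that $\pi^{\mathrm{an}}$ is a complex analytic quotient is unjustified without properness hypotheses; the paper sidesteps this by invoking only the analytic Poisson bracket on $\H$-invariant analytic functions, not the full analytic-quotient property.
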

\endgroup

Theorem \ref{Theorem: Algebraic category} appears in the main text as Theorem \ref{Theorem: Algebraic theorem}.

\subsubsection{Symplectic reduction by a Lie group along a submanifold}\label{Subsubsection: Symplectic reduction by a Lie group along a submanifold}
Our results have implications for classical Hamiltonian $G$-spaces in both the smooth and complex analytic categories, where $G$ is any real or complex Lie group with Lie algebra $\g$. We develop the implications for smooth Hamiltonian $G$-spaces in parallel with those for holomorphic Hamiltonian $G$-spaces, allowing context to resolve any ambiguities that this may create. 

Recall the identification of Hamiltonian $G$-spaces with Hamiltonian spaces for the cotangent groupoid $T^*G\tto\g^*$ described in \cite{mikami-weinstein} and mentioned in \S\ref{Subsubsection: Universal reduced spaces}. This gives rise to the notation $$M\sll{S,\mathcal{H}}G\coloneqq M\sll{S,\mathcal{H}}T^*G$$ for the reduction of a Hamiltonian $G$-space $M$ along a pre-Poisson submanifold $S\subseteq\g^*$ with respect to a stabilizer subgroupoid $\mathcal{H}\tto S$ in $T^*G$. As per \eqref{Equation: Definition} and the discussion preceding it, we may write $$M\sll{S} G\coloneqq M\sll{S,\mathcal{H}} G\quad\text{for any source-connected }\mathcal{H}\tto S.$$ On the other hand, we call the pair $(S,\mathcal{H})$ a \textit{clean reduction datum} if it satisfies the hypotheses of Theorem \ref{Theorem: Smooth category}(i) or Theorem \ref{Theorem: Complex analytic category}(iv). Note that $M\sll{S,\mathcal{H}}G$ is a symplectic manifold in this case. 

One may specialize this discussion to the Hamiltonian $G$-space $M=T^*G$, where $G$ acts on $T^*G$ by right translations. Let us begin by considering
$$\mathfrak{M}_{G,S,\mathcal{H}}\coloneqq T^*G\sll{S,\mathcal{H}}G\quad\text{and}\quad\mathfrak{M}_{G,S}\coloneqq \mathfrak{M}_{G,S,\mathcal{H}}\quad\text{for any source-connected }\mathcal{H}\tto S.$$
Note that $\mathfrak{M}_{G,S,\mathcal{H}}$ is a symplectic manifold if $(S,\mathcal{H})$ is a clean reduction datum for the right translation action of $G$ on $T^*G$. In this case, the left translation action descends to a Hamiltonian $G$-action on $\mathfrak{M}_{G,S,\mathcal{H}}$, i.e.\ $\mathfrak{M}_{G,S,\mathcal{H}}$ is a Hamiltonian $G$-space. Part (i) of the following result justifies our calling $\mathfrak{M}_{G,S,\mathcal{H}}$ the \textit{universal reduced space} associated to $(G,S,\mathcal{H})$.  

\begingroup
\setcounter{tmp}{\value{theorem}}% store current value of theorem counter
\setcounter{theorem}{3} %assign desired value to theorem counter
\renewcommand\thetheorem{\Alph{theorem}}% locally redefine the representation of the theorem counter

\begin{theorem}[Reduction by a Lie group]\label{Theorem: Reduction by a Lie group along a submanifold}
Let $S\subseteq\g^*$ be a pre-Poisson submanifold.
\begin{itemize}
\item[\textup{(i)}] Suppose that $(S,\mathcal{H})$ is a clean reduction datum for both a Hamiltonian $G$-space $M$ and the right translation action of $G$ on $T^*G$. We then have a canonical symplectomorphism
\[
M \sll{S, \H} G \cong (M \times \mathfrak{M}_{G, S, \H}^-) \sll{0} G,
\]
where $\mathfrak{M}_{G, S, \H}^-$ is $\mathfrak{M}_{G, S, \H}$ with the negated symplectic form, $G$ acts diagonally on $M \times \mathfrak{M}_{G, S, \H}^-$, and $\sll{0}$ denotes Marsden--Weinstein--Meyer reduction at level $0$.
\item[\textup{(ii)}] Suppose that the Lie subalgebroid $L_S \coloneqq \sigma^{-1}(TS) \cap TS^\circ \subseteq T^*\g^*$ is contained in the kernel of the Kirillov--Kostant--Souriau Poisson structure $\sigma:T^*\g^*\longrightarrow T\g^*$. The subspace $$\h_{\xi}\coloneqq (T_{\xi}S)^{\circ}\cap\g_{\xi}$$ is then a Lie subalgebra of $\g$ for all $\xi\in S$, where $(T_{\xi}S)^{\circ}\subseteq\g$ is the annihilator of $T_{\xi}S\subseteq\g^*$ and $\g_{\xi}\subseteq\g$ is the centralizer of $\xi$. Furthermore, we have 
$$L_S=\{(x,\xi)\in \g\times S:x\in\h_{\xi}\}.$$
\item[\textup{(iii)}] Retain the hypothesis of \textup{Part (ii)}. Let $H_{\xi}\subseteq G$ be the connected Lie subgroup with Lie algebra $\h_{\xi}$ for all $\xi\in S$, and suppose that $$\mathcal{H}\coloneqq\{(g,\xi)\in G\times S:g\in H_{\xi}\}$$ is closed in $G\times S$. The subset $\mathcal{H}$ is then a stabilizer subgroupoid of $S$ in $T^*G\tto\g^*$ if one uses the left trivialization to identify $T^*G$ with $G\times\g^*$.
\item[\textup{(iv)}] Suppose that $S$ is a $G$-invariant submanifold of $\g^*$. The hypothesis of \textup{Part (ii)} is then satisfied. Let us also assume that the hypotheses of \textup{Part (iii)} are satisfied.
Then $(S,\mathcal{H})$ is a clean reduction datum for the right translation action of $G$ on $T^*G$, and the symplectic manifold $\mathfrak{M}_{G,S}$ inherits the structure of a symplectic groupoid integrating $S$.   
\end{itemize}
\end{theorem}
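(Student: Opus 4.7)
My plan is to address the four parts in an order suited to their logical dependencies: (ii) and (iii) first as Lie-algebroidal computations, (i) via a generalized shifting trick, and (iv) as an application combining the others together with the symplectic groupoid structure on $\mathfrak{M}_{G,S}$.

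For Part (ii), I would use the left trivialization $T^*\g^*\cong\g^*\times\g$, under which the Kirillov--Kostant--Souriau anchor is $\sigma_\xi(x)=\ad_x^*\xi$ with $\ker\sigma_\xi=\g_\xi$. The hypothesis $L_S\subseteq\ker\sigma$ then makes the constraint $\sigma^{-1}(T_\xi S)$ vacuous on the fibers of $L_S$, leaving $(L_S)_\xi=(T_\xi S)^\circ\cap\g_\xi=\h_\xi$. That $\h_\xi$ is a Lie subalgebra is a consequence of $L_S$ being a Lie subalgebroid with vanishing anchor: its fibers then carry a pointwise Lie bracket, which I would identify with the restriction of the $\g$-bracket via the standard identification of the cotangent Lie algebroid of $\g^*$ with the action algebroid $\g\ltimes\g^*$. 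For Part (iii), the closedness hypothesis combined with the constant rank of $\xi\mapsto\h_\xi$ and connectedness of the $H_\xi$ gives $\mathcal{H}$ the structure of an embedded Lie subgroupoid of $T^*G=G\times\g^*$. Since $H_\xi\subseteq G_\xi$, one has $\ttt=\sss=\xi$ on $\mathcal{H}$, so the groupoid structure is inherited directly from $T^*G$; the Lie algebroid coincides with $L_S$ by differentiation of the identity bisection; and isotropy at the identity section follows from $\h_\xi\subseteq(T_\xi S)^\circ$ together with $\langle\xi,[x_1,x_2]\rangle=0$ for $x_i\in\g_\xi$, then propagates along $\mathcal{H}$ by invariance of the canonical symplectic form under left groupoid multiplication.

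For Part (i), I would adapt the classical Marsden--Weinstein shifting trick. The diagonal $G$-action on $M\times\mathfrak{M}_{G,S,\mathcal{H}}^-$ carries moment map $\mu_M-\mu_{\mathfrak{M}}$, where $\mu_{\mathfrak{M}}$ is the moment map for the descended left-translation $G$-action on $\mathfrak{M}_{G,S,\mathcal{H}}=T^*G\sll{S,\mathcal{H}}G$. The symplectomorphism is constructed by sending the $G$-orbit of $(m,[g,\xi])$ in the zero level set to the $\mathcal{H}$-orbit of $g^{-1}\cdot m\in\mu_M^{-1}(S)$. Well-definedness and bijectivity follow from an orbit-chasing argument in the definitions, and compatibility of the reduced symplectic forms is verified by applying Theorem A to both sides and comparing their pullbacks along a common cover of the two reduced spaces.

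For Part (iv), $G$-invariance of $S$ gives $T_\xi S\supseteq\ad_\g^*\xi$, hence $(T_\xi S)^\circ\subseteq\g_\xi$, verifying the hypothesis of (ii). The right-translation moment map $\mu:T^*G\longrightarrow\g^*$ is a surjective submersion, so $\mu^{-1}(S)$ is smooth and $S$ intersects $\mu$ transversally. Since $G$-invariance also makes $S$ a Poisson submanifold of $\g^*$, the restriction $T^*G|_{\mu^{-1}(S)}$ acquires the structure of a Lie groupoid over $S$ integrating $T^*\g^*|_S$, and quotienting by $\mathcal{H}$ produces $\mathfrak{M}_{G,S}$ as a Lie groupoid over $S$ integrating the quotient algebroid $T^*\g^*|_S/L_S\cong T^*S$. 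Smoothness and freeness/properness of the $\mathcal{H}$-action come from the closedness hypothesis together with the constant rank of $L_S$; the symplectic form descends by Theorem A. The main obstacle, I expect, is verifying that this quotient is a symplectic groupoid, i.e.\ that the groupoid multiplication on $T^*G|_{\mu^{-1}(S)}$ descends to $\mathfrak{M}_{G,S}$ and keeps its graph Lagrangian. This requires showing the $\mathcal{H}$-quotient commutes with multiplication and source/target maps in a controlled fashion, ultimately resting on the isotropy of $\mathcal{H}$ established in Part (iii) and the closedness/source-connectedness assumptions.
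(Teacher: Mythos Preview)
Your overall architecture matches the paper's, and Parts (i), (ii), and (iv) are close to what the paper does. There is, however, a genuine gap in Part (iii), and a secondary imprecision in the isotropy argument.

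\textbf{Smoothness of $\mathcal{H}$.} You write that ``the closedness hypothesis combined with the constant rank of $\xi\mapsto\h_\xi$ and connectedness of the $H_\xi$ gives $\mathcal{H}$ the structure of an embedded Lie subgroupoid.'' This is exactly the nontrivial step, and it does not follow from general principles. The paper devotes a separate result to it (Theorem~\ref{s72lkap4}): a family version of the closed subgroup theorem, proved by choosing a complement $F$ to $E=L_S$ in $\g\times S$, considering the map $(x+y,\xi)\mapsto(e^xe^y,\xi)$, and running a sequential contradiction argument (as in Cartan's proof) to show that this map straightens $\mathcal{H}$ onto $E$ near each point. Without this, the constant rank of $\h_\xi$ and closedness of $\mathcal{H}$ alone do not guarantee that $\mathcal{H}$ is a submanifold---the fibres $H_\xi$ could in principle fail to assemble smoothly. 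You need either to supply this argument or to cite it.

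\textbf{Isotropy.} Your proposed propagation of isotropy ``by invariance of the canonical symplectic form under left groupoid multiplication'' is not correct as stated: the symplectic form $\Omega$ on $T^*G$ is \emph{multiplicative}, not invariant under one-sided translation. The paper instead invokes the Cattaneo--Zambon fact (Remark~\ref{2g4d9b8v}(ii), from \cite[Proposition~7.2]{cattaneo-zambon-2009}) that any \emph{source-connected} Lie subgroupoid integrating $L_S$ is automatically isotropic. Since your $H_\xi$ are connected, $\mathcal{H}$ is source-connected and this applies directly; your verification at the identity section is then unnecessary.

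For Part (i), your inverse-direction map is fine up to signs; the paper goes forward via $p\mapsto(p,1,\mu(p))$, which makes the pullback computation a one-liner. For Part (iv), the paper checks explicitly that $\mathcal{H}$ is \emph{normal} in $G\times S$ (via $\Ad_g\h_\xi=\h_{\Ad_g^*\xi}$ from $G$-invariance of $S$) before forming the quotient groupoid, and then verifies the Lagrangian condition on the graph of multiplication directly by pushing down from $\Gamma_{T^*G}$; this is essentially what you outline.
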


Theorem \ref{Theorem: Reduction by a Lie group along a submanifold} appears in the main text as Proposition \ref{4ywv9zyt} and Theorems \ref{universality}, \ref{amc0wisd}, and \ref{Theorem: invariant}.

We proceed to obtain the following specific examples of universal reduced spaces.

\begingroup
\setcounter{tmp}{\value{theorem}}% store current value of theorem counter
\setcounter{theorem}{4} %assign desired value to theorem counter
\renewcommand\thetheorem{\Alph{theorem}}%

\begin{theorem}[Specific examples of universal reduced spaces]\label{Theorem: Specific examples}
	Let $G$ be a connected complex semisimple Lie group with Lie algebra $\g$.
	\begin{itemize}
		\item[\textup{(i)}] If $G$ is of adjoint type, $\mathcal{S}\subseteq\g^*$ is a principal Slodowy slice, and $\Delta_n\mathcal{S}\subseteq(\g^*)^n$ is the diagonally embedded copy of $\mathcal{S}$, then $\mathfrak{M}_{G^n,\Delta_n\mathcal{S}}$ is the $n^{\text{th}}$ open Moore--Tachikawa variety. Its affinization is the $n^{\text{th}}$ Moore--Tachikawa variety constructed by Ginzburg--Kazhdan, and the Poisson structure on this variety can be recovered from Theorem \ref{Theorem: Algebraic category}.
		\item[\textup{(ii)}] If $(\g^*_{\mathrm{irr}})^{\circ}$ is the set of subregular semisimple elements in $\g^*$, then $$\mathfrak{M}_{G,(\g^*_{\mathrm{irr}})^{\circ}}=\bigsqcup_{\xi\in (\g^*_{\mathrm{irr}})^{\circ}}G/[G_{\xi},G_{\xi}],$$ where $G_{\xi}$ is the $G$-stabilizer of $\xi\in\g^*$. Furthermore, this space has the structure of a holomorphic symplectic groupoid integrating $(\g^*_{\mathrm{irr}})^{\circ}$. Its symplectic form, Hamiltonian $G$-action, and groupoid structure are described in Example \ref{Example: g_irr}.    
	\end{itemize}
\end{theorem}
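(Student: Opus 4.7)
The plan for both parts is to apply Theorem \ref{Theorem: Reduction by a Lie group along a submanifold} by identifying the stabilizer subgroupoid of the relevant pre-Poisson submanifold and then computing the resulting quotient via the left trivialization $T^*G \cong G \times \g^*$.

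For Part (i), I recall that the principal Slodowy slice $\mathcal{S} \subseteq \g^*$ is a Poisson transversal, hence $L_\mathcal{S} = 0$; the diagonal $\Delta_n \mathcal{S} \subseteq (\g^*)^n$ is then pre-Poisson, and at $(\xi, \ldots, \xi)$ a direct computation using Poisson transversality of $\mathcal{S}$ yields
\[
(L_{\Delta_n \mathcal{S}})_{(\xi, \ldots, \xi)} = \{(\alpha_1, \ldots, \alpha_n) \in \g_\xi^n : \alpha_1 + \cdots + \alpha_n = 0\},
\]
which sits inside $\g_\xi^n$, the kernel of the Kirillov--Kostant--Souriau bivector on $(\g^*)^n$. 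Thus the hypothesis of Theorem \ref{Theorem: Reduction by a Lie group along a submanifold}(ii) holds for $G^n$, and the source-connected stabilizer subgroupoid $\mathcal{H}$ integrates this sum-zero abelian bundle. Under the left trivialization, $\mu^{-1}(\Delta_n \mathcal{S}) \cong G^n \times \mathcal{S}$, and $\mathfrak{M}_{G^n, \Delta_n \mathcal{S}}$ is the quotient by the $\mathcal{H}$-action. I will identify this quotient with the open Moore--Tachikawa variety $\mathcal{W}_n^\circ$ of \cite{bielawskipreprint, ginzburg-kazhdan} by checking that both satisfy the universal reduction property of Theorem \ref{Theorem: Reduction by a Lie group along a submanifold}(i) with respect to $(\Delta_n \mathcal{S}, \mathcal{H})$; the affinization and Poisson-structure statements then follow from the Ginzburg--Kazhdan construction combined with Theorem \ref{Theorem: Algebraic category}.

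For Part (ii), I note that $S \coloneqq (\g^*_{\mathrm{irr}})^\circ$ is $G$-invariant, so Theorem \ref{Theorem: Reduction by a Lie group along a submanifold}(iv) will apply once $\h_\xi$ is computed. The $G$-invariance forces $\ad^*(\g) \xi \subseteq T_\xi S$, hence $(T_\xi S)^\circ \subseteq \g_\xi$ and $\h_\xi = (T_\xi S)^\circ$. For subregular semisimple $\xi$, the Levi $\g_\xi$ decomposes as $\g_\xi = \mathfrak{z} \oplus \mathfrak{sl}_2$ with $\mathfrak{z}$ the center, of dimension $\rank \g - 1$. Deformations of $\xi$ within $S$ split into $G$-orbit directions plus transverse moves along the invariants of the subregular stratum, which via the Killing-form identification $\g \cong \g^*$ correspond exactly to the $\mathfrak{z}$-directions; thus $T_\xi S = \ad^*(\g) \xi + \mathfrak{z}$, with annihilator $\g_\xi \cap \mathfrak{z}^\perp = [\g_\xi, \g_\xi] \cong \mathfrak{sl}_2$. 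Therefore $H_\xi = [G_\xi, G_\xi]$, and closedness of $\mathcal{H} = \{(g, \xi) : g \in H_\xi\}$ in $G \times S$ follows from the algebraic dependence of the Levi derived subgroups on $\xi$. Theorem \ref{Theorem: Reduction by a Lie group along a submanifold}(iv) then supplies the symplectic-groupoid structure on $\mathfrak{M}_{G, S}$ integrating $S$. Via $\mu^{-1}(S) \cong G \times S$, the quotient by the action of $\mathcal{H}$ (right multiplication on the first factor, fiberwise by $[G_\xi, G_\xi]$) yields $\mathfrak{M}_{G, S} \cong \bigsqcup_{\xi \in S} G / [G_\xi, G_\xi]$.

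The main obstacle will be Part (i): while the Lie-algebroid computation and dimension count quickly reduce the question to a concrete symplectic quotient, the identification of this quotient with the Moore--Tachikawa presentation traditionally given via unipotent-radical reductions will require careful matching of two superficially different constructions. For Part (ii), the main subtlety will be rigorously establishing the explicit form $T_\xi S = \ad^*(\g) \xi + \mathfrak{z}$, which amounts to showing that a $G$-invariant transverse slice to a subregular semisimple coadjoint orbit is parameterized by the central part of its centralizer; once this is settled, the remainder is a clean application of Theorem \ref{Theorem: Reduction by a Lie group along a submanifold}.
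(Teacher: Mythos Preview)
Your approach to Part (ii) is essentially the paper's: compute $T_\xi S = \ad^*(\g)\xi + \mathfrak{z}(\g_\xi)$, deduce $\h_\xi = (T_\xi S)^\perp = [\g_\xi,\g_\xi]$, and invoke Theorem \ref{Theorem: Reduction by a Lie group along a submanifold}(iv). The paper frames this via the general machinery of decomposition classes, which gives the tangent-space formula and the closedness of $\mathcal{H}$ uniformly; your closedness argument (``algebraic dependence of the Levi derived subgroups'') is a bit loose, and the paper's version --- trivializing $\mathcal{H}$ locally via a section of $G \to G/N_G([G_\xi,G_\xi])$ --- is what you would likely need to make it rigorous.

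Part (i), however, has a real gap in the identification step. Your plan is to match $\mathfrak{M}_{G^n,\Delta_n\mathcal{S}}$ with the Ginzburg--Kazhdan space by showing both satisfy the universal property of Theorem \ref{Theorem: Reduction by a Lie group along a submanifold}(i). But that universal property does not obviously characterize $\mathfrak{M}_{G,S,\H}$ uniquely, and in any case it is the wrong tool here. The Ginzburg--Kazhdan open variety is \emph{defined} as the geometric quotient $\mathcal{Z}_n^\circ = \mathcal{N}_n/\mathcal{J}_n$, where $\mathcal{N}_n = \mathcal{N}\times_\c\cdots\times_\c\mathcal{N}$ and $\mathcal{J}_n$ is the kernel of multiplication in the fibred product of the universal centralizer. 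The paper's argument is direct: one checks that under the left trivialization $\mu^{-1}(\Delta_n\mathcal{S}) = \mathcal{N}_n$, and that the stabilizer subgroupoid you have already computed --- the sum-zero bundle of centralizers --- is \emph{literally} $\mathcal{J}_n$ (using that $G$ adjoint forces $\mathcal{J}_n$ to be source-connected). The identification $\mathfrak{M}_{G^n,\Delta_n\mathcal{S}} = \mathcal{Z}_n^\circ$ is then tautological. There is no ``unipotent-radical reduction'' to match against; the Ginzburg--Kazhdan construction is precisely a quotient by the group scheme $\mathcal{J}_n$, and the whole point is that this group scheme \emph{is} your stabilizer subgroupoid.
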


Theorem \ref{Theorem: Specific examples} appears in the main text as Example \ref{Example: g_irr} and Theorem \ref{Theorem: Moore-Tachikawa}.

A few brief comments are warranted. We first note that the stabilizer subgroupoid used to construct $\mathfrak{M}_{G^n,\Delta_n\mathcal{S}}$ is a genuine group scheme, i.e.\ a group scheme whose fibres are not all isomorphic to one another. This contrasts with the more familiar construction of reduced spaces as quotients by Lie group actions. The Ginzburg--Kazhdan construction of Moore--Tachikawa varieties thereby inspired much of the setup in this paper. In particular, this construction was the main motivation for our work. 

Our second comment is that Theorem \ref{Theorem: Specific examples}(ii) holds in greater generality; one can reduce along any decomposition class $\mathcal{D}\subseteq\g^*$ \cite{borho-kraft} and obtain an explicit description of the Hamiltonian $G$-space $\mathfrak{M}_{G,\mathcal{D}}$. The Hamiltonian $G$-spaces $\mathfrak{M}_{G,\mathcal{D}}$  do not seem to appear in the literature.

\subsubsection{Examples of symplectic reduction along a submanifold}
In addition to Marsden--Weinstein--Meyer, Mikami--Weinstein reduction, and the pre-images of Poisson transversals under moment maps, we realize each of the following as special cases of symplectic reduction along a submanifold. Retain the notation used in \S\ref{Subsubsection: Symplectic reduction by a Lie group along a submanifold} and identify $T^*G$ with $G\times\g^*$ via the left trivialization as appropriate. Write $G_{\xi}$ for the $G$-stabilizer of $\xi\in\g^*$.    

\begingroup
\setcounter{tmp}{\value{theorem}}% store current value of theorem counter
\setcounter{theorem}{5} %assign desired value to theorem counter
\renewcommand\thetheorem{\Alph{theorem}}% locally redefine the representation of the theorem counter

\begin{theorem}[General examples]\label{Theorem: General examples}
Let $G$ be a Lie group with Lie algebra $\g$. Suppose that $G$ acts on a symplectic manifold $M$ in a Hamiltonian fashion and with moment map $\mu:M\longrightarrow\g^*$.
\begin{itemize}
\item[\textup{(i)}] If $\mathcal{O}\subseteq\g^*$ is a coadjoint orbit, $\mathcal{H}=\{(g,\xi)\in G\times\mathcal{O}:g\in G_{\xi}\}$, $(\mathcal{O},\mathcal{H})$ is a clean reduction datum, and $\mathcal{O}^{-}$ is $\mathcal{O}$ with the negated symplectic form, then $M\sll{\mathcal{O},\mathcal{H}}G=\mu^{-1}(\mathcal{O})/G\times\mathcal{O}^{-}$.
\item[\textup{(ii)}] If $G$ is compact and connected and $\mathfrak{t}_+^*\subseteq\g^*$ is a closed fundamental Weyl chamber, then $M\sll{\mathfrak{t}_+^*}G$ is the imploded cross-section of $M$. 
\item[\textup{(iii)}] If $G=T$ is a compact torus and $P\subseteq\mathfrak{g}^*=\mathfrak{t}^*$ is a polyhedral set, then $M\sll{P}T$ is the symplectic cut of $M$ with respect to $P$.
\end{itemize} 
\end{theorem}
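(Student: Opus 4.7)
Each of the three parts amounts to (a) identifying the stabilizer subgroupoid $\mathcal{H}$ of the given submanifold (or stratified subset) $S\s\g^*$ via Theorem \ref{Theorem: Reduction by a Lie group along a submanifold}, and (b) checking that the quotient $\mu^{-1}(S)/\mathcal{H}$ agrees with the classical construction named on the right-hand side. Throughout, the computation of $\mathcal{H}$ reduces to the computation of the Lie algebras $\h_\xi=(T_\xi S)^\circ\cap\g_\xi$ furnished by parts (ii)--(iii) of Theorem \ref{Theorem: Reduction by a Lie group along a submanifold}, after verifying the hypothesis that $L_S$ lies in the kernel of the Kirillov--Kostant--Souriau bivector $\sigma$.

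\textbf{Part (i).} The coadjoint orbit $\mathcal{O}\s\g^*$ is a symplectic leaf of the KKS structure, hence a Poisson submanifold, so $\sigma(T\mathcal{O}^\circ)=0$ and $L_\mathcal{O}=T\mathcal{O}^\circ$; the hypothesis of \ref{Theorem: Reduction by a Lie group along a submanifold}(ii) is satisfied trivially. For $\xi\in\mathcal{O}$ one has $T_\xi\mathcal{O}=\{\ad^*_\eta\xi:\eta\in\g\}$, whose annihilator is the coadjoint stabilizer $\g_\xi$, so $\h_\xi=\g_\xi$ and the hypothesized $\mathcal{H}=\{(g,\xi):g\in G_\xi\}$ is exactly the associated stabilizer subgroupoid. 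Since the $\mathcal{H}$-orbit through $p\in\mu^{-1}(\xi)$ is $G_\xi\cdot p$, the quotient $N/\mathcal{H}$ coincides with $\mu^{-1}(\mathcal{O})/G$. To recover the stated shifting-trick formula I would specialize \ref{Theorem: Reduction by a Lie group along a submanifold}(i) and directly identify $\mathfrak{M}_{G,\mathcal{O},\mathcal{H}}$ with $\mathcal{O}$ as a Hamiltonian $G$-space: in the left trivialization $T^*G=G\times\g^*$ one has $s^{-1}(\mathcal{O})=G\times\mathcal{O}$, and the $\mathcal{H}$-quotient is identified with $\mathcal{O}$ via $(g,\xi)\mapsto\ad^*_g\xi$, with the induced symplectic form being the KKS form.

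\textbf{Parts (ii) and (iii).} The subsets $\t_+^*$ and $P$ are not submanifolds but are stratified by their relatively open faces. I would apply Theorem \ref{Theorem: Smooth category} to each open face separately and then glue the pieces into the expected stratified quotient. For Part (ii), fix a $G$-invariant inner product on $\g$ and identify $\t^*\s\g^*$. For $\xi$ in the interior of a face $F$ of $\t_+^*$, the centralizer $\g_\xi=\g_F$ is a constant Levi subalgebra. A root-space decomposition shows that the annihilator of $T_\xi F\s\t^*$ inside $\g_F$ equals the derived subalgebra $[\g_F,\g_F]$: the full-root-space part is forced by $\g_F$-invariance, and on the toral part the invariant inner product matches the $\t^*$-orthogonal complement of $T_\xi F$ with the span of the coroots of the roots vanishing on $F$. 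Hence $\h_\xi=[\g_F,\g_F]$ and the source-connected stabilizer subgroupoid has isotropy $[G_F,G_F]$ at $\xi$. The stratum-by-stratum reduction $\bigsqcup_F\mu^{-1}(F^\circ)/[G_F,G_F]$ is exactly the definition of the imploded cross-section of Guillemin--Jeffrey--Sjamaar. For Part (iii), with $G=T$ abelian, every $\g_\xi=\t$ and all brackets vanish, so the hypothesis of \ref{Theorem: Reduction by a Lie group along a submanifold}(ii) is automatic and $\h_\xi=(T_\xi S)^\circ\s\t$ for any submanifold $S\s\t^*$. On the relative interior of each face of $P$, $\h_\xi$ is the Lie algebra of the subtorus $T_F\s T$ fixing $F$, and the resulting reduction $\bigsqcup_F\mu^{-1}(F^\circ)/T_F$ matches Lerman's symplectic cut of $M$ along $P$; the symplectic structures on corresponding strata are matched by comparing with Lerman's presentation via the auxiliary product $(M\times\C^n)\sll{0}T$.

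\textbf{Main obstacle.} The substantive work is in Parts (ii) and (iii): besides the Lie-theoretic identification $\h_\xi=[\g_F,\g_F]$ (which hinges on an inner-product computation that I would carry out case by case using the root-space decomposition of $\g_F$), the principal difficulty is that $\t_+^*$ and $P$ are manifolds with corners rather than submanifolds. Our framework produces a symplectic quotient stratum by stratum, and one must assemble these pieces into the globally-defined stratified spaces $M_{\mathrm{impl}}$ and the symplectic cut, together with matching symplectic structures across strata. This is exactly where the classical auxiliary constructions---Guillemin--Jeffrey--Sjamaar's universal imploded cross-section and Lerman's $\C^n$ trick---enter the argument to compare transversal neighbourhoods of corresponding strata.
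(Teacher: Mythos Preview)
Your approach to Parts (ii) and (iii) is essentially the same as the paper's: compute $\h_\xi$ on each open face via the root-space decomposition (for (ii)) or triviality of the Poisson structure (for (iii)), and observe that the resulting stratum-by-stratum quotient is by definition the imploded cross-section or the symplectic cut. Your ``main obstacle'' about gluing strata is overstated: the statement only asserts an identification of stratified topological spaces with matching symplectic structures on each piece, and the paper simply defines $M\sll{\t_+^*}K$ and $M\sll{P}T$ as topological quotients of $\mu^{-1}(\t_+^*)$ and $\mu^{-1}(P)$ (Remark~\ref{zlyzkwmo}) without reproving the finer local structure of $M_{\mathrm{impl}}$ or $M_P$.

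There is, however, a genuine error in Part (i). You write that ``the quotient $N/\mathcal{H}$ coincides with $\mu^{-1}(\mathcal{O})/G$'' and that $\mathfrak{M}_{G,\mathcal{O},\mathcal{H}}\cong\mathcal{O}$ via $(g,\xi)\mapsto\Ad_g^*\xi$. Both claims are off by a factor of $\mathcal{O}$. The $\mathcal{H}$-orbit of $p\in\mu^{-1}(\mathcal{O})$ is $G_{\mu(p)}\cdot p$, which is strictly smaller than the $G$-orbit; in particular $\mu$ is constant on $\mathcal{H}$-orbits but not on $G$-orbits, so the map $p\mapsto([p]_G,\mu(p))$ descends to a bijection $N/\mathcal{H}\cong(\mu^{-1}(\mathcal{O})/G)\times\mathcal{O}$. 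Likewise, in $(G\times\mathcal{O})/\mathcal{H}$ the second coordinate $\xi$ is $\mathcal{H}$-invariant, so your proposed map $(g,\xi)\mapsto\Ad_g^*\xi$ is not injective on the quotient; the correct identification is $(g,\xi)\mapsto(\Ad_g^*\xi,\xi)$, giving $\mathfrak{M}_{G,\mathcal{O},\mathcal{H}}\cong\mathcal{O}\times\mathcal{O}^-$ (Lemma~\ref{0c3nmzej} in the paper). Universality then yields $M\sll{\mathcal{O},\mathcal{H}}G\cong(M\times\mathcal{O}^-\times\mathcal{O})\sll{0}G\cong(\mu^{-1}(\mathcal{O})/G)\times\mathcal{O}^-$ via the shifting trick, which is the stated result. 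A secondary gap: since $G_\xi$ need not be connected, Theorem~\ref{Theorem: Reduction by a Lie group along a submanifold}(iii) does not directly apply, and one must verify by hand that $\mathcal{H}$ is isotropic in $T^*G$; the paper does this by an explicit computation with the formula~\eqref{lcnmyorx} (Lemma~\ref{zc31vfge}).
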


While $\mathfrak{t}_+^*$ and $P$ are not submanifolds, each is stratified into pre-Poisson submanifolds. This allows one to form each of $M\sll{\mathfrak{t}_+^*}G$ and $M\sll{P}T$ on a stratum-by-stratum basis. We refer the reader to Remark \ref{zlyzkwmo} for further details. 

Theorem \ref{Theorem: General examples} appears in the main text as Propositions \ref{l73ftj1i}, \ref{0dxzp0g8}, and \ref{Proposition: Cut}.

\subsubsection{Shifted symplectic interpretation}
The notion of symplectic reduction along a submanifold can be interpreted as a construction in shifted symplectic geometry \cite{ptvv} similar to the one for Marsden--Weinstein--Meyer reduction \cite[\S2.1.2]{calaque} and quasi-Hamiltonian reduction \cite{safronov-2016}.
More precisely, we obtain it as a derived intersection of two Lagrangian morphisms \cite[Theorem 2.9]{ptvv} in the $1$-shifted symplectic stack $[X/\G]$ associated to a symplectic groupoid $\G \tto X$.
One consequence is that most of the assumptions of Theorem \ref{Theorem: Algebraic category} can be dropped at the expense of obtaining a derived stack $[M \sll{S, \H} \G]$ endowed with a 0-shifted symplectic structure.
This picture also explains the definition of a stabilizer subgroupoid, as shown by Part (i) of the following theorem.

\begingroup
\setcounter{tmp}{\value{theorem}}% store current value of theorem counter
\setcounter{theorem}{6} %assign desired value to theorem counter
\renewcommand\thetheorem{\Alph{theorem}}% locally redefine the representation of the theorem counter

\begin{theorem}[Shifted symplectic interpretation]\label{Theorem: Shifted}
Let $\G \tto X$ be an algebraic symplectic groupoid.
\begin{enumerate}
\item[\textup{(i)}]
An algebraic subgroupoid $\H \tto S$ of $\G \tto X$ is a stabilizer subgroupoid of a pre-Poisson subvariety if and only if the zero $2$-form on $S$ is a Lagrangian structure on the morphism of quotient stacks $[S/\H] \too [X/\G]$.

\item[\textup{(ii)}] Suppose that $\G \tto X$ acts on a symplectic variety $M$ in a Hamiltonian fashion with moment map $\mu : M \too X$, and let $\H \tto S$ be a stabilizer subgroupoid of a pre-Poisson subvariety $S$ in $\G \tto X$.
We then have two Lagrangian morphisms $[S/\H] \too [X/\G]$ and $[\mu] : [M/\G] \too [X/\G]$ on the $1$-shifted symplectic stack $[X/\G]$, inducing a $0$-shifted symplectic structure on the derived fibre product
\[
[M\sll{S, \H} \G] \coloneqq [S/\H] \times_{[X/\G]}^h [M/\G].
\]
\end{enumerate}
\end{theorem}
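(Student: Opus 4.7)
The plan is to handle the two parts separately, leveraging the standard correspondence between symplectic groupoids and $1$-shifted symplectic quotient stacks. Recall that the symplectic groupoid structure on $\G \tto X$ endows $[X/\G]$ with a canonical $1$-shifted symplectic form $\omega_{[X/\G]}$, whose underlying $2$-form on $X$ is determined by the multiplicative symplectic form on $\G$; see \cite{ptvv, calaque}.

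For Part (i), I would first observe that a $2$-form on the quotient stack $[S/\H]$ corresponds to an $\H$-basic $2$-form on $S$, so the zero $2$-form is always a legitimate candidate. A Lagrangian structure on $f:[S/\H] \to [X/\G]$ with underlying form $0$ consists of two pieces. The first is an isotropy datum: a nullhomotopy of $f^*\omega_{[X/\G]}$ in closed $2$-forms on $[S/\H]$. Unwinding the simplicial formula for the pullback along a map of quotient stacks, this reduces to the vanishing of the pullback of the multiplicative form on $\G$ to $\H$, i.e.\ to $\H$ being an isotropic Lie subgroupoid of $\G$. The second piece is non-degeneracy: the induced map $T_{[S/\H]} \to \mathbb{L}_{[S/\H]/[X/\G]}[1]$ must be a quasi-isomorphism. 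At a point $s \in S$ one has the two-term tangent complexes
\[
T_{[S/\H]}\big\vert_s \simeq [\Lie(\H)_s \to T_sS] \quad\text{and}\quad T_{[X/\G]}\big\vert_s \simeq [\Lie(\G)_s \to T_sX]
\]
in degrees $-1, 0$, and the $1$-shifted symplectic structure identifies the latter with $[T_s^*X \to T_sX]$ via the Poisson bivector $\sigma$. A diagram chase in the cofiber sequence $\mathbb{L}_{[X/\G]}|_{[S/\H]} \to \mathbb{L}_{[S/\H]} \to \mathbb{L}_{[S/\H]/[X/\G]}$ then shows that non-degeneracy of the zero form is equivalent, pointwise, to the equality $\Lie(\H)_s = \sigma^{-1}(T_sS) \cap T_sS^{\circ}$. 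In particular this forces $\sigma^{-1}(TS) \cap TS^{\circ}$ to have locally constant rank, so that $S$ is pre-Poisson, and the two conditions together are precisely the defining conditions of a stabilizer subgroupoid.

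For Part (ii), I would invoke the well-documented principle (see for instance \cite{calaque, safronov-2016}) that a Hamiltonian action of a symplectic groupoid $\G \tto X$ on $(M, \omega)$ with moment map $\mu$ is the same data as a Lagrangian morphism $[\mu] : [M/\G] \to [X/\G]$ whose underlying $2$-form on $M$ is $\omega$. Combining this with Part (i) applied to $\H \tto S$, we have two Lagrangian morphisms targeting the $1$-shifted symplectic stack $[X/\G]$. The promised $0$-shifted symplectic structure on $[M \sll{S, \H} \G] = [S/\H] \times_{[X/\G]}^h [M/\G]$ is then supplied by the Lagrangian intersection theorem of Pantev--To\"en--Vaqui\'e--Vezzosi \cite[Theorem 2.9]{ptvv}, which states that the derived fibre product of two Lagrangians in an $n$-shifted symplectic stack carries a canonical $(n-1)$-shifted symplectic structure.

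The main obstacle will be the non-degeneracy calculation in Part (i). It requires carefully transporting the $1$-shifted symplectic form on $[X/\G]$ through its concrete description in terms of $\sigma$, and then performing a two-term cofiber diagram chase to match the conditions dictated by the quasi-isomorphism $T_{[S/\H]} \simeq \mathbb{L}_{[S/\H]/[X/\G]}[1]$ with the two constraints $\Lie(\H) = \sigma^{-1}(TS) \cap TS^{\circ}$ and isotropy of $\H$ in $\G$. All remaining steps reduce to direct applications of known principles of shifted symplectic geometry.
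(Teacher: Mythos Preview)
Your proposal is correct and follows essentially the same route as the paper. The paper's argument for Part (i) is exactly the two-step structure you outline: first, the zero form is isotropic iff $\H$ is isotropic in $\G$; second, the non-degeneracy condition is unwound as an explicit map of two-term complexes $\mathbb{T}_f \to \mathbb{L}_{[S/\H]}$ (the paper uses this formulation rather than your $T_{[S/\H]} \to \mathbb{L}_{[S/\H]/[X/\G]}[1]$, and your shift appears to be off by one, but the two are equivalent after a rotation of the triangle), and the induced maps on cohomology are checked to be isomorphisms precisely when $\Lie(\H) = \sigma^{-1}(TS)\cap TS^\circ$. Part (ii) is, as you say, an immediate consequence of Part (i), the known Lagrangian structure on $[\mu]$, and \cite[Theorem 2.9]{ptvv}; the paper states this in one line. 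The concrete cohomology computation you flag as the main obstacle is indeed the only place where real work occurs, and the paper carries it out by writing down the three-term-to-two-term map of complexes explicitly and computing kernels and cokernels.
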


This result appears in the main text as Proposition \ref{yrb59dpc} and Theorem \ref{pw00lbch}.

\subsection{Organization}  Our paper is organized as follows. Sections \ref{smooth-case}, \ref{phj103sm}, \ref{lie-group-case}, \ref{algebraic-case}, and \ref{95hfl28k} are based around the discussions, proofs, implications of Theorems \ref{Theorem: Smooth category}, \ref{Theorem: Complex analytic category}, \ref{Theorem: Reduction by a Lie group along a submanifold}, \ref{Theorem: Algebraic category}, and \ref{Theorem: Shifted}, respectively. Theorem \ref{Theorem: Specific examples} is divided between Sections \ref{lie-group-case} and \ref{algebraic-case}, while Theorem \ref{Theorem: General examples} appears in Section \ref{lie-group-case}. Each of the sections in this paper begins with an outline of its content and structure. Section \ref{95hfl28k} is followed by a list of recurring notation in our paper. 

\subsection*{Acknowledgements} We gratefully acknowledge Victor Ginzburg for sending us a draft of joint work with David Kazhdan \cite{ginzburg-kazhdan}. We also thank Justin Hilburn, Steven Rayan, and Marco Zambon for fruitful conversations.

\section{Main construction: smooth version}\label{smooth-case}
The present section is devoted to the discussion, proof, and implications of Theorem \ref{Theorem: Smooth category}. We begin with an overview of the Lie groupoid-theoretic preliminaries in \S\ref{8s76u88n}. This leads to treatment of pre-Poisson submanifolds and stabilizer subgroupoids in \S\ref{sratijkn}. The proofs Parts (i), (ii), and (iii) in Theorem \ref{Theorem: Smooth category} then appear in \S\ref{Subsection: Symplectic reduction along a submanifold}, \S\ref{jp33sbh7}, and \S\ref{jp33sbh7}, respectively. We conclude with \S\ref{Subsection: Smooth examples}, which briefly outlines some special cases of Theorem \ref{Theorem: Smooth category}.  

\subsection{Preliminaries}\label{8s76u88n}
We begin by assembling the concepts and conventions needed to define and study symplectic reduction along a submanifold.

\subsubsection{Actions of Lie groupoids}

Let $\G \tto X$ be a Lie groupoid.
We denote the source and target maps by $\sss:\mathcal{G}\longrightarrow X$ and $\ttt:\mathcal{G}\longrightarrow X$, respectively.
Our convention is that, given $g, h \in \G$, the product $gh$ is defined if and only if $\ttt(g) = \sss(h)$.
The base $X$ is identified as a submanifold of $\G$ via the identity bisection $1 : X \hookrightarrow \G$.
We say that $\G \tto X$ is an \dfn{ssc Lie groupoid} if every fibre of $\sss$ is connected and simply-connected.
The Lie algebroid of $\G \tto X$ shall be denoted $\Lie(\G)$ and realized as the vector bundle $(\ker d\ttt)\big\vert_X \longrightarrow X$ with anchor map $d\sss : (\ker d\ttt)\big\vert_X \longrightarrow TX$.

Now consider a smooth manifold $M$, a smooth map $\mu:M\longrightarrow X$, and the fibred product \[
	\G \prescript{}{\ttt}{\times}_\mu M \coloneqq \{(g, p) \in \G \times M : \ttt(g) = \mu(p)\}.
	\] Recall that an \dfn{action} of $\G \tto X$ on $M$ with \dfn{moment map} $\mu$ is a smooth map
	\[
	\G \prescript{}{\ttt}{\times}_\mu M \too M, \quad (g, p) \mtoo g \cdot p,
	\] such that $\mu(g \cdot p) = \sss(g)$, $(g \cdot h) \cdot p = g \cdot (h \cdot p)$, and $1_{\mu(p)} \cdot p = p$ for all $p \in M$ and $g, h \in \G$ for which these expressions are defined.
The $\G$-\dfn{orbit} of a point $p \in M$ is the set
\[
\G \cdot p \coloneqq \{ g \cdot p : (g, p) \in \G \prescript{}{\ttt}{\times}_\mu M \},
\]
an immersed submanifold of $M$.
The \dfn{quotient} of $M$ by $\G$, denoted $M / \G$, is the space of orbits with the quotient topology.
We say that the action is \dfn{free} if $g \cdot p = p$ for $(g, p) \in \G \prescript{}{\ttt}{\times}_\mu M$ implies $g = 1_{\mu(p)}$, and \dfn{proper} if the map
\begin{equation}\label{ff300lau}
\G \prescript{}{\ttt}{\times}_\mu M \too M \times M,\quad (g, p) \mtoo (p, g\cdot p)
\end{equation}
is proper.
As for Lie group actions, if the action is free and proper, or more generally the image of \eqref{ff300lau} is a closed embedded submanifold, then the quotient $M / \G$ is a topological manifold with a unique smooth structure such that the quotient map $M \longrightarrow M / \G$ is a smooth submersion \cite[Theorem 1.6.20]{mackenzie}.

\subsubsection{Hamiltonian systems}
Let us recall the notion of a Hamiltonian action of a symplectic groupoid on a symplectic manifold, following Mikami--Weinstein \cite{mikami-weinstein}.

A \dfn{Poisson bivector field} on a smooth manifold $P$ is a bundle map $\sigma : T^*P \longrightarrow TP$ such that the bilinear map $\{f, g\} \coloneqq \sigma(df)g$ on smooth functions $f, g \in C^\infty(P)$ is a Lie bracket.
In this case, we call the pair $(P, \sigma)$ a \dfn{Poisson manifold}.
For a smooth function $f$ on $P$, we denote by $X_f$ the Hamiltonian vector field $X_f \coloneqq \sigma(df)$.
We denote by $P^-$ the Poisson manifold with the opposite Poisson structure $-\sigma$.
We say that a submanifold $Q \s P$ is \dfn{coisotropic} if $\sigma(TQ^\circ) \s TQ$, where $TQ^\circ$ is the annihilator of $TQ$ in $T^*P$.

A \dfn{symplectic groupoid} \cite{weinstein-1987, coste-dazord-weinstein} is a Lie groupoid $\G\tto X$ together with a symplectic form $\Omega$ on $\G$, such that the graph
\[
\Gamma_\G \coloneqq \{(g, h, gh) : (g, h) \in \G \prescript{}{\ttt}{\times}_\sss \G\}
\]
of multiplication is Lagrangian in $\G \times \G \times \G^-$.
In this case, there is a canonical vector bundle isomorphism $\Lie(\G) \cong T^*X$ given by
\begin{equation}\label{r6nwj6xb}
\ker d\ttt_x \too T^*_xX, \quad v \mtoo \Omega(v)\big\vert_{T_xX}
\end{equation}
for all $x \in X$, where $\Omega$ is viewed as a bundle map $T\G \longrightarrow T^*\G$.
The anchor map $\sigma : \Lie(\G) \cong T^*X \longrightarrow TX$ is then a Poisson bivector field on $X$ such that the source and target maps are Poisson and anti-Poisson, respectively.
%Conversely, every Poisson manifold $(X, \sigma)$ induces a Lie algebroid structure on $T^*X$ that can be integrated to a symplectic local groupoid. \pcnote{Can we avoid parenthetical sentences? This could perhaps be made into a footnote or remark.}

\begin{defn}[{Mikami--Weinstein \cite{mikami-weinstein}}]\label{Definition: Hamiltonian system}
An action of a symplectic groupoid $\G\tto X$ on a Poisson manifold $M$ with moment map $\mu : M \longrightarrow X$ is \dfn{Hamiltonian} if its graph 
\[
\Gamma \coloneqq \{(g, p, q) \in \G \times M \times M :
\ttt(g) = \mu(p) \text{ and } q = g \cdot p \}
\]
is coisotropic in $\G \times M \times M^-$.
If the Poisson structure on $M$ is induced by a symplectic form $\omega$, we call $((M, \omega), \G \tto X, \mu)$ a \dfn{Hamiltonian system}.
\end{defn}

\begin{rem}
The moment map $\mu : M \longrightarrow X$ of a Hamiltonian system is automatically Poisson \cite[Theorem 3.8]{mikami-weinstein}.
Conversely, any Poisson map $\mu : M \longrightarrow X$ from a symplectic manifold $M$ to a Poisson manifold $X$ uniquely determines an action of the symplectic local groupoid integrating $X$ \cite[Chapitre III, Th\'eor\`eme 1.1]{coste-dazord-weinstein}.
\end{rem}

\begin{rem}
Each Lie group $G$ with Lie algebra $\mathfrak{g}$ has an associated symplectic groupoid $T^*G\tto\mathfrak{g}^*$. 
It has the property that Hamiltonian systems $((M, \omega), T^*G \tto \mathfrak{g}^*, \mu)$ are equivalent to the more familiar Hamiltonian $G$-spaces. We refer the reader to \S\ref{fz9catpk} for further details.    
\end{rem}

\subsection{Pre-Poisson submanifolds and stabilizer subgroupoids}\label{sratijkn}
For a submanifold $S$ of a Poisson manifold $(X, \sigma)$, consider the subset of $T^*X$ given by
\begin{equation}\label{pb4s1a5o}
L_S \coloneqq \sigma^{-1}(TS) \cap TS^\circ,
\end{equation}
where $TS^\circ$ is the annihilator of $TS$ in $T^*X$.
Our reduction procedure is based on the following class of submanifolds.

\begin{defn}[{Cattaneo--Zambon \cite{cattaneo-zambon-2007} and \cite[Definition 2.2]{cattaneo-zambon-2009}}]
A submanifold $S$ of a Poisson manifold $(X, \sigma)$ is called \dfn{pre-Poisson} if \eqref{pb4s1a5o} has constant rank over $S$.
\end{defn}

\begin{rem}\label{157l9vhc}
In \cite{cattaneo-zambon-2007, cattaneo-zambon-2009}, pre-Poisson submanifolds are defined by the condition that $\sigma(TS^\circ) + TS$ has constant rank.
The two definitions are equivalent since $(\sigma(TS^\circ) + TS)^\circ = \sigma^{-1}(TS) \cap TS^\circ$.
\end{rem}

Note that there is no constraint on the dimension of $S$.
In fact, a generic submanifold is pre-Poisson in the following sense.

\begin{prop}
For a generic point $x \in X$, a generic subspace $V \s T_xX$, and an arbitrary submanifold $S \s X$ with $x \in S$ and $T_xS = V$, $S$ is pre-Poisson in a neighbourhood of $x$.
\end{prop}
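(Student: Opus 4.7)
The plan is to recast the pre-Poisson condition via Remark~\ref{157l9vhc}, which says that a submanifold is pre-Poisson if and only if $y \mapsto \dim(\sigma_y(T_yS^\circ) + T_yS)$ is locally constant on $S$. I would then exploit lower semicontinuity of ranks on a Grassmann bundle. Setting $k = \dim V$, let $\pi : \mathrm{Gr}_k(TX) \too X$ denote the Grassmann bundle of $k$-planes in $TX$, let $\tau \s \pi^*TX$ be the tautological subbundle, and let $\tau^\circ \s \pi^*T^*X$ be its annihilator. The smooth vector bundle morphism
\[
\phi : \tau \oplus \tau^\circ \too \pi^*TX, \qquad (v,\alpha) \mtoo v + \sigma(\alpha),
\]
satisfies $\im \phi_{(y, V)} = V + \sigma_y(V^\circ)$, so $F(y, V) \coloneqq \rk \phi_{(y, V)} = \dim(\sigma_y(V^\circ) + V)$ defines an integer-valued, lower semicontinuous function on $\mathrm{Gr}_k(TX)$.

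Next I would invoke the standard fact that every integer-valued, lower semicontinuous function $F$ bounded above on a topological space is locally constant on an open dense subset. Indeed, given any nonempty open $\Omega$, pick $p \in \Omega$ at which $F$ attains $\max_\Omega F$ (finite because $F \leq \dim X$); lower semicontinuity yields a neighbourhood of $p$ in $\Omega$ on which $F \geq F(p)$, while maximality forces $F \leq F(p)$ on the same neighbourhood, so $F \equiv F(p)$ there. Let $U \s \mathrm{Gr}_k(TX)$ denote the resulting open dense set on which $F$ is locally constant; this will play the role of the generic locus in the proposition.

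To conclude, suppose $(x, V) \in U$ and let $S$ be any submanifold through $x$ with $T_xS = V$. The Gauss section $\gamma_S : S \too \mathrm{Gr}_k(TX)$, $y \mto (y, T_yS)$, is smooth, so $\gamma_S^{-1}(U)$ is an open neighbourhood of $x$ in $S$ on which $F \circ \gamma_S$ is locally constant. Hence $\dim(\sigma_y(T_yS^\circ) + T_yS)$ is constant on a neighbourhood of $x$ in $S$, which by Remark~\ref{157l9vhc} is exactly the pre-Poisson condition near $x$. I do not anticipate a serious obstacle here; the only mildly delicate point is matching the informal quantification ``generic $x$ and generic $V$'' to the precise assertion that $U$ is open and dense in $\mathrm{Gr}_k(TX)$, which I take to be the intended reading, and optionally refining it along the fibres of $\pi$ if a literal fibred version is desired.
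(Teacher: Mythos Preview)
Your argument is correct and rests on the same core observation as the paper's: the integer-valued function $(y,V)\mapsto\dim(V+\sigma_y(V^\circ))$ is lower semicontinuous, hence locally constant on an open dense set. The packaging differs slightly. The paper first restricts to the open dense set of $x\in X$ where $\sigma$ has locally constant rank, thereby reducing to a single Poisson vector space $(W,\sigma)$, and then takes the open dense set of $k$-planes $V$ on which $\dim(V+\sigma(V^\circ))$ is maximal; this yields the literal fibred reading ``generic $x$, then generic $V\subseteq T_xX$''. You instead work directly on the total space $\mathrm{Gr}_k(TX)$ and use the Gauss section to transfer local constancy to $S$, which is cleaner and avoids the intermediate reduction but, as you note, gives genericity for the pair $(x,V)$ rather than the two-step version. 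Either reading is acceptable here, and your final caveat about refining along fibres is exactly the bridge between the two.
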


\begin{proof}
The term ``generic'' used in this statement refers to an element of some fixed open dense set.
The set of $x \in X$ such that $\sigma$ has constant rank in a neighbourhood of $x$ is open and dense, so this reduces to the case of a Poisson vector space $(W, \sigma)$.
The proposition then follows from the fact that the set of $k$-dimensional subspaces $V \s W$ such that $V + \sigma(V^\circ)$ has maximal dimension is open and dense in the Grassmannian of $k$-planes in $W$.
\end{proof}

We have the following important result on pre-Poisson submanifolds.

\begin{thm}[{Cattaneo--Zambon \cite[Proposition 3.6 and Proposition 7.2]{cattaneo-zambon-2009}}]\label{gzjt46cf}
Let $\G \tto X$ be a symplectic groupoid and $S \s X$ a pre-Poisson submanifold.
\begin{enumerate}
\item[\textup{(i)}]
The subset $L_S$ of $T^*X$ given by \eqref{pb4s1a5o} is a Lie subalgebroid of $\Lie(\G) = T^*X$.
\item[\textup{(ii)}] The ssc Lie groupoid $\G_S \tto S$ with Lie algebroid $L_S$ is isotropic in $\G$.
\end{enumerate}
\end{thm}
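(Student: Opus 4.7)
The plan is to establish Part (i) by a direct Cartan-calculus verification of bracket closure, and Part (ii) by first proving infinitesimal isotropy along the identity bisection and then propagating it via multiplicativity of $\Omega$ and source-simply-connectedness of $\G_S$.

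For Part (i), note that $L_S = (TS + \sigma(TS^\circ))^\circ$ (Remark \ref{157l9vhc}), so the constant-rank hypothesis on $TS + \sigma(TS^\circ)$ makes $L_S$ a smooth vector subbundle of $T^*X|_S$, and the anchor $\sigma$ of the cotangent Lie algebroid restricts to a map $L_S \too TS$ by definition. For bracket closure, let $\alpha, \beta$ be $1$-forms on a neighbourhood of $S$ whose restrictions to $S$ are sections of $L_S$; I use the Koszul formula $[\alpha, \beta] = \mathcal{L}_{\sigma(\alpha)}\beta - \iota_{\sigma(\beta)}d\alpha$. The anchor condition $\sigma([\alpha, \beta])|_S \s TS$ follows from the identity $\sigma([\alpha, \beta]) = [\sigma(\alpha), \sigma(\beta)]$ together with the elementary observation that the Lie bracket of two vector fields tangent to $S$ along $S$ remains tangent to $S$ along $S$. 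The annihilator condition $[\alpha, \beta]|_S \s TS^\circ$ is verified by pairing with an arbitrary $V \in T_xS$ extended to a vector field $\tilde V$ tangent to $S$ near $S$; expanding $\langle[\alpha, \beta], \tilde V\rangle$ via Cartan calculus produces five terms, each of which vanishes at $x \in S$ because it is either the derivative of a function vanishing on $S$ taken along a direction tangent to $S$, or the pairing of an element of $TS^\circ$ with a vector tangent to $S$. A quick check then shows the bracket restriction is independent of the extensions $\alpha, \beta$ (using the Leibniz rule and $\sigma(L_S) \s TS$), which completes the verification.

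For Part (ii), I first show that $\Omega$ restricts to zero on $T\G_S|_S$. At $x \in S$, the decomposition $T_x\G = \ker d\ttt_x \oplus T_xX$ gives $T_x\G_S = L_x \oplus T_xS$. Both $\ker d\ttt_x$ and $T_xX$ are Lagrangian in $T_x\G$ (as a target fibre and as the Lagrangian identity bisection, respectively), so $L_x$ and $T_xS$ are each isotropic; and under the identification \eqref{r6nwj6xb} the mixed pairing $\Omega(\xi, V)$ with $\xi \in L_x$ and $V \in T_xS$ becomes the canonical pairing of $\xi \in L_x \s T_x^*X$ with $V \in T_xS$, which vanishes because $L_x \s (T_xS)^\circ$. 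To propagate isotropy from $S$ to all of $\G_S$, observe that $i^*\Omega$ is a closed multiplicative $2$-form on $\G_S$ whose infinitesimally-multiplicative datum $\mu : L_S \too T^*S$, $\mu(\xi) = \Omega(\xi, \cdot)|_{TS}$, vanishes by the preceding computation. Source-simply-connectedness of $\G_S$ and the uniqueness half of the integration theorem for closed multiplicative $2$-forms on source-simply-connected Lie groupoids then force $i^*\Omega \equiv 0$.

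The main obstacle is the propagation step in Part (ii), which invokes a non-trivial integration theorem. A more elementary alternative is to decompose $T_g\G_S$ at an arbitrary $g \in \G_S$ into a part tangent to a source fibre of $\G$ (on which $\Omega$ vanishes automatically by Lagrangianity of source fibres) together with a complementary part obtained by right-translating from the identity bisection; multiplicativity of $\Omega$, namely $m^*\Omega = \mathrm{pr}_1^*\Omega + \mathrm{pr}_2^*\Omega$, together with the vanishing on $T\G_S|_S$, handles the latter. Part (i) then amounts to routine Cartan calculus once the subbundle structure of $L_S$ is secured by the pre-Poisson hypothesis.
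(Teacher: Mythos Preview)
The paper does not prove this theorem; it simply cites Cattaneo--Zambon \cite{cattaneo-zambon-2009}. So there is no ``paper's proof'' to compare against, and your proposal stands or falls on its own.

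Your argument for Part (i) is sound. The issue is in Part (ii): your claim that $\ker d\ttt_x$ (and later, source fibres) is Lagrangian in $T_x\G$ is \emph{false}. What is true is $(\ker d\sss)^\Omega = \ker d\ttt$, so $\ker d\ttt$ is Lagrangian only when it coincides with $\ker d\sss$. For the pair groupoid $\G = M \times M^-$ over a symplectic manifold $(M,\omega)$, one has $\ker d\ttt_{(p,p)} = T_pM \times 0$ and $\Omega\big\vert_{\ker d\ttt} = \omega$, which is nondegenerate. The same failure occurs for $T^*G$ at any $\xi$ outside the Poisson centre, by \eqref{lcnmyorx}. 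Consequently your justification that ``$L_x$ is isotropic because it sits inside a Lagrangian $\ker d\ttt_x$'' does not work, and neither does the ``elementary alternative'' in your final paragraph, which rests on the same false premise for source fibres.

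The conclusion that $(L_S)_x$ is isotropic is nevertheless correct, but it genuinely uses \emph{both} defining conditions of $L_S$. For $v,w \in (L_S)_x \subset \ker d\ttt_x$ with images $\alpha,\beta \in T_x^*X$ under \eqref{r6nwj6xb}, write $v = (v - d\sss_x(v)) + d\sss_x(v)$ with the first summand in $\ker d\sss_x$. Since $w \in \ker d\ttt_x = (\ker d\sss_x)^\Omega$, the pairing with the first summand vanishes, and by \eqref{r6nwj6xb} the remaining term is $\Omega(d\sss_x(v), w) = -\beta(d\sss_x(v)) = -\beta(\sigma(\alpha))$, which is zero because $\sigma(\alpha) \in T_xS$ and $\beta \in T_xS^\circ$. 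Fortunately, your main propagation argument via the integration theorem for closed multiplicative forms only needs the IM datum $\mu(\xi) = \Omega(\xi,\cdot)\big\vert_{T_xS}$ to vanish, and that is exactly your correct ``mixed pairing'' computation; so the integration-theorem route survives once you drop the erroneous Lagrangianity claims.
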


Part (ii) requires further explanation.
First, recall that a Lie subalgebroid of an integrable Lie algebroid is integrable \cite[Proposition 3.4]{moerdijk-mrcun-2002}.
It follows from Part (i) that $L_S$ integrates to an ssc Lie groupoid $\G_S \tto S$ together with an immersion $\G_S \longrightarrow \G$ \cite[Proposition 3.5]{moerdijk-mrcun-2002}.
The condition that $\G_S$ is isotropic in $\G$ then means that the pullback of the symplectic form on $\G$ to $\G_S$ vanishes.

In this paper, we follow the convention that a \dfn{Lie subgroupoid} of a Lie groupoid $\G$ is a Lie groupoid $\H$ together with a possibly non-injective immersion $\H \longrightarrow \G$ that is also a Lie groupoid homomorphism.
The groupoid $\G_S \tto S$ in Theorem \ref{gzjt46cf} is therefore an isotropic Lie subgroupoid of $\G \tto S$.
These considerations motivate the following definition.

\begin{defn}\label{Definition: Subgroupoid}
Let $\G \tto X$ be a symplectic groupoid and $S \s X$ a pre-Poisson submanifold.
The Lie algebroid $L_S$ defined in \eqref{pb4s1a5o} is called the \dfn{stabilizer subalgebroid} of $S$.
A \dfn{stabilizer subgroupoid of $S$ in $\G$} is any (possibly non-source-connected) isotropic Lie subgroupoid $\H \tto S$ of $\G \tto X$ with Lie algebroid $L_S$.
The unique ssc stabilizer subgroupoid of $S$ in $\G$ is denoted $\G_S \tto S$.
\end{defn}

\begin{rem}\
\label{2g4d9b8v}
\begin{enumerate}
\item[(i)]
This terminology is justified by the fact that stabilizer subgroupoids generalize stabilizer subgroups in Marsden--Weinstein--Meyer reduction, which we explain in  \S\ref{marsden-weinstein-example}.

\item[(ii)]
By \cite[Proposition 7.2]{cattaneo-zambon-2009}, any source-connected Lie subgroupoid $\H \tto S$ of $\G \tto X$ with Lie algebroid $L_S$ is isotropic, and hence a stabilizer subgroupoid.
More generally, if $\H$ is source-connected over an open dense subset of $S$, then it is isotropic.

\item[(iii)]
On the other hand, a non-source-connected Lie subgroupoid $\H \tto S$ with Lie algebroid $L_S$ might not be isotropic.
For example\footnote{We thank Marco Zambon for pointing this out.}, let $X$ be a smooth manifold with the zero Poisson structure $\sigma = 0$.
Then $\G = T^*X$ with its canonical symplectic form is a symplectic groupoid integrating $X$ \cite[Ch.\ II, Example 3.3]{coste-dazord-weinstein}.
Consider $S = X$, so that $L_S$ is trivial.
For any non-vanishing $1$-form $\alpha$ on $X$, 
\[
\H \coloneqq \{k\alpha_x : x \in X, k \in \mathbb{Z}\}
\]
is a Lie subgroupoid of $\G = T^*X$ with Lie algebroid $L_S = 0$.
However, the pullback of the canonical symplectic form on $T^*X$ by $\alpha : X \longrightarrow T^*X$ is $d\alpha$. It follows that $\H$ is isotropic if and only if $\alpha$ is closed.

\item[(iv)]
One explanation for the isotropy condition is that stabilizer subgroupoids are precisely the Lie subgroupoids of $\G$ for which the map of quotient stacks $[S/\H] \longrightarrow [X/\G]$ has a canonical Lagrangian structure.
This is explained in \S\ref{95hfl28k}.
\end{enumerate}
\end{rem}

Another important result of \cite{cattaneo-zambon-2009} is that pre-Poisson submanifolds can be embedded coisotropically in Poisson transversals, as we now recall.

We begin by recalling that a submanifold $Y$ of a Poisson manifold $(X, \sigma)$ is called a \dfn{Poisson transversal} (or sometimes a \dfn{cosymplectic submanifold}) if $Y$ intersects every symplectic leaf transversally and in a symplectic submanifold of the leaf \cite{frejlich-marcut}.
Equivalently, $Y$ is Poisson transversal if
\[
TX\big\vert_Y = TY \oplus \sigma(TY^\circ).
\]
In this case, $Y$ inherits a canonical Poisson structure from $X$ \cite[Lemma 3]{frejlich-marcut}.
To describe this Poisson structure as a bivector field $\sigma_Y:T^*Y \longrightarrow TY$, note that the restriction of linear functionals defines an isomorphism $\sigma^{-1}(TY) \longrightarrow T^*Y$.
One then defines $\sigma_Y$ by $\sigma_Y(\xi) = \sigma(\hat{\xi})$, where $\hat{\xi}$ is the unique element of $\sigma^{-1}(TY)$ mapping to $\xi \in T^*Y$.

Cattaneo--Zambon \cite{cattaneo-zambon-2009} prove that every pre-Poisson submanifold $S$ of a Poisson submanifold $(X, \sigma)$ can be embedded in a Poisson transversal $Y \s X$ such that $S$ is coisotropic in $Y$ \cite[Theorem 3.3]{cattaneo-zambon-2009}.

\subsection{Symplectic reduction along a submanifold}\label{Subsection: Symplectic reduction along a submanifold}
Let $\mu : M \longrightarrow X$ be a smooth map between smooth manifolds. We say that a submanifold $S \s X$ intersects $\mu$ \dfn{cleanly} if $\mu^{-1}(S)$ is a submanifold of $M$ satisfying $$T_p \mu^{-1}(S) = d\mu_p^{-1}(T_{\mu(p)}S)$$ for all $p \in \mu^{-1}(S)$.
Recall that this condition is satisfied if $S$ is \dfn{transverse} to $\mu$, i.e.\ if \ $$T_{\mu(p)}S + \im d\mu_p = T_{\mu(p)}X$$ for all $p \in \mu^{-1}(S)$.

We are now equipped to begin proving Theorem \ref{Theorem: Smooth category} from the introduction. The main idea is that for a Hamiltonian system $((M, \omega), \G \tto X, \mu)$, pre-Poisson submanifold $S \s X$, and stabilizer subgroupoid $\H \tto S$, the distribution defined by the kernel of the restriction of $\omega$ to $\mu^{-1}(S)$ coincides with the orbits of the $\H$-action.
This result can be stated more generally for Poisson manifolds:

\begin{thm}\label{distribution-proposition}
Let $\G \tto X$ be a symplectic groupoid acting on a Poisson manifold $(M, \tau)$ in a Hamiltonian way with moment map $\mu : M \longrightarrow X$.
Let $S \s X$ be a pre-Poisson submanifold intersecting $\mu$ cleanly, let $\H \tto S$ be a stabilizer subgroupoid of $S$ in $\G$, and let $N \coloneqq \mu^{-1}(S)$.
We then have
\[
T_p(\H \cdot p) = T_pN \cap \tau(T_pN^\circ)
\]
for all $p \in N$.
\end{thm}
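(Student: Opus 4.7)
My plan is to combine three ingredients: a linear-algebraic description of $T_pN$ and its annihilator $T_pN^\circ$ coming from clean intersection; the explicit formula for the infinitesimal action of $\Lie(\G)\cong T^*X$ in a Hamiltonian system; and the identification $T_p(\H\cdot p) = \rho_p((L_S)_{\mu(p)})$, which follows from $\Lie(\H) = L_S$ and the general theory of Lie groupoid orbits.

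For the first ingredient, clean intersection gives $T_pN = (d\mu_p)^{-1}(T_{\mu(p)}S)$, so the induced map $T_pM/T_pN \too T_{\mu(p)}X/T_{\mu(p)}S$ is injective. Dualizing yields a surjection from $T_{\mu(p)}S^\circ$ onto $T_pN^\circ$ which is nothing but the restriction of $(d\mu_p)^*$, giving $T_pN^\circ = (d\mu_p)^*(T_{\mu(p)}S^\circ)$. For the second ingredient, under the identification \eqref{r6nwj6xb} the infinitesimal action is
\[
\rho_p(\xi) = \tau_p((d\mu_p)^*\xi), \qquad \xi \in T^*_{\mu(p)}X,
\]
and since $\mu$ is Poisson (automatic for a Hamiltonian system), the relation $d\mu_p \circ \tau_p \circ (d\mu_p)^* = \sigma_{\mu(p)}$ holds. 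Both identities can be extracted by differentiating the coisotropy condition on the graph $\Gamma$ of Definition \ref{Definition: Hamiltonian system} at $(1_{\mu(p)}, p, p)$, or by appeal to \cite{mikami-weinstein}.

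Combining the pieces, a vector $v \in T_pM$ lies in $T_p(\H\cdot p)$ if and only if $v = \tau_p((d\mu_p)^*\xi)$ for some $\xi \in T_{\mu(p)}S^\circ$ with $\sigma(\xi)\in T_{\mu(p)}S$. Setting $\eta \coloneqq (d\mu_p)^*\xi$, the first ingredient shows that $\eta$ ranges over all of $T_pN^\circ$ as $\xi$ ranges over $T_{\mu(p)}S^\circ$, while the second gives $d\mu_p(v) = \sigma(\xi)$, so the constraint $\sigma(\xi) \in T_{\mu(p)}S$ is equivalent to $v \in T_pN$. This yields $T_p(\H\cdot p) = T_pN \cap \tau_p(T_pN^\circ)$, as required. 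The only delicate point is pinning down the infinitesimal action formula in the Poisson target setting; once this is granted, the rest is straightforward linear algebra.
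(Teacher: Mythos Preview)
Your proof is correct. It rests on the same core facts as the paper's argument---the infinitesimal action formula $\rho_p(\xi) = \tau_p((d\mu_p)^*\xi)$ and the Poisson property $d\mu_p \circ \tau_p \circ (d\mu_p)^* = \sigma_{\mu(p)}$---but packages them more directly in linear-algebraic terms. The paper instead works with explicit functions: it proves three preparatory lemmas (Lemmas \ref{1hpyffiz}, \ref{gn98u3jv}, \ref{kvugo3gu}) characterizing $T_x\H$ and $T_x\H_x$ via Hamiltonian vector fields $X_{\sss^*f}$ of functions $f \in C^\infty(X)$ vanishing on $S$, and then translates back and forth between such functions and covectors. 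Your approach bypasses this by invoking the infinitesimal action formula once and doing the rest with annihilators and duals; the paper's Lemma \ref{gn98u3jv} is precisely the function-level incarnation of your formula $\rho_p = \tau_p \circ (d\mu_p)^*$. The function-based route has the mild advantage that it derives the action formula from the coisotropy of the graph inside the paper (rather than citing it), and its lemmas are reused later in the complex-analytic setting; your route is shorter and makes the linear-algebraic structure more transparent.
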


To prove this theorem, we begin with a few lemmas.
Let $((M, \tau), \G \tto X, \mu)$, $\H \tto S$, and $N$ be as in Theorem \ref{distribution-proposition}.

\begin{lem}\label{1hpyffiz}
We have
\[
T_x\H = d\sss_x^{-1}(T_xS) \cap d\ttt_x^{-1}(T_xS) \cap (T_xS)^\Omega
\]
for all $x \in S$, where $\Omega$ is the symplectic form on $\G$.
\end{lem}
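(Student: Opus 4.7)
The plan is to compute $T_x\mathcal{H}$ explicitly using the splitting induced by the identity bisection, and then match it against the three conditions on the right-hand side one at a time.

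First I would set up notation. At $x \in S \subseteq X \hookrightarrow \mathcal{G}$, the identity bisection and the source/target maps induce a canonical decomposition
\[
T_x\mathcal{G} = T_xX \oplus \ker d\ttt_x,
\]
and the isomorphism $\ker d\ttt_x \cong T_x^*X$ from \eqref{r6nwj6xb} identifies the anchor $d\sss_x\big\vert_{\ker d\ttt_x}$ with the Poisson bivector $\sigma_x : T_x^*X \too T_xX$. Since $\mathcal{H}\tto S$ is a Lie subgroupoid over $S$ with Lie algebroid $L_S$, the same decomposition restricts to
\[
T_x\mathcal{H} = T_xS \oplus (L_S)_x,
\]
where $(L_S)_x \subseteq \ker d\ttt_x$ corresponds to $\sigma^{-1}(T_xS) \cap (T_xS)^\circ \subseteq T_x^*X$ under \eqref{r6nwj6xb}.

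Next I would invoke the standard fact that the identity bisection $X \hookrightarrow \mathcal{G}$ is Lagrangian in $(\mathcal{G},\Omega)$. Concretely, this gives $\Omega(w, w') = 0$ for all $w, w' \in T_xX$. Combined with the defining property of \eqref{r6nwj6xb}, it follows that for $v = w + u$ with $w \in T_xX$ and $u \in \ker d\ttt_x$ corresponding to $\xi \in T_x^*X$, and for any $w' \in T_xX$,
\[
\Omega(v, w') = \Omega(u, w') = \xi(w').
\]
Similarly $d\ttt_x(v) = w$ and $d\sss_x(v) = w + \sigma(\xi)$.

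The two inclusions are now immediate computations. For $v = w + u \in T_x\mathcal{H}$ as above with $w \in T_xS$ and $\xi \in \sigma^{-1}(T_xS) \cap (T_xS)^\circ$: the identity $d\ttt_x(v) = w \in T_xS$ handles the $d\ttt_x$ condition, $d\sss_x(v) = w + \sigma(\xi) \in T_xS$ handles the $d\sss_x$ condition, and $\Omega(v, w') = \xi(w') = 0$ for $w' \in T_xS$ shows $v \in (T_xS)^\Omega$. Conversely, if $v = w + u$ lies in the right-hand side, the $d\ttt_x$ condition forces $w \in T_xS$; then the $d\sss_x$ condition forces $\sigma(\xi) \in T_xS$, i.e.\ $\xi \in \sigma^{-1}(T_xS)$; and the $\Omega$-orthogonality condition specializes via $\Omega(v, w') = \xi(w')$ for $w' \in T_xS$ to $\xi \in (T_xS)^\circ$. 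Hence $u$ corresponds to an element of $(L_S)_x$, and $v \in T_x\mathcal{H}$.

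The only non-mechanical ingredient is keeping the identification \eqref{r6nwj6xb} between $\ker d\ttt_x$ and $T_x^*X$ aligned with the anchor $\sigma$ and with the restriction of $\Omega$ to $T_xX$; once this dictionary is in place the three conditions on the right-hand side translate literally into the three conditions cutting out $(L_S)_x$ inside $\ker d\ttt_x$ plus the condition $w \in T_xS$, so the identity drops out.
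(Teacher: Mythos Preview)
Your proof is correct and follows essentially the same approach as the paper: both arguments use the decomposition $T_x\mathcal{G} = T_xX \oplus \ker d\ttt_x$, the identification \eqref{r6nwj6xb}, and the fact that $X$ is Lagrangian in $\mathcal{G}$ to translate the three conditions on the right-hand side into the defining conditions for $T_xS \oplus (L_S)_x$. The paper's version compresses the two inclusions into a short chain of equalities of subspaces, while you check them elementwise, but the content is identical.
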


\begin{proof}
Let $\varphi : (\ker d\ttt)\big\vert_X \longrightarrow T^*X$ be the isomorphism given by \eqref{r6nwj6xb}, so that $$\sigma \circ \varphi = d\sss : (\ker d\ttt)\big\vert_X \longrightarrow TX.$$
Let $x \in S$.
The definition of $\H$ then implies that
\[
\ker d\ttt_x \cap T_x\H = \varphi^{-1}(\sigma^{-1}(T_xS) \cap T_xS^\circ) = \ker d\ttt_x \cap d\sss^{-1}_x(T_xS) \cap T_xS^\Omega.
\]
It follows that
\[
T_x\H = T_xS \oplus (\ker d\ttt_x \cap T_x\H) = T_xS \oplus (\ker d\ttt_x \cap d\sss^{-1}_x(T_xS) \cap T_xS^\Omega).
\]
But $T_xS \subseteq T_xS^\Omega$ since $X$ is Lagrangian in $\G$, so
\[
T_x\H = (T_xS \oplus (\ker d\ttt_x \cap d\sss^{-1}_x(T_xS))) \cap T_xS^\Omega = d\sss_x^{-1}(T_xS) \cap d\ttt_x^{-1}(T_xS) \cap T_xS^\Omega.\qedhere
\]
\end{proof}

Denote the action map by $\psi : \G \prescript{}{\ttt}{\times}_\mu M \longrightarrow M$ and the orbit map at a point $p \in M$ over $x \coloneqq \mu(p) \in X$ by
\[
\psi_p : \G_x \too M, \quad g \mtoo g \cdot p,
\]
where $\G_x \coloneqq \ttt^{-1}(x)$.
If $p \in N$ then $T_p(\H \cdot p)$ is the image of $T_x\H_x$ under $d\psi_p$.

\begin{lem}\label{gn98u3jv}
For all $f \in C^\infty(X)$ and $(g, p) \in \G \times_X M$, we have
\[
d\psi_p(X_{\sss^*f}(g)) = X_{\mu^*f}(g \cdot p).
\]
\end{lem}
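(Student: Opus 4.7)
The plan is to exhibit a judiciously chosen function on the Poisson manifold $\G \times M \times M^-$ that vanishes on the graph $\Gamma$ of the action, and to read off the desired identity from the tangency of its Hamiltonian vector field to $\Gamma$.

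More precisely, I fix $f \in C^\infty(X)$ and set $H \in C^\infty(\G \times M \times M^-)$ by $H(g, p, q) \coloneqq f(\sss(g)) - f(\mu(q))$. Since $\mu(g\cdot p) = \sss(g)$ for every $(g, p) \in \G \prescript{}{\ttt}{\times}_\mu M$, the function $H$ vanishes on $\Gamma$. The coisotropy of $\Gamma$ in $\G \times M \times M^-$ (Definition \ref{Definition: Hamiltonian system}), combined with $dH\big\vert_\Gamma \in T\Gamma^\circ$, then guarantees that $X_H$ is tangent to $\Gamma$ at every point of $\Gamma$; this tangency will be the core of the argument.

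Next, I would compute $X_H$ componentwise, exploiting the direct-sum structure of the product Poisson bivector on $\G \times M \times M^-$. The three contributions are $X_{\sss^*f}$ in the $\G$-factor, $0$ in the $M$-factor (since $H$ does not depend on $p$), and $+X_{\mu^*f}$ in the $M^-$-factor; the latter sign arises from the two cancelling negatives contributed by the coefficient of $\mu^*f$ in $H$ and by the Poisson bivector on $M^-$ being $-\tau$. Hence $X_H(g, p, q) = (X_{\sss^*f}(g),\, 0,\, X_{\mu^*f}(q))$.

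Finally, I would interpret this tangency at a point of $\Gamma$ of the form $(g, p, g\cdot p)$. The standard symplectic-groupoid identity $\{\sss^*f, \ttt^*h\}_\G = 0$ for all $h \in C^\infty(X)$, a consequence of $\sss$ being Poisson and $\ttt$ being anti-Poisson, shows that $X_{\sss^*f}$ is tangent to the $\ttt$-fibres of $\G$, so $(X_{\sss^*f}(g), 0)$ lies in $T_{(g,p)}(\G \prescript{}{\ttt}{\times}_\mu M)$, the domain of $d\psi$. Since $T_{(g,p,g\cdot p)}\Gamma$ is the graph of $d\psi$ over $T_{(g,p)}(\G \prescript{}{\ttt}{\times}_\mu M)$, the membership $X_H(g, p, g\cdot p) \in T_{(g,p,g\cdot p)}\Gamma$ forces $d\psi(X_{\sss^*f}(g), 0) = X_{\mu^*f}(g\cdot p)$, and identifying $d\psi(\cdot, 0) = d\psi_p$ yields the claimed equation. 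The only substantive point is bookkeeping the sign conventions when passing between $M$ and $M^-$; no deeper obstacle arises.
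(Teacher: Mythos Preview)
Your proof is correct and follows essentially the same approach as the paper: define the function $H(g,p,q)=f(\sss(g))-f(\mu(q))$ on $\G\times M\times M^-$, use coisotropy of $\Gamma$ to deduce that $X_H=(X_{\sss^*f},0,X_{\mu^*f})$ is tangent to $\Gamma$, and read off the identity from the graph description of $T\Gamma$. Your separate verification that $X_{\sss^*f}$ is tangent to the $\ttt$-fibres is harmless but unnecessary, since tangency of $X_H$ to $\Gamma$ already forces $(X_{\sss^*f}(g),0)$ to lie in $T_{(g,p)}(\G\prescript{}{\ttt}{\times}_\mu M)$.
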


\begin{proof}
Consider the map
\[
F : \G \times M \times M^- \too \R, \quad F(g, p, q) = f(\sss(g)) - f(\mu(q)).
\]
The graph $\Gamma$ of the action is coisotropic and  $F\big\vert_\Gamma = 0$, so $X_F$ is tangent to $\Gamma$.
But $$dF = (d(\sss^*f), 0, -d(\mu^*f)),$$ so $X_F = (X_{\sss^*f}, 0, X_{\mu^*f})$.
Hence, $$(X_{\sss^*f}(g), 0, X_{\mu^*f}(g \cdot p)) = (u, v, d\psi(u, v))$$ for some $(u, v) \in T_{(g, p)}(\G \times_X M)$, which implies that $v = 0$ and $u = X_{\sss^*f}(g)$.
We conclude that $$X_{\mu^*f}(g \cdot p) = d\psi(u, 0) = d\psi_p(X_{\sss^*f}(g)).$$
\end{proof}

\begin{lem}\label{kvugo3gu}
Let $x \in S$ and $v \in T_x\G$.
Then $v \in T_x\H_x$ if and only if $v = X_{\sss^*f}(x)$ for some $f \in C^\infty(X)$ such that $f\big\vert_S = 0$ and $X_f(x) \in T_xS$.
\end{lem}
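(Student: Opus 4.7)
The plan is to identify $T_x\H_x$ explicitly via the isomorphism $\varphi$ from \eqref{r6nwj6xb} and to translate the vector $X_{\sss^*f}(x)$ into the same language; the equivalence then reduces to an elementary linear algebra statement in $T^*_xX$.

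First I would combine Lemma \ref{1hpyffiz} with the fact that $X$ is Lagrangian in $\G$ to rewrite
\[
T_x\H_x \;=\; T_x\H \cap \ker d\ttt_x \;=\; \ker d\ttt_x \cap d\sss_x^{-1}(T_xS) \cap (T_xS)^\Omega.
\]
Since $\varphi(v) = \Omega(v)|_{T_xX}$ and $\sigma \circ \varphi = d\sss$, the condition $v \in (T_xS)^\Omega$ for $v \in \ker d\ttt_x$ translates to $\varphi(v) \in T_xS^\circ$, and the condition $d\sss_x(v) \in T_xS$ translates to $\varphi(v) \in \sigma^{-1}(T_xS)$. Hence $\varphi$ restricts to an isomorphism $T_x\H_x \cong L_S|_x = T_xS^\circ \cap \sigma^{-1}(T_xS)$.

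Next I would describe $X_{\sss^*f}(x)$ for $f \in C^\infty(X)$ and $x \in X$. Because $\sss^*f$ is $\sss$-basic and $(\ker d\sss)^\Omega = \ker d\ttt$ in a symplectic groupoid, one has $X_{\sss^*f}(x) \in \ker d\ttt_x$. Pairing with $w \in T_xX \subseteq T_x\G$ and using $d\sss_x|_{T_xX} = \mathrm{Id}$ gives
\[
\varphi(X_{\sss^*f}(x))(w) \;=\; \Omega(X_{\sss^*f}(x), w) \;=\; d(\sss^*f)_x(w) \;=\; df_x(w),
\]
so $\varphi(X_{\sss^*f}(x)) = df_x$. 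Applying $\sigma$ then yields $d\sss_x(X_{\sss^*f}(x)) = X_f(x)$.

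Given this dictionary, the claim becomes the assertion that $\alpha \in T_xS^\circ \cap \sigma^{-1}(T_xS)$ if and only if $\alpha = df_x$ for some $f \in C^\infty(X)$ with $f|_S = 0$ and $X_f(x) \in T_xS$. The reverse implication is immediate: $f|_S = 0$ forces $df_x \in T_xS^\circ$, and $X_f(x) = \sigma(df_x) \in T_xS$ by hypothesis. For the forward implication, given $\alpha \in L_S|_x$, I would use coordinates $(y_1, \ldots, y_k, z_1, \ldots, z_{n-k})$ around $x$ in which $S = \{y_1 = \cdots = y_k = 0\}$; since $\alpha \in T_xS^\circ$, it can be written as $\alpha = \sum_i c_i\, dy_i|_x$, and the function $f \coloneqq \sum_i c_i y_i$ (extended to $X$ by a bump function supported in the chart) satisfies $f|_S = 0$ and $df_x = \alpha$, whence $X_f(x) = \sigma(\alpha) \in T_xS$ by hypothesis. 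The resulting $v = X_{\sss^*f}(x)$ then has $\varphi(v) = \alpha$, as required. The only delicate point is the compatibility $\varphi(X_{\sss^*f}(x)) = df_x$, which is a direct unwinding of the definition of $\varphi$ once one knows $X_{\sss^*f}(x) \in \ker d\ttt_x$; everything else is a routine translation.
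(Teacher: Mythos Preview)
Your proof is correct and follows essentially the same approach as the paper: both identify $T_x\H_x = \ker d\ttt_x \cap d\sss_x^{-1}(T_xS) \cap (T_xS)^\Omega$ from Lemma \ref{1hpyffiz}, establish that $X_{\sss^*f}(x) \in \ker d\ttt_x$ via $(\ker d\sss)^\Omega = \ker d\ttt$, and match the remaining conditions through the identification $\Omega(v)|_{T_xX} = df_x$. The only cosmetic difference is that you route everything through the isomorphism $\varphi$ of \eqref{r6nwj6xb} and phrase the equivalence entirely in $T_x^*X$, whereas the paper verifies the three conditions directly in $T_x\G$; your construction of $f$ via local coordinates is exactly what the paper leaves implicit when it writes ``$\Omega(v)|_{T_xX} = df_x$ for some $f \in C^\infty(X)$ such that $f|_S = 0$.''
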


\begin{proof}
By Lemma \ref{1hpyffiz}, we have 
\begin{equation}\label{29xt0xz1}
T_x\H_x = \ker d\ttt_x \cap d\sss_x^{-1}(T_xS) \cap (T_xS)^\Omega.
\end{equation}
Let $v = X_{\sss^*f}(x)$ be as stated.
Then $d\ttt(v) = 0$ \cite[Theorem 1.6(i)]{mikami-weinstein}, $d\sss(v) = X_f(x) \in T_xS$ since $\sss$ is a Poisson map, and $$\Omega(v, T_xS) = df(d\sss(T_xS)) = df(T_xS) = 0.$$
Hence, $v \in T_x\H_x$ by \eqref{29xt0xz1}.

Conversely, let $v \in T_x\H_x$.
By \eqref{29xt0xz1}, we have $d\ttt(v) = 0$, $d\sss(v) \in T_xS$ and $\Omega(v)\big\vert_{T_xX} \in T_xS^\circ$.
The latter implies that $\Omega(v)\big\vert_{T_xX} = df_x$ for some $f \in C^\infty(X)$ such that $f\big\vert_S = 0$.
Then $\Omega(v) = df \circ d\sss_x$ since $T_x\G = T_xX \oplus \ker d\sss_x$ and $(\ker d\sss_x)^\Omega = \ker d\ttt$.
It follows that $v = X_{\sss^*f}(x)$ and $X_f(x) = d\sss(v) \in T_xS.$
\end{proof}

We now prove Theorem \ref{distribution-proposition}. 

\begin{proof}[Proof of Theorem \ref{distribution-proposition}]
To show that $$T_p(\H \cdot p) \s T_pN \cap \tau(T_pN^\circ),$$ let $x \coloneqq \mu(p)$ and $v \in T_x\H_x$.
Note that $$u \coloneqq d\psi_p(v) \in T_p(\H \cdot p).$$
Then $u \in T_pN$ since $\H$ acts on $N$.
By Lemma \ref{kvugo3gu}, we have $v = X_{\sss^*f}(x)$ for some $f \in C^\infty(X)$ with $f\big\vert_S = 0$ and $X_f(x) \in T_xS$, and by Lemma \ref{gn98u3jv}, $$u = d\psi_p(v) = X_{\mu^*f}(p).$$
But $d\mu(X_{\mu^*f}(p)) = X_f(x) \in T_xS$ since $\mu$ is a Poisson map; this follows from \cite[Theorem 3.8]{mikami-weinstein}, as explained in \cite[\S4.2]{bursztyn-crainic}. Hence $u \in T_pN$.
We also have $$d(\mu^*f)(T_pN) = df(d\mu(T_pN)) \s df(T_xS) = 0,$$ so $u = X_{\mu^*f}(p) \in \tau(T_pN^\circ).$

Conversely, let $v \in T_pN \cap \tau(T_pN^\circ)$.
Since $$T_pN^\circ = (d\mu^{-1}(T_xS))^\circ = d\mu^*(T_xS^\circ),$$ we have $v = X_{\mu^*f}(p)$ for some $f \in C^\infty(X)$ with $f\big\vert_S = 0$.
We also have $$X_f(x) = d\mu(v) \in T_xS.$$
We thus have $X_{\sss^*f}(x) \in T_x\H_x$ by Lemma \ref{kvugo3gu} and $v = d\psi_p(X_{\sss^*f}(x))$ by Lemma \ref{gn98u3jv}, so $v \in T_p(\H \cdot p)$.
\end{proof}

The main theorem of this section is the following generalization of Mikami--Weinstein reduction.

\begin{thm}[Theorem \ref{Theorem: Smooth category}(i)]\label{main-theorem-smooth}
Let $((M, \omega), \G \tto X, \mu)$ be a Hamiltonian system, $S \subseteq X$ a pre-Poisson submanifold, $\H \tto S$ a stabilizer subgroupoid of $S$ in $\G$, and $N \coloneqq \mu^{-1}(S)$.
Suppose that $S$ intersects $\mu$ cleanly and that $N / \H$ has the structure of a smooth manifold for which the quotient map $\pi : N \longrightarrow N / \H$ is a smooth submersion. 
Then there is a unique symplectic form $\bar{\omega}$ on $N / \H$ satisfying $\pi^*\bar{\omega} = i^*\omega$, where $i : N \longrightarrow M$ is the inclusion map.
\end{thm}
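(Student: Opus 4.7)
The plan is to combine Theorem \ref{distribution-proposition} with the standard descent lemma for differential forms along a surjective submersion; almost all of the substance lies in the former. I would first specialize Theorem \ref{distribution-proposition} to the symplectic case, taking the Poisson bivector to be $\tau = \omega^{-1}$. Then $\tau(T_pN^\circ) = T_pN^\omega$, the symplectic orthogonal in $T_pM$, so the theorem yields
\[
T_p(\H \cdot p) \;=\; T_pN \cap T_pN^\omega \;=\; \ker (i^*\omega)_p
\]
for every $p \in N$. Since $\pi : N \longrightarrow N/\H$ is a surjective submersion whose fibres are the $\H$-orbits, this identifies $\ker(i^*\omega)$ with $\ker d\pi$ pointwise on $N$.

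Next I would invoke the standard descent criterion: a form $\alpha$ on $N$ equals $\pi^*\bar{\alpha}$ for some form $\bar{\alpha}$ on $N/\H$ if and only if $\iota_v \alpha = 0$ and $\iota_v\, d\alpha = 0$ for every $v$ tangent to the fibres of $\pi$. Both conditions hold for $\alpha = i^*\omega$: we have $d\alpha = i^*d\omega = 0$, and $\iota_v\alpha = 0$ for $v \in \ker d\pi = \ker i^*\omega$ by the identification above. This produces the desired $\bar\omega$ with $\pi^*\bar\omega = i^*\omega$; uniqueness also follows from the injectivity of $\pi^*$ on forms. Closedness of $\bar\omega$ is then immediate from $\pi^*(d\bar\omega) = d(i^*\omega) = 0$, and non-degeneracy follows from the kernel identification once more: if $d\pi_p(v)$ lies in the kernel of $\bar\omega_{\pi(p)}$ then $v \in \ker(i^*\omega)_p = \ker d\pi_p$, so $d\pi_p(v) = 0$.

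The main obstacle, matching $\ker(i^*\omega)$ with the tangent spaces to $\H$-orbits, i.e.\ verifying that the infinitesimal $\H$-action sweeps out exactly the null directions of $i^*\omega$, has already been resolved by Theorem \ref{distribution-proposition} via Lemmas \ref{1hpyffiz}--\ref{kvugo3gu}; once that is in hand, the present theorem reduces to the formal descent argument above. A subsidiary point worth noting is that the constant-rank hypothesis usually needed to apply the descent lemma is automatic here, since $\ker(i^*\omega) = \ker d\pi$ and $\pi$ is a submersion by assumption.
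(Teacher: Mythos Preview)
Your proof is correct and follows essentially the same approach as the paper: both invoke Theorem \ref{distribution-proposition} to identify $\ker(i^*\omega)$ with the tangent spaces to the $\H$-orbits, and then appeal to the standard symplectic reduction/descent argument. The paper simply cites the latter step as ``standard symplectic reduction \cite{marsden-weinstein, mikami-weinstein},'' whereas you spell out the descent criterion and the verification of closedness and non-degeneracy explicitly.
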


\begin{proof}
We have $\ker d\pi_p = T_p(\H \cdot p)$ for all $p \in N$ by assumption.
Hence, as in standard symplectic reduction \cite{marsden-weinstein, mikami-weinstein}, it suffices to show that the distribution $\ker(i^*\omega) = TN \cap TN^\omega$ on $N$ coincides with the one induced by the $\H$-orbits.
This is precisely the content of Theorem \ref{distribution-proposition}.
\end{proof}

\begin{defn}\label{Definition: Reduction along a submanifold}
Let $((M, \omega), \G \tto X, \mu)$ be a Hamiltonian system.
A \dfn{reduction datum} is a pair $(S, \H)$, where $S$ is a pre-Poisson submanifold of $X$ and $\H\tto S$ is a stabilizer subgroupoid of $S$ in $\G$.
The quotient topological space $\mu^{-1}(S) / \H$ is called the \dfn{symplectic reduction of $M$ by $\G$ along $S$ with respect to $\H$} and denoted
\[
M \sll{S, \H} \G \coloneqq \mu^{-1}(S) / \H.
\]
We say that the reduction datum $(S, \H)$ is \dfn{clean} if the assumptions of Theorem \ref{main-theorem-smooth} are satisfied, in which case $M \sll{S, \H} \G$ is a symplectic manifold.
If $\H$ is source-connected, we use the simplified notation
\[
M \sll{S} \G \coloneqq M \sll{S, \H} \G
\]
and terminology \dfn{symplectic reduction of $M$ by $\G$ along $S$}.
\end{defn}

\begin{rem}
The quotient $M \sll{S, \H} \G$ depends on $\H$ only up to its image in $\G$.
In particular, $M \sll{S, \H} \G = M \sll{S, \G_S} \G$ for any source-connected stabilizer subgroupoid $\H$ of $S$ in $\G$.
This explains and justifies the simplified notation $M \sll{S} \G$.
\end{rem}

\begin{rem}\label{zlyzkwmo}
More generally, let $S \s X$ be a stratified space whose strata are pre-Poisson submanifolds $S_i$ of $X$.
For a collection of stabilizer subgroupoids $\H = (\H_i \tto S_i)_i$, we may define
\[
M \sll{S, \H} \G \coloneqq \bigcup_i \mu^{-1}(S_i) / \H_i,
\]
and regard it as a topological quotient of $\mu^{-1}(S)$.
This notion will be useful when discussing symplectic implosion (\S\ref{nxuzyzm0}) and symplectic cutting (\S\ref{jyt6wx64}).
\end{rem}

\begin{rem}\label{general-reduction-remark}
Suppose that we had assumed $\mathcal{H}$ to be source-connected in Theorem \ref{main-theorem-smooth}. The theorem could then be viewed as a special case reduction along a pre-symplectic submanifold, i.e.\ a submanifold $i: N \hookrightarrow M$ such that $i^*\omega$ has constant rank.
The distribution $\ker(i^*\omega) \s TN$ would then necessarily be integrable, so that the leaf space would symplectic when smooth \cite[Theorem 25.2]{guillemin-sternberg}.
The essence of Theorem \ref{main-theorem-smooth} is as follows: if $\mu : M \longrightarrow X$ is a Poisson map, then for any pre-Poisson submanifold $S \subseteq X$, the leaves in $N \coloneqq \mu^{-1}(S)$ are explicitly realized as the orbits of a groupoid action. 
\end{rem}

\subsection{A sufficient condition for smoothness}
\label{jp33sbh7}

In the context of Marsden--Weinstein--Meyer reduction, one knows that zero is a regular value of the moment map if the Hamiltonian action in question is free.
The goal of this section is to generalize this classical fact to our setting.

Let $((M, \omega), \G \tto X, \mu)$ be a Hamiltonian system, let $(S, \H)$ be a reduction datum, and let $N \coloneqq \mu^{-1}(S)$.

\begin{prop}\label{pxamcx75}
Let $p \in N$, let $x \coloneqq \mu(p)$, and let $\varphi_p : \H_x \longrightarrow N$ be the orbit map $\varphi_p(h) = h \cdot x$.
We have
\[
T_xS + \im d\mu_p = (\ker(d\varphi_p)_x)^\circ,
\]
where $\ker(d\varphi_p)_x$ is viewed as a subspace of $T_x^*X$ via the isomorphism \eqref{r6nwj6xb}.
\end{prop}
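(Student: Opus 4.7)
My plan is to translate the statement into a linear-algebra identity in $T_x^*X$ and $T_xX$, by combining the isomorphism \eqref{r6nwj6xb} with Lemmas \ref{kvugo3gu} and \ref{gn98u3jv}, and then to conclude with two standard annihilator identities plus the Poisson property of $\mu$.

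First I will identify the tangent space $T_{1_x}\mathcal{H}_x = \ker d\ttt_x \cap T_x\mathcal{H}$ with the fibre $L_S|_x = \sigma^{-1}(T_xS) \cap T_xS^\circ \subseteq T_x^*X$ via \eqref{r6nwj6xb}. By Lemma \ref{kvugo3gu}, every element of $T_{1_x}\mathcal{H}_x$ has the form $X_{\sss^*f}(1_x)$ for some $f \in C^\infty(X)$ with $f\big\vert_S = 0$ and $X_f(x) \in T_xS$; under \eqref{r6nwj6xb} it corresponds (up to sign) to $df_x \in L_S|_x$. Lemma \ref{gn98u3jv} then gives $(d\varphi_p)_x(X_{\sss^*f}(1_x)) = X_{\mu^*f}(p)$. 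Because $(M,\omega)$ is symplectic, this vanishes exactly when $d(\mu^*f)_p = d\mu_p^*(df_x) = 0$, i.e., when $df_x \in (\im d\mu_p)^\circ$. Consequently,
\[
\ker(d\varphi_p)_x = L_S|_x \cap (\im d\mu_p)^\circ
\]
as subspaces of $T_x^*X$.

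Taking annihilators in $T_xX$ and using the formula $(A \cap B)^\circ = A^\circ + B^\circ$, the desired identity reduces to showing $L_S|_x^\circ + \im d\mu_p = T_xS + \im d\mu_p$. Applying the same formula to $L_S|_x = \sigma^{-1}(T_xS) \cap T_xS^\circ$, together with the general linear-algebra fact $(\sigma^{-1}(W))^\circ = \sigma(W^\circ)$ (a consequence of $\sigma$ being skew), yields $L_S|_x^\circ = \sigma(T_xS^\circ) + T_xS$. Finally, because $\mu$ is a Poisson map, one has $\sigma_x = d\mu_p \circ \tau_p \circ d\mu_p^*$ as maps $T_x^*X \longrightarrow T_xX$, where $\tau$ denotes the Poisson bivector of $M$. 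In particular $\sigma(T_xS^\circ) \subseteq \im d\mu_p$, so this summand is absorbed into $\im d\mu_p$ and the identity follows.

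I do not expect any serious obstacle. The one piece of actual content is recognizing $(d\varphi_p)_x$ as the restriction to $L_S|_x$ of the infinitesimal action $T_x^*X \longrightarrow T_pM$ coming from the groupoid action, after which everything reduces to routine linear algebra in the dual pair $T_x^*X$, $T_xX$. The only place where a little care is needed is in the sign bookkeeping for the identity $(\sigma^{-1}(W))^\circ = \sigma(W^\circ)$ and for the correspondence $X_{\sss^*f}(1_x) \leftrightarrow \pm df_x$ under \eqref{r6nwj6xb}.
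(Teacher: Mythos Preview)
Your proof is correct and takes a genuinely different, more algebraic route than the paper's.

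The paper reformulates the identity inside $T_x\G$ as $T_xS + \im d\mu_p = T_xX \cap (\ker(d\varphi_p)_x)^\Omega$ and then verifies both inclusions by explicit element chasing: it shows $T_xS$ and $\im d\mu_p$ lie in the $\Omega$-orthogonal using Lemmas \ref{1hpyffiz}, \ref{kvugo3gu}, \ref{gn98u3jv}, and for the reverse inclusion picks $\xi \in T_xS^\circ \cap (\im d\mu_p)^\circ$, writes $\xi|_{T_xX} = \Omega(v)|_{T_xX}$ with $v \in \ker d\ttt_x$, and checks by hand that $v \in T_x\H_x$ with $d\varphi_p(v)=0$. Your argument instead first pins down $\ker(d\varphi_p)_x$ exactly as $L_S|_x \cap (\im d\mu_p)^\circ$ in $T_x^*X$, then finishes with two purely linear-algebraic moves: the annihilator identity $(L_S|_x)^\circ = \sigma(T_xS^\circ)+T_xS$ (which is precisely Remark \ref{157l9vhc}) and the observation $\sigma(T_xS^\circ)\subseteq \im d\mu_p$ from $\sigma_x = d\mu_p\circ\tau_p\circ d\mu_p^*$. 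This packages the content more transparently: the only analytic input is the Poisson property of $\mu$, and everything else is dual-space bookkeeping. The paper's approach, by contrast, never needs the explicit factorization $\sigma_x = d\mu_p\circ\tau_p\circ d\mu_p^*$ and stays closer to the symplectic geometry of $\G$, which makes it more self-contained with respect to the earlier lemmas but at the cost of a longer reverse-inclusion argument.
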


\begin{proof}
Regarding $\ker (d\varphi_p)_x$ as a subspace of $T_x\H_x \s T_x\G$, the statement can be reformulated as
\[
T_xS + \im d\mu_p = T_xX \cap (\ker (d\varphi_p)_x)^\Omega.
\]
We prove this reformulated version below.

Lemma \ref{1hpyffiz} tells us that $$\ker (d\varphi_p)_x \s T_x\H \s (T_xS)^\Omega,$$ so $$T_xS \s T_xX \cap (\ker (d\varphi_p)_x)^\Omega.$$
To show that $\im d\mu_p \s (\ker (d\varphi_p)_x)^\Omega$, let $v \in T_pM$ and $w \in \ker (d\varphi_p)_x$.
By Lemma \ref{kvugo3gu}, $w = X_{\sss^*f}(x)$ for some $f \in C^\infty(X)$ such that $f\big\vert_S = 0$ and $X_f(x) \in T_xS$.
Lemma \ref{gn98u3jv} then shows that $X_{\mu^*f}(p) = d\varphi_p(w) = 0$, so $d(\mu^*f)_p = 0$.
It follows that $$\Omega(d\mu(v), w) = -df(d\sss(d\mu(v))) = -df(d\mu(v)) = 0.$$

It remains only to show that
\[
T_xS^\circ \cap (\im d\mu_p)^\circ \s T_xX^\circ + \Omega(\ker (d \varphi_p)_x),
\]
where all annihilators are taken in $T^*_x\G$.
To this end, let $\xi \in T_xS^\circ \cap (\im d\mu_p)^\circ$ and write $\xi\big\vert_{T_xX} = \Omega(v)\big\vert_{T_xX}$ with $v \in \ker d\ttt_x$.
It then suffices to prove that $v \in T_x\H_x$ and $d\varphi_p(v) = 0$.
We begin by verifying that $v \in T_x\H_x$.
First note that $v \in \ker d\ttt_x \cap (T_xS)^\Omega$.
By Lemma \ref{1hpyffiz}, showing that $d\sss(v) \in T_xS$ would suffice to prove that $v \in T_x\H_x$.
Since $\mu$ is Poisson, we have
\[
d\sss(v) = \sigma(\xi\big\vert_{T_xX}) = d\mu(\omega^{-1}(d\mu^*(\xi\big\vert_{T_xX}))),
\]
and the latter is $0$ since $\xi \in (\im d\mu_p)^\circ$.
It follows that $v \in T_x\H_x$.
To show that $d\varphi_p(v) = 0$, first note that Lemma \ref{kvugo3gu} implies that $v = X_{\sss^*f}(x)$ for some $f \in C^\infty(X)$ such that $f\big\vert_S = 0$ and $X_f(x) \in T_xX$.
But $$df_x = \Omega(v)\big\vert_{T_xX} = \xi\big\vert_{T_xX} \in (\im d\mu_p)^\circ,$$ so $d(\mu^*f)_p = 0$ and hence $d\varphi_p(v) = X_{\mu^*f}(p) = 0$ by Lemma \ref{gn98u3jv}.
\end{proof}

We then deduce the following sufficient conditions for $M \sll{S, \H} \G$ to be smooth.

\begin{thm}[Theorem \ref{Theorem: Smooth category}(ii)]\label{1gjd75z2}
If $\H$ acts freely on $N$, then $S$ is transverse to $\mu$.
Hence, if the action of $\H$ on $N$ is also proper, then $(S, \H)$ is a clean reduction datum and $M \sll{S, \H} \G$ is a symplectic manifold.
\end{thm}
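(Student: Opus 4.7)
The plan is to reduce the transversality of $S$ and $\mu$ at a point $p \in N$ to the injectivity of the differential at the identity of the orbit map $\varphi_p : \H_{\mu(p)} \too N$, $g \mto g \cdot p$, using Proposition \ref{pxamcx75}. Indeed, that proposition identifies the annihilator (in $T_xX$) of $\ker(d\varphi_p)_{1_x}$ with $T_xS + \im d\mu_p$, where $x \coloneqq \mu(p)$, so transversality at $p$ is equivalent to $\ker(d\varphi_p)_{1_x} = 0$.

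The infinitesimal stabilizer $\ker(d\varphi_p)_{1_x}$ is automatically contained in the isotropy Lie algebra $\Lie(\H^x_x)$: since $\mu \circ \varphi_p = \sss\big\vert_{\H_x}$, any $v$ in the kernel satisfies $d\sss_{1_x}(v) = d\mu_p(d\varphi_p(v)) = 0$, and $v$ already lies in $T_{1_x}\H_x = \ker d\ttt_{1_x}$. This places us in the classical Lie group setting. The isotropy group $\H^x_x$ is a Lie group, it acts smoothly on the immersed orbit $\H^x_x \cdot p \s M$, and the stabilizer $\H^p \coloneqq \{g \in \H^x_x : g \cdot p = p\}$ is closed in $\H^x_x$ and hence a Lie subgroup by Cartan's theorem, with Lie algebra equal to $\{v \in \Lie(\H^x_x) : d\varphi_p(v) = 0\} = \ker(d\varphi_p)_{1_x}$. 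Freeness of the $\H$-action on $N$ forces $\H^p = \{1_x\}$, whence $\ker(d\varphi_p)_{1_x} = 0$, as required.

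The main technical point is the identification $\Lie(\H^p) = \ker(d\varphi_p)_{1_x}$: one cannot directly treat $\H^x_x$ as acting on the manifold $\mu^{-1}(x)$, since the latter need not be smooth a priori (proving its smoothness is of course the whole point of the theorem). Instead, one exponentiates $v$ inside the Lie group $\H^x_x$ to obtain a curve $\exp(tv)\cdot p$ in the immersed orbit $\H^x_x \cdot p$, and observes that this curve is an integral curve of the smooth vector field $v^\sharp$ defined on the orbit. The assumption $v^\sharp_p = d\varphi_p(v) = 0$ together with uniqueness of integral curves then forces $\exp(tv) \cdot p = p$ for all $t$, so $v \in \Lie(\H^p)$.

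For the second assertion, if the action of $\H$ on $N$ is also proper, then Mackenzie's \cite[Theorem 1.6.20]{mackenzie} (invoked already in \S\ref{8s76u88n}) endows $N/\H$ with a unique smooth manifold structure for which $\pi : N \too N/\H$ is a surjective submersion. Combined with the transversality just established, which in particular implies that $S$ intersects $\mu$ cleanly, this shows that $(S,\H)$ is a clean reduction datum, and Theorem \ref{main-theorem-smooth} then equips $M \sll{S,\H} \G$ with its reduced symplectic form.
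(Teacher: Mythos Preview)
Your proof is correct and follows precisely the paper's approach: both reduce transversality to the vanishing of $\ker(d\varphi_p)_{1_x}$ via Proposition \ref{pxamcx75}, and then deduce this vanishing from freeness. The paper simply asserts ``$\ker(d\varphi_p)_x = 0$ if $\H$ acts freely'' as a one-line fact, whereas you carefully justify it by first observing that the kernel lies in the isotropy Lie algebra $\Lie(\H^x_x)$ and then invoking the standard argument (exponentiation plus uniqueness of integral curves) that a free Lie group action has injective infinitesimal generators; this extra care is reasonable given that $\mu^{-1}(x)$ is not yet known to be smooth, but it does not constitute a different route.
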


\begin{proof}
This follows from Proposition \ref{pxamcx75} using the fact that $\ker (d\varphi_p)_x = 0$ if $\H$ acts freely.
\end{proof}

Recall that the \dfn{restriction of $\G$ to $S$} is the topological groupoid
\[
\G\big\vert_S \coloneqq \sss^{-1}(S) \cap \ttt^{-1}(S).
\]
The following criterion will be useful for investigating the properness of the action of $\H$ on $N$.

\begin{prop}\label{mbslz011}
If $\G$ acts properly on $M$ and $\H$ is closed in $\G\big\vert_S$, then $\H$ acts properly on $N$.
\end{prop}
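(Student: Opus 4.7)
The plan is to show that the action map
$$
\Phi_\H : \H \prescript{}{\ttt}{\times}_\mu N \too N \times N, \quad (h, p) \mtoo (p, h \cdot p),
$$
is proper by comparing it to the analogous map $\Phi_\G : \G \prescript{}{\ttt}{\times}_\mu M \longrightarrow M \times M$ for the $\G$-action, which is proper by hypothesis. The inclusions $\H \hookrightarrow \G$ and $N \hookrightarrow M$ induce a continuous map $\iota : \H \prescript{}{\ttt}{\times}_\mu N \longrightarrow \G \prescript{}{\ttt}{\times}_\mu M$ fitting into a commutative square with $\Phi_\H$ and $\Phi_\G$, so the task reduces to showing that $\Phi_\H^{-1}(K)$ is compact for every compact $K \s N \times N$.

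Given such a $K$, I would first use properness of $\Phi_\G$ to conclude that $\Phi_\G^{-1}(K) \s \G \prescript{}{\ttt}{\times}_\mu M$ is compact. The key observation is that every $(g, p) \in \Phi_\G^{-1}(K)$ automatically satisfies $g \in \G\big\vert_S$: the condition $(p, g \cdot p) \in K \s N \times N$ forces $\ttt(g) = \mu(p) \in S$ and $\sss(g) = \mu(g \cdot p) \in S$, the latter because the moment map condition gives $\mu(g \cdot p) = \sss(g)$. Consequently, $\iota(\Phi_\H^{-1}(K))$ coincides with the set of $(g, p) \in \Phi_\G^{-1}(K)$ whose first component lies in the image of $\H$ inside $\G\big\vert_S$. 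Since $\H$ is closed in $\G\big\vert_S$ by hypothesis, this set is a closed subset of the compact set $\Phi_\G^{-1}(K)$, and therefore compact.

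It remains to transfer compactness back across $\iota$. Here I would interpret the hypothesis ``$\H$ is closed in $\G\big\vert_S$'' as saying that the immersion $\H \longrightarrow \G\big\vert_S$ is a closed topological embedding (the subgroupoid analogue of the closed subgroup theorem), so that $\iota$ is itself a topological embedding. Compactness of $\iota(\Phi_\H^{-1}(K))$ then yields compactness of $\Phi_\H^{-1}(K)$, which is precisely what is required. The main obstacle is essentially this last point: reconciling the paper's immersion convention for Lie subgroupoids with the purely topological nature of properness on the domain of $\Phi_\H$. Once $\H$ is known to be embedded as a closed topological subspace of $\G\big\vert_S$, the argument is a routine application of the fact that a closed subset of a compact set is compact.
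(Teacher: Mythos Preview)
Your argument is correct and follows essentially the same idea as the paper's proof: both exploit that for $K \s N \times N$ the preimage $\Phi_\G^{-1}(K)$ already lies in $(\G\big\vert_S) \prescript{}{\ttt}{\times}_\mu N$, and then use closedness of $\H$ in $\G\big\vert_S$. The paper packages this slightly differently---it first reduces to showing that $\G\big\vert_S$ acts properly on $N$ and obtains this by recognizing the action map as the base change of $\Phi_\G$ along $N \times N \hookrightarrow M \times M$ (proper maps being stable under pullback)---whereas you work directly with compact preimages; but the content is the same, and your explicit identification of the subtle point about the immersion convention is apt.
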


\begin{proof}
Since $\H$ is closed in $\G\big\vert_S$, it suffices to show that the action of $\G\big\vert_S$ on $N$ is proper.
By assumption, the map $\theta : \G \prescript{}{\ttt}{\times}_\mu M \longrightarrow M \times M$ defined by \eqref{ff300lau} is proper.

Consider the pullback of $\theta$ by the inclusion $\iota : N \times N \hookrightarrow M \times M$, i.e.\ 
\begin{equation}\label{nxlsrpts}
(\G \prescript{}{\ttt}{\times}_\mu M) \times_{M \times M} (N \times N) \longrightarrow N \times N.
\end{equation}
Since $(\G \prescript{}{\ttt}{\times}_\mu M) \times_{M \times M} (N \times N)$ is closed in $(\G \prescript{}{\ttt}{\times}_\mu M) \times N \times N$ and $\theta$ is proper, \eqref{nxlsrpts} is proper.
Note that
\[
(\G \prescript{}{\ttt}{\times}_\mu M) \times_{M \times M} (N \times N) = (\G\big\vert_N) \prescript{}{\ttt}{\times}_\mu N
\]
and that \eqref{nxlsrpts} is the action map $(\G\big\vert_N) \prescript{}{\ttt}{\times}_\mu N \longrightarrow N \times N$.
This shows that the action of $\G\big\vert_S$ on $N$ is proper, as desired.
\end{proof}

\begin{rem}
The stabilizer subgroupoid $\H$ may not be closed in $\G\big\vert_S$, even when $S$ is closed in $X$.
See Remark \ref{hbxo6c96} for a specific example.
This is in contrast to the case of Hamiltonian group actions, where stabilizer subgroups are always closed.
\end{rem}

\subsection{A Poisson map on the reduced space}\label{Subsection: A Poisson map on the reduced space}
Let $\G \tto X$ be a symplectic groupoid and $S \s X$ a pre-Poisson submanifold.
Note that the distribution on $S$ induced by the orbits of the ssc stabilizer subgroupoid $\G_S \tto S$ (i.e.\ the image of the anchor map) is $TS \cap \sigma(TS^\circ)$.
Hence, if $S / \G_S$ is smooth, it inherits a Poisson structure via Marsden--Ratiu reduction applied to the triple $(X, S, \sigma(TS^\circ))$ \cite[Example D]{marsden-ratiu}.
This holds more generally for any stabilizer subgroupoid, as the following proposition shows.

\begin{prop}\label{9idw3y65}
Consider a symplectic groupoid $\mathcal{G}\tto X$, pre-Poisson submanifold $S \subseteq X$, and stabilizer subgroupoid $\mathcal{H}\tto S$ of $S$ in $\G$.
Suppose that $S / \H$ has the structure of a smooth manifold such that the quotient map $\rho : S \longrightarrow S / \H$ is a smooth submersion.
Let $i : S \longrightarrow X$ be the inclusion map.
Then there is a unique Poisson structure on $S / \H$ such that for any locally defined smooth functions $f, g$ on $S / \H$, and locally defined smooth functions $F, G$ on $X$ such that $\rho^*f = i^*F$, $\rho^*g = i^*G$, and $dF(\sigma(TS^\circ)) = dG(\sigma(TS^\circ)) = 0$, we have
\[
\rho^*\{f, g\}_{\mathcal{S}/\mathcal{H}} = \{F,G\}_X\big\vert_S.
\]
\end{prop}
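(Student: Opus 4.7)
The plan is to recognise this proposition as an instance of Marsden--Ratiu Poisson reduction \cite{marsden-ratiu} applied to the triple $(X, S, E)$ with $E \coloneqq \sigma(TS^\circ) \s TX\big\vert_S$. The tangent distribution to the $\H$-orbits on $S$ is the image of the anchor of $L_S = \sigma^{-1}(TS)\cap TS^\circ$, namely $\sigma(TS^\circ)\cap TS = E \cap TS$, which coincides with $\ker d\rho$ by the submersion hypothesis. In this framework the admissible local functions on $X$ are precisely those $F$ satisfying $dF\big\vert_E = 0$, which is exactly the condition $dF(\sigma(TS^\circ)) = 0$ appearing in the statement.

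Uniqueness of the bracket is immediate since $\rho$ is a surjective submersion, so $\rho^*$ is injective on locally defined smooth functions. For existence of the extension $F$ of a given $f$: at each $y \in S$ the two prescriptions $dF_y\big\vert_{T_yS} = d(\rho^*f)_y$ and $dF_y\big\vert_{\sigma(T_yS^\circ)} = 0$ are consistent because $d(\rho^*f)_y$ vanishes on $T_yS \cap \sigma(T_yS^\circ) = \ker d\rho_y$ by $\H$-invariance of $\rho^*f$. The pre-Poisson hypothesis, equivalent to constant rank of $TS + \sigma(TS^\circ)$ along $S$ by Remark \ref{157l9vhc}, allows this pointwise jet to be globalised to a smooth $F$ on a neighbourhood of $x$; a cleaner alternative is to invoke the Cattaneo--Zambon coisotropic embedding \cite[Theorem 3.3]{cattaneo-zambon-2009} of $S$ in a local Poisson transversal $Y \s X$ and pull $F$ back along a local retraction onto $Y$.

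The key calculation for well-definedness is the following: if $H \in C^\infty(X)$ satisfies $H\big\vert_S = 0$, then for $y \in S$ one has $X_H(y) = \sigma(dH_y) \in \sigma(T_yS^\circ)$, so the hypothesis $dG(\sigma(TS^\circ))\big\vert_S = 0$ forces
\[
\{H, G\}(y) = dG_y(X_H(y)) = 0.
\]
Applied to $H = F_1 - F_2$ for two admissible extensions of $f$, this shows that $\{F, G\}_X\big\vert_S$ depends only on $f$ and $g$. $\H$-invariance along orbits is then obtained from the Jacobi identity on $X$: for $v \in \sigma(T_yS^\circ)\cap T_yS = \ker d\rho_y$, choose $H\big\vert_S = 0$ with $X_H(y) = v$, and compute
\[
v\cdot\{F, G\} = \{H, \{F, G\}\}(y) = \{\{H, F\}, G\}(y) + \{F, \{H, G\}\}(y);
\]
both $\{H, F\}$ and $\{H, G\}$ vanish on $S$ by the previous observation, and the same observation applied one step further shows each outer bracket vanishes at $y$. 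Hence $\{F, G\}_X\big\vert_S$ descends to a smooth function on $S/\H$, which we define to be $\{f, g\}_{S/\H}$.

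Antisymmetry and Leibniz are inherited directly from $X$, and the Jacobi identity for $\{\,,\,\}_{S/\H}$ follows from Jacobi on $X$ once one notes that $\{F, G\}$ is itself an admissible extension of $\{f, g\}_{S/\H}$ (the required condition $d\{F, G\}(\sigma(TS^\circ))\big\vert_S = 0$ is precisely the $\H$-invariance calculation just performed). The main technical obstacle is the existence step for $F$, where the pre-Poisson constant-rank hypothesis is essential; all remaining steps reduce to short Poisson-algebraic manipulations with the defining formula.
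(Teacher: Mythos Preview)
Your framework is correct and matches the paper's: this is Marsden--Ratiu reduction for the triple $(X,S,\sigma(TS^\circ))$, and your computations for existence of extensions, independence of the choice of $F,G$, and the Jacobi identity are all fine. There is, however, a genuine gap in the $\H$-invariance step.

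What you prove is that $\{F,G\}_X\big\vert_S$ is annihilated by every tangent vector in $\ker d\rho = TS\cap\sigma(TS^\circ)$. This is an \emph{infinitesimal} statement: it shows constancy along the leaves of the null distribution, hence invariance under any \emph{source-connected} stabilizer subgroupoid. But the proposition is stated for an arbitrary stabilizer subgroupoid $\H$, and the fibres of $\rho$ (i.e.\ the $\H$-orbits) need not be connected. Your argument never uses the hypothesis that $\H$ is isotropic in $\G$, and Remark~\ref{2g4d9b8v}(iii) shows that without isotropy a non-source-connected subgroupoid integrating $L_S$ can genuinely fail to behave well; the isotropy condition is exactly what closes this gap.

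The paper handles this by a direct groupoid computation. Embedding $S$ coisotropically in a Poisson transversal $Y$ (as you also suggest), one has that $\H$ is Lagrangian in the symplectic subgroupoid $\G\big\vert_Y$. After arranging $dF(\sigma(TY^\circ))=0$, one checks that $X_{\sss^*F}-X_{\ttt^*F}$ is tangent to $\H$ by pairing it against $T\H$ with $\Omega$ and using the Lagrangian condition \eqref{51px4zgr}. From this and $j^*\sss^*G=j^*\ttt^*G$ one gets $j^*\sss^*\{F,G\}_X=j^*\ttt^*\{F,G\}_X$, i.e.\ honest $\H$-invariance of $\{F,G\}_X\big\vert_S$, valid for every component of $\H$. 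Your Jacobi-identity trick is a pleasant shortcut in the source-connected case, but it cannot replace this argument in general.
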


\begin{proof}
Recall that if $\H$ is source-connected, then $S / \H$ is the Marsden--Ratiu reduction of the triple $(X, S, \sigma(TS^\circ))$ \cite[Example D]{marsden-ratiu}.
We are thereby reduced to proving the following claim: if $f, g, F, G$ are as in the statement, then $\{F, G\}_X$ is $\H$-invariant.
In other words, we want to show that $j^*\sss^*\{F, G\}_X = j^*\ttt^*\{F, G\}_X$, where $j : \H \longrightarrow \G$ is the immersion of $\H$ in $\G$.

Recall from \S\ref{sratijkn} that there is a Poisson transversal $Y$ in $X$ containing $S$ as a coisotropic submanifold.
The restriction $\G\big\vert_Y \coloneqq \sss^{-1}(Y) \cap \ttt^{-1}(Y)$ is then a symplectic subgroupoid of $\G$ \cite[Lemma 7.1]{cattaneo-zambon-2009} and $\H$ is an isotropic Lie subgroupoid of $\G\big\vert_Y$.
By \cite[\S5]{cattaneo-2004}, $\H$ is Lagrangian in $\G\big\vert_Y$.
We therefore have
\begin{equation}\label{51px4zgr}
T_h\H = T_h\H^\Omega \cap T_{j(h)}(\G\big\vert_Y)
\end{equation}
for all $h \in \H$, where $\Omega$ is the symplectic form on $\G$ and $T_h\H$ is identified as subspace of $T_{j(h)}\G$ via the immersion $j : \H \longrightarrow \G$.

Since $TX\big\vert_Y = TY \oplus \sigma(TY^\circ)$, there is a locally defined smooth function $\tilde{F}$ on $X$ such that $\rho^*f = i^*\tilde{F}$ and $d\tilde{F}(\sigma(TY^\circ)) = 0$.
Note that $\sigma(T_xS^\circ) \cap T_xY \s T_xS$ for all $x \in S$ since $S$ is coisotropic in $Y$.
We therefore have
\[
\sigma(T_xS^\circ) = (\sigma(T_xS^\circ) \cap T_xS) \oplus \sigma(T_xY^\circ),
\]
so $d\tilde{F}(\sigma(TS^\circ)) = 0$.
On the other hand, we have $i^*\{\tilde{F}, G\}_X = i^*\{F, G\}_X$ as $i^*\tilde{F} = i^*F$.
We may then assume without loss of generality that $dF(\sigma(TY^\circ)) = 0$, i.e.\ $X_F$ is tangent to $Y$.

We claim that $X_{\sss^*F} - X_{\ttt^*F}$ is tangent to $\H$.
A first observation is that $j^*\sss^*F = j^* \ttt^*F$, since $$j^*\sss^*F = \sss^* i^*F = \sss^* \rho^* f = \ttt^* \rho^*f = \ttt^* i^*F = j^* \ttt^* F.$$
This implies that
\[
\Omega(X_{\sss^*F} - X_{\ttt^*F}, T\H) = (d(\sss^*F) - d(\ttt^*F))(T\H) = 0,
\]
so $X_{\sss^*F} - X_{\ttt^*F}$ takes values in $T\H^\Omega$.
Now recall that $d\sss(X_{\ttt^*F}) = 0 = d\ttt(X_{\sss^*F})$ (see e.g.\ \cite[Theorem 1.6(i)]{mikami-weinstein}), so $d\sss(X_{\sss^*F} - X_{\ttt^*F}) = X_F$ and $d\ttt(X_{\sss^*F} - X_{\ttt^*F}) = X_F$ are tangent to $Y$.
By combining the last two sentences with \eqref{51px4zgr}, we get
\[
(X_{\sss^*F} - X_{\ttt^*F})\big\vert_{j(h)} \in (T_h\H)^\Omega \cap T_{j(h)}(\G\big\vert_Y) = T_h\H
\]
for all $h \in \H$.

The identity $j^* \sss^* G = j^* \ttt^* G$ therefore implies that for all $h \in \H$ and $g \coloneqq j(h)$,
\begin{align*}
(j^*\sss^*\{F, G\}_X)(h) &= \{\sss^*F, \sss^*G\}_X(g) = d(\sss^*G)(X_{\sss^*F}|_g) = d(\sss^*G)(X_{\sss^*F}|_g - X_{\ttt^*F}|_g) \\
&= d(\ttt^*G)(X_{\sss^*F}|_g - X_{\ttt^*F}|_g) = -d(\ttt^*G)(X_{\ttt^*F}|_g) = -\{\ttt^*F, \ttt^*G\}(g) \\
&= (j^*\ttt^*\{F, G\}_X)(h).\qedhere
\end{align*}
\end{proof}

\begin{thm}[Theorem A(iii)]\label{omd5dx3w}
Let $((M, \omega), \G \tto X, \mu)$ be a Hamiltonian system and $(S, \H)$ a clean reduction datum.
Suppose that $S / \H$ has the structure of a smooth manifold such that the quotient map $S \longrightarrow S / \H$ is a smooth submersion.
Then $\mu : M \longrightarrow X$ descends to a Poisson map $M \sll{S} \G \longrightarrow S / \H$.
\end{thm}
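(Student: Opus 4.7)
The plan has two parts: constructing the descent $\bar\mu$ and verifying its Poisson property through a local bracket calculation pulled back via $\mu$.

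For the first part, I would check descent directly from the groupoid action. For $p \in N$ and $h \in \H$ with $\ttt(h) = \mu(p)$, one has $\mu(h \cdot p) = \sss(h)$, and both $\sss(h), \ttt(h) \in S$ clearly lie in the same $\H$-orbit, so $\rho \circ \mu|_N$ is $\H$-invariant. Since $\pi : N \too M\sll{S}\G$ is a surjective submersion by hypothesis, $\rho \circ \mu|_N$ factors uniquely through $\pi$ as a smooth map $\bar\mu : M \sll{S} \G \too S/\H$.

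For the Poisson property, fix local $f, g \in C^\infty(S/\H)$ near $\rho(\mu(p))$ and invoke Proposition \ref{9idw3y65} to choose local extensions $F, G \in C^\infty(X)$ with $F\big\vert_S = \rho^*f$, $G\big\vert_S = \rho^*g$, $dF(\sigma(TS^\circ)) = dG(\sigma(TS^\circ)) = 0$, and $\{F,G\}_X\big\vert_S = \rho^*\{f,g\}_{S/\H}$. The crux of the argument is to show that $\mu^*F$ and $\mu^*G$ are \emph{admissible} extensions of the restrictions $\pi^*\bar\mu^*f, \pi^*\bar\mu^*g$ to $N$, in the sense that $d(\mu^*F)$ and $d(\mu^*G)$ annihilate $TN^\omega$ along $N$. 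A dimension count from clean intersection (using $T_pN = d\mu_p^{-1}(T_{\mu(p)}S)$) gives $T_pN^\circ = d\mu_p^*(T_{\mu(p)}S^\circ)$, hence $T_pN^\omega = \omega_p^{-1}(d\mu_p^*(T_{\mu(p)}S^\circ))$. For $v = \omega_p^{-1}(d\mu_p^*\alpha) \in T_pN^\omega$ with $\alpha \in T_{\mu(p)}S^\circ$, the Poisson-map identity $d\mu_p \circ \omega_p^{-1} \circ d\mu_p^* = \sigma_{\mu(p)}$ yields
\[
d(\mu^*F)_p(v) = dF(d\mu_p(v)) = dF(\sigma(\alpha)) = 0,
\]
as $\sigma(\alpha) \in \sigma(TS^\circ)$ and $F$ was chosen to kill this subspace.

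The remainder is a standard symplectic-reduction argument. Admissibility and non-degeneracy of $\omega$ imply $X_{\mu^*F}\big\vert_p \in (T_pN^\omega)^\omega = T_pN$ for $p \in N$, so $X_{\mu^*F}$ is tangent to $N$ along $N$; combining with $\pi^*\bar\omega = i^*\omega$ from Theorem \ref{main-theorem-smooth} and testing against arbitrary $d\pi(v)$, one obtains $d\pi(X_{\mu^*F}\big\vert_N) = X_{\bar\mu^*f}$, whence
\[
\pi^*\{\bar\mu^*f, \bar\mu^*g\}_{M\sll{S}\G} = \{\mu^*F, \mu^*G\}_M\big\vert_N.
\]
Using that $\mu$ is Poisson we rewrite the right-hand side as $\mu^*\{F,G\}_X\big\vert_N = (\mu\big\vert_N)^*(\{F,G\}_X\big\vert_S) = (\mu\big\vert_N)^*\rho^*\{f,g\}_{S/\H} = \pi^*\bar\mu^*\{f,g\}_{S/\H}$, and the surjective-submersion property of $\pi$ finishes the proof. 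The main obstacle I anticipate is the admissibility step: correctly identifying $T_pN^\omega$ under clean intersection and unwinding the Poisson-map condition on $\mu$ to show that the naive extension $\mu^*F$ already suffices, with no need for a more delicate correction.
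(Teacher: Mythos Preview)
Your proposal is correct and follows essentially the same approach as the paper's proof: choose extensions $F,G$ via Proposition~\ref{9idw3y65}, verify that $\mu^*F,\mu^*G$ annihilate $TN^\omega$ using $T_pN^\omega = \omega_p^{-1}(d\mu_p^*(T_{\mu(p)}S^\circ))$ together with the Poisson property of $\mu$, and then reduce the bracket identity to $i^*\{\mu^*F,\mu^*G\}$. The paper is slightly more terse---it packages your Hamiltonian-vector-field descent step into the phrase ``the Poisson structure on $N/\H$ is also obtained from Poisson reduction''---but the substance is the same.
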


\begin{proof}
Let $N \coloneqq \mu^{-1}(S)$ and consider the commutative diagram
\[
\begin{tikzcd}
N
    \arrow[hook]{r}{i}
    \arrow{d}{\pi}
    \arrow{dr}{\tilde{\mu}}
& M \arrow{dr}{\mu} \\
N/\H \arrow{dr}{\bar{\mu}}
& S \arrow[hook]{r}{j} \arrow{d}{\rho}
& X \\
& S/\H.
\end{tikzcd}
\]
Let $f, g$ be locally defined smooth functions on $S / \H$ and $F, G$ local extensions of $\rho^*f, \rho^*g$ as in Proposition \ref{9idw3y65}.
We want to show that $\bar{\mu}^*\{f, g\} = \{\bar{\mu}^*f, \bar{\mu}^*g\}$, or equivalently, $$\pi^*\bar{\mu}^*\{f, g\} = \pi^*\{\bar{\mu}^*f, \bar{\mu}^*g\}.$$
We have $$\pi^*\bar{\mu}^*\{f, g\} = \tilde{\mu}^*\rho^*\{f, g\} = \tilde{\mu}^*j^*\{F, G\} = i^* \mu^*\{F, G\} = i^*\{\mu^*F, \mu^*G\},$$ as $\mu$ is a Poisson map.
Since the Poisson structure on $N / \H$ is also obtained from Poisson reduction, it suffices to show the following:
\begin{enumerate}
\item[(1)] $\mu^*F$ and $\mu^*G$ vanish on $TN^\omega$;
\item[(2)] $\mu^*F$ and $\mu^*G$ restrict to $\pi^*\bar{\mu}^*f$ and $\pi^*\bar{\mu}^*g$ on $N$.
\end{enumerate}
To verify (1), suppose that $v \in TN^\omega$.
We have $v = \omega^{-1}(d\mu^*(\xi))$ for $\xi \in TS^\circ$.
Since $\mu$ is Poisson, we obtain $d\mu(v) = \sigma(\xi) \in \sigma(TS^\circ).$
It follows that $d(\mu^*F)(v) = dF(d\mu(v)) = 0.$
To show (2), note that $i^*\mu^*F = \tilde{\mu}^*j^*F = \tilde{\mu}^*\rho^*f = \pi^*\bar{\mu}^*f.$
One has analogous identities for $g$ and $G$.
\end{proof}

\begin{rem}
If $TS \cap \sigma(T^*X) \s \sigma(TS^\circ)$, then the Poisson structure on $S / \H$ is trivial.
The Poisson map $\overline{\mu}$ then amounts to an ``integrable system'' on $M \sll{S} \G$.
\end{rem}

\subsection{Examples}\label{Subsection: Smooth examples}

Let $((M, \omega), \G \tto X, \mu)$ be a Hamiltonian system.
We observe the following special cases of Theorem \ref{Theorem: Smooth category}.

\begin{ex}[Singletons]
If $S = \{x\}$ is a singleton in $X$, then the isotropy group at $x$ is a stabilizer subgroupoid of $S$ in $\mathcal{G}$.
We thereby recover Mikami--Weinstein reduction \cite[Theorem 3.12]{mikami-weinstein}, of which the more classical Marsden--Weinstein--Meyer reduction \cite{marsden-weinstein,meyer} is a special case.
\end{ex}

\begin{ex}[Poisson transversals]\label{6sap623o}
If $S \s X$ is Poisson transversal, then $S$ is transverse to $\mu$ \cite[Lemma 7(1)]{frejlich-marcut}.
We also have $L_S=0$ in this case, so that the trivial groupoid over $S$ is a stabilizer subgroupoid.
It follows that $M \sll{S} \G = \mu^{-1}(S)$, and we recover the fact that $\mu^{-1}(S)$ is a symplectic submanifold of $M$.
\end{ex}

%\begin{ex}\label{osz3u5dd}
%Let $((M, \omega), \G \tto X, \mu)$ be a Hamiltonian system.
%If $S$ is a Poisson submanifold of $(X, \sigma)$, then $\sigma(TS^\circ) = 0$.
%The stabilizer subalgebroid of $S$ is therefore $L_S = TS^\circ$ with the trivial anchor map $L_S \longrightarrow TS$.
%It follows that $L_S$ is a bundle of Lie algebras.
%The source-connected, source-simply-connected stabilizer subgroupoid of $S$ is then the bundle of Lie groups $\G_S \coloneqq \bigcup_{x \in S} H_x,$ where $H_x$ is the simply-connected Lie group with Lie algebra $(L_S)_x$ \pcnote{Have we defined this notation? This last sentence is also a bit too heavily partitioned.}.
%Since the anchor map of $L_S$ is trivial, we have $S / \G_S = S$, so \pcnote{New sentence: The Poisson map ... from ... therefore takes the form ...} the Poisson map from Theorem \ref{main-theorem-smooth}(iii) is $M \sll{S} \G \longrightarrow S$.
%\end{ex}

\begin{ex}[General symplectic reduction]
\label{ty6jqjxq}
Let $(M, \omega)$ be a symplectic manifold.
The pair groupoid $\G \coloneqq M \times M^-$ is then a symplectic groupoid over $(M, \omega)$, with source map $\sss(p, q) = p$, target map $\ttt(p, q) = q$, and multiplication $(p, q)(q, r) = (p, r)$.
The symplectic groupoid $\G$ then acts on $(M, \omega)$ in a Hamiltonian way with moment map the identity map and action $(p, q) \cdot q = p$.

A submanifold $i : S \hookrightarrow M$ is pre-Poisson if and only if it is pre-symplectic.
In this case, one has $M \sll{S} \G = S / {\sim}$, where $\sim$ is the equivalence relation generated by the leaves of the distribution $\ker i^*\omega$.
We have therefore recovered the procedure of reduction along a pre-symplectic submanifold (Remark \ref{general-reduction-remark}).
\end{ex}

\section{Main construction: complex analytic version}
\label{phj103sm}

This section is concerned with Theorem \ref{Theorem: Complex analytic category} and its proof.
We develop a complex analytic version of Marsden--Ratiu reduction \cite{marsden-ratiu} in \S\ref{Subsection: Complex analytic Marsden--Ratiu reduction}, and examine its relation to complex pre-Poisson submanifolds in \S\ref{Subsection: Pre-Poisson complex submanifolds}. This creates the context needed to prove Theorem \ref{Theorem: Complex analytic category}. Parts (i)--(v) of this theorem are proved in \S\ref{Subsection: Complex analytic symplectic reduction along a submanifold}, while Parts (vi)--(vii) are proved in \S\ref{Subsection: A Poisson map}.

\subsection{Complex analytic Marsden--Ratiu reduction}\label{Subsection: Complex analytic Marsden--Ratiu reduction} 
We begin by observing that the notion of Poisson reduction introduced by Marsden--Ratiu \cite{marsden-ratiu} adapts to the complex analytic setting.

Let us begin with a brief digression on terminology.
A \dfn{holomorphic Poisson structure} on a complex analytic space $(P, \O_P)$ is an enrichment of the structure sheaf $\mathcal{O}_P$ to a sheaf of Poisson algebras.
In this case, we call $(P, \O_P)$ a \dfn{complex analytic Poisson space}.
We also often identity a Poisson structure with the corresponding \dfn{homomorphism Poisson bivector field}, i.e.\ the bundle homomorphism
\[
\tau : T^*P \longrightarrow TP,\quad df_p \mtoo X_f(p)
\]
for all $p \in P$ and $f \in \O_{P, p}$, where $X_f \coloneqq \{f, \cdot\}$ is the Hamiltonian vector field associated to $f$.
We call $P$ \dfn{symplectic} if it is smooth and $\tau = \omega^{-1}$ for some holomorphic symplectic form $\omega : TP \longrightarrow T^*P$.

Let $E$ be a holomorphic vector bundle over a complex analytic space $P$, and write $E_x$ for the fibre of $E$ over $x\in X$.
Our convention is that a \dfn{subbundle} of $E$ is simply a complex analytic subspace $R$ of $E$ whose fibres $R_x \coloneqq R \cap E_x$ are vector subspaces of $E_x$ for all $x \in P$.
Note that the fibres of $R$ may have different dimensions, so that $R$ is not necessarily a vector bundle.
For a complex analytic subspace $Y \s P$ and a subbundle $R$ of $TP\big\vert_Y$, we write $\O_P^R$ for the subsheaf of $\O_P$ defined by
\[
\O_P^R(U) \coloneqq \{f \in \O_P(U) : df(R) = 0\}
\]
for all open subsets $U \subseteq P$.
The germ of a holomorphic function $f \in \O_P(U)$ at a point $p \in U$ is denoted $f_p \in \O_{P, p}$.

The following result is a complex analytic analogue of \cite[Theorem 2.2]{marsden-ratiu}.

\begin{thm}[Complex analytic Marsden--Ratiu reduction]\label{2k4asf2u}
Let $(M, \tau)$ be a complex analytic Poisson space, $N \s M$ a reduced complex analytic subspace, $E \longrightarrow N$ a subbundle of $TM\big\vert_N$, and $D \coloneqq TN \cap E$.
Suppose that
\begin{enumerate}
\item[\textup{(1)}] $\O_M^E$ is closed under Poisson bracket,
\item[\textup{(2)}] $\tau(E^\circ) \s TN + E$, and
\item[\textup{(3)}] for all $p \in N$ and $f \in \O_{N, p}^D$, there exists $F \in \O_{M, p}^E$ such that $F\big\vert_N = f$.
\end{enumerate}
One then has a unique Poisson bracket $\{\cdot, \cdot\}'$ on $\O_N^D$ satisfying
\begin{equation}\label{jup0hvqk}
\{f, g\}' = \{F, G\}\big\vert_N
\end{equation}
for all $p \in N$, $f, g \in \O_{N, p}^D$, and $F, G \in \O_{M, p}^E$ related to $f, g$ as in \textup{(3)}.
\end{thm}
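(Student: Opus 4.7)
The plan is to follow the strategy of the smooth Marsden--Ratiu reduction, adapted to the complex analytic sheaf-theoretic setting. The formula \eqref{jup0hvqk} is forced by hypothesis (3), which supplies extensions $F, G \in \O_{M,p}^E$ for every pair of germs $f, g \in \O_{N,p}^D$; so uniqueness of $\{\cdot,\cdot\}'$ is immediate once existence is established. The remaining content is to show that the bracket is well-defined (independent of the chosen extensions), that it actually lands in $\O_N^D$, and that it satisfies the Poisson axioms.

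For well-definedness, suppose $F_1, F_2 \in \O_{M,p}^E$ both restrict to $f$, and let $G \in \O_{M,p}^E$ restrict to $g$. Setting $H \coloneqq F_1 - F_2$, the function $H$ lies in $\O_{M,p}^E$ and vanishes along $N$, so $dH_p$ annihilates both $T_pN$ (because $H|_N = 0$) and $E_p$ (because $H \in \O_M^E$), hence annihilates $T_pN + E_p$. On the other hand, $X_G(p) = \tau(dG_p)$ with $dG_p \in E_p^\circ$, so hypothesis (2) gives $X_G(p) \in \tau(E_p^\circ) \subseteq T_pN + E_p$. Therefore
\[
\{H, G\}(p) = -dH_p(X_G(p)) = 0,
\]
which shows that $\{F_1, G\}|_N = \{F_2, G\}|_N$; the analogous argument in the second slot completes well-definedness.

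Next, hypothesis (1) ensures that $\{F, G\} \in \O_{M,p}^E$, so its restriction to $N$ has differential vanishing on $E \cap TN = D$; hence $\{f, g\}' \in \O_{N,p}^D$. Bilinearity and antisymmetry of $\{\cdot,\cdot\}'$ follow at once from the corresponding properties of $\{\cdot,\cdot\}$ on $M$. For the Leibniz rule, note that if $F_i \in \O_{M,p}^E$ extends $f_i$, then $F_1 F_2 \in \O_{M,p}^E$ extends $f_1 f_2$, since $d(F_1 F_2) = F_1\, dF_2 + F_2\, dF_1$ still annihilates $E$; the Leibniz identity on $M$ then descends by restriction to $N$. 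The Jacobi identity for $\{\cdot,\cdot\}'$ reduces similarly to the Jacobi identity for three extensions $F_1, F_2, F_3 \in \O_{M,p}^E$, with hypothesis (1) guaranteeing that each intermediate bracket $\{F_i, F_j\}$ remains in $\O_M^E$ and hence serves as a legitimate extension of $\{f_i, f_j\}'$.

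The main obstacle is the well-definedness step, which is where the geometric hypotheses (1) and (2) are essential; once it is settled, the verification of the Poisson axioms is a routine transfer from the ambient bracket on $M$, and the passage from the smooth to the complex analytic setting is purely formal, since all arguments are sheaf-theoretic and carried out on germs.
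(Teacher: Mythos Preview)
Your proof is correct and follows essentially the same strategy as the paper's. The only notable variation is in the well-definedness step: you argue directly that $X_G(p) \in \tau(E_p^\circ) \subseteq T_pN + E_p$ and that $dH_p$ kills $T_pN + E_p$, whereas the paper dualizes condition~(2) via the skew-symmetry of $\tau$ to conclude $TN^\circ \cap E^\circ \subseteq \tau^{-1}(E)$ and hence $X_H|_N \in E$, then pairs against $dG$. These are equivalent linear-algebra manoeuvres; the remaining verification of closure under the bracket, Leibniz, and Jacobi is identical in both.
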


\begin{proof}
To see that \eqref{jup0hvqk} does not depend on the choice of $F$ and $G$, we need to show the following: if $p \in N$ and $F, G \in \O_{M, p}^E$ are such that $F\big\vert_N = 0$, then $\{F, G\}\big\vert_N = 0$.
Note that $X_F\big\vert_N$ takes values in $\tau(TN^\circ \cap E^\circ)$, while Condition (2) implies that $$TN^\circ \cap E^\circ = (TN + E)^\circ \s \tau(E^\circ)^\circ = \tau^{-1}(E).$$ It follows that $\tau(TN^\circ \cap E^\circ) \s E$, and hence $\{F, G\}\big\vert_N = dG(X_F\big\vert_N) = 0.$

Let $f, g \in \O_N^D(U)$ for some open set $U \s N$.
We then get a well-defined element $\{f, g\}' \in \O_N(U)$ defined by the collection of germs $\{F, G\}\big\vert_N \in \O_{N, p}$ for all $p \in U$ and $F, G \in \O_{M, p}^E$ satisfying $F\big\vert_N = f_p$ and $G\big\vert_N = g_p$.
We also have $\{f, g\}' \in \O_N^D(U)$, since
\[
d\{f, g\}'(D) = d\{F, G\}(D) \s d\{F, G\}(E) = 0
\]
by Condition (1).

The Leibniz identity follows from the fact that $\{f, gh\}' = \{F, GH\}\big\vert_N$ if $f, g, h \in \O_{N, p}$ are related to $F, G, H \in \O_{M, p}$ as in Condition (3), which holds since $d(GH)$ also vanishes on $E$.
For the Jacobi identity, note that $\{f, \{g, h\}'\}' = \{F, \{G, H\}\}\big\vert_N$, since $\{g, h\}' = \{G, H\}\big\vert_N$ by definition and $d\{G, H\}(E) = 0$ by Condition (1).
\end{proof}

Condition (3) is implicit in Marsden--Ratiu reduction \cite[Theorem 2.2]{marsden-ratiu}, as it always holds in the context of smooth manifolds.
On the other hand, the same argument works for non-singular complex analytic spaces:

\begin{lem}\label{961mx9ov}
Let $M$ be a complex manifold, $N \s M$ a complex submanifold, and $E \longrightarrow N$ a holomorphic subbundle of $TM\big\vert_N$.
Assume that $E$ and $D \coloneqq TN \cap E$ are vector bundles.
For all $p \in N$ and $f \in \O_{N, p}^D$, there exists $F \in \O_{M, p}^E$ such that $F\big\vert_N = f$.
\end{lem}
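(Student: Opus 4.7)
The plan is to construct $F$ by an explicit formula in local coordinates that put the pair $(N, E)$ into a normal form.

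First, I would pick holomorphic coordinates $(x_1, \ldots, x_k, y_1, \ldots, y_m)$ on $M$ centered at $p$ with $N = \{y = 0\}$ locally, so that $TN$ is framed along $N$ by $\partial/\partial x_1, \ldots, \partial/\partial x_k$ and the classes $[\partial/\partial y_1], \ldots, [\partial/\partial y_m]$ frame the normal bundle $TM\big\vert_N/TN$. Since $D = TN \cap E$ and $E$ are holomorphic vector bundles, the quotient $E/D$ embeds as a rank $s$ sub-bundle of $TM\big\vert_N/TN$, where $s = \rk(E) - \rk(D)$. By a holomorphic linear change of the $y$-coordinates that depends on $x$ (and leaves $N$ fixed), I would straighten this sub-bundle so that $E/D$ is spanned by $[\partial/\partial y_1], \ldots, [\partial/\partial y_s]$ at every point of $N$ near $p$. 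In the adjusted coordinates, $E$ is locally spanned by $D$ together with vector fields $e_j = \partial/\partial y_j + w_j$ for $j = 1, \ldots, s$, where each $w_j = \sum_i c_{ji}(x)\,\partial/\partial x_i$ is a holomorphic section of $TN$ near $p$.

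With this normal form in hand, I would define
\[
F(x, y) \coloneqq f(x) - \sum_{j=1}^s \biggl(\sum_i c_{ji}(x)\, \frac{\partial f}{\partial x_i}(x)\biggr) y_j
\]
on a neighborhood of $p$ in $M$. Clearly $F\big\vert_N = f$, so the remaining content is to verify that $dF$ annihilates $E$. For $d \in D_q \subseteq T_qN$ at $q \in N$, one has $dF(d)\big\vert_q = df(d)\big\vert_q = 0$ by the hypothesis $f \in \O_{N,p}^D$. For $e_j$ at $q \in N$, a direct computation gives
\[
dF(e_j)\big\vert_q = \frac{\partial F}{\partial y_j}(q) + df(w_j)\big\vert_q = -\sum_i c_{ji}(q)\,\frac{\partial f}{\partial x_i}(q) + \sum_i c_{ji}(q)\,\frac{\partial f}{\partial x_i}(q) = 0,
\]
which is the conclusion of the lemma.

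The main obstacle I anticipate is the straightening step. It amounts to a standard local triviality statement for holomorphic sub-bundles: starting from any local frame $e_{d+1}, \ldots, e_{d+s}$ for a complement of $D$ in $E$, written as $e_j = \sum_i a_{ji}(x)\,\partial/\partial x_i + \sum_k b_{jk}(x)\,\partial/\partial y_k$, one uses that the $s \times m$ matrix $b(x)$ has rank $s$ near $p$ to produce (after possibly permuting the $y$-coordinates so that its first $s$ columns form an invertible block $b_0(x)$) an invertible holomorphic matrix $A(x)$ satisfying $b(x)\,A(x)^T = (I_s \mid 0)$; the change of variables $\tilde y = A(x)\,y$ then produces coordinates of the required form. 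After this, the remainder of the argument is purely computational.
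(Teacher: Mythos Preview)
Your argument is correct. The paper takes a slightly different and more geometric route: it chooses a holomorphic complement $R$ of $D$ inside $E$, extends $R$ to a complement $S$ of $TN$ in $TM\big\vert_N$, and then uses a local holomorphic tubular neighbourhood modelled on $S$ to set $F = f \circ \pi$, where $\pi : S \to N$ is the bundle projection. In your coordinate language this amounts to pushing the straightening one step further: instead of only arranging that $E/D$ is spanned by $[\partial/\partial y_1], \ldots, [\partial/\partial y_s]$, one arranges that $\partial/\partial y_1, \ldots, \partial/\partial y_s$ themselves lie in $E$ (so that the $w_j$ vanish), after which the simpler extension $F(x,y) = f(x)$ already annihilates $E$ with no correction terms needed. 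Your approach trades that extra coordinate adjustment for an explicit first-order Taylor formula; both are equally valid, and yours has the minor advantage of making the dependence on $f$ completely explicit.
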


\begin{proof}
Note that this is a local statement. We may therefore assume that $E = D \oplus R$ for some holomorphic subbundle $R$ of $TM\big\vert_N$, and then extend $R$ to a holomorphic subbundle $S$ containing $R$ such that $TM\big\vert_N = TN \oplus S$.
Then there exists a tubular neighbourhood of $N$ in $S$ that is biholomorphic to a neighbourhood of $p$ in $M$.
We may define $F = f \circ \pi$, where $\pi : S \longrightarrow N$ is the bundle map.
\end{proof}

The following example of complex analytic Marsden--Ratiu reduction will be important in the remainder of this section.

\begin{prop}[{cf.\ \cite[Example D]{marsden-ratiu}}]\label{lmgqb7xq}
Let $(M, \tau)$ be a complex analytic Poisson space, $N \s M$ a reduced complex analytic subspace, and $E \coloneqq \tau(TN^\circ)$.
\begin{enumerate}
\item[\textup{(i)}]
Conditions \textup{(1)} and \textup{(2)} in \textup{Theorem \ref{2k4asf2u}} are satisfied.
\item[\textup{(ii)}]
If $M$ and $N$ are smooth and $E$ and $D \coloneqq TN \cap E$ are vector bundles, then Condition \textup{(3)} is also satisfied.
\end{enumerate}
\end{prop}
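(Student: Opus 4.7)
The plan is to verify Conditions (1) and (2) of Theorem \ref{2k4asf2u} directly from the definition $E = \tau(TN^\circ)$, and to derive Part (ii) immediately from Lemma \ref{961mx9ov}.

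\emph{Condition (1).} Let $F, G \in \O_M^E$ and fix $p \in N$. To show that $d\{F, G\}_p$ annihilates $E_p$, note that every vector in $E_p$ has the form $\tau_p(dh_p) = X_h(p)$ for some $h \in \I_{N, p}$, since $T_pN^\circ$ is spanned by differentials of local sections of the ideal sheaf $\I_N$. Hence it suffices to show $\{h, \{F, G\}\}(p) = 0$, and by the Jacobi identity
\[
\{h, \{F, G\}\} = \{\{h, F\}, G\} + \{F, \{h, G\}\},
\]
one is reduced to showing that each of the two terms on the right vanishes at $p$. The key observation is that $\{h, F\}$ lies in $\I_{N, p}$. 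Indeed, for $q \in N$ near $p$ we have $\{h, F\}(q) = dF_q(\tau_q(dh_q))$, and $\tau_q(dh_q) \in E_q$ because $dh_q \in T_qN^\circ$; since $dF$ annihilates $E$ by hypothesis, $\{h, F\}$ vanishes set-theoretically on $N$ near $p$. Crucially, because $N$ is \emph{reduced}, the holomorphic Nullstellensatz yields $\{h, F\} \in \I_{N, p}$.

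Writing $\{h, F\} = \sum_i h_i \phi_i$ locally with $h_i \in \I_{N, p}$ and applying the Leibniz rule,
\[
\{\{h, F\}, G\}(p) = \sum_i \bigl(\phi_i(p)\{h_i, G\}(p) + h_i(p)\{\phi_i, G\}(p)\bigr) = \sum_i \phi_i(p)\{h_i, G\}(p),
\]
where the second summand vanishes as $h_i(p) = 0$. Each remaining term vanishes since $\{h_i, G\}(p) = dG_p(\tau_p(dh_i))$ lies in $dG_p(E_p) = 0$, and the symmetric argument handles $\{F, \{h, G\}\}(p)$. For Condition (2), antisymmetry of $\tau$ suffices: given $p \in N$ and $\eta \in E_p^\circ$, every $\xi \in T_pN^\circ$ satisfies $0 = \eta(\tau_p(\xi)) = -\xi(\tau_p(\eta))$, so $\tau_p(\eta) \in (T_pN^\circ)^\circ$, which equals $T_pN$ at smooth points (or via the Zariski tangent identification more generally). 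Hence $\tau(E^\circ) \s TN \s TN + E$.

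Part (ii) is then immediate: with $M$ and $N$ smooth and $E, D \coloneqq TN \cap E$ holomorphic vector bundles, Lemma \ref{961mx9ov} directly produces the required extension $F \in \O_{M, p}^E$ of any $f \in \O_{N, p}^D$. The main obstacle in the argument is the passage from the set-theoretic vanishing $\{h, F\}\big\vert_N = 0$ to the sheaf-theoretic membership $\{h, F\} \in \I_{N, p}$, which is precisely where the reducedness hypothesis enters; once this is available, everything else reduces to formal manipulations with Poisson brackets, the Leibniz rule, and the definition of $E = \tau(TN^\circ)$.
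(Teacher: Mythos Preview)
Your proof is correct. For Condition (1), the paper takes a more conceptual route: noting that $E^\circ = \tau(TN^\circ)^\circ = \tau^{-1}(TN)$, one sees that $F \in \O_M^E$ if and only if $X_F$ is tangent to $N$, and then $X_{\{F,G\}} = [X_F, X_G]$ reduces the claim to closure of tangent vector fields under Lie bracket (which, for singular $N$, again uses reducedness via $X_F(\I_N) \s \I_N$). Your argument unpacks the same content directly at the level of the Jacobi identity and the ideal $\I_N$; it is longer but makes the role of reducedness fully explicit. For Condition (2), your antisymmetry computation is exactly the paper's one-line $\tau(E^\circ) = \tau(\tau^{-1}(TN)) \s TN$, rewritten pointwise. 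Two minor simplifications: once $\{h,F\} \in \I_{N,p}$, the decomposition $\sum_i h_i \phi_i$ is unnecessary --- simply note $d\{h,F\}_p \in T_pN^\circ$, hence $X_{\{h,F\}}(p) \in E_p$ and $\{\{h,F\},G\}(p) = dG_p(X_{\{h,F\}}(p)) = 0$; and in Condition (2), the identity $(T_pN^\circ)^\circ = T_pN$ is pure linear algebra (double annihilator of a subspace) and needs no smoothness caveat.
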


\begin{proof}
Note that since $E^\circ = \tau(TN^\circ)^\circ = \tau^{-1}(TN),$ a local holomorphic function $F$ on $M$ satisfies $dF(E) = 0$ if and only if $X_F$ is tangent to $N$.
Condition (1) therefore follows from the Jacobi identity of $\tau$, which is equivalent to $X_{\{F, G\}} = [X_F, X_G]$ for all holomorphic functions $F$ and $G$.
Condition (2) also holds since $\tau(E^\circ) = \tau(\tau^{-1}(TN)) \s TN$.
Part (ii) holds by Lemma \ref{961mx9ov}.
\end{proof}

\subsection{Pre-Poisson complex submanifolds}\label{Subsection: Pre-Poisson complex submanifolds}

As in \S\ref{smooth-case}, a complex submanifold $S$ of a holomorphic Poisson manifold $(X, \sigma)$ is called \dfn{pre-Poisson} if $\sigma^{-1}(TS) \cap TS^\circ$ has constant rank over $S$.

We now discuss a special case of complex analytic Marsden--Ratiu reduction (Theorem \ref{2k4asf2u}) obtained from a Poisson map $\mu : M \longrightarrow X$ and a pre-Poisson submanifold $S \s X$.
Despite $\mu^{-1}(S)$ possibly being singular, we will show that the conditions of Theorem \ref{2k4asf2u} are satisfied with respect to $E \coloneqq \tau(TN^\circ)$.
This will be the basis for our notion of symplectic reduction along a submanifold, where $\mu$ will be the moment map of a Hamiltonian system.

To this end, we first recall some of the results of Cattaneo--Zambon \cite{cattaneo-zambon-2009} and adapt them to the complex analytic setting.
Let $(X, \sigma)$ be a holomorphic Poisson manifold.
As in the $C^\infty$ case (\S\ref{sratijkn}), any Poisson transversal complex submanifold $Y \s X$ inherits a canonical holomorphic Poisson structure \cite[Lemma 3]{frejlich-marcut}.

\begin{prop}[{cf.\ \cite[Theorem 3.3]{cattaneo-zambon-2009}}]
\label{i9vtsb2f}\label{Proposition: Transversal germ}
Let $S\subseteq X$ be a pre-Poisson complex submanifold of a holomorphic Poisson manifold $(X, \sigma)$.
For all $x \in S$, there is a Poisson transversal complex submanifold $Y \s X$ which contains a neighbourhood of $x$ in $S$ as a coisotropic submanifold.
\end{prop}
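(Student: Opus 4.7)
The strategy is to adapt the proof of \cite[Theorem 3.3]{cattaneo-zambon-2009} to the holomorphic category. The pre-Poisson hypothesis combined with Remark \ref{157l9vhc} ensures that, after restriction to a small neighbourhood of $x$, the subbundles $TS$, $TS^\circ$, $L_S \coloneqq \sigma^{-1}(TS) \cap TS^\circ$, and $TS + \sigma(TS^\circ)$ of $TX\big\vert_S$ (or $T^*X\big\vert_S$) all have constant rank. They are therefore holomorphic subbundles, and this is the key structural feature that makes the construction feasible in the holomorphic category.

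The first step is linear-algebraic and infinitesimal. Choose a holomorphic splitting $TS^\circ = L_S \oplus R$ near $x$; since $\ker(\sigma\big\vert_{TS^\circ}) \s L_S$, the restriction $\sigma\big\vert_R$ is injective, and $\sigma(R) \cap TS = 0$ because a vector in the intersection pulls back to an element of $R \cap \sigma^{-1}(TS) \s R \cap L_S = 0$. The subspace $T_xS \oplus \sigma(R_x) = T_xS + \sigma(T_xS^\circ)$ is then a natural candidate for part of the tangent space of $Y$ at $x$. Following Cattaneo--Zambon, one enlarges it by a carefully chosen complement inside the characteristic subspace $\sigma(T_x^*X)$ so as to eliminate the obstruction $\sigma(L_S|_x) \s T_xS$ to transversality, producing a subspace $\Lambda_x \s T_xX$ that is Poisson transversal at $x$ and still contains $T_xS$.

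Next I would realize $\Lambda_x$ as the tangent space at $x$ of a genuine holomorphic complex submanifold $Y$ containing a neighbourhood of $x$ in $S$. To do so, pick holomorphic one-forms $\alpha_1, \ldots, \alpha_k$ defined on a neighbourhood of $x$ whose restrictions to $S$ span the chosen complementary subbundles, and apply $\sigma$ to obtain holomorphic vector fields $W_i \coloneqq \sigma(\alpha_i)$ whose values at $x$ span the directions added to $T_xS$ to obtain $\Lambda_x$. The complex-time holomorphic flows $\varphi^{W_i}_{t_i}$ exist on small polydiscs, and the holomorphic map
\[
\Phi(t_1, \ldots, t_k, s) \coloneqq \varphi^{W_1}_{t_1} \circ \cdots \circ \varphi^{W_k}_{t_k}(s)
\]
from a small polydisc times a neighbourhood of $x$ in $S$ into $X$ has differential at $(0, x)$ of rank $\dim \Lambda_x$. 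The holomorphic implicit function theorem then produces $Y$ as a neighbourhood of $x$ in the image of $\Phi$.

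The final step is to verify that $Y$ is Poisson transversal at $x$ and that $S$ is coisotropic in $Y$ at $x$; both properties then persist on a neighbourhood of $x$ by openness. Poisson transversality $T_xY \oplus \sigma(T_xY^\circ) = T_xX$ is a pointwise computation using the skew-adjoint pairing $\langle \sigma(\alpha), \beta\rangle = -\langle \alpha, \sigma(\beta)\rangle$ together with the explicit description of $\Lambda_x$. Coisotropy follows from the identification $\sigma_Y(\xi) = \sigma(\hat \xi)$ with $\hat\xi \in \sigma^{-1}(TY)$ restricting to $\xi \in T^*Y$: for $\xi \in TS^\circ_Y$ the extension $\hat\xi$ can be chosen in $L_S$, so $\sigma(\hat\xi) \in TS$. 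The main obstacle is the holomorphic integration in the previous paragraph, since the smooth proof of \cite{cattaneo-zambon-2009} relies on tools such as bump functions and tubular neighbourhoods that are unavailable in the holomorphic category; one must instead rely on complex-time holomorphic flows and the holomorphic implicit function theorem, verifying at each stage that the required objects extend holomorphically. Once $\Phi$ is built, the algebraic verifications go through exactly as in the smooth case.
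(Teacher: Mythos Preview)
Your proposal has a genuine gap in the infinitesimal step. You enlarge $T_xS + \sigma(T_xS^\circ)$ by a complement chosen \emph{inside the characteristic subspace} $\sigma(T_x^*X)$, and you then realize $Y$ by flowing $S$ along Hamiltonian vector fields $W_i = \sigma(\alpha_i)$. But Hamiltonian vector fields are tangent to the symplectic leaves, so the submanifold you build satisfies $T_xY \subseteq T_xS + \sigma(T_x^*X)$. For a general pre-Poisson submanifold there is no reason that $T_xS + \sigma(T_x^*X) = T_xX$; for instance, take $X$ with the zero Poisson structure and $S$ a proper submanifold, where $\sigma(T_x^*X) = 0$ and the only Poisson transversal is an open set. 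The complement $R$ must be chosen in $TX\big\vert_S$, not in $\sigma(T^*X)\big\vert_S$, and correspondingly the vector fields spanning $R$ cannot all be Hamiltonian. With arbitrary holomorphic vector fields your flow argument would go through, but as written it does not.

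Separately, your main worry---that tubular neighbourhoods are unavailable holomorphically---is unfounded for a \emph{local} statement: near any $x \in S$ one has holomorphic slice coordinates in which $S$ is a linear subspace, and this is all that is needed. The paper exploits this directly: choose a holomorphic subbundle $R$ of $TX\big\vert_{S_x}$ with $TX\big\vert_{S_x} = (TS_x + \sigma(TS_x^\circ)) \oplus R$, and take $Y$ to be any submanifold germ containing $S_x$ with $TY\big\vert_{S_x} = TS_x \oplus R$, which a local tubular neighbourhood provides immediately. The verification that such $Y$ is Poisson transversal with $S$ coisotropic is then the same pointwise linear algebra as in \cite[Theorem 3.3]{cattaneo-zambon-2009}. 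So the paper's route is both shorter and avoids the issue above.
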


\begin{proof}
We simply redo the proof of \cite[Theorem 3.3]{cattaneo-zambon-2009} locally.
Let $S_x$ be the germ of $S$ at $x$.
Choose a subbundle $R$ of $TX\big\vert_{S_x}$ such that $$TX\big\vert_{S_x} = (TS_x + \sigma(TS_x^\circ)) \oplus R,$$ and let $Y$ be a submanifold germ of $X$ containing $S_x$ with $TY\big\vert_{S_x} = TS_x \oplus R$.
Note that the latter is possible since we can choose a tubular neighbourhood of $S$ near $x$.
\end{proof}

\begin{rem}\label{Remark: Stein}
If $X$ is Stein and $S$ is closed, then there is a Poisson transversal $Y$ containing $S$ globally.
To see this, recall that the positive-degree cohomologies of coherent sheaves on Stein manifolds are trivial.
Every short exact sequence of vector bundles therefore splits, so that we may choose a global $R$ in the proof.
Since the tubular neighbourhood theorem holds for Stein manifolds (see e.g.\ \cite[Theorem 3.3.3]{forstneric}), we can choose a global $Y$ with $TY\big\vert_S = TS \oplus R$.
\end{rem}

The previous result is the main ingredient used to obtain Condition (3) in complex analytic Marsden--Ratiu reduction.

\begin{prop}\label{8fzyp3dy}\label{dldijdjx}
Let $\mu : (M, \tau) \longrightarrow (X, \sigma)$ be a holomorphic Poisson map between holomorphic Poisson manifolds.
Let $S \s X$ be a pre-Poisson complex submanifold such that $N \coloneqq \mu^{-1}(S)$ is reduced, let $E \coloneqq \tau(TN^\circ)$, and let $D \coloneqq TN \cap E$.
Conditions \textup{(1)--(3)} in \textup{Theorem \ref{2k4asf2u}} are then satisfied.
In particular, for all $p \in N$ and $f \in \O_{N, p}^D$, there exists $F \in \O_{M, p}^E$ such that $F\big\vert_N = f$.
\end{prop}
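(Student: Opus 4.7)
My strategy is to dispatch Conditions (1) and (2) of Theorem \ref{2k4asf2u} immediately and then reduce Condition (3) to a coisotropic situation via the Poisson transversal trick of Cattaneo--Zambon (Proposition \ref{Proposition: Transversal germ}). Conditions (1) and (2) follow at once from Proposition \ref{lmgqb7xq}(i) applied to the holomorphic Poisson manifold $(M, \tau)$ and the reduced analytic subspace $N$, so the real content is Condition (3).

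To prove Condition (3), I would localize at $p \in N$, set $x \coloneqq \mu(p)$, and use Proposition \ref{Proposition: Transversal germ} to produce a Poisson transversal complex submanifold $Y \s X$ through $x$ that contains a neighbourhood of $x$ in $S$ as a coisotropic submanifold. Standard transversality for Poisson maps into Poisson transversals then implies that $M_Y \coloneqq \mu^{-1}(Y)$ is a Poisson transversal complex submanifold of $M$ and that the restriction $\mu' \coloneqq \mu\big\vert_{M_Y} : M_Y \too Y$ is a holomorphic Poisson map; I write $\tau'$ for the induced bivector on $M_Y$. The first technical step is to exploit the Poisson transversal splitting
\[
TM\big\vert_{M_Y} = TM_Y \oplus \tau(TM_Y^\circ)
\]
together with its dual $T^*M\big\vert_{M_Y} = TM_Y^\circ \oplus \tau^{-1}(TM_Y)$ to establish a direct sum decomposition
\[
E = E' \oplus \tau(TM_Y^\circ)
\]
along $N$, where $E' \coloneqq \tau'(TN^\circ_{M_Y})$; this is a purely linear-algebraic calculation relying on the fact that the restriction of $TN^\circ \s T^*M\big\vert_N$ to $TM_Y$ takes values in $TN^\circ_{M_Y}$.

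The next step is to show that $N$ is coisotropic in $M_Y$. Because $S$ is coisotropic in $Y$ and $\mu'$ is Poisson, the ideal $\mu'^*(I_S) \cdot \O_{M_Y}$ is a Poisson ideal in $\O_{M_Y}$; since $N$ is reduced, $I_N$ is its radical, and the radical of a Poisson ideal is again Poisson. Translating this back to Zariski tangent spaces gives $E' \s TN$ along $N$, and since $TN \s TM_Y$ meets $\tau(TM_Y^\circ)$ only in $0$, I obtain $D = TN \cap E = E'$. For any $f \in \O_{N, p}^D$, I would then choose an arbitrary holomorphic extension $f' \in \O_{M_Y, p}$ with $f'\big\vert_N = f$; the key observation is that $f'$ automatically lies in $\O_{M_Y, p}^{E'}$, because for $q \in N$ and $v \in E'_q \s T_qN$, the derivative $df'_q(v)$ depends only on $f'\big\vert_N = f$ and equals $df_q(v) = 0$.

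Finally, I would transfer $f'$ from $M_Y$ back to the ambient $M$ using a holomorphic tubular neighbourhood of $M_Y$ whose fibres are tangent to $\tau(TM_Y^\circ)$: letting $\pi$ denote the associated retraction onto $M_Y$, set $F \coloneqq \pi^* f'$. Then $F\big\vert_{M_Y} = f'$ (so $F\big\vert_N = f$), $dF$ annihilates $\tau(TM_Y^\circ)$ by construction, and $dF\big\vert_{TM_Y} = df'$ annihilates $E'$, so the decomposition $E = E' \oplus \tau(TM_Y^\circ)$ yields $F \in \O_{M, p}^E$. The main obstacle will be the careful bookkeeping at singular points of $N$: one must consistently interpret $TN$ as the Zariski tangent space, justify that the radical of a Poisson ideal is Poisson in the holomorphic setting, and confirm that the various annihilator identities survive this interpretation.
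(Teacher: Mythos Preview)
Your proposal is correct and follows essentially the same route as the paper: both dispatch Conditions (1)--(2) via Proposition \ref{lmgqb7xq}(i), localize and embed $S$ coisotropically in a Poisson transversal $Y$ via Proposition \ref{i9vtsb2f}, pull $Y$ back to a Poisson transversal $P = \mu^{-1}(Y)$ in $M$, and use the splitting $E = D \oplus \tau(TP^\circ)\big\vert_N$ together with an extension of $f$ that is constant along $\tau(TP^\circ)$.

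The only noteworthy difference is your treatment of coisotropy of $N$ in $P$. You argue via Poisson ideals and radicals, whereas the paper simply invokes the pointwise linear-algebra fact that the preimage of a coisotropic subspace under a Poisson linear map is coisotropic: since $T_pN = (d\mu')^{-1}(T_xS)$ is the Zariski tangent space of the scheme-theoretic preimage and $d\mu' \circ \tau' \circ (d\mu')^* = \sigma_Y$, one gets $\tau'(T_pN^\circ) \s T_pN$ directly. Your radical argument is not wrong, but it is an unnecessary detour; the ``careful bookkeeping at singular points'' you anticipate is already handled by working with Zariski tangent spaces throughout, exactly as the paper does. Similarly, your inline tubular-neighbourhood construction is precisely the content of Lemma \ref{961mx9ov}, which the paper cites rather than re-proves.
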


\begin{proof}
Conditions (1) and (2) hold by Proposition \ref{lmgqb7xq}(i), so it suffices to verify Condition (3).
We first observe that this third condition is local. 
Proposition \ref{i9vtsb2f} therefore allows us to assume that $S \s Y \s X$, where $Y$ is Poisson transversal in $X$ and $S$ is coiostropic in $Y$.
By \cite[Lemma 7]{frejlich-marcut}, $Y$ is transverse to $\mu$, $P \coloneqq \mu^{-1}(Y)$ is a Poisson transversal submanifold of $M$, and the restriction map $P \longrightarrow Y$ is Poisson\footnote{This was originally stated for smooth manifolds in \cite{frejlich-marcut}, but the same proof works in the holomorphic setting.}.
Since the preimage of a coisotropic subspace under a Poisson map is coisotropic, $N=\mu^{-1}(S)$ is coisotropic in $P$.

Now let $p \in N$ and $f \in \O_{N, p}^D$.
Extend $f$ to $\tilde{f} \in \O_{P, p}$.
Since $P$ is smooth and $TP \cap \tau(TP^\circ) = 0$, Lemma \ref{961mx9ov} implies that there exists $F \in \O_{M, p}$ such that $F\big\vert_P = \tilde{f}\big\vert_P$ and $dF(\tau(TP^\circ)) = 0$.
We also have that $TP\big\vert_N \cap E \s TN$ as $N$ is coisotropic in $P$, so $E = D \oplus \tau(TP^\circ)\big\vert_N.$
It follows that
\[
dF(E) = dF(D \oplus \tau(TP^\circ)\big\vert_N) = dF(D) = df(D) = 0.\qedhere
\]
\end{proof}

\subsection{Complex analytic symplectic reduction along a submanifold}\label{Subsection: Complex analytic symplectic reduction along a submanifold} 

We now state and prove the main result of this section --- a complex analytic version of symplectic reduction along a submanifold.

We begin with a few definitions.
A \dfn{holomorphic Hamiltonian system} is a tuple $((M, \omega), \G \tto X, \mu)$, where $M$ is a complex manifold endowed with a holomorphic symplectic form $\omega$, and $\G \tto X$ is a holomorphic symplectic groupoid acting on $M$ in a Hamiltonian way with moment map $\mu : M \longrightarrow X$; the definition is the same as in \S\ref{smooth-case}, except that ``smooth'' has become ``holomorphic''.

In particular, the base $X$ of a holomorphic Hamiltonian system inherits a canonical holomorphic Poisson bivector field $\sigma : T^*X \longrightarrow TX$.
For a pre-Poisson complex submanifold $S \s X$, the \dfn{stabilizer subalgebroid}
\[
L_S \coloneqq \sigma^{-1}(TS) \cap TS^\circ
\]
is a holomorphic Lie subalgebroid of $T^*X = \mathrm{Lie}(\G)$.
By Laurent-Gengoux--Sti\'{e}non--Xu's work on the integration of holomorphic Lie algebroids \cite{LGSX-integration}, the source-connected, source-simply-connected Lie subgroupoid $\G_S \tto S$ of $\G$ with Lie algebroid $L_S$ is a holomorphic Lie groupoid and the immersion $\G_S \longrightarrow \G$ is holomorphic \cite[Theorem 3.17 and Proposition 3.20]{LGSX-integration}.
The subgroupoid $\G_S$ is also isotropic in $\G$ by \cite[Proposition 7.2]{cattaneo-zambon-2009}.
More generally, we call any isotropic holomorphic subgroupoid $\H \tto S$ of $\G$ with Lie algebroid $L_S$ a \dfn{holomorphic stabilizer subgroupoid of $S$ in $\G$}.

Let $((M, \omega), \G \tto X, \mu)$ be a holomorphic Hamiltonian system, $S \s X$ a pre-Poisson complex submanifold, and $\H \tto S$ a holomorphic stabilizer subgroupoid of $S$ in $\G$.
Let $N \coloneqq \mu^{-1}(S) \subseteq M$ be the complex analytic space defined by the ideal generated by all elements $\mu^*f$, where $f$ is in the ideal of $S\subseteq X$.
One then has a holomorphic action of $\H \tto S$ on $N$. 
Given any point $p \in N$, we have a holomorphic orbit map
\[
\varphi_p : \H_{\mu(p)} \too N, \quad h \mtoo h \cdot p.
\]
This enables us to define
\[
T_p(\H \cdot p) \coloneqq \im d\varphi_p
\]
as a vector subspace of $T_pN$.

\begin{prop}\label{kwi1nwnj}
We have
\[
T_p(\H \cdot p) = T_pN \cap T_pN^\omega
\]
for all $p \in N$.
\end{prop}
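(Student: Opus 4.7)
The plan is to parallel the proof of the smooth analogue, Theorem \ref{distribution-proposition}, lifting Lemmas \ref{1hpyffiz}, \ref{gn98u3jv}, and \ref{kvugo3gu} to the holomorphic category; their proofs transcribe verbatim, depending only on formal properties of symplectic groupoids, Poisson maps, and the coisotropy of the graph of the action. For the inclusion $T_p(\H \cdot p) \subseteq T_pN \cap T_pN^\omega$, I would take $v \in T_x\H_x$ with $x \coloneqq \mu(p)$, use the holomorphic version of Lemma \ref{kvugo3gu} to write $v = X_{\sss^*f}(x)$ for some local holomorphic $f$ with $f\big\vert_S = 0$ and $X_f(x) \in T_xS$, and invoke Lemma \ref{gn98u3jv} to get $d\varphi_p(v) = X_{\mu^*f}(p)$. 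This vector lies in $T_pN$ since the $\H$-action preserves $N$, and the inclusion $T_pN \subseteq d\mu_p^{-1}(T_xS)$, which holds at the level of Zariski tangent spaces for any reduced preimage of a submanifold, implies that $d(\mu^*f)_p$ annihilates $T_pN$ and hence $X_{\mu^*f}(p) \in T_pN^\omega$.

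The reverse inclusion is more delicate, since the smooth proof relied on the identity $T_pN^\circ = d\mu_p^*(T_xS^\circ)$, which requires clean intersection and is not assumed here. To circumvent this, I would work locally near $p$ and invoke Proposition \ref{i9vtsb2f} to embed a neighbourhood of $x$ in $S$ coisotropically into a Poisson transversal complex submanifold $Y \subseteq X$. A holomorphic version of \cite[Lemma 7]{frejlich-marcut} then makes $P \coloneqq \mu^{-1}(Y)$ into a holomorphic symplectic submanifold of $M$ transverse to $\mu$, on which the restriction $\mu\big\vert_P : P \longrightarrow Y$ is holomorphic Poisson and whose preimage of $S$ is precisely $N$. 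Because $S$ is coisotropic in $Y$ and $\mu\big\vert_P$ is Poisson, $N \subseteq P$ is itself coisotropic; moreover the transversality $T_pP = d\mu_p^{-1}(T_xY)$ forces $T_pN \cap T_pN^\omega = T_pN \cap T_pN^{\omega_P}$, with $\omega_P$ the restriction of $\omega$ to $P$, and this common quantity is the null distribution of $N$ viewed as a coisotropic subspace of $P$.

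Having moved the problem into $P$, the transversality of $P$ with $\mu$ restores clean intersection in the sense that the Zariski tangent space of $N \subseteq P$ coincides with $d(\mu\big\vert_P)_p^{-1}(T_xS)$. The smooth-case argument applied to the triple $(P, \mu\big\vert_P, S)$ then expresses any $v \in T_pN \cap T_pN^{\omega_P}$ as $X_{(\mu\big\vert_P)^*f}(p)$ for an appropriate local holomorphic $f$ with $f\big\vert_S = 0$ and $X_f(x) \in T_xS$. Combined with the holomorphic Lemma \ref{kvugo3gu} applied to the symplectic subgroupoid $\G\big\vert_Y \tto Y$, this exhibits $v$ as the image under $d\varphi_p$ of an element of $T_x\H_x$, completing the argument.

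The main technical obstacle is verifying that the stabilizer subalgebroid $L_S \subseteq T^*X$ --- defined using the Poisson bivector $\sigma$ on the ambient manifold $X$ --- is canonically identified, after restriction through the Poisson transversal, with the analogous object built on $Y$ using $\sigma_Y$. Equivalently, one must check that $\H$ integrates the characteristic distribution of $S \subseteq Y$, so that its orbits on $N$ are precisely the null leaves of the coisotropic $N \subseteq P$. This compatibility is encoded in the description of $\G\big\vert_Y$ as a symplectic subgroupoid of $\G$ integrating $Y$ \cite[\S7]{cattaneo-zambon-2009}; once accepted, the argument closes.
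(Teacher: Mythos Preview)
Your detour through the Poisson transversal is unnecessary, and the misconception driving it is the belief that the identity $T_pN^\circ = d\mu_p^*(T_xS^\circ)$ (equivalently $T_pN = d\mu_p^{-1}(T_xS)$) requires clean intersection. It does not. In the complex analytic setting, $N$ is \emph{defined} as the complex analytic subspace whose ideal sheaf is generated by $\mu^*\mathcal{I}_S$; the Zariski tangent space of such a scheme-theoretic preimage is always
\[
T_pN = \{v \in T_pM : d(\mu^*g)_p(v) = 0 \text{ for all } g \in (\mathcal{I}_S)_x\} = d\mu_p^{-1}(T_xS),
\]
regardless of whether $N$ is smooth. This is exactly the observation the paper makes in one line (``By the definition of $N$, we have $T_pN = d\mu_p^{-1}(T_{\mu(p)}S)$''), after which the smooth proof of Theorem~\ref{distribution-proposition} transcribes verbatim. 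In the smooth category one needs clean intersection precisely because there is no scheme structure to fall back on; here the analytic structure supplies the tangent-space identity for free.

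Your approach can be made to work, but it only postpones the same observation rather than avoiding it. When you assert that ``the transversality of $P$ with $\mu$ restores clean intersection in the sense that the Zariski tangent space of $N \subseteq P$ coincides with $d(\mu\big\vert_P)_p^{-1}(T_xS)$,'' you are misattributing the cause: transversality of $Y$ with $\mu$ makes $P$ smooth, but says nothing about whether $S$ meets $\mu\big\vert_P$ cleanly --- indeed $N$ can still be singular inside $P$. The Zariski tangent space identity you need at that step holds for exactly the same reason it held in $M$ to begin with, namely that $N$ is the scheme-theoretic preimage of $S$ under $\mu\big\vert_P$. So the embedding into $P$ buys nothing for this proposition (though it is genuinely useful elsewhere in the section, e.g.\ in Proposition~\ref{8fzyp3dy}).
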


\begin{proof}
By the definition of $N$, we have $T_pN = d\mu_p^{-1}(T_{\mu(p)}S).$
This allows us to repeat the proof of Theorem \ref{distribution-proposition} verbatim in the present context.
\end{proof}

For an $\mathcal{H}$-invariant open subset $U \s N$, we let $\O_N(U)^\H$ be the set of $\H$-invariant elements of $\O_N(U)$.
The next result gives a Poisson structure on $\O_N(U)^\H$; it is the first step towards the construction of a Poisson structure on a quotient of $N$ by $\H$.

\begin{thm}\label{zogb0t7j}
Let $((M, \omega), \G \tto X, \mu)$ be a holomorphic Hamiltonian system, $S \s X$ a pre-Poisson complex submanifold such that $N \coloneqq \mu^{-1}(S)$ is reduced, and $\H \tto S$ a holomorphic stabilizer subgroupoid of $S$ in $\G$.
For every $\H$-invariant open set $U \s N$, there is a unique Poisson bracket
\[
\{\cdot, \cdot\}' : \O_N(U)^\H \times  \O_N(U)^\H \too  \O_N(U)^\H
\]
with the following property: for all $f, g \in \O_N(U)^\H$ and $p \in U$, the germ of $\{f, g\}'$ at $p$ is $\{F, G\}\big\vert_N$, where $F, G \in \O_{M, p}$ are any extensions of the germs of $f, g$ at $p$ such that $dF(TN^\omega) = dG(TN^\omega) = 0$.
\end{thm}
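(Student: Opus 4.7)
The plan is to deduce the theorem from the complex analytic Marsden--Ratiu reduction Theorem \ref{2k4asf2u} applied with the Poisson bivector $\tau \coloneqq \omega^{-1}$ on $M$, the reduced subspace $N$, and the subbundle $E \coloneqq \tau(TN^\circ) = TN^\omega$, setting $D \coloneqq TN \cap E$. Its three hypotheses are precisely what Proposition \ref{dldijdjx} supplies for the Poisson moment map $\mu : M \too X$ and the pre-Poisson submanifold $S$. One thereby obtains a Poisson bracket $\{\cdot, \cdot\}'$ on the sheaf $\O_N^D$, uniquely characterized by the extension formula $\{f, g\}'_p = \{F, G\}\big\vert_N$ for any local extensions $F, G \in \O_{M, p}$ with $dF(TN^\omega) = dG(TN^\omega) = 0$; bilinearity, antisymmetry, the Leibniz rule, and the Jacobi identity are built into the output of Theorem \ref{2k4asf2u}.

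By Proposition \ref{kwi1nwnj}, $T_p(\H \cdot p) = D_p$ for every $p \in N$, so any $\H$-invariant holomorphic function on any $\H$-invariant open $U \subseteq N$ has differential annihilating $D$. This yields the inclusion $\O_N(U)^\H \subseteq \O_N^D(U)$, and the restriction of $\{\cdot, \cdot\}'$ to $\O_N(U)^\H \times \O_N(U)^\H$ is thus a well-defined, $\O_N^D(U)$-valued, $\C$-bilinear, skew-symmetric pairing satisfying the Leibniz rule and the Jacobi identity, with uniqueness inherited from the extension formula.

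The main obstacle is to show that $\{f, g\}'$ is itself $\H$-invariant when $f$ and $g$ are, not merely $D$-invariant; this is a genuine issue whenever $\H$ fails to be source-connected, since $\O_N^D$-membership only encodes invariance along source-connected components of orbits. My strategy is to realize the action of any $h \in \H$ locally as the restriction of a holomorphic symplectomorphism of $M$. Choose a local holomorphic bisection $B$ of $\H$ through $h$; because $\H$ is isotropic in $\G$, the bisection $B$ is isotropic in $\G$ of dimension $\dim S$, so extends to a local Lagrangian bisection $\tilde B$ of $\G$ through $h$ by completing $T_h B$ to a Lagrangian subspace of $T_h \G$ transverse to $\ker d\sss_h$ and $\ker d\ttt_h$ and invoking the local Lagrangian embedding theorem. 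After possibly shrinking, one can arrange $\tilde B$ to extend $B$ on a neighbourhood of $\sss(h)$ in $S$, so that the induced local holomorphic symplectomorphism $\phi_{\tilde B} : M \too M$ preserves $N$ near any $p$ with $\mu(p) = \ttt(h)$ and restricts there to the $B$-action. Given extensions $F, G \in \O_{M, p}$ of $f, g$ annihilating $TN^\omega$, the compositions $F \circ \phi_{\tilde B}^{-1}$ and $G \circ \phi_{\tilde B}^{-1}$ are extensions of $f, g$ near $h \cdot p$ satisfying the same annihilation condition --- they restrict to $f, g$ on $N$ by $\H$-invariance of $f, g$, and $d\phi_{\tilde B}^{-1}$ preserves $TN^\omega$ because $\phi_{\tilde B}$ is a symplectomorphism preserving $N$. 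Since holomorphic symplectomorphisms preserve Poisson brackets, $\{F \circ \phi_{\tilde B}^{-1}, G \circ \phi_{\tilde B}^{-1}\}(h \cdot p) = \{F, G\}(p)$, so $\{f, g\}'(h \cdot p) = \{f, g\}'(p)$, establishing $\H$-invariance and completing the proof.
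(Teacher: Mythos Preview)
Your reduction to Theorem \ref{2k4asf2u} via Proposition \ref{dldijdjx}, together with the observation $\O_N(U)^\H \subseteq \O_N^D(U)$ from Proposition \ref{kwi1nwnj}, is exactly how the paper begins. The difference lies entirely in how you establish $\H$-invariance of $\{f,g\}'$.

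The paper does not extend anything to $\G$. It takes only a local holomorphic \emph{section} $\rho$ of $\ttt|_\H$ through $h$ and works with the induced map $\psi_\rho$ on $N$. Using that the graph of the $\G$-action is Lagrangian in $\G \times M \times M^-$ and that $\H$ is isotropic, it shows directly that $\psi_\rho^* i^*\omega = i^*\omega$; from this it deduces $d\psi_\rho(X_F|_p) - X_{\tilde F}|_{h\cdot p} \in T_{h\cdot p}N^\omega$ and concludes $\{F,G\}(p) = \{\tilde F,\tilde G\}(h\cdot p)$ by a short computation. No ambient symplectomorphism of $M$ is invoked.

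Your route---extend a bisection $B$ of $\H$ to a Lagrangian bisection $\tilde B$ of $\G$ and use that Lagrangian bisections of a symplectic groupoid act by symplectomorphisms on Hamiltonian $\G$-spaces---is valid and conceptually clean, but the extension step is under-argued. Completing $T_hB$ to a Lagrangian $W$ transverse to $\ker d\sss_h$ and $\ker d\ttt_h$ is correct linear algebra, and a Lagrangian submanifold with tangent $W$ at $h$ certainly exists; however, ``the local Lagrangian embedding theorem'' alone does not produce a Lagrangian \emph{containing} $B$. For that you need the holomorphic isotropic neighbourhood theorem (model a neighbourhood of $B$ in $\G$ on a standard form and extend there), and you should say so. Once $\tilde B \supseteq B$ is secured, your observation that $\tilde B \cap \ttt^{-1}(S) = B$ near $h$ (by a dimension count, since $\ttt|_{\tilde B}$ is a local biholomorphism) guarantees $\phi_{\tilde B}$ preserves $N$ and restricts to the $\H$-action, and the rest of your argument goes through. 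In short: your strategy works, but trades the paper's elementary computation for an appeal to the isotropic embedding theorem and the general fact about Lagrangian bisections; the paper's approach is more self-contained, yours is more structural.
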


\begin{proof}
Proposition \ref{kwi1nwnj} implies that $\O_N(U)^\H \s \O_N^D(U)$, where $D \coloneqq TN \cap TN^\omega$.
Since the moment map $\mu : M \longrightarrow X$ is Poisson, Proposition \ref{dldijdjx} shows that the complex analytic version of the Marsden--Ratiu theorem (Theorem \ref{2k4asf2u}) holds, producing a Poisson bracket
\[
\{\cdot, \cdot\}' : \O_N^D(U) \times \O_N^D(U) \too \O_N^D(U).
\]
It suffices to show that $\O_N(U)^\H$ is closed under $\{\cdot, \cdot\}'$.

Let $f, g \in \O_N(U)^\H$, let $p \in U$, and let $h \in \H$ be such that $\ttt(h) = \mu(p)$.
We want to show that $$\{f, g\}(h \cdot p) = \{f, g\}(p).$$
In other words, we want to show that for any $F, G \in \O_{M, p}^{TN^\omega}$ satisfying $F\big\vert_N = f_p$, $G\big\vert_N = g_p$, and any $\tilde{F}, \tilde{G} \in \O_{M, h \cdot p}^{TN^\omega}$ satisfying $\tilde{F}\big\vert_N = f_{h \cdot p}$, $\tilde{G}\big\vert_N = g_{h \cdot p}$, we have
\begin{equation}\label{0r2e2otz}
\{F, G\}(p) = \{\tilde{F}, \tilde{G}\}(h \cdot p).
\end{equation}
Since $\ttt$ is a submersion, we can find a local holomorphic section
\[
\rho : V \s S \longrightarrow \H
\]
of $\ttt$ passing through $h$, i.e.\ $V$ is a neighbourhood of $\ttt(h)$ in $S$, $\ttt \circ \rho = \mathrm{Id}$, and $\rho(\ttt(h)) = h$.
We then get a biholomorphism
\[
\psi_\rho : N \cap \mu^{-1}(V) \longrightarrow N, \quad \psi_\rho(q) = \rho(\mu(q)) \cdot q,
\]
such that $\psi_\rho(p) = h \cdot p$.

To show \eqref{0r2e2otz}, we first claim that 
\begin{equation}\label{hsj792ou}
\psi_\rho^*i^*\omega = i^*\omega,
\end{equation}
where $i : N \longrightarrow M$ is the inclusion map.
Note that for all $q \in N \cap \mu^{-1}(V)$, $(\rho(\mu(q)), q, \psi_\rho(q))$ is in the graph $\Gamma$ of the action of $\G$ on $M$, which is Lagrangian in $\G \times M \times M^-$ by definition\footnote{We are implicitly identifying $\H$ as a subset of $\G$ for simplicity of notation; this is only true locally.}.
Hence, for all $u, v \in T_qN$, we have
\[
(d\rho(d\mu(u)), u, d\psi_\rho(u)) \in T\Gamma,
\]
and the analogous statement is true for $v$.
It follows that
\[
\Omega(d\rho(d\mu(u)), d\rho(d\mu(v))) + \omega(u, v) - \omega(d\psi_\rho(u), d\psi_\rho(v)) = 0.
\]
But $\H$ is isotropic in $\G$, so the first term vanishes.
This proves \eqref{hsj792ou}.

Now note that $X_F$ and $X_G$ are tangent to $N$, as one has $$\omega(X_F, TN^\omega) = dF(TN^\omega) = 0$$ and an analogous statement for $X_G$.
In particular, we may evaluate $d\psi_\rho$ at $X_F|_p$.
We claim that
\begin{equation}\label{0c3jvlj8}
d\psi_\rho(X_F|_p) - X_{\tilde{F}}|_{\psi_\rho(p)} \in T_{\psi_\rho(p)}N^\omega.
\end{equation}
To see this, let $v \in T_{\psi_\rho(p)}N$ and write $v = d\psi_\rho(u)$ for $u \in T_pN$.
Then by \eqref{hsj792ou}, we have $$\omega(d\psi_\rho(X_F|_p), v) = \psi_\rho^*\omega(X_F|_p, u) = \omega(X_F, u) = dF(u) = df(u).$$
But $f$ is $\H$-invariant, so $$df(u) = d(\psi_\rho^*f)(u) = df(v) = d\tilde{F}(v) = \omega(X_{\tilde{F}}|_{\psi_\rho(p)}, v).$$
This proves \eqref{0c3jvlj8}.

At the same time, since $d\tilde{G}(TN^\omega) = 0$, \eqref{0c3jvlj8} shows that
\begin{align*}
\{\tilde{F}, \tilde{G}\}(h \cdot p) &= d\tilde{G}(X_{\tilde{F}}|_{h \cdot p}) = d\tilde{G}(d\psi_\rho(X_F|_p)) = dg(d\psi_\rho(X_F|_p)) = dg(X_F|_p) = dG(X_F|_p) \\
&= \{F, G\}(p).
\end{align*}
This proves \eqref{0r2e2otz} and hence $\O_N(U)^\H$ is closed under $\{\cdot, \cdot\}'$.
\end{proof}

To discuss the quotient of $N$ by $\H$ in the most general context, we make the following definition.

\begin{defn}
Let $\H$ be a holomorphic Lie groupoid acting holomorphically on a complex analytic space $(N, \O_N)$.
A \dfn{complex analytic quotient} of $N$ by $\H$ is a complex analytic space $(Q, \O_Q)$ together with an $\H$-invariant holomorphic map $\pi : N \longrightarrow Q$ such that
\[
\pi^* : \O_Q \too (\pi_*\O_N)^\H
\]
is a sheaf isomorphism.
\end{defn}

Theorem \ref{zogb0t7j} then has the following consequence.

\begin{thm}[Theorem \ref{Theorem: Complex analytic category}(i)---(iv)]\label{ytazg95b}
Let $((M, \omega), \G \tto X, \mu)$ be a holomorphic Hamiltonian system, $S \s X$ a pre-Poisson complex submanifold, and $\H \tto S$ a holomorphic stabilizer subgroupoid of $S$ in $\G$.
Suppose that $N \coloneqq \mu^{-1}(S)$ is reduced and that there is a complex analytic quotient of $N$ by $\H$, denoted $\pi : N \longrightarrow Q$.

\begin{enumerate}
\item[\textup{(i)}]
For all $p \in N$ and $f \in \O_{Q, \pi(p)}$, there exists $F \in \O_{M, p}$ such that $\pi^*f = F\big\vert_N \in \O_{N, p}$ and $dF(TN^\omega) = 0$.

\item[\textup{(ii)}]
There is a unique holomorphic Poisson structure $\{\cdot, \cdot\}_Q$ on $Q$ such that 
\[
\pi^*\{f, g\}_Q = \{F, G\}_M\big\vert_N
\]
for all $f, g \in \O_{Q, \pi(p)}$ and $F, G \in \O_{M, p}$ related to $f, g$ as in \textup{Part (i)}.\qed

\item[\textup{(iii)}]
Assume that there exists $p \in N$ such that $d\pi_p$ is surjective, $\pi^{-1}(\pi(p))$ is an $\mathcal{H}$-orbit, and $\pi(p)$ is a smooth point of $Q$.
The Poisson structure in \textup{Part (ii)} is then symplectic on a neighbourhood of $\pi(p)$.

\item[\textup{(iv)}]
Suppose that $S$ intersects $\mu$ cleanly and that $Q = N/\H$ has a complex-manifold structure such that $\pi : N \longrightarrow Q$ is a holomorphic submersion.
The Poisson structure on $Q$ is then induced by a holomorphic symplectic form $\bar{\omega}$ satisfying $\pi^*\bar{\omega} = i^*\omega$, where $i : N\longrightarrow M$ is the inclusion map.

\end{enumerate}
\end{thm}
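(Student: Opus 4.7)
The plan is to handle the four parts sequentially, each leveraging the preceding machinery, with Part (iii) demanding the most care. For Part (i), I note that $\pi^*f$ is an $\H$-invariant germ in $\O_{N,p}$ since $\pi$ is $\H$-invariant. By Proposition \ref{kwi1nwnj}, the infinitesimal $\H$-orbit is $T_p(\H \cdot p) = T_pN \cap T_pN^\omega$, so $d(\pi^*f)$ annihilates $D \coloneqq TN \cap TN^\omega$. The extension $F \in \O_{M,p}$ with $F\big\vert_N = \pi^*f$ and $dF(TN^\omega) = 0$ is then produced by Proposition \ref{dldijdjx}, which establishes Condition (3) of complex analytic Marsden--Ratiu reduction applied with $E = TN^\omega$.

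For Part (ii), I would transport the Poisson bracket $\{\cdot, \cdot\}'$ of Theorem \ref{zogb0t7j} on $(\pi_*\O_N)^\H$ across the sheaf isomorphism $\pi^* \colon \O_Q \too (\pi_*\O_N)^\H$. The characterizing identity $\pi^*\{f,g\}_Q = \{F,G\}_M\big\vert_N$ then follows directly from the definition of $\{\cdot, \cdot\}'$ combined with Part (i) for the existence of $F, G$, and uniqueness is forced by injectivity of $\pi^*$.

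Part (iii) is the main obstacle. I would first derive the identity
\[
d\pi_p(X_F(p)) = X_f(\pi(p))
\]
by pairing both sides with an arbitrary $dg_{\pi(p)}$ and applying the formula of Part (ii). Now suppose $X_f(\pi(p)) = 0$; then $X_F(p) \in \ker d\pi_p$. The combination of $\pi^{-1}(\pi(p)) = \H \cdot p$, surjectivity of $d\pi_p$, and smoothness of $\pi(p)$ in $Q$ should let me identify $\ker d\pi_p$ with $T_p(\H \cdot p) = T_pN \cap T_pN^\omega$ via Proposition \ref{kwi1nwnj}. Since $X_F(p) \in T_pN$ already follows from $dF(TN^\omega) = 0$ (using $(TN^\omega)^\omega = TN$), this forces $X_F(p) \in T_pN^\omega$, hence $dF\big\vert_{T_pN} = 0$, hence $d(\pi^*f)_p = 0$, and finally $df_{\pi(p)} = 0$ by surjectivity of $d\pi_p$. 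This gives injectivity of $\sigma_Q^\flat \colon T^*_{\pi(p)}Q \too T_{\pi(p)}Q$ at $\pi(p)$, hence bijectivity by dimension, and openness of the non-degeneracy locus inside the smooth locus of $Q$ then propagates symplecticity to a neighbourhood. The delicate step I expect to take the most care is the pointwise identification $\ker d\pi_p = T_p(\H \cdot p)$, which genuinely relies on the orbit being cut out locally as a smooth fibre of $\pi$ at $p$.

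For Part (iv), I would translate the proof of Theorem \ref{main-theorem-smooth} to the holomorphic category. Clean intersection makes $N$ a closed complex submanifold of $M$, so $i^*\omega$ is a holomorphic closed $2$-form on $N$ whose kernel distribution is $TN \cap TN^\omega$; by Proposition \ref{kwi1nwnj} and the submersion hypothesis on $\pi$, this kernel coincides with $\ker d\pi$. Standard descent along the holomorphic submersion $\pi$ then produces a unique holomorphic closed $2$-form $\bar{\omega}$ on $Q$ satisfying $\pi^*\bar{\omega} = i^*\omega$, and the matching kernels force $\bar{\omega}$ to be non-degenerate. Finally, $\bar{\omega}$ induces a Poisson structure obeying the characterizing identity of Part (ii), so by the uniqueness established there it must coincide with the Poisson structure constructed in Part (ii).
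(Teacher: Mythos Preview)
Your proof is correct and closely parallels the paper's. Parts (i) and (ii) are identical in substance. In Part (iii) you argue injectivity of the Poisson bivector $\tau_Q$ directly (tracing $X_f(\pi(p)) = 0 \Rightarrow df_{\pi(p)} = 0$), while the paper instead descends $\omega$ through the short exact sequence $0 \to T_pN \cap T_pN^\omega \to T_pN \to T_{\pi(p)}Q \to 0$ to obtain an explicit left inverse $\bar{\omega}_x$ of $\tau_Q$ and checks $\bar{\omega}_x \circ \tau_Q = \mathrm{Id}$; these are equivalent linear-algebra manoeuvres, and both hinge on the identification $\ker d\pi_p = T_p(\H \cdot p)$ that you correctly flag as the delicate step. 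For Part (iv), the paper simply observes that under those hypotheses every point of $Q$ satisfies the conditions of (iii), so (iv) is immediate---this shortcut is available to them because their proof of (iii) has already exhibited the symplectic form as the descent of $\omega$. Your proof of (iii) establishes non-degeneracy without producing the formula $\pi^*\bar{\omega} = i^*\omega$, so the separate descent argument you give in (iv), followed by the appeal to uniqueness from (ii), is genuinely needed on your route and is a clean way to close the loop.
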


\begin{proof}
Part (i) follows from Proposition \ref{8fzyp3dy}, and
Part (ii) follows from Theorem \ref{zogb0t7j} using the isomorphisms $\pi^* : \O_Q(U) \cong \O_N(\pi^{-1}(U))^\H$.

To prove Part (iii), let $x \coloneqq \pi(p)$ and let $\tau : T_x^*Q \longrightarrow T_xQ$ be the Poisson structure at $x$.
It suffices to show that $\tau$ has an inverse $\bar{\omega}_x : T_xQ \longrightarrow T_x^*Q$.

Since $\pi^{-1}(x) = \H \cdot p$, Proposition \ref{kwi1nwnj} shows that $\ker d\pi_p = T_pN \cap T_pN^\omega$, so we have a short exact sequence
\[
0 \longrightarrow T_pN \cap T_pN^\omega \longrightarrow T_pN \longrightarrow T_xQ \longrightarrow 0.
\]
It follows that the symplectic form $\omega_x : T_xM \longrightarrow T_x^*M$ descends to a map $\bar{\omega}_x : T_xQ \longrightarrow T_x^*Q$.
To show that $\bar{\omega}_x$ is an inverse of $\tau$, let $f \in \O_{Q, x}$ and $F \in \O_{M, p}$ be such that $\pi^*f = F\big\vert_N$ and $dF(TN^\omega) = 0$.
By the definition of $\tau$, we have $\tau(df_x) = d\pi(X_F|_p)$.
Hence, for all $v \in T_pN$, we have $$\bar{\omega}_x(\tau(df_x))(d\pi_p(v)) = \omega(X_F|_p, v) = dF_p(v) = df_x(d\pi_p(v)).$$
This shows that $\bar{\omega}_x \circ \tau = \mathrm{Id}$, so that $\bar{\omega}_x$ is indeed an inverse of $\tau$. 

Part (iv) follows immediately from Part (iii).
\end{proof}

%\begin{rem}
%The characterization properties in Theorem \ref{ytazg95b} show that the Poisson structure is a complex analytic version of Marsden--Ratiu reduction of Poisson manifolds \cite{marsden-ratiu}.
%On the other hand, since $N$ is in general singular, the existence of functions $F$ and $G$ satisfying (a) and (b) is not immediate, and the proof makes essential use of the fact that $N = \mu^{-1}(S)$ for some Poisson map $\mu$ and pre-Poisson submanifold $S$.
%\end{rem}

\begin{defn}\label{Definition: Complex analytic reduction}
The complex analytic Poisson space $(Q, \O_Q)$ in Theorem \ref{ytazg95b} is called the \dfn{symplectic reduction of $M$ by $\G$ along $S$ with respect to $\H$ and $\pi$} and denoted $M \sll{S, \H, \pi} \G$.
The pair $(S, \H)$ is called a \dfn{reduction datum}; it is called a \dfn{clean reduction datum} if the conditions of Theorem \ref{ytazg95b}(iv) are satisfied, in which case
\[
M \sll{S, \H} \G \coloneqq M \sll{S, \H, \pi} \G = \mu^{-1}(S) / \H
\]
is a complex symplectic manifold.
If $(S, \H)$ is clean and $\H$ is source-connected, we use the simplified notation
\[
M \sll{S} \G \coloneqq M \sll{S, \H} \G,
\]
and call it the \dfn{symplectic reduction of $M$ by $\G$ along $S$}.
\end{defn}

\begin{rem}\label{Remark: Source-connected}
We remind the reader that any two choices of source-connected stabilizer subgroupoid $\mathcal{H}\tto S$ yield the same reduced space $M \sll{S, \H} \G$. This justifies our notation $M \sll{S} \G$.
\end{rem}

\begin{rem}
The reducedness assumption on $N$ is not always satisfied.
To see this, consider the standard representation of $\SLn(2, \C)$ on $\C^2$ and its lift to a Hamiltonian action on $T^*\C^2 = \C^4$.
In coordinates $(z_1, z_2, w_1, w_2)$, the components of the moment map are $\mu_1 = z_1 w_2$, $\mu_2 = z_2 w_1$, and $\mu_3 = z_1 w_1 - z_2 w_2$ (see e.g.\ \cite[Example 2.6]{herbig-schwarz-seaton}).
Note that $f \coloneqq z_1w_1 + z_2w_2$ is not in the ideal generated by $\mu$, but that $f^2 = \mu_3^2 + 4\mu_1\mu_2$ belongs to this ideal.
\end{rem}

The next result shows that the sufficient condition for smoothness discussed in \S\ref{jp33sbh7} adapts to the complex analytic setting.

\begin{prop}[Theorem \ref{Theorem: Complex analytic category}(v)]\label{ugr3y1lq}
Let $((M, \omega), \G \tto X, \mu)$ be a holomorphic Hamiltonian system, $S \s X$ a pre-Poisson complex submanifold, and $\H \tto S$ a holomorphic stabilizer subgroupoid of $S$ in $\G$.
If $\H$ acts freely on $N \coloneqq \mu^{-1}(S)$, then $S$ is transverse to $\mu$.
If the $\H$-action is also proper, then $(S, \H)$ is a clean reduction datum and $M \sll{S, \H} \G$ is a complex symplectic manifold.
\end{prop}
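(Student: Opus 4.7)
The plan is to show that the proof strategy of Theorem \ref{1gjd75z2} transfers \emph{mutatis mutandis} to the holomorphic category, with the only extra work being to invoke the appropriate holomorphic quotient theorem. The entire argument in the smooth case rested on the identity
\[
T_xS + \im d\mu_p = (\ker(d\varphi_p)_x)^\circ,
\]
of Proposition \ref{pxamcx75}, where $\ker(d\varphi_p)_x \s T_x\H_x$ is viewed as a subspace of $T_x^*X$ via \eqref{r6nwj6xb}. Once this identity is in hand, freeness of the $\H$-action forces the left-hand side to be all of $T_xX$, which is exactly the transversality of $S$ to $\mu$ at $p$.

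To establish the identity in the holomorphic setting, I would verify that the three technical lemmas preceding it in \S\ref{Subsection: Symplectic reduction along a submanifold}—namely Lemma \ref{1hpyffiz} (describing $T_x\H$ via source, target, and symplectic orthogonal), Lemma \ref{gn98u3jv} (relating Hamiltonian vector fields of source-pulled-back functions to those of $\mu$-pulled-back functions), and Lemma \ref{kvugo3gu} (characterizing elements of $T_x\H_x$ as values of Hamiltonian vector fields $X_{\sss^*f}$ with $f|_S=0$ and $X_f(x)\in T_xS$)—go through verbatim in the holomorphic category. Indeed, each of these is purely local, and the underlying ingredients (the isomorphism \eqref{r6nwj6xb}, the fact that $X$ is Lagrangian in $\G$, the coisotropy of the action graph, and the formulas $d\ttt(X_{\sss^*f})=0$, $d\sss(X_{\sss^*f})=X_f$) are all algebraic consequences of the Poisson/symplectic structures which are already available in the holomorphic setting by Definition \ref{Definition: Hamiltonian system} and Proposition \ref{kwi1nwnj}. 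The proof of the identity itself is then a line-by-line repetition of the proof of Proposition \ref{pxamcx75}. Holomorphic functions $f$ with $f|_S = 0$ exist locally around any $x \in S$ by the basic geometry of complex submanifolds, so the cited lemma statements remain meaningful.

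From here, freeness of $\H$ on $N$ gives $\ker(d\varphi_p)_x = 0$, hence $(\ker(d\varphi_p)_x)^\circ = T_xX$, which is the transversality claim. For the second assertion, I would observe that transversality ensures $N = \mu^{-1}(S)$ is a complex submanifold of $M$ and is intersected cleanly by $S$, giving in particular that $N$ is reduced. To obtain the complex manifold structure on $Q = N/\H$ with holomorphic submersion $\pi$, I would invoke the holomorphic version of the standard result that a free and proper Lie groupoid action admits a quotient manifold structure (the analogue of \cite[Theorem 1.6.20]{mackenzie}); this is standard once one has a holomorphic slice theorem for free proper groupoid actions, which can be assembled from local holomorphic transversals to orbits combined with the inverse function theorem. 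With $\pi : N \too Q$ thus a holomorphic submersion onto a complex manifold, $(S,\H)$ satisfies the hypotheses of Theorem \ref{ytazg95b}(iv), and that theorem produces the holomorphic symplectic form $\bar{\omega}$ on $Q = M \sll{S,\H}\G$ with $\pi^*\bar{\omega} = i^*\omega$.

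The main obstacle is the quotient manifold step: establishing that a free and proper holomorphic groupoid action on a complex manifold gives a holomorphic quotient manifold. The rest is a purely formal transcription of the smooth-category arguments. This obstacle is not serious in principle, since the classical slice-theorem argument is holomorphic in nature, but it should be stated clearly and attributed to (or derived from) the existing literature on holomorphic Lie groupoids so as not to leave a gap.
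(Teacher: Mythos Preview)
Your proposal is correct and follows essentially the same approach as the paper: adapt the smooth-category argument of Theorem \ref{1gjd75z2} (via Proposition \ref{pxamcx75}) to conclude transversality from freeness, then invoke a holomorphic quotient theorem and apply Theorem \ref{ytazg95b}(iv). The paper handles what you call the ``main obstacle''---the holomorphic quotient manifold step---by citing Godement's criterion (as in Serre's \emph{Lie Algebras and Lie Groups}), which is exactly the kind of attribution you anticipated needing.
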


\begin{proof}
The proof of Theorem \ref{1gjd75z2} adapts to the complex analytic category, showing that $S$ is transverse to $\mu$.
Hence, $N$ is a complex submanifold of $M$ on which $\H$ acts freely and properly.
By Godement's criterion (see e.g.\ \cite[Part II, Chapter III, Theorem 2 on p.\ 92]{serre-1992}), $N / \H$ has a unique complex manifold structure such that the quotient map $\pi : N \longrightarrow N / \H$ is a holomorphic submersion.
This shows that $(S, \H)$ is a clean reduction datum. We may therefore apply Theorem \ref{ytazg95b}(iv) to complete the proof.
\end{proof}

\subsection{A Poisson map on the reduced space}\label{Subsection: A Poisson map}
We now discuss a complex analytic version of Theorem \ref{Theorem: Smooth category}(iii).
To this end, we first discuss the existence of a Poisson structure on a complex analytic quotient of $S$ by $\H$.

\begin{prop}[Theorem \ref{Theorem: Complex analytic category}(vi)]\label{45y1ghc8}
Let $\G \tto X$ be a holomorphic symplectic groupoid, $S \s X$ a pre-Poisson complex submanifold, and $\H \tto S$ a holomorphic stabilizer subgroupoid of $S$ in $\G$.
Suppose that there is a complex analytic quotient of $S$ by $\H$, denoted $\rho : S \longrightarrow R$.
Then there is a unique holomorphic Poisson structure on $R$ such that 
\[
\rho^*\{f, g\}_R = \{F, G\}_X\big\vert_S
\]
for all $x \in S$, $f, g \in \O_{R, \rho(x)}$, and $F, G \in \O_{X, x}^{\sigma(TS^\circ)}$ satisfying $\rho^*f = F\big\vert_S$, $\rho^*g = G\big\vert_S$.
\end{prop}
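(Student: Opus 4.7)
The plan is to adapt the smooth proof of Proposition \ref{9idw3y65} to the complex analytic category, using the tools developed earlier in this section. First, I apply complex analytic Marsden--Ratiu reduction (Theorem \ref{2k4asf2u}) to the triple $(X, S, E)$ with $E \coloneqq \sigma(TS^\circ)$. Conditions (1) and (2) hold by Proposition \ref{lmgqb7xq}(i), while Condition (3)---the extendability of germs on $S$ with differential vanishing on $D \coloneqq TS \cap E$ to germs on $X$ with differential vanishing on $E$---follows from Proposition \ref{8fzyp3dy} applied to the identity Poisson map $X \too X$. This produces a Poisson bracket $\{\cdot, \cdot\}'$ on $\O_S^D$. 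Since $D$ is precisely the image of the anchor map $\sigma : L_S \too TS$, it coincides with the tangent distribution of the $\H$-orbits, so $\O_S^\H \s \O_S^D$; combined with the identification $\O_R \cong (\rho_*\O_S)^\H$ from the definition of complex analytic quotient, the problem reduces to showing that $\{\cdot, \cdot\}'$ restricts to a Poisson bracket on $\O_S^\H$.

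The main obstacle is this $\H$-invariance claim, namely that $\{F, G\}_X\big\vert_S$ is $\H$-invariant whenever $F, G \in \O_{X, x}^{\sigma(TS^\circ)}$ restrict along $S$ to $\H$-invariant germs. My plan is to port the Poisson transversal argument from the smooth proof of Proposition \ref{9idw3y65} to the holomorphic setting. After localizing at a point $x \in S$, Proposition \ref{Proposition: Transversal germ} embeds $S$ coisotropically in a Poisson transversal complex submanifold $Y \s X$; the restriction $\G\big\vert_Y \coloneqq \sss^{-1}(Y) \cap \ttt^{-1}(Y)$ is then a holomorphic symplectic subgroupoid of $\G$ and $\H$ is Lagrangian in $\G\big\vert_Y$, the dimension-theoretic arguments of Cattaneo--Zambon and Cattaneo transferring verbatim to the holomorphic category. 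Using the splitting $E = D \oplus \sigma(TY^\circ)\big\vert_S$ that comes from coisotropy of $S$ in $Y$ and the transversal decomposition $TX\big\vert_Y = TY \oplus \sigma(TY^\circ)$, I can replace $F$ by an extension $\tilde F$ still satisfying $d\tilde F(\sigma(TS^\circ)) = 0$ and additionally satisfying $d\tilde F(\sigma(TY^\circ)) = 0$, so that $X_{\tilde F}$ becomes tangent to $Y$. The crux is then to show that $X_{\sss^*\tilde F} - X_{\ttt^*\tilde F}$ is tangent to $\H$: the $\H$-invariance of $\tilde F\big\vert_S = \rho^*f$ forces $j^*\sss^*\tilde F = j^*\ttt^*\tilde F$, whence $\Omega(X_{\sss^*\tilde F} - X_{\ttt^*\tilde F}, T\H) = 0$; meanwhile $\sss$ is Poisson and $\ttt$ is anti-Poisson, so $d\sss$ and $d\ttt$ both send $X_{\sss^*\tilde F} - X_{\ttt^*\tilde F}$ to $X_{\tilde F}$, which is tangent to $Y$; the Lagrangian identity $T_h\H = (T_h\H)^\Omega \cap T_{j(h)}(\G\big\vert_Y)$ then yields tangency to $\H$. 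Pairing against $G$ (extended analogously) and using $j^*\sss^*G = j^*\ttt^*G$ gives $j^*\sss^*\{F, G\}_X = j^*\ttt^*\{F, G\}_X$, which is precisely $\H$-invariance of $\{F, G\}_X\big\vert_S$.

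Uniqueness of $\{\cdot, \cdot\}_R$ is immediate from the defining identity, since the germs of the form $\rho^*f$ exhaust $(\rho_*\O_S)^\H$; and the skew-symmetry, Leibniz, and Jacobi axioms on $\O_R$ descend from the corresponding axioms for $\{\cdot, \cdot\}'$ on $\O_S^\H$ via the sheaf isomorphism $\rho^* : \O_R \cong (\rho_*\O_S)^\H$, completing the construction.
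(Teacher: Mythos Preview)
Your proposal is correct and follows essentially the same approach as the paper: both obtain the Poisson bracket on $\O_S^D$ via Proposition \ref{dldijdjx} (which is exactly your application of complex analytic Marsden--Ratiu reduction to the identity map), and both establish closure of $\O_S^\H$ under this bracket by transplanting the Poisson-transversal argument of Proposition \ref{9idw3y65} to the holomorphic setting via Proposition \ref{i9vtsb2f}. The paper's proof merely gestures at this transplantation in one sentence, whereas you have written out the details; there is no substantive difference.
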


\begin{proof}
Proposition \ref{dldijdjx} gives us a Poisson bracket on $\O_S^D$, where $D \coloneqq TS \cap \sigma(TS^\circ)$.
To show that this gives a Poisson bracket on $\O_{R} = (\rho_*\O_S)^\H$, it suffices to show that $\H$-invariant functions in $\O_S^D$ are closed under the Poisson bracket.
This follows from the same reasoning as in the proof of Proposition \ref{9idw3y65}, using Proposition \ref{i9vtsb2f} in the present case.
\end{proof}

\begin{thm}[Theorem \ref{Theorem: Complex analytic category}(vii)]\label{6hr3szsv}
Let $((M, \omega), \G \tto X, \mu)$ be a holomorphic Hamiltonian system, $S \s X$ a complex pre-Poisson submanifold, and $\H \tto S$ a holomorphic stabilizer subgroupoid of $S$ in $\G$.
Suppose that $N \coloneqq \mu^{-1}(S)$ is reduced and that there is a complex analytic quotient $\pi : N \longrightarrow M \sll{S, \H, \pi} \H$ of $N$ by $\H$.
Let us also suppose that there is a complex analytic quotient $\rho : S \longrightarrow R$ of $S$ by $\H$, and that the moment map $\mu : M \longrightarrow X$ descends to a holomorphic map
\[
\bar{\mu} : M \sll{S, \H, \pi} \G \too R.
\]
Then $\bar{\mu}$ is a Poisson map with respect to the Poisson structures of \textup{Theorem \ref{ytazg95b}} and \textup{Proposition \ref{45y1ghc8}}.
\end{thm}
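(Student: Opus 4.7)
The plan is to verify the defining relation of a Poisson map directly on germs, using the Poisson-bracket characterizations established in Theorem \ref{ytazg95b}(ii) and Proposition \ref{45y1ghc8}. Writing $Q \coloneqq M \sll{S, \H, \pi} \G$ for brevity, the isomorphism $\pi^* : \O_Q \too (\pi_*\O_N)^\H$ coming from the definition of a complex analytic quotient reduces the claim to proving
\[
\pi^*\bar{\mu}^*\{f, g\}_R = \pi^*\{\bar{\mu}^*f, \bar{\mu}^*g\}_Q
\]
at an arbitrary point $p \in N$ and for germs $f, g \in \O_{R, \rho(\mu(p))}$.

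My first step would be to invoke Proposition \ref{45y1ghc8} to obtain extensions $F, G \in \O_{X, \mu(p)}$ of $\rho^*f$ and $\rho^*g$ satisfying the annihilation condition $dF(\sigma(TS^\circ)) = 0 = dG(\sigma(TS^\circ))$ and for which $\rho^*\{f, g\}_R = \{F, G\}_X\big\vert_S$. Pulling back along the restricted moment map $\tilde{\mu} \coloneqq \mu\big\vert_N : N \too S$ and using that $\mu$ is Poisson then yields
\[
\pi^*\bar{\mu}^*\{f, g\}_R = \tilde{\mu}^*\rho^*\{f, g\}_R = \mu^*\{F, G\}_X\big\vert_N = \{\mu^*F, \mu^*G\}_M\big\vert_N.
\]
The main task is then to identify $\mu^*F$ and $\mu^*G$ as extensions of $\pi^*\bar{\mu}^*f$ and $\pi^*\bar{\mu}^*g$ to which Theorem \ref{ytazg95b}(ii) applies. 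The restriction identity $(\mu^*F)\big\vert_N = \pi^*\bar{\mu}^*f$ will follow immediately from the commutative triangle $\bar{\mu} \circ \pi = \rho \circ \tilde{\mu}$, and analogously for $G$.

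The decisive step will be checking the compatibility condition $d(\mu^*F)(T_pN^\omega) = 0$. Here I plan to exploit the description $T_pN^\omega = \omega^{-1}(d\mu_p^*(T_{\mu(p)}S^\circ))$, which follows from the identification $T_pN = d\mu_p^{-1}(T_{\mu(p)}S)$ used in the proof of Proposition \ref{kwi1nwnj}. For $v = \omega^{-1}(d\mu_p^*\xi) \in T_pN^\omega$ with $\xi \in T_{\mu(p)}S^\circ$, the Poisson property of $\mu$ forces $d\mu_p(v) = \sigma(\xi) \in \sigma(T_{\mu(p)}S^\circ)$, so $d(\mu^*F)(v) = dF(\sigma(\xi)) = 0$ by the choice of $F$. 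With both admissibility conditions verified, Theorem \ref{ytazg95b}(ii) will deliver
\[
\pi^*\{\bar{\mu}^*f, \bar{\mu}^*g\}_Q = \{\mu^*F, \mu^*G\}_M\big\vert_N,
\]
matching the expression obtained above and closing the argument.

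I expect the main subtlety to be ensuring that this tangent-space manipulation survives the fact that $N$ may be singular. This is however navigated exactly as in the proof of Proposition \ref{kwi1nwnj}, by interpreting $T_pN$ as the Zariski tangent space and using that it still equals $d\mu_p^{-1}(T_{\mu(p)}S)$. Otherwise, the argument amounts to a germ-level transcription of the smooth proof of Theorem \ref{omd5dx3w}.
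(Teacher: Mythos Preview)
Your proposal is correct and is precisely the argument the paper has in mind: its proof of Theorem \ref{6hr3szsv} simply states that ``the same reasoning as in the proof of Theorem \ref{omd5dx3w}'' applies, and your write-up is exactly that germ-level transcription, including the key verification that $d(\mu^*F)(TN^\omega)=0$ via $TN^\omega=\omega^{-1}(d\mu^*(TS^\circ))$ and the Poisson property of $\mu$.
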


\begin{proof}
This follows from the definitions of the Poisson structures using the same reasoning as in the proof of Theorem \ref{omd5dx3w}.
\end{proof}

\section{Symplectic reduction by a Lie group along a submanifold}
\label{lie-group-case}
This section examines the implications of \S\ref{smooth-case} and \S\ref{phj103sm} for classical Hamiltonian $G$-spaces. In \S\ref{fz9catpk}, we derive some of the Lie-theoretic machinery relevant to reducing by a Lie group along a submanifold. Our first example is Marsden--Weinstein--Meyer reduction, which we address in \S\ref{marsden-weinstein-example}. In \S\ref{fvffdxir}, we introduce the universal reduced spaces $\mathfrak{M}_{G,S,\mathcal{H}}$ and $\mathfrak{M}_{G,S}$ and prove Theorem \ref{Theorem: Reduction by a Lie group along a submanifold}(i). We subsequently define and study the class of \textit{stable} submanifolds $S\subseteq\g^*$ in \S\ref{peg44up7}; this is a particularly convenient class of submanifolds along which to reduce, and it includes all Poisson transversals and Poisson submanifolds. The proofs of Parts (ii)--(iii) in Theorem \ref{Theorem: Reduction by a Lie group along a submanifold} are also given in \S\ref{peg44up7}. We then devote \S\ref{emhsjkzj} to the proof of Theorem \ref{Theorem: Reduction by a Lie group along a submanifold}(iv). Theorem \ref{Theorem: General examples} is the subject of \S\ref{Subsection: Symplectic reduction along a coadjoint orbit}--\ref{jyt6wx64}, while Theorem \ref{Theorem: Specific examples}(ii) is proved in \S\ref{Subsection: Decomposition class}.

Except for those instances in which we explicitly indicate otherwise, all material in this section can be interpreted in both the category of smooth manifolds and the category of complex manifolds.
We will simply use the terms \dfn{manifolds} and \dfn{maps} for smooth manifolds and smooth maps, or complex manifolds and holomorphic maps.
A \dfn{submanifold} will refer to a smooth submanifold or a complex submanifold, and both are assumed to be embedded but not necessarily closed.
A \dfn{Lie group} will mean a real Lie group or a complex Lie group, and we will proceed analogously for Lie algebras.
A \dfn{symplectic form} will mean a symplectic form on smooth manifold or a holomorphic symplectic form on a complex manifold, etc.

\subsection{Main construction}\label{fz9catpk}

The notion of a Hamiltonian system introduced in \S\ref{smooth-case} and \S\ref{phj103sm} generalizes the standard one for Lie group actions \cite[Examples 3.1 and 3.9]{mikami-weinstein}.
Let us briefly recall the salient details.

Let $G$ be a Lie group with Lie algebra $\g$, adjoint representation $\Ad : G \longrightarrow \mathrm{GL}(\g)$, and coadjoint representation $\Ad^* : G \longrightarrow \mathrm{GL}(\g^*)$.
We use the left trivialization 
\[
G \times \g^* \overset{\cong}{\too} T^*G, \quad (g, \xi) \mtoo \xi\circ (dL_{g^{-1}})_g
\]
to freely identify $T^*G$ and $G \times \g^*$ throughout our paper.
Let $\Omega\coloneqq d\Theta$ be the canonical symplectic form on $T^*G$, where $\Theta$ is the tautological $1$-form.
We have 
\begin{equation}\label{lcnmyorx}
\Omega_{(g, \xi)}((u_1, \zeta_1), (u_2, \zeta_2)) = -\zeta_2(u_1) + \zeta_1(u_2) - \xi([u_1, u_2])
\end{equation}
for all $(u_i, \zeta_i) \in \g \times \g^* = T_gG \times \g^*=T_{(g, \xi)}(G \times \g^*)$ and $(g,\xi)\in G\times\g^*=T^*G$, where $T_gG$ is identified with $\g$ by left translations (see e.g.\ \cite[Proposition 4.4.1]{abraham-marsden}).

Consider the $(G \times G)$-action on $G$ defined by
\[
(a, b) \cdot g = a g b^{-1}
\]
for all $(a, b) \in G \times G$ and $g \in G$.
This action lifts to a Hamiltonian action on $T^*G = G \times \g^*$ given by
\[
(a, b) \cdot (g, \xi) = (agb^{-1}, \Ad_b^*\xi)
\]
for all $(a, b) \in G \times G$ and $(g, \xi) \in T^*G$, and it admits
\begin{equation}\label{1h6um459}
\mu : T^*G \too \g^* \times \g^*, \qquad (g, \xi) \mtoo (-\Ad_g^*\xi, \xi)
\end{equation}
as a moment map (see e.g.\ \cite[Theorem 4.4.3]{abraham-marsden}).

The identification $T^*G = G \times \g^*$ also shows that $T^*G$ has the structure of a Lie groupoid, namely the action groupoid for the coadjoint representation.
Explicitly, $T^*G \tto \g^*$ is a Lie groupoid with source and target maps 
\begin{equation}\label{lj8xx6wr}
\sss(g, \xi) = \Ad_g^*\xi \quad\text{and}\quad \ttt(g, \xi) = \xi,
\end{equation}
and multiplication
\[
(g, \xi) \cdot (h, \eta) = (gh, \eta) \quad \text{if} \quad\xi = \Ad_h^*\eta.
\]
The canonical symplectic form on $T^*G$ then gives $T^*G \tto \g^*$ the structure of a symplectic groupoid, called the \dfn{cotangent groupoid of $G$}.
The Poisson bivector field $\sigma$ on the base $\g^*$ is the Kirillov--Kostant--Souriau structure given by
\begin{equation}\label{ohdna9u5}
\sigma : T^*\g^* = \g \times \g^* \longrightarrow \g^* \times \g^*=T\g^*,\quad (x, \xi) \mtoo (-\ad_x^*\xi, \xi),
\end{equation}
where $\ad_x : \g \longrightarrow \g$ is the adjoint action $\ad_x y = [x, y]$ of $x\in\g$ and $\ad_x^* : \g^* \longrightarrow \g^*$ is its dual $\ad_x^*\xi \coloneqq -\xi \circ \ad_x$.

A Hamiltonian $G$-space $(M, \omega)$ with moment $\mu : M \longrightarrow \g^*$ is then equivalent to a Hamiltonian system (Definition \ref{Definition: Hamiltonian system}) with underlying symplectic manifold $(M, \omega)$, moment map $\mu$, and symplectic groupoid $T^*G \tto \g^*$; one simply lets $T^*G \tto \g^*$ act on $M$ by
\[
(g, \xi) \cdot p = g \cdot p \quad\text{if}\quad \xi = \mu(p)
\]
for all $(g, \xi) \in T^*G$ and $p \in M$ \cite[Example 3.9]{mikami-weinstein}. In this context, we have the following counterpart of Definitions \ref{Definition: Reduction along a submanifold} and \ref{Definition: Complex analytic reduction}.

\begin{defn}
Let $M$ be a Hamiltonian $G$-space with moment map $\mu : M \longrightarrow \g^*$.
A \dfn{reduction datum for the action of $G$ on $M$} is a reduction datum for the action of $T^*G \tto \g^*$ on $M$, i.e.\ a pair $(S, \H)$ with $S \s \g^*$ a pre-Poisson submanifold and $\H \tto S$ a stabilizer subgroupoid of $S$ in $T^*G \tto \g^*$.
We say that $(S, \H)$ is \dfn{clean} if it is a clean reduction datum for the action of $T^*G \tto \g^*$ on $M$.
In this case, the \dfn{symplectic reduction of $M$ by $G$ along $S$ with respect to $\H$} is defined to be
\[
M \sll{S, \H} G \coloneqq M \sll{S, \H} T^*G = \mu^{-1}(S) / \H.
\]
If $\H$ is source-connected, we use the simplified notation
\[
M \sll{S} G \coloneqq M \sll{S, \H} G
\]
and call it the \dfn{symplectic reduction of $M$ by $G$ along $S$}.
\end{defn}

\begin{rem}
Although $G$ is a group, $\H$ can be a non-trivial groupoid; see Theorem \ref{Theorem: Moore-Tachikawa} and the discussion preceding it.
One consequence is that $M \sll{S} G$ might not be realizable as a quotient of $\mu^{-1}(S)$ by a Lie group action.
\end{rem}

Now recall that $T^*G\big\vert_S$ denotes the restriction of the cotangent groupoid to $S\subseteq\g^*$. This features in following sufficient condition for $M \sll{S, \H} G$ to be a symplectic manifold, which we state for future reference. 

\begin{lem}\label{4z1djzwm}
Let $M$ be a Hamiltonian $G$-space with moment map $\mu : M \longrightarrow \g^*$ and $(S, \H)$ a reduction datum.
If $G$ acts freely and properly on $M$ and $\H$ is closed in $T^*G\big\vert_S$, then $(S, \H)$ is clean and $M \sll{S, \H} G$ is a symplectic manifold.
\end{lem}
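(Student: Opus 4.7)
The plan is to reduce this to the smooth-category result Theorem \ref{1gjd75z2} (``if $\H$ acts freely and properly on $N \coloneqq \mu^{-1}(S)$, then $(S,\H)$ is clean and $M\sll{S,\H}\G$ is symplectic'') applied to the symplectic groupoid $\G = T^*G \tto \g^*$. The task is therefore to deduce from the two hypotheses on the $G$-action and on $\H$ that $\H$ acts freely and properly on $N$.

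First I would verify that the cotangent groupoid $T^*G \tto \g^*$ acts freely and properly on $M$ whenever $G$ does. Freeness is immediate from the formula $(g, \xi) \cdot p = g \cdot p$ (valid when $\xi = \mu(p)$): if $(g,\xi)\cdot p = p$ then $g\cdot p = p$, so $g = e$ by freeness of the $G$-action, forcing $(g, \xi) = (e, \mu(p)) = 1_{\mu(p)}$. For properness, I would use the canonical identification
\[
T^*G \prescript{}{\ttt}{\times}_\mu M = \{((g,\xi), p) : \xi = \mu(p)\} \cong G \times M, \quad ((g,\mu(p)), p) \mapsto (g,p),
\]
under which the map $((g,\xi),p) \mapsto (p, (g,\xi)\cdot p)$ of \eqref{ff300lau} becomes $(g,p) \mapsto (p, g\cdot p)$. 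Properness of this latter map is exactly properness of the $G$-action.

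Next I would apply Proposition \ref{mbslz011} to the symplectic groupoid $\G = T^*G$: since $T^*G$ acts properly on $M$ and $\H$ is closed in $T^*G\big\vert_S$, the action of $\H$ on $N$ is proper. Freeness of $\H$ on $N$ is inherited from freeness of $T^*G$ on $M$, because $\H$ is a Lie subgroupoid of $T^*G\big\vert_S$ and its action on $N$ is the restriction of the ambient $T^*G$-action.

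With $\H$ acting freely and properly on $N$, Theorem \ref{1gjd75z2} directly yields that $(S,\H)$ is a clean reduction datum and that $M\sll{S,\H}G = M\sll{S,\H}T^*G$ is a symplectic manifold, completing the proof. No step here looks like a genuine obstacle; the only point that deserves care is the identification ensuring that freeness and properness transfer cleanly from the Lie group action to the action of the cotangent groupoid, which is essentially bookkeeping given the left-trivialization conventions fixed in \S\ref{fz9catpk}.
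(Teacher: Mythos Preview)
Your proposal is correct and follows essentially the same route as the paper: both deduce freeness of the $\H$-action on $N$ from freeness of the $G$-action, identify $T^*G \prescript{}{\ttt}{\times}_\mu M \cong G \times M$ to transfer properness from $G$ to $T^*G$, invoke Proposition \ref{mbslz011} to obtain properness of the $\H$-action on $N$, and then conclude via Theorem \ref{1gjd75z2}. The only cosmetic difference is that the paper goes directly from freeness of $G$ to freeness of $\H$ without passing through freeness of $T^*G$, but this is immaterial.
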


\begin{proof}
Since the action of $G$ on $M$ is free, so is the action of $\H$ on $N \coloneqq \mu^{-1}(S)$.
Theorem \ref{1gjd75z2} or Proposition \ref{ugr3y1lq} then show that $S$ is transverse to $\mu$, and that it suffices to show the action of $\H$ on $N$ to be proper.
Note that the action of $T^*G \tto \g^*$ on $M$ is proper; \eqref{ff300lau} reduces to the map $G \times M \longrightarrow M \times M$, $(g, p) \mto (p, g \cdot p)$, which is proper by assumption.  
The action of $\H$ on $N$ is then proper by Proposition \ref{mbslz011}.
\end{proof}

We also record the following description of the stabilizer subalgebroid, which will be useful for examples.

\begin{lem}\label{subalgebroid-lemma}
Let $S \s \g^*$ be a submanifold and let $\sigma$ be the Kirillov--Kostant--Souriau Poisson structure on $\g^*$.
The subset $L_S \coloneqq \sigma^{-1}(TS) \cap TS^\circ$ of $\mathrm{Lie}(T^*G) = T^*\g^* = \g \times \g^*$ is given by
\[
L_S = \{(x, \xi) \in \g \times S : x \in (T_\xi S)^\circ \text{ and } \ad_x^*\xi \in T_\xi S\}.
\]
\end{lem}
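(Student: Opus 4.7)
The plan is to unwind the definitions using the explicit formula \eqref{ohdna9u5} for the Kirillov--Kostant--Souriau Poisson bivector field, together with the identification $T^*\g^* \cong \g \times \g^*$ via left translations, under which a pair $(x,\xi)$ represents the covector $x \in \g \cong T_\xi^*\g^*$ at the point $\xi$. Under this identification, a tangent vector $v \in T_\xi \g^* = \g^*$ pairs with a covector $x \in \g$ via $\langle x, v\rangle$, so that a subspace $V \subseteq \g^*$ has annihilator $V^\circ \subseteq \g$ in the usual sense.

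First I would analyze each of the two factors. For the annihilator condition, $(x,\xi) \in TS^\circ$ precisely means $\xi \in S$ and $\langle x, v\rangle = 0$ for every $v \in T_\xi S \subseteq \g^*$, i.e. $x \in (T_\xi S)^\circ \subseteq \g$. For the other factor, by \eqref{ohdna9u5} we have $\sigma(x,\xi) = (-\ad_x^*\xi,\xi) \in T_\xi \g^* = \g^*$, so $(x,\xi) \in \sigma^{-1}(TS)$ if and only if $\xi \in S$ and $-\ad_x^*\xi \in T_\xi S$. Since $T_\xi S$ is a linear subspace of $\g^*$, the latter is equivalent to $\ad_x^*\xi \in T_\xi S$.

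Intersecting the two descriptions then yields
\[
L_S = \{(x,\xi) \in \g \times S : x \in (T_\xi S)^\circ \text{ and } \ad_x^*\xi \in T_\xi S\},
\]
which is exactly the claimed formula. There is no real obstacle here; the only points requiring mild care are the sign in \eqref{ohdna9u5} and the bookkeeping for the identification $T^*\g^* = \g \times \g^*$, both of which are handled by the observations in the previous paragraph.
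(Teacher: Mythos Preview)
Your proof is correct and is exactly the unpacking of the paper's one-line argument, which simply says the lemma follows from the expression \eqref{ohdna9u5} for $\sigma$. You have made explicit the two conditions coming from $TS^\circ$ and $\sigma^{-1}(TS)$ and noted that the sign in $-\ad_x^*\xi$ is irrelevant since $T_\xi S$ is a linear subspace; nothing more is needed.
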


\begin{proof}
This follows from the expression for $\sigma$ given by \eqref{ohdna9u5}.
\end{proof}

The following observation will be useful for showing that certain subgroupoids are in fact stabilizer subgroupoids.

\begin{lem}\label{m0mdb7qp}
Let $\H \tto S$ be an embedded Lie subgroupoid of $T^*G \tto \g^*$ over any submanifold $S \s \g^*$.
\begin{enumerate}
\item[\textup{(i)}]
We have
\[
\H = \{(g, \xi) \in G \times S : g \in \H_\xi\}
\]
for some collection of submanifolds $\H_\xi \s G$, $\xi \in S$, each containing the identity $1 \in G$.

\item[\textup{(ii)}]
The Lie algebroid of $\H$ is the Lie subalgebroid of $\mathrm{Lie}(T^*G) = \g \times \g^*$ given by
\[
\mathrm{Lie}(\H) = \{(x, \xi) \in \g \times S : x \in T_1\H_\xi\}.
\]
\end{enumerate}
\end{lem}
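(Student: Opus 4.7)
The plan is to exploit the fact that, under the left trivialization $T^*G = G \times \g^*$, the target map $\ttt$ is the projection onto the second factor and the identity bisection is $\xi \mapsto (1, \xi)$. Both parts then reduce to routine bookkeeping with fibres of submersions.

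For Part (i), I would first observe that $\H \s G \times S$: since the target map of the Lie subgroupoid $\H \tto S$ is the restriction of $\ttt$, any $h = (g, \xi) \in \H$ satisfies $\xi = \ttt(h) \in S$. Setting $\H_\xi \coloneqq \{g \in G : (g, \xi) \in \H\}$ then yields the claimed set-theoretic description of $\H$. To see that each $\H_\xi$ is an embedded submanifold of $G$, I would use that the target map $\ttt_\H : \H \longrightarrow S$ is a surjective submersion (by the defining property of a Lie groupoid), so its fibre over $\xi$, namely $\H_\xi \times \{\xi\} = \H \cap (G \times \{\xi\})$, is an embedded submanifold of $\H$; since $\H$ is embedded in $G \times \g^*$, this is also an embedded submanifold of $G \times \{\xi\}$, and projecting onto $G$ gives $\H_\xi$ as an embedded submanifold. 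Finally, $1 \in \H_\xi$ because the identity bisection of $\H$, being the restriction of that of $T^*G$, sends $\xi \in S$ to $(1, \xi) \in \H$.

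For Part (ii), I would compute the Lie algebroid directly from the definition $\mathrm{Lie}(\H) = (\ker d\ttt_\H)\big\vert_S$. At a point $\xi \in S$, we have
\[
\mathrm{Lie}(\H)_\xi = T_{(1, \xi)}\H \cap \ker d\ttt_{(1, \xi)}.
\]
Since $\ttt$ is projection onto the second factor, $\ker d\ttt_{(1, \xi)} = \g \times \{0\}$. Combining this with the identity $\H \cap (G \times \{\xi\}) = \H_\xi \times \{\xi\}$ from Part (i) gives
\[
\mathrm{Lie}(\H)_\xi = T_{(1, \xi)}(\H_\xi \times \{\xi\}) = T_1 \H_\xi \times \{0\},
\]
which under the canonical identification $\mathrm{Lie}(T^*G) \cong \g \times \g^*$ is exactly the fibre of $\{(x, \xi) \in \g \times S : x \in T_1 \H_\xi\}$ at $\xi$. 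Assembling these fibres over $S$ yields the desired formula.

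The only point that requires any real care --- and it is a minor verification rather than a genuine obstacle --- is the compatibility between the smooth structure on the $\ttt$-fibre $\H_\xi \s \H$ coming from $\ttt_\H$ being a submersion, and the smooth structure induced by the set-theoretic identification $\H_\xi \times \{\xi\} = \H \cap (G \times \{\xi\})$. This follows automatically from $\H$ being embedded in $G \times \g^*$ and $\ttt$ being the restriction of the linear projection, so no deep input is needed.
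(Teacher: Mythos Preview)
Your proof is correct and follows essentially the same approach as the paper's. The only point the paper makes explicit that you gloss over is that the identification $\mathrm{Lie}(T^*G) = T^*\g^* = \g \times \g^*$ via the symplectic form \eqref{r6nwj6xb} introduces a sign, sending $(x,\xi)$ to $(-x,\xi)$ relative to the naive trivialization of $(\ker d\ttt)\big\vert_{\{1\}\times\g^*}$ you use; this is harmless here since $T_1\H_\xi$ is a linear subspace and hence invariant under negation, but it is worth noting since elsewhere in the paper $\mathrm{Lie}(\H)$ is compared with $L_S \s T^*\g^*$.
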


\begin{proof}
Let $\sss, \ttt : T^*G \tto \g^*$ be the source and target maps \eqref{lj8xx6wr}.
Since $\xi = \ttt(g, \xi) \in S$ for all $(g, \xi) \in \H$, we have 
\[
\H = \{(g, \xi) \in G \times S : g \in \H_\xi\}
\]
with $\H_\xi \coloneqq \{g \in G : (g, \xi) \in \H\}$.
Note that $\H_\xi \times \{\xi\}$ is the target fibre of $\H$ over $\xi$, so that $\H_\xi$ is necessarily smooth. This completes the proof of Part (i).

To verify Part (ii), note that the restriction $(\ker d\ttt)\big\vert_{\{1\} \times \g^*}$ of $\ker d\ttt$ to the identity section $\{1\} \times \g^* \s G\times\g^* = T^*G$ is the trivial vector bundle $\g \times \g^*$ over $\g^*$.
From \eqref{lcnmyorx}, we see that the canonical isomorphism $(\ker d\ttt)\big\vert_{\{1\} \times \g^*} \longrightarrow T^*\g^* = \g \times \g^*$ given by \eqref{r6nwj6xb} is
\[
\g \times \g^* \longrightarrow \g \times \g^*, \quad (x, \xi) \mtoo (-x, \xi).
\]
It follows that the Lie subalgebroid of $T^*\g^*$ induced by $\H$ is as claimed.
\end{proof}

\subsection{Marsden--Weinstein--Meyer reduction}
\label{marsden-weinstein-example}

We now show that Marsden--Weinstein--Meyer reduction can be recovered as symplectic reduction along a singleton.

Let $G$ be a Lie group acting on a symplectic manifold $(M, \omega)$ in a Hamiltonian way with moment map $\mu : M \longrightarrow \g^*$.
Let $H \s G$ be an immersed Lie subgroup with Lie algebra $\h \s \g$ and let $\xi \in \g^*$.
Let $H_\xi$ be the stabilizer subgroup of $\xi\big\vert_\h \in \h^*$ in $H$, and consider the annihilator $\h^{\circ}\subseteq\g^*$ of $\h$.

\begin{lem}[{cf.\ \cite[Proposition 5.1 and Example 6.2]{cattaneo-zambon-2007}}]
	The affine linear subspace $\xi + \h^\circ \s \g^*$ is pre-Poisson and
	\begin{equation}\label{olgzbvds}
	\H \coloneqq H_\xi \times (\xi + \h^\circ)
	\end{equation}
	is a stabilizer subgroupoid of $\xi + \h^\circ$ in $T^*G \tto \g^*$.
\end{lem}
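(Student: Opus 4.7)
The plan is to verify each claim by direct computation using \eqref{lcnmyorx}, \eqref{lj8xx6wr}, \eqref{ohdna9u5}, and Lemmas \ref{subalgebroid-lemma} and \ref{m0mdb7qp}. The crucial observation that makes everything go through is that $\eta\big\vert_\h = \xi\big\vert_\h$ for every $\eta \in S \coloneqq \xi + \h^\circ$, so that Lie-theoretic data associated to $\eta\big\vert_\h$ (in particular the centralizer in $\h$) is constant along $S$.

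First I would establish the pre-Poisson property. Since $S$ is an affine subspace, $T_\eta S = \h^\circ$ and hence $(T_\eta S)^\circ = \h$ for every $\eta \in S$. Lemma \ref{subalgebroid-lemma} then gives
\[
L_S = \{(x, \eta) \in \g \times S : x \in \h \text{ and } \ad_x^*\eta \in \h^\circ\}.
\]
For $x \in \h$ and $y \in \h$ we have $[x,y] \in \h$, so $(\ad_x^*\eta)(y) = -\eta([x,y]) = -(\xi\big\vert_\h)([x,y])$ depends only on $\xi\big\vert_\h$. Thus the condition $\ad_x^*\eta \in \h^\circ$ becomes $x \in \h_{\xi\vert_\h}$, the centralizer of $\xi\big\vert_\h$ in $\h$, which is by definition $\Lie(H_\xi)$. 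Therefore $L_S = \Lie(H_\xi) \times S$, a vector bundle of constant rank, establishing that $S$ is pre-Poisson.

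Next I would show $\H = H_\xi \times S$ is an embedded Lie subgroupoid of $T^*G\tto\g^*$. By \eqref{lj8xx6wr} the target map $\ttt(g,\eta) = \eta$ trivially lands in $S$, and for the source map one must show $\Ad_g^*\eta \in S$ whenever $g \in H_\xi$ and $\eta \in S$. Writing $\eta = \xi + \alpha$ with $\alpha \in \h^\circ$, we have
\[
\Ad_g^*\eta - \xi = (\Ad_g^*\xi - \xi) + \Ad_g^*\alpha,
\]
and both summands lie in $\h^\circ$: the first because $g \in H_\xi$ stabilizes $\xi\big\vert_\h$, the second because $g \in H$ implies $\Ad_{g^{-1}}$ preserves $\h$, so $\alpha \in \h^\circ$ gives $(\Ad_g^*\alpha)(y) = \alpha(\Ad_{g^{-1}}y) = 0$ for all $y \in \h$. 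Closure under multiplication is immediate from $(g,\eta)(g',\eta') = (gg',\eta')$ and the fact that $H_\xi$ is a subgroup. Combined with Lemma \ref{m0mdb7qp}(ii), the Lie algebroid of $\H$ is $\Lie(H_\xi) \times S$, which matches $L_S$.

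The remaining step, which I expect to be the main (though still modest) obstacle since it is the only place where the full symplectic form intervenes, is to verify that $\H$ is isotropic in $T^*G$ -- this is needed because the definition requires $\H$ itself (not merely its source-connected component, cf.\ Remark \ref{2g4d9b8v}(ii)) to be isotropic. Using the left trivialization, a tangent vector to $\H$ at $(g,\eta)$ is of the form $(x,\zeta)$ with $x \in \Lie(H_\xi) = \h_{\xi\vert_\h} \subseteq \h$ and $\zeta \in T_\eta S = \h^\circ$. Plugging two such vectors into \eqref{lcnmyorx} gives
\[
\Omega_{(g,\eta)}((x_1,\zeta_1),(x_2,\zeta_2)) = -\zeta_2(x_1) + \zeta_1(x_2) - \eta([x_1,x_2]).
\]
The first two terms vanish because $x_i \in \h$ while $\zeta_j \in \h^\circ$. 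For the third, $[x_1,x_2] \in \h$, and writing $\eta = \xi + \alpha$ with $\alpha \in \h^\circ$ reduces it to $\xi([x_1,x_2])$, which vanishes because $x_1 \in \h_{\xi\vert_\h}$ centralizes $\xi\big\vert_\h$. Thus $\Omega\big\vert_\H = 0$, completing the verification that $\H$ is a stabilizer subgroupoid of $S$ in $T^*G \tto \g^*$.
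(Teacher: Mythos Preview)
Your proposal is correct and follows essentially the same route as the paper: compute $L_S$ via Lemma~\ref{subalgebroid-lemma} to get $\h_\xi \times S$, match it with $\Lie(\H)$ via Lemma~\ref{m0mdb7qp}(ii), and verify isotropy directly from \eqref{lcnmyorx}. You are somewhat more explicit than the paper in checking that $\H$ really is a subgroupoid over $S$ (i.e.\ that the source map lands in $S$), which the paper leaves implicit when it invokes Lemma~\ref{m0mdb7qp}; one minor caveat is that $H$ is only assumed immersed in $G$, so calling $\H$ \emph{embedded} is slightly stronger than warranted, though this does not affect the argument.
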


\begin{proof}
	Lemma \ref{subalgebroid-lemma} implies that $$L_{\xi + \h^\circ} = \{(x, \xi + \eta) \in \g \times (\xi + \h^\circ) : x \in \h_\xi\},$$ where $\h_\xi \coloneqq \Lie(H_\xi)$.
	By Lemma \ref{m0mdb7qp}(ii), the Lie algebroid of $\H$ is $L_{\xi + \h^\circ}$.
	It then only remains to show that $\H$ is isotropic in $T^*G$.
	To this end, let $(h, \xi + \eta) \in \H$, and let $(u_i, \zeta_i) \in T_{(h, \xi + \eta)}\H = \h_\xi \times \h^\circ$, $i \in \{1, 2\}$.
	Equation \eqref{lcnmyorx} implies that
	\[
	\Omega((u_1, \zeta_1), (u_2, \zeta_2)) = -\zeta_2(u_1) + \zeta_1(u_2) - (\xi + \eta)([u_1, u_2]),
	\]
	and we see that each term vanishes.
\end{proof}

In light of this result, we may consider the symplectic reduction
\[
M \sll{\xi + \h^\circ, \H} G = \mu^{-1}(\xi + \h^\circ) / (H_\xi \times (\xi + \h^\circ)).
\]
By viewing $\xi$ as an element of $\g^* / \h^\circ = \h^*$, we also have the symplectic reduction of $M$ by $H$ at level $\xi$:
\[
M \sll{\xi} H \coloneqq \mu_\h^{-1}(\xi) / H_\xi,
\]
where $\mu_\h : M \longrightarrow \h^*$ is the induced moment map for the action of $H$ on $M$. We have $\mu_\h^{-1}(\xi) = \mu^{-1}(\xi + \h^\circ)$, so these two symplectic reductions coincide:

\begin{prop}
	We have $M \sll{\xi + \h^\circ, \H} G = M \sll{\xi} H$, where $\H$ is given by \eqref{olgzbvds}.\qed
\end{prop}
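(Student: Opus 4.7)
The plan is to verify that the two reduced spaces coincide by identifying both the set being quotiented and the equivalence relation imposed on it.

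First, I would unravel the definition of $\mu_\h : M \too \h^*$. Since $\mu_\h$ is, by construction, the composition of $\mu : M \too \g^*$ with the restriction map $\g^* \too \h^*$ dual to the inclusion $\h \hookrightarrow \g$, the identity $\mu_\h^{-1}(\xi) = \mu^{-1}(\xi + \h^\circ)$ follows immediately from the fact that the kernel of the restriction map is exactly $\h^\circ$. Call this common subset $N$.

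Next, I would compare the two actions on $N$. The action of the subgroupoid $\H = H_\xi \times (\xi + \h^\circ)$ on $N$ induced by the Hamiltonian action of $T^*G \tto \g^*$ is, by definition, $(h, \eta) \cdot p = h \cdot p$ whenever $\mu(p) = \eta$. Forgetting the second coordinate, this is nothing but the restricted $H_\xi$-action on $N$, and so the orbits of $\H$ and of $H_\xi$ on $N$ coincide. The only mild verification needed is that the $\H$-action is well-defined as an action on $N$, which amounts to checking that $\sss(h,\eta) = \mathrm{Ad}_h^*\eta$ lies in $\xi + \h^\circ$ for $h \in H_\xi$ and $\eta \in \xi + \h^\circ$; writing $\eta = \xi + \alpha$, this splits as $(\mathrm{Ad}_h^*\xi - \xi) + \mathrm{Ad}_h^*\alpha \in \h^\circ$, where the first summand lies in $\h^\circ$ by the defining property of $H_\xi$ and the second lies in $\h^\circ$ because $h \in H$ normalizes $\h$. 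This is already implicit in $\H$ being a well-defined subgroupoid over $S = \xi + \h^\circ$, but it is reassuring to see directly.

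Combining these two observations,
\[
M \sll{\xi + \h^\circ, \H} G = \mu^{-1}(\xi + \h^\circ) / \H = N / H_\xi = \mu_\h^{-1}(\xi) / H_\xi = M \sll{\xi} H,
\]
which gives the claimed equality. No step presents a serious obstacle; the entire proposition is essentially a bookkeeping unpacking of the definitions, once one has recognised that the base factor $\xi + \h^\circ$ in the action groupoid $\H = H_\xi \times (\xi + \h^\circ)$ contributes no additional identifications on $N$ beyond those already induced by the $H_\xi$-action.
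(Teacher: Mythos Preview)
Your proof is correct and follows the same approach as the paper, which simply notes that $\mu_\h^{-1}(\xi) = \mu^{-1}(\xi + \h^\circ)$ and leaves the rest implicit (the proposition is stated with a \qed and no separate proof). Your write-up is more explicit than the paper's, spelling out why the $\H$-orbits and the $H_\xi$-orbits on $N$ coincide, but the underlying argument is identical.
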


Taking $\h=\g$ then yields the following corollary.

\begin{cor}
	Let $\xi \in \g^*$. 
	Then $\H \coloneqq G_\xi \times \{\xi\}$ is a stabilizer subgroupoid of $\{\xi\}$ in $T^*G \tto \g^*$ and
	\[
	M \sll{\{\xi\}, \H} G = \mu^{-1}(\xi) / G_\xi
	\]
	is the Marsden--Weinstein--Meyer reduced space at level $\xi$.\qed
\end{cor}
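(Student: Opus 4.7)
The plan is to obtain this corollary as an immediate specialization of the preceding proposition to the case $H = G$. First I would observe that taking $H = G$ gives $\h = \g$, so that the annihilator is $\h^{\circ} = 0 \subseteq \g^*$ and hence $\xi + \h^{\circ} = \{\xi\}$. The stabilizer $H_\xi$ then reduces to the usual coadjoint stabilizer $G_\xi$, so the subgroupoid defined in equation \eqref{olgzbvds} becomes exactly $\H = G_\xi \times \{\xi\}$.

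Next I would invoke the preceding lemma: it tells us both that $\xi + \h^{\circ}$ is pre-Poisson in $\g^*$ (which in our case simply means that the singleton $\{\xi\}$ is a pre-Poisson submanifold, a fact that is also obvious from Lemma \ref{subalgebroid-lemma}, since $L_{\{\xi\}} = \g_\xi \times \{\xi\}$ visibly has constant rank) and that $\H$ is a stabilizer subgroupoid of $\{\xi\}$ in $T^*G \tto \g^*$. Applying the equality $M \sll{\xi + \h^{\circ}, \H} G = M \sll{\xi} H$ from the preceding proposition with $H = G$ then yields
\[
M \sll{\{\xi\}, \H} G \;=\; M \sll{\xi} G \;=\; \mu^{-1}(\xi)/G_\xi,
\]
since the induced moment map $\mu_\g$ for the $G$-action coincides with $\mu$.

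There is essentially no obstacle here: the content has already been established in the preceding proposition, and the corollary is purely a matter of unwinding the specialization $H = G$. The only small bookkeeping step is to confirm that $\mu^{-1}(\{\xi\})$ in the new notation agrees with $\mu^{-1}(\xi)$ in the classical Marsden--Weinstein--Meyer setup, and that the $G_\xi$-action inherited from the groupoid structure of $\H = G_\xi \times \{\xi\}$ coincides with the standard $G_\xi$-action on $\mu^{-1}(\xi)$; both are immediate from the definition of the source and target maps of $T^*G$ given in \eqref{lj8xx6wr}.
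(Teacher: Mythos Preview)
Your proposal is correct and matches the paper's approach exactly: the paper simply says ``Taking $\h=\g$ then yields the following corollary'' and marks it with a \qed, and your argument is precisely the specialization $H=G$ in the preceding proposition, with the bookkeeping made explicit.
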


\begin{rem}\label{hbxo6c96}
	Let $\G \tto X$ be a symplectic groupoid and $S \s X$ a pre-Poisson submanifold.
	One consequence of this subsection is that there may not exist any stabilizer subgroupoid $\H \tto S$ of $S$ that is closed in $\G\big\vert_S$.
	To see this, let $\mathfrak{h}\subseteq\mathfrak{g}$ be a Lie subalgebra. Any stabilizer subgroupoid of $\mathfrak{h}^{\circ}\subseteq\g^*$ in the cotangent groupoid $T^*G\tto\g^*$ must take the form $H\times\mathfrak{h}^{\circ}$, where $H$ is a Lie subgroup of $G$ with Lie algebra $\mathfrak{h}$. It now just remains to observe that $\mathcal{H}$ is closed in $\mathcal{G}$ if and only if $H$ is closed in $G$.
\end{rem}

\subsection{The universal reduced spaces}\label{fvffdxir}

Let $G$ be a Lie group and recall the Hamiltonian $(G\times G)$-action on $T^*G$ and moment map $\mu:T^*G \longrightarrow \mathfrak{g}^* \times \mathfrak{g}^*$ discussed in \S\ref{fz9catpk}.
We refer to the Hamiltonian action of $G \cong G \times \{1\} \s G \times G$ on $T^*G$ as the action by \dfn{left translations}.
Similarly, the action of $G \cong \{1\} \times G$ on $T^*G$ is called the action by \dfn{right translations}.

Let us suppose that $S \subseteq \mathfrak{g}^*$ is a pre-Poisson submanifold, that $\H \tto S$ is a stabilizer subgroupoid of $S$ in $T^*G \tto \g^*$, and that $(S, \H)$ is a clean reduction datum with respect to the action of $G$ on $T^*G$ by right translations.
It follows that
\[
\mathfrak{M}_{G, S, \H} \coloneqq T^*G\sll{S, \H}G = (G \times S) / \H
\]
is a symplectic manifold, which we call the \dfn{universal reduced space associated to $(G, S, \H)$}; Theorem \ref{universality} will elucidate this choice of nomenclature. Remark \ref{Remark: Source-connected} justifies using the simplified notation $$\mathfrak{M}_{G, S} \coloneqq \mathfrak{M}_{G, S, \H},$$
where $\mathcal{H}\tto S$ is any source-connected stabilizer subgroupoid of $S$ in $T^*G$.
Since the action of $G$ on $T^*G$ is proper, we observe that $(S, \H)$ is clean if $\H$ is closed in $T^*G\big\vert_S$ (Lemma \ref{4z1djzwm}).
We also observe that the action of $G$ on $T^*G$ by left translations descends to a Hamiltonian $G$-action on $\mathfrak{M}_{G, S,\mathcal{H}} = (G \times S) / \H$ given by
\begin{equation}\label{dex0d9ot}
G \times \mathfrak{M}_{G, S, \H} \too \mathfrak{M}_{G, S, \H}, \quad (a, [(g, \xi)]) \mtoo a \cdot [(g, \xi)] = [(ag, \xi)]
\end{equation}
for all $a, g \in G$ and $\xi \in S$.
The corresponding moment map is induced by the first component of $\mu:T^*G\longrightarrow\g^*\oplus\g^*$, i.e.\ it is given by
\begin{equation}\label{u4l3kt9f}
\mathfrak{M}_{G, S,\mathcal{H}} = (G \times S) / \H \too \g^*, \quad [(g, \xi)] \mtoo -\Ad_g^*\xi.
\end{equation}

We may summarize the previous discussion as follows.

\begin{prop}\label{4ywv9zyt}
Let $S \s \g^*$ be a pre-Poisson submanifold and $\H \tto S$ a stabilizer subgroupoid of $S$ in $T^*G \tto \g^*$.
\begin{enumerate}
\item[\textup{(i)}]
Suppose that $(S, \H)$ is a clean reduction datum for the action of $G$ on $T^*G$ by right translations.
The universal reduced space $\mathfrak{M}_{G, S, \H}$ is then a Hamiltonian $G$-space with action \eqref{dex0d9ot} and moment map \eqref{u4l3kt9f}.
It has dimension
\[
\dim \mathfrak{M}_{G, S, \H} = \dim \g + \dim S - \rk L_S.
\]
\item[\textup{(ii)}]
The hypothesis of \textup{Part (i)} is satisfied if $\H$ is closed in $T^*G\big\vert_S$.
\end{enumerate}
\end{prop}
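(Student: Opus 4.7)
The plan is to assemble this proposition from the discussion immediately preceding it, together with Theorem \ref{main-theorem-smooth} (or Theorem \ref{ytazg95b}(iv) in the holomorphic setting) and Lemma \ref{4z1djzwm}; no new technical machinery is required.

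For Part (i), I would first invoke the hypothesis that $(S,\H)$ is a clean reduction datum for the right translation action to apply Theorem \ref{main-theorem-smooth}. This produces a symplectic form $\bar\omega$ on $\mathfrak{M}_{G,S,\H}$ satisfying $\pi^*\bar\omega = j^*\Omega$, where $j : G \times S \hookrightarrow T^*G$ is the inclusion and $\pi : G \times S \longrightarrow \mathfrak{M}_{G,S,\H}$ is the quotient map. The left translation action of $G$ on $T^*G$ commutes with the right translation action, hence preserves the moment-map level set $\mu_{\mathrm{right}}^{-1}(S) = G \times S$ and commutes with the $\H$-action. It therefore descends to the $G$-action \eqref{dex0d9ot} on $\mathfrak{M}_{G,S,\H}$.

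To verify that the descended action is Hamiltonian with moment map \eqref{u4l3kt9f}, I would use the fact that the map $\nu : G \times S \longrightarrow \g^*$, $\nu(g,\xi) = -\Ad_g^*\xi$, is the restriction to $G \times S$ of the first component of the $(G\times G)$-moment map of \eqref{1h6um459}. Since $\H$ acts through the right translation factor, this first component is $\H$-invariant, so $\nu$ descends to the map $\bar\nu$ of \eqref{u4l3kt9f}. For any $x \in \g$, the fundamental vector field $x^\sharp$ on $T^*G$ for the left translation action is tangent to $G \times S$ and satisfies the Hamiltonian identity $\iota_{x^\sharp}\Omega = -d\langle \nu, x\rangle$ on $T^*G$; pulling back along $\pi$ using $\pi^*\bar\omega = j^*\Omega$ yields the analogous identity on $\mathfrak{M}_{G,S,\H}$. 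Equivariance of $\bar\nu$ under $G$ is inherited from that of $\nu$.

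For the dimension formula, I would observe that $\pi$ is a submersion whose fibres are the $\H$-orbits on $G \times S$; by Theorem \ref{distribution-proposition} applied to the right translation action, these fibres have dimension $\rk L_S$. This yields $\dim\mathfrak{M}_{G,S,\H} = \dim \g + \dim S - \rk L_S$. Part (ii) then follows immediately from Lemma \ref{4z1djzwm}, since $G$ acts freely and properly on $T^*G$ by right translations. The main point requiring care is the descent of the left-translation moment map through the $\H$-quotient and the verification that the Hamiltonian identity pulls back correctly; this is where the explicit formula \eqref{1h6um459} and the relation $\pi^*\bar\omega = j^*\Omega$ do the work, and everything else is bookkeeping.
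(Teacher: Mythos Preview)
Your approach is essentially the paper's own: the paper defers the Hamiltonian $G$-space structure to the discussion immediately preceding the proposition and to Lemma \ref{4z1djzwm} for Part (ii), leaving only the dimension formula to prove. Your more explicit verification of the descended moment map is fine and follows the same logic.

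One small correction on the dimension formula: Theorem \ref{distribution-proposition} is not the right citation. That theorem identifies $T_p(\H\cdot p)$ with $T_pN \cap T_pN^\omega$, but it does not tell you this space has dimension $\rk L_S$; in general the orbit dimension is $\rk L_S - \dim\ker(d\varphi_p)_x$. The paper's argument is simpler and more direct: since $G$ acts freely on $T^*G$ by right translations, $\H$ acts freely on $G\times S$, so the orbit map $\H_x \to G\times S$ is an immersion and each $\H$-orbit has dimension equal to $\dim T_x\H_x = \rk L_S$. This gives $\dim\mathfrak{M}_{G,S,\H} = \dim(G\times S) - \rk L_S = \dim\g + \dim S - \rk L_S$ immediately.
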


\begin{proof}
It only remains to prove the formula for the dimension; this is simply a result of $\mathcal{H}$ acting freely on $G\times S$ and the fact that $\mathfrak{M}_{G, S, \H} = (G \times S) / \H$.
\end{proof}

The following result is analogous to the universal property of symplectic implosion \cite[\S4]{guillemin-jeffrey-sjamaar}.

\begin{thm}[Universality; Theorem \ref{Theorem: Reduction by a Lie group along a submanifold}(i)]\label{universality}
Let $G$ be a Lie group acting on a symplectic manifold $M$ in a Hamiltonian way with moment map $\mu : M \longrightarrow \g^*$.
Let $(S, \H)$ be a clean reduction datum for both the action of $G$ on $M$ and the action of $G$ on $T^*G$ by right translations.
We then have a canonical symplectomorphism
\[
M \sll{S, \H} G \cong (M \times \mathfrak{M}_{G, S, \H}^-) \sll{0} G,
\]
where $G$ acts on $M \times \mathfrak{M}_{G, S, \H}^-$ diagonally and $\sll{0}$ denotes Marsden--Weinstein--Meyer reduction at level $0$.
\end{thm}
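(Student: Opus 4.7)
The plan is to prove this by \emph{symplectic reduction in stages} applied to the product $M \times T^*G$, viewed as a Hamiltonian system for two commuting copies of $G$. Let $G_1 \cong G$ act diagonally via the given action on $M$ and the left-translation action on $T^*G$, and let $G_2 \cong G$ act trivially on $M$ and by right translations on $T^*G$; reading the moment maps off from \eqref{1h6um459}, one obtains
\[
\mu_1(p, (g, \xi)) = \mu(p) - \Ad_g^*\xi \qquad \text{and} \qquad \mu_2(p, (g, \xi)) = \xi.
\]
These actions commute and make $M \times T^*G$ into a Hamiltonian $(G_1 \times G_2)$-space.

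Reducing first by $G_2$ along $S$ with respect to $\H$ only affects the $T^*G$-factor and, by the definition of the universal reduced space recorded in \eqref{dex0d9ot}--\eqref{u4l3kt9f}, produces $M \times \mathfrak{M}_{G, S, \H}$ as a Hamiltonian $G_1$-space. The subsequent Marsden--Weinstein--Meyer reduction at level $0$ then matches $(M \times \mathfrak{M}_{G, S, \H}^-)\sll{0} G$, with the $\mathfrak{M}^-$ notation used to normalize the sign of the diagonal moment map on the zero level set. In the opposite order, one first reduces by $G_1$ at level $0$: the level set is $\mu_1^{-1}(0) = \{(p, (g, \xi)) : \mu(p) = \Ad_g^*\xi\}$, and the map $[(p, (g, \xi))] \mtoo g^{-1}\cdot p$ defines a symplectomorphism from the $G_1$-quotient onto $M$ which identifies the residual $G_2$-action with the original $G$-action on $M$ and the $G_2$-moment map with $\mu$. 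Reducing this recovered Hamiltonian $G$-space along $S$ with respect to $\H$ now yields the left-hand side $M \sll{S, \H} G$. The reduction-in-stages principle for symplectic groupoid actions, applied to these two commuting reductions on $M \times T^*G$, then identifies the two computations and produces the claimed symplectomorphism. At the level of points, the identifying map is given by $[p] \mtoo [(p, [(1_G, \mu(p))])]$, with inverse $[(p, [(g, \xi)])] \mtoo [g^{-1}\cdot p]$.

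The main obstacle is to verify the cleanness of each intermediate reduction, so that Theorem \ref{main-theorem-smooth} applies at every stage. The assumed cleanness of $(S, \H)$ for both the given $G$-action on $M$ and the right-translation action on $T^*G$ should propagate to cleanness of the relevant intermediate reductions and of the joint level set $\mu_1^{-1}(0) \cap \mu_2^{-1}(S)$; this is the main technical point. A secondary subtlety is the careful bookkeeping of the sign conventions encoded by the $\mathfrak{M}^-$ notation, which account for the fact that negating the symplectic form on $\mathfrak{M}_{G, S, \H}$ correspondingly negates its moment map. Once these points are handled, the symplectomorphism property follows immediately from the characterization of the reduced symplectic form in Theorem \ref{main-theorem-smooth}: both reduced forms pull back, via their respective quotient maps, to the restriction of $\omega \oplus \Omega$ on the joint level set, and hence descend to the same symplectic form on the common quotient.
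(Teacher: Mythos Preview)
Your approach via reduction in stages is conceptually sound, but it is both more elaborate than and genuinely different from the paper's proof. The paper does not invoke any reduction-in-stages principle; instead it writes down the explicit map
\[
\psi : \mu^{-1}(S) \too M \times T^*G, \qquad p \mtoo (p, 1, \mu(p)),
\]
observes that it descends to a bijection $\mu^{-1}(S)/\H \to (M \prescript{}{\mu}{\times}_{-\nu} \mathfrak{M}_{G,S,\H})/G$, and checks in one line that $\psi^*(\omega, -\Omega) = i^*\omega$ using $d\psi(u) = (u, 0, d\mu(u))$ and the formula \eqref{lcnmyorx} for $\Omega$ (the second component vanishes because $\Omega$ is alternating in the $\g$-slot). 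This avoids entirely the need to verify cleanness of intermediate reductions or to appeal to a commutation-of-reductions statement.

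Your route has a real gap as written: you invoke a ``reduction-in-stages principle for symplectic groupoid actions'' that the paper neither states nor proves, and one of your two stages is the new reduction $\sll{S,\H}$ introduced here, so classical reduction-in-stages for commuting group actions does not literally apply. You would need to prove that reducing $M \times T^*G$ first by $G_1$ at $0$ and then by $\H$ along $S$ agrees (as a symplectic manifold) with the opposite order, and you correctly flag that propagating cleanness through both orders is the crux---but you do not carry this out. The paper's direct argument sidesteps all of this: since the final map is written down explicitly, one checks smoothness and the symplectic condition on the nose, using only the cleanness hypotheses already in the statement. If you wish to salvage your approach, the cleanest fix is simply to exhibit the map $[p] \mapsto [(p,[(1,\mu(p))])]$ that you mention at the end and verify directly that it pulls back the reduced form correctly; but at that point you have essentially reproduced the paper's proof.
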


\begin{proof}
The map
\[
\psi : \mu^{-1}(S) \too M \times T^*G, \quad p \mtoo (p, 1, \mu(p))
\]
descends to a diffeomorphism $$\mu^{-1}(S) / \H \longrightarrow (M \prescript{}{\mu}\times_{-\nu} (G \times S)/\H) / G,$$
where $\nu$ is the moment map \eqref{u4l3kt9f}.
It then remains to show that $\psi^*(\omega, -\Omega) = i^*\omega$, where $i : \mu^{-1}(S) \longrightarrow M$ is the inclusion map.
But $d\psi(u) = (u, 0, d\mu(u))$ for $u \in T\mu^{-1}(S)$, so $$\psi^*(\omega, -\Omega)(u, v) = \omega(u, v) - \Omega((0, d\mu(u)), (0, d\mu(v))) = \omega(u, v)$$
for all $p \in \mu^{-1}(S)$ and $u, v \in T_p\mu^{-1}(S)$.
\end{proof}

\begin{rem}[K\"ahler quotients]
The following remark only applies in the smooth category.
Let $G$ be a compact Lie group and $(M, \omega)$ a Hamiltonian $G$-space with moment map $\mu : M \longrightarrow \g^*$.
Suppose that $M$ has a $G$-invariant K\"ahler metric compatible with $\omega$.
The symplectic reduction $M \sll{\xi} G \coloneqq \mu^{-1}(\xi) / G_\xi$ at level $\xi \in \g^*$ is then also K\"ahler if it is smooth \cite[\S3(C)]{HKLR}.
It is natural to ask if the same holds for a general symplectic reduction $M \sll{S} G$ along a pre-Poisson submanifold $S \s \g^*$.
An immediate corollary of universality (Theorem \ref{universality}) is that it suffices to ask this question for the universal reduced space; if $\mathfrak{M}_{G, S, \H}$ has a compatible $G$-invariant K\"ahler structure, then $M \sll{S, \H} G$ is also K\"ahler.

One might then expect the K\"ahler structure on $T^*G \cong G_\C$ to induce a K\"ahler structure on $\mathfrak{M}_{G, S, \H}$.
However, the standard methods for showing that $M \sll{\xi} G$ is K\"ahler (e.g.\ \cite[\S3(C)]{HKLR} or \cite{kirwan-1984}) do not generalize to the case of symplectic reduction along a submanifold.
For instance, suppose that $S$ is a Poisson transversal in $\g^*$. We then have $\mathfrak{M}_{G, S} = G \times S$ (Example \ref{6sap623o}), which is a complex submanifold of $G_\C$ if and only if $S$ is open in $\g^*$.
On the other hand, there are special cases in which it is possible to define a K\"ahler structure on $\mathfrak{M}_{G, S, \H}$ indirectly; symplectic implosion \cite[\S6]{guillemin-jeffrey-sjamaar} is a noteworthy example (see \S\ref{nxuzyzm0}).
\end{rem}

\subsection{Symplectic reduction along stable submanifolds}
\label{peg44up7}

We now study a large class of pre-Poisson submanifolds that encompasses most of the examples in this paper. The principal advantage of working with these submanifolds is that that their stabilizer subgroupoids can be readily computed as Lie group fibrations (a.k.a.\ group schemes).

\begin{defn}
Let $(X, \sigma)$ be a Poisson manifold.
A submanifold $S \s X$ is called \dfn{stable} if it is pre-Poisson and its stabilizer subalgebroid $L_S$ is contained in $\ker \sigma$.
\end{defn}

\begin{lem}\label{3uj9s7s9}
Poisson submanifolds are stable.
\end{lem}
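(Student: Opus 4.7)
The plan is to use the standard characterization of Poisson submanifolds in terms of the annihilator bundle $TS^\circ$, namely that $S$ is a Poisson submanifold if and only if $\sigma(TS^\circ) = 0$. Once this is established, both defining conditions of stability---pre-Poisson-ness and the inclusion $L_S \subseteq \ker \sigma$---follow immediately from the formula $L_S = \sigma^{-1}(TS) \cap TS^\circ$.

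First I would recall that $S \subseteq X$ being a Poisson submanifold means that the inclusion $S \hookrightarrow X$ is a Poisson map, equivalently that the anchor $\sigma : T^*X \longrightarrow TX$ satisfies $\sigma(T^*X\big\vert_S) \subseteq TS$. To go from this to the vanishing statement $\sigma(TS^\circ) = 0$, I would use the skew-symmetry of $\sigma$: for any $\xi \in T_xS^\circ$ and any $\eta \in T_x^*X$, one has
\[
\langle \sigma(\xi), \eta \rangle = -\langle \xi, \sigma(\eta) \rangle,
\]
and the right-hand side vanishes because $\sigma(\eta) \in T_xS$ (as $S$ is a Poisson submanifold) and $\xi \in T_xS^\circ$. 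Since $\eta$ is arbitrary, $\sigma(\xi) = 0$.

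With that, the computation of $L_S$ becomes trivial: every $\xi \in TS^\circ$ satisfies $\sigma(\xi) = 0 \in TS$, so automatically $\xi \in \sigma^{-1}(TS)$, giving
\[
L_S = \sigma^{-1}(TS) \cap TS^\circ = TS^\circ.
\]
This bundle has constant rank $\dim X - \dim S$, so $S$ is pre-Poisson. Moreover $\sigma(L_S) = \sigma(TS^\circ) = 0$, i.e.\ $L_S \subseteq \ker \sigma$, which is the remaining condition for stability.

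There is no real obstacle here; the only mildly non-trivial point is the equivalence between the usual definition of a Poisson submanifold and the condition $\sigma(TS^\circ) = 0$, and even that is a one-line consequence of the skew-symmetry of $\sigma$. The entire argument is essentially a bookkeeping check against the definitions.
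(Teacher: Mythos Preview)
Your proof is correct and follows essentially the same route as the paper: both use the characterization $\sigma(TS^\circ)=0$ to conclude $L_S=TS^\circ$, whence constant rank and $L_S\subseteq\ker\sigma$. The only difference is that the paper simply asserts the equivalence ``$S$ Poisson $\Longleftrightarrow \sigma(TS^\circ)=0$'' whereas you supply the short skew-symmetry argument for the direction needed.
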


\begin{proof}
A submanifold $S \s X$ is Poisson if and only if $\sigma(TS^\circ) = 0$.
In this case, $$L_S = \sigma^{-1}(TS) \cap TS^\circ = TS^\circ,$$ so $L_S$ has constant rank.
The condition $\sigma(TS^\circ) = 0$ also implies that $L_S \s \ker \sigma$.
\end{proof}

\begin{lem}
Poisson transversals are stable.
\end{lem}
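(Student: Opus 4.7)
The plan is to observe that for a Poisson transversal $S$, the stabilizer subalgebroid $L_S$ is in fact the zero bundle, which makes both conditions for stability trivial.

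First, I would unpack the definition: $S$ being a Poisson transversal means $TX\big\vert_S = TS \oplus \sigma(TS^\circ)$. Taking an arbitrary $\alpha \in L_S = \sigma^{-1}(TS) \cap TS^\circ$, I get that $\sigma(\alpha)$ lies in both $TS$ (since $\alpha \in \sigma^{-1}(TS)$) and $\sigma(TS^\circ)$ (since $\alpha \in TS^\circ$). The direct sum decomposition forces $\sigma(\alpha) = 0$, so $\alpha \in \ker \sigma \cap TS^\circ$.

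Next, I would upgrade this to $L_S = 0$. Comparing ranks in the decomposition $TX\big\vert_S = TS \oplus \sigma(TS^\circ)$, the fibres of $\sigma(TS^\circ)$ have dimension $\dim X - \dim S = \rk TS^\circ$. Since $\sigma : TS^\circ \longrightarrow \sigma(TS^\circ)$ is surjective by definition and both sides have the same rank, it is an isomorphism of vector bundles. Hence $\ker \sigma \cap TS^\circ = 0$, and combined with the previous step this yields $L_S = 0$.

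Finally, I would conclude that $L_S = 0$ trivially has constant rank (so $S$ is pre-Poisson) and is trivially contained in $\ker \sigma$ (so $S$ is stable). There is no real obstacle here; the only point requiring care is the rank equality in the middle step, which is an immediate consequence of the direct sum decomposition defining a Poisson transversal.
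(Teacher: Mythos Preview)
Your proof is correct and takes essentially the same approach as the paper: both arguments reduce to the fact that $L_S = 0$ for a Poisson transversal, which the paper simply cites from an earlier example while you supply the direct rank argument explicitly.
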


\begin{proof}
By Example \ref{6sap623o}, a Poisson transversal $S$ satisfies $L_S = 0$.
\end{proof}

Let $G$ be a Lie group with Lie algebra $\g$.
We specialize to the case of $X = \g^*$ with its Kirillov--Kostant--Souriau Poisson structure $\sigma$ in \eqref{ohdna9u5}. A pre-Poisson submanifold $S \s \g^*$ is then stable if and only if $(L_S)_\xi \s \g_\xi$ for all $\xi \in S$, where $(L_S)_\xi$ is given by Lemma \ref{subalgebroid-lemma} and
\[
\g_\xi \coloneqq \Lie(G_\xi) = \{ x \in \g : \ad_x^*\xi = 0\}.
\]

\begin{lem}\label{5r34bsl9}
A $G$-invariant submanifold $S \s \g^*$ is Poisson and hence stable.
\end{lem}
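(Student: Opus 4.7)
The plan is to reduce the statement to Lemma \ref{3uj9s7s9} by showing that any $G$-invariant submanifold $S \s \g^*$ is automatically a Poisson submanifold. The underlying conceptual observation is that the symplectic leaves of the Kirillov--Kostant--Souriau structure on $\g^*$ are precisely the coadjoint orbits, so a $G$-invariant submanifold is a union of symplectic leaves.

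To make this concrete, I would fix $\xi \in S$ and appeal to the explicit formula \eqref{ohdna9u5} to identify the image of the anchor at $\xi$ as
\[
\sigma(T^*_\xi \g^*) = \{-\ad_x^*\xi : x \in \g\} = T_\xi(G \cdot \xi).
\]
Since $S$ is $G$-invariant we have $G \cdot \xi \s S$, hence $T_\xi(G \cdot \xi) \s T_\xi S$, and therefore $\sigma(T^*_\xi \g^*) \s T_\xi S$ for every $\xi \in S$. By skew-symmetry of the Poisson bivector, this is equivalent to $\sigma(TS^\circ) = 0$, which is the characterization of Poisson submanifold used in the proof of Lemma \ref{3uj9s7s9}. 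That lemma then immediately gives stability of $S$.

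The computation is essentially a one-line consequence of the description of $\sigma$, so there is no real obstacle. The only modest point of care is matching the identification $T^*_\xi\g^* \cong \g$ from \eqref{ohdna9u5} with $T_\xi S^\circ$ viewed inside $\g$, and noting the standard equivalence between $\sigma(T^*X\big\vert_S) \s TS$ and $\sigma(TS^\circ) = 0$ used implicitly in Lemma \ref{3uj9s7s9}.
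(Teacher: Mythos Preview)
The proposal is correct and follows essentially the same approach as the paper: both show that $\sigma_\xi(T^*_\xi\g^*) = \{-\ad_x^*\xi : x \in \g\} \subseteq T_\xi S$ via $G$-invariance of $S$, which is the Poisson-submanifold condition, and then invoke Lemma~\ref{3uj9s7s9}. Your write-up is in fact more explicit than the paper's one-line proof, spelling out the coadjoint-orbit interpretation and the skew-symmetry equivalence that the paper leaves implicit.
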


\begin{proof}
For all $\xi \in S$ and $x \in T^*_\xi \g^* = \g$, we have $\sigma_\xi(x) = -\ad_x^*\xi \in T_\xi S$.
\end{proof}

We now show that the stabilizer subgroupoids of stable submanifolds take a particularly simple form.

\begin{thm}[Theorem \ref{Theorem: Reduction by a Lie group along a submanifold}(ii)---(iii)]\label{amc0wisd}
Let $S \s \g^*$ be a stable submanifold. For each $\xi \in S$, let $H_\xi \s G$ be a Lie subgroup with Lie algebra $\h_\xi$. Consider the set
\[
\H \coloneqq \{(g, \xi) \in G \times S : g \in H_\xi\}.
\]

\begin{enumerate}
\item[\textup{(i)}] The subspace
\begin{equation}\label{4mlgs031}
\h_\xi \coloneqq (T_\xi S)^\circ \cap \g_\xi
\end{equation}
is a Lie subalgebra of $\g$ for all $\xi \in S$, and the stabilizer subalgebroid of $S$ is
\[
L_S = \{(x, \xi) \in \g \times S : x \in \h_\xi\}.
\]	
\item[\textup{(ii)}]
Suppose that the following two conditions hold.
\begin{enumerate}[leftmargin=4em]
\item[\textup{(C1)}] $\H$ is a submanifold of $T^*G$.
\item[\textup{(C2)}] $\H$ is isotropic in $T^*G$.
\end{enumerate}
Then $\H$ is a stabilizer subgroupoid of $S$ in $T^*G \tto \g^*$.
\item[\textup{(iii)}]
If \textup{(C1)} holds and there is an open dense subset $U$ of $S$ such that $H_\xi$ is connected for all $\xi \in U$, then \textup{(C2)} also holds.
\item[\textup{(iv)}]
If $\H$ is closed in $G \times S$ and each $H_\xi$ is connected, then both \textup{(C1)} and \textup{(C2)} hold.
\end{enumerate}
\end{thm}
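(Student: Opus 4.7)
My plan is to treat the four parts sequentially, with each built on the preceding ones.

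For Part (i), Lemma \ref{subalgebroid-lemma} gives $L_S = \{(x,\xi)\in\g\times S : x\in (T_\xi S)^\circ \text{ and } \ad_x^*\xi \in T_\xi S\}$, and stability ($L_S\subseteq\ker\sigma$) forces $\ad_x^*\xi=0$ on $L_S$, collapsing the second condition to $x\in\g_\xi$ and yielding the formula $L_S=\{(x,\xi):x\in\h_\xi\}$. To see that $\h_\xi$ is a Lie subalgebra, I use that $L_S$ is a Lie subalgebroid of $T^*\g^*$ with vanishing anchor (since $L_S\subseteq\ker\sigma$), so it carries a fibrewise Lie algebra structure via the restricted Koszul bracket. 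The Koszul bracket on $T^*\g^*$ restricts to $[\cdot,\cdot]_\g$ on constant $\g$-valued sections; combined with the $C^\infty$-bilinearity of the Lie algebroid bracket at points where both anchors vanish, this identifies the induced fibrewise bracket on $\h_\xi$ with the restriction of $[\cdot,\cdot]_\g$, proving that $\h_\xi$ is a Lie subalgebra of $\g$.

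For Part (ii), condition (C1) makes $\H$ an embedded submanifold of $T^*G$, and the subgroup property of each $H_\xi$ renders $\H$ closed under multiplication, inversion, and units, so $\H$ is a Lie subgroupoid. Lemma \ref{m0mdb7qp}(ii) then identifies its Lie algebroid as $\{(x,\xi):x\in T_1H_\xi\}=\{(x,\xi):x\in\h_\xi\}=L_S$ by Part (i), and (C2) supplies the isotropy required for $\H$ to be a stabilizer subgroupoid. For Part (iii), connectedness of $H_\xi$ over the open dense set $U$ combined with $\h_\xi\subseteq\g_\xi$ from Part (i) forces $H_\xi\subseteq G_\xi$ on $U$ (as $H_\xi$ is the unique connected subgroup integrating $\h_\xi$ and $G_\xi$ is closed). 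Hence $\sss(g,\xi)=\Ad_g^*\xi=\xi$ whenever $g\in H_\xi$ and $\xi\in U$, so the source fibre of $\H$ at $\xi$ is $H_\xi\times\{\xi\}$, which is connected. Thus $\H$ is source-connected over $U$, and Remark \ref{2g4d9b8v}(ii) yields (C2).

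For Part (iv), closedness of $\H$ in $G\times S$ implies each $H_\xi$ is closed in $G$, hence by Cartan's theorem an embedded Lie subgroup of constant dimension $\rk L_S$ with Lie algebra $\h_\xi$; the connectedness hypothesis then delivers (C2) via Part (iii) once (C1) is established. The main task is therefore (C1), showing that $\H$ is a submanifold of $T^*G$. My plan is to use the exponential map $\Phi : L_S \to G\times S$, $(x,\xi)\mapsto (\exp_G(x),\xi)$, whose differential at $(0,\xi_0)$ sends $\h_{\xi_0}\oplus T_{\xi_0}S$ injectively into $\g\oplus T_{\xi_0}\g^*$, giving an embedded chart of $\H$ near the identity bisection. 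At a general point $(h_0,\xi_0)\in\H$, one transports this chart by groupoid multiplication with a smooth local section of $\ttt:\H\to S$ through $(h_0,\xi_0)$. Producing this local section is the main obstacle: it requires that the family of closed connected Lie subgroups $\{H_\xi\}$ vary smoothly with $\xi$, which follows from the smoothness of $L_S$ as a subbundle of $\g\times S$ combined with a standard argument for bundles of closed Lie subgroups of constant dimension.
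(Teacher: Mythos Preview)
Your treatment of Parts (i)--(iii) is correct and matches the paper's approach: Part (i) via Lemma~\ref{subalgebroid-lemma} and stability, Parts (ii)--(iii) via Lemma~\ref{m0mdb7qp}(ii) and Remark~\ref{2g4d9b8v}(ii). Your extra detail on the Koszul bracket in Part~(i) is a welcome elaboration of the paper's one-line ``$L_S$ is a bundle of Lie subalgebras.''

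There is a genuine gap in Part~(iv). You assert that the exponential map $\Phi:L_S\to G\times S$, $(x,\xi)\mapsto(\exp x,\xi)$, gives ``an embedded chart of $\H$ near the identity bisection'' because $d\Phi_{(0,\xi_0)}$ is injective. But injectivity of the differential only shows $\Phi$ is an immersion with image \emph{contained in} $\H$; it does not show the image \emph{equals} $\H$ near $(1,\xi_0)$. That equality is exactly where closedness enters, and it is not automatic: one must rule out points of $\H$ near $(1,\xi_0)$ that escape the exponential image. The paper isolates this as a separate result (Theorem~\ref{s72lkap4}, a parametrized closed-subgroup theorem) and proves it by a sequential contradiction argument in the spirit of Cartan's theorem---choosing a complement $F$ to $L_S$, assuming points $(e^{x_n}e^{y_n},\xi_n)\in\H$ with $y_n\in F_{\xi_n}\setminus\{0\}$, normalizing $y_n/\|y_n\|\to y_0$, and using closedness to force $y_0\in L_{S,\xi_0}\cap F_{\xi_0}=0$.

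Your deferral of the local-section step to ``a standard argument for bundles of closed Lie subgroups'' is similarly incomplete: the smooth variation of $\{H_\xi\}$ is essentially equivalent to (C1), so invoking it is circular. The paper constructs the section explicitly as a product $h(\xi)=e^{y_1(\xi)}\cdots e^{y_m(\xi)}$ of exponentials of local sections of the \emph{vector bundle} $L_S$ (which is already smooth), thereby avoiding any appeal to smoothness of $\H$ itself. In short, your outline for (iv) matches the paper's strategy, but the step you call easy (the exponential chart) and the step you call ``standard'' (the section) together constitute the entire content of Theorem~\ref{s72lkap4}.
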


\begin{proof}
Since $L_S \s \ker \sigma$, the anchor map of $L_S$ is trivial.
It follows that $L_S$ is a bundle of Lie subalgebras.
We have
\[
(L_S)_\xi = (T_\xi S)^\circ \cap \sigma_\xi^{-1}(T_\xi S) = (T_\xi S)^\circ \cap \ker \sigma_\xi = \h_\xi
\]
since $(L_S)_\xi \s \ker \sigma_\xi$. This verifies Part (i).
Parts (ii) and (iii) follow from Lemma \ref{m0mdb7qp}(ii) and Remark \ref{2g4d9b8v}(ii).
Part (iv) results from the following generalization of the closed subgroup theorem.
\end{proof}

\begin{thm}\label{s72lkap4}
Let $G$ be a Lie group with Lie algebra $\g$ and let $S$ be a manifold. Suppose that $E$ is a subbundle of the trivial vector bundle $\g \times S \longrightarrow S$ such that the fibre $E_{\xi}$ over each $\xi\in S$ is a Lie subalgebra of $\g$.
Let $H_\xi$ be the connected Lie subgroup of $G$ with Lie algebra $E_\xi$ for each $\xi\in S$, and set
\[
\H \coloneqq \{(g, \xi) \in G \times S : g \in H_\xi\}.
\]
If $\H$ is a closed subset of $G \times S$, then it is a submanifold of $G \times S$.
\end{thm}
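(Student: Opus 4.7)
The plan is to adapt the classical proof of the closed subgroup theorem to this parametric setting. Fix an arbitrary point $(g_0, \xi_0) \in \H$; it suffices to produce a submanifold chart for $\H$ near this point.

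First I would reduce to the case $g_0 = 1$. Since $\H$ is closed in $G \times S$, the fibre $H_{\xi_0} = \{g \in G : (g, \xi_0) \in \H\}$ is closed in $G$, so the classical closed subgroup theorem makes it an embedded Lie subgroup with Lie algebra $E_{\xi_0}$. Using that $H_{\xi_0}$ is connected, I would write $g_0 = \exp(y_1) \cdots \exp(y_m)$ with $y_i \in E_{\xi_0}$ and extend each $y_i$ to a smooth local section $\tilde y_i$ of $E$ over a neighbourhood $V$ of $\xi_0$. The map $\sigma(\xi) \coloneqq \exp(\tilde y_1(\xi)) \cdots \exp(\tilde y_m(\xi))$ is then smooth, satisfies $\sigma(\xi_0) = g_0$, and takes values $\sigma(\xi) \in H_\xi$, so the diffeomorphism $(g, \xi) \mapsto (\sigma(\xi)^{-1}g, \xi)$ of $G \times V$ preserves $\H$ and carries $(g_0, \xi_0)$ to $(1, \xi_0)$.

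Next I would build a local chart on $G \times S$ adapted to $E$. After shrinking $V$, choose a smooth subbundle $F$ of $\g \times V$ complementary to $E$, and set
\[
\Psi : (E \oplus F)\big\vert_V \too G \times V, \qquad \Psi(x, y, \xi) = (\exp(x) \exp(y), \xi).
\]
Since $d\exp_0 = \Id$, the differential $d\Psi_{(0,0,\xi_0)}$ is the identity on $\g \oplus T_{\xi_0}S$, so $\Psi$ is a local diffeomorphism at $(0,0,\xi_0)$. It then suffices to verify that $\Psi^{-1}(\H)$ coincides locally with the slice $\{y = 0\}$, which would exhibit $\H$ as a submanifold of dimension $\rk E + \dim S$. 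The inclusion $\{y = 0\} \s \Psi^{-1}(\H)$ is immediate, since $\exp(x) \in H_\xi$ whenever $x \in E_\xi$. The reverse inclusion I would prove by contradiction: if it failed, there would be a sequence $(g_n, \xi_n) \in \H$ with $\xi_n \to \xi_0$, $g_n = \exp(x_n)\exp(y_n)$, $(x_n, y_n) \to (0,0)$, and $y_n \neq 0$. Then $\exp(y_n) = \exp(x_n)^{-1} g_n \in H_{\xi_n}$. Fixing a norm on $\g$ and setting $c_n \coloneqq \|y_n\|$, a Bolzano--Weierstrass argument applied to $y_n / c_n$ in a local trivialization of $F$ extracts a subsequence with $y_n / c_n \to y \in F_{\xi_0}$ and $\|y\| = 1$. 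For any $t \in \R$, choosing $k_n \in \mathbb{Z}$ with $k_n c_n \to t$ yields $\exp(y_n)^{k_n} = \exp(k_n y_n) \to \exp(ty)$ and $(\exp(y_n)^{k_n}, \xi_n) \in \H$, so closedness of $\H$ forces $\exp(ty) \in H_{\xi_0}$ for all $t \in \R$. Since $H_{\xi_0}$ has Lie algebra $E_{\xi_0}$, this implies $y \in E_{\xi_0}$, contradicting $y \in F_{\xi_0} \setminus \{0\}$.

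The hard part will be this limiting argument: the vectors $y_n$ live in varying fibres $F_{\xi_n}$ rather than a fixed vector space, so extracting the subsequential limit in $F_{\xi_0}$ requires using the smoothness of $F$ as a subbundle of $\g \times V$. The same argument will apply in the holomorphic category, since $\exp$ is holomorphic and the only use of real structure is in the finite-dimensional compactness step that produces the vector $y$.
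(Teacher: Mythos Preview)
Your proposal is correct and follows essentially the same route as the paper: both build the chart $(x,y,\xi)\mapsto(\exp(x)\exp(y),\xi)$ using a local complement $F$ to $E$, prove $\Psi^{-1}(\H)=\{y=0\}$ locally by the same Bolzano--Weierstrass contradiction (normalize $y_n$, pass to a limit $y\in F_{\xi_0}$, use $\exp(y_n)^{k_n}\to\exp(ty)$ and closedness of $\H$ to force $y\in E_{\xi_0}$), and handle a general base point by writing $g_0$ as a product of exponentials of local sections of $E$. The only cosmetic difference is ordering: you reduce to $g_0=1$ first, the paper treats the identity first and then translates.
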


\begin{proof}  
Since this is a local statement, we may assume that there is a complement $F \s \g \times S$ to $E$, so that $\g \times S = E \oplus F$.
Consider the exponential map $\g\longrightarrow G$, $x\mtoo e^x$, and define
\[
\Phi : \g \times S \too G \times S, \quad (x + y, \xi) \mtoo (e^x e^y, \xi) \quad \text{for $\xi \in S$, $x \in E_\xi$, and $y \in F_\xi$}.
\]
Let us also fix $\xi_0 \in S$.
We claim that there exists a neighbourhood $U$ of $(0, \xi_0)$ in $\g \times S$ such that $\Phi$ restricts to a diffeomorphism (or biholomorphism) $U \longrightarrow \Phi(U)$ and
\[
\Phi(U \cap E) = \Phi(U) \cap \H.
\]
This will establish that $\H$ is a submanifold in a neighbourhood of $(1, \xi_0)$.
The case of a general point $(h_0, \xi_0)$ will be shown to follow from this special case.

To prove the above-mentioned claim, let $\{U_n\}_{n \in \mathbb{N}}$ be a neighbourhood basis for $\g \times S$ at $(0, \xi_0)$.
The differential $d\Phi_{(0, \xi_0)} : \g \times T_{\xi_0} S \longrightarrow \g \times T_{\xi_0} S$ is the identity map, so we may assume that $\Phi$ restricts to a diffeomorphism (or biholomorphism) on each $U_n$.
We claim that there exists $n$ large enough so that $\Phi(U_n \cap E) = \Phi(U_n) \cap \H$.

Suppose that this is not the case.
We have $\Phi(U_n \cap E) \s \Phi(U_n) \cap \H$, so there exists
\[
(g_n, \xi_n) \in \Phi(U_n) \cap \H \setminus \Phi(U_n \cap E)
\]
for all $n$.
In particular,
\[
(g_n, \xi_n)  = (e^{x_n}e^{y_n}, \xi_n)
\]
for some $x_n \in E_{\xi_n}$ and $y_n \in F_{\xi_n}$ such that $(x_n + y_n, \xi_n) \in U_n$.
Since $U_n$ is a neighbourhood basis at $(0, \xi_0)$, the sequence $(x_n + y_n, \xi_n)$ converges to $(0, \xi_0)$ as $n \longrightarrow \infty$.

Note that $y_n \neq 0$ for all $n$, since $(g_n, \xi_n) \notin \Phi(U_n \cap E)$.
Fix a norm $\|\cdot\|$ on $\g$ and set $c_n = \|y_n\| > 0$.
Then $c_n^{-1}y_n$ lies on the unit sphere of $\g$, so by passing to a subsequence we may assume that $c_n^{-1}y_n \longrightarrow y_0$ for some $y_0 \in \g$ with $\|y_0\| = 1$.
Moreover, $(c_n^{-1}y_n, \xi_n) \in F$ for all $n$, so $(y_0, \xi_0) \in F$.

Let $t \in \R$.
Let $k_n$ be the largest integer such that $k_n \le t/c_n$.
Then $|k_nc_n - t| \le c_n \longrightarrow 0$ and so $k_nc_n\longrightarrow t$.
We also have $e^{y_n} = e^{-x_n}g_n \in H_{\xi_n}$, so $(e^{y_n}, \xi_n) \in \H$ and hence
\[
(e^{(k_nc_n)(c_n^{-1}y_n)}, \xi_n) = (e^{k_ny_n}, \xi_n)=((e^{y_n})^{k_n}, \xi_n) \in \H
\]
for all $n$.
Since $\H$ is closed in $G \times S$, taking the limit as $n \longrightarrow \infty$ shows that $(e^{ty_0}, \xi_0) \in \H$, i.e.\ $e^{ty_0} \in H_{\xi_0}$.
But $t$ is arbitrary, so we have $y_0 \in E_{\xi_0}$.
It follows that $y_0 \in E_{\xi_0} \cap F_{\xi_0} = 0$, contradicting the fact that $\|y_0\| = 1$.
This proves the claim, implying that $\H$ is a submanifold of $G \times S$ in a neighbourhood of $(1, \xi_0)$.

Now let $(h_0, \xi_0) \in \H$ be arbitrary.
Let $U$ be a neighbourhood of $(0, \xi_0)$ in $\g \times S$ such that $\Phi$ restricts to a diffeomorphism (or biholomorphism) $U \longrightarrow \Phi(U)$ and $\Phi(U \cap E) = \Phi(U) \cap \H$.
Since $H_{\xi_0}$ is connected, we can write $h_0 = e^{x_1} \cdots e^{x_m}$ for some $x_i \in E_{\xi_0}$.
By further shrinking $S$ if necessary, we may assume that there are sections $s_i : S \longrightarrow E$ for $i = 1, \ldots, m$ of the form $s_i(\xi) = (y_i(\xi), \xi)$, where $y_i(\xi_0) = x_i$.
Then $h(\xi) \coloneqq e^{y_1(\xi)} \cdots e^{y_m(\xi)}$ is a map $S \longrightarrow G$ such that $(h(\xi), \xi) \in \H$ for all $\xi \in S$ and $h(\xi_0) = h_0$.
Define $\Psi : \g \times S \longrightarrow G \times S$ to be the composition of $\Phi$ and the diffeomorphism (or biholomorphism) $G \times S \longrightarrow G \times S$ given by $(g, \xi) \mtoo (h(\xi)g, \xi)$, i.e.\
\[
\Psi(x + y, \xi) = (h(\xi) e^x e^y, \xi)\quad \text{for all $\xi \in S$, $x \in E_\xi$, and $y \in F_\xi$}.
\]
Note that $\Psi$ is a diffeomorphism (or biholomorphism) onto its image, $\Psi(U \cap E) = \Psi(U) \cap \H$, and $\Psi(0, \xi_0) = (h_0, \xi_0)$.
We conclude that $\H$ is a submanifold in a neighbourhood of $(h_0, \xi_0)$.
\end{proof}

\begin{rem}\label{Remark: Useful}
The assumption in Theorem \ref{s72lkap4} that the groups $H_\xi$ are connected can be replaced by the following more general condition: for each $\xi_0 \in S$ and $h_0 \in H_{\xi_0}$, there is a map $s$ (smooth or holomorphic) from a neighbourhood of $\xi_0$ in $S$ to $G$ such that $s(\xi_0) = h_0$ and $s(\xi) \in H_\xi$ for all $\xi$.
In this case, the proof shows that $\H$ is a submanifold of $G \times S$ if it is closed.
\end{rem}

\subsection{Integration of $G$-invariant submanifolds}
\label{emhsjkzj}

Let $G$ be a Lie group with Lie algebra $\g$ and let $S \s \g^*$ be a $G$-invariant submanifold.
Lemma \ref{5r34bsl9} tells us that $S$ is a Poisson submanifold of $\g^*$, and we let $\sigma_S:T^*S\longrightarrow TS$ denote the Poisson bivector field that it inherits as such. One may then attempt to find a symplectic groupoid that integrates $(S,\sigma_S)$, i.e.\ a symplectic groupoid with Lie algebroid $(T^*S,\sigma_S:T^*S\longrightarrow TS)$.
In what follows, we relate symplectic reduction along $S$ to the process of integrating $(S, \sigma_S)$.  

Since $S$ is a Poisson submanifold, its stabilizer subalgebroid is given by $L_S = TS^\circ$.
We also know Poisson submanifolds to be stable (Lemma \ref{3uj9s7s9}), so that
\[
\h_\xi \coloneqq T_\xi S^\circ
\]
is a Lie subalgebra of $\g$ for all $\xi \in S$.
Let $H_\xi \s G$ be the connected Lie subgroup of $G$ with Lie algebra $\h_\xi$ and suppose that
\begin{equation}\label{qbmbd5d8}
\H \coloneqq \{(g, \xi) \in G \times S : g \in H_\xi\}
\end{equation}
is a submanifold of $T^*G$. 
It follows from Theorem \ref{amc0wisd}(iii) that $\H$ is a source-connected stabilizer subgroupoid of $S$ in $T^*G \tto \g^*$. Suppose also that $(S, \H)$ is a clean reduction datum for the action of $G$ on $T^*G$ by right translations, so that the universal reduced space $$\mathfrak{M}_{G, S}=T^*G \sll{S, \H} G=(G\times S)/\mathcal{H}$$ described in \S\ref{fvffdxir} is a Hamiltonian $G$-space.
Note that if $\H$ is closed in $G \times S$, then $\mathcal{H}$ is a submanifold of $T^*G$ (Theorem \ref{amc0wisd}(iv)) and $(S,\mathcal{H})$ is clean (Lemma \ref{4z1djzwm}).

\begin{rem}
The stabilizer subgroupoid \eqref{qbmbd5d8} of a $G$-invariant submanifold $S \s \g^*$ has been studied as a so-called \textit{character Lagrangian}.
We refer the reader to \cite{weinstein-symplectic-category}, \cite[Eq.\ (4.5)]{guillemin-sternberg-homogeneous}, \cite{guillemin-sternberg-symplectic-analogies}, and \cite[\S12.5]{guillemin-sternberg-semi-classical} for further details.
\end{rem}

\begin{thm}[Theorem \ref{Theorem: Reduction by a Lie group along a submanifold}(iv)]\label{Theorem: invariant}
Let $S\subseteq\g^*$ be a $G$-invariant submanifold. Assume that \eqref{qbmbd5d8} is a submanifold of $T^*G$ and that $(S,\mathcal{H})$ is a clean reduction datum for the action of $G$ on $T^*G$ by right translations. The symplectic manifold $\mathfrak{M}_{G, S}$ then has the structure of a symplectic groupoid integrating $(S, \sigma_S)$. The underlying source and target maps are given by
\begin{equation}\label{2vbq8gis}
\sss:\mathfrak{M}_{G,S}\longrightarrow S,\quad [(g,\xi)]\mtoo\mathrm{Ad}_g^*\xi\quad\text{and}\quad \ttt:\mathfrak{M}_{G,S}\longrightarrow S,\quad [(g,\xi)]\mtoo\xi.
\end{equation}
\end{thm}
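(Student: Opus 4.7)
The plan is to descend the symplectic groupoid structure of the cotangent groupoid $T^*G \tto \g^*$ along the quotient map $\pi : G \times S = \mu^{-1}(S) \too \mathfrak{M}_{G,S}$, which is a smooth submersion by the cleanness hypothesis. The reduced symplectic form $\bar\Omega$ on $\mathfrak{M}_{G,S}$ is determined by $\pi^*\bar\Omega = i^*\Omega$, where $\Omega$ is the canonical form on $T^*G$ and $i : G\times S \hookrightarrow T^*G$ is the inclusion. Since $S$ is $G$-invariant, $T^*G\big\vert_S = G \times S$ is a wide Lie subgroupoid of $T^*G \tto \g^*$, so the source, target, multiplication, identity, and inverse of $T^*G$ restrict to it and are the natural candidates for structure maps to push through $\pi$.

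First, I would show that $\sss[g,\xi] \coloneqq \Ad_g^*\xi$, $\ttt[g,\xi] \coloneqq \xi$, the multiplication $[g_1,\xi_1]\cdot[g_2,\xi_2] \coloneqq [g_1 g_2,\xi_2]$ (defined when $\xi_1 = \Ad_{g_2}^*\xi_2$), identity $\xi \mto [1,\xi]$, and inverse $[g,\xi]^{-1} \coloneqq [g^{-1},\Ad_g^*\xi]$ are all well-defined on the quotient. The only nontrivial check is multiplication, which reduces to showing $g^{-1}H_{\xi_1}g = H_{\xi_2}$ whenever $\xi_1 = \Ad_g^*\xi_2$. By Theorem \ref{amc0wisd}(i), this in turn amounts to $\Ad_{g^{-1}}\h_{\xi_1} = \h_{\xi_2}$, which follows from the $G$-equivariance of both $\xi \mto T_\xi S$ (invoking the $G$-invariance of $S$) and $\xi \mto \g_\xi$. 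Smoothness of the induced maps and submersivity of $\sss, \ttt$ then follow automatically from descent along $\pi$, while the groupoid axioms transfer directly from $T^*G\big\vert_S$.

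Next, I would verify that $\bar\Omega$ makes the graph of multiplication $\bar\Gamma \subseteq \mathfrak{M}_{G,S}\times\mathfrak{M}_{G,S}\times\mathfrak{M}_{G,S}^-$ Lagrangian. Setting $\Gamma \coloneqq \Gamma_{T^*G} \cap (G\times S)^3$, where $\Gamma_{T^*G}$ is the Lagrangian graph of multiplication of the symplectic groupoid $T^*G$, one has $\bar\Gamma = \pi^{\times 3}(\Gamma)$. Since $\pi^{\times 3}$ pulls back $\bar\Omega + \bar\Omega - \bar\Omega$ to the restriction of $\Omega + \Omega - \Omega$, and the latter vanishes on $\Gamma_{T^*G}$ and therefore on $\Gamma$, the image $\bar\Gamma$ is isotropic in $(\mathfrak{M}_{G,S}^3, \bar\Omega + \bar\Omega - \bar\Omega)$. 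Combining Proposition \ref{4ywv9zyt}(i) with $\rk L_S = \dim\g - \dim S$ (since $L_S = TS^\circ$ for the Poisson submanifold $S$) yields $\dim\mathfrak{M}_{G,S} = 2\dim S$, and so $\dim\bar\Gamma = \dim\mathfrak{M}_{G,S} + \dim S = 3\dim S = \tfrac{1}{2}\dim(\mathfrak{M}_{G,S}^3)$, upgrading isotropy to the Lagrangian condition.

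Finally, for the Lie algebroid: any symplectic groupoid canonically integrates its base with the unique Poisson structure for which the target map is Poisson, and the Lie algebroid is $T^*(\cdot)$ with anchor the Poisson bivector. The commutative identity $\ttt_{T^*G} \circ i = j \circ \ttt \circ \pi$, where $j : S \hookrightarrow \g^*$ is the inclusion, together with $\ttt_{T^*G} : T^*G \too \g^*$ being Poisson and $j$ realizing $(S,\sigma_S)$ as a Poisson submanifold of $(\g^*,\sigma)$, forces the induced Poisson structure on $S$ coming from $\mathfrak{M}_{G,S}$ to coincide with $\sigma_S$. The main obstacle in this outline is the Lagrangian verification of the third paragraph: quotients of Lagrangians by group actions are not automatically Lagrangian, but here the isotropy of $\H$ together with its subgroupoid structure, which guarantees compatibility of the $\H$-action with composition in $T^*G\big\vert_S$, is what makes the reduced graph isotropic, with the dimension count completing the argument.
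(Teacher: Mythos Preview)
Your proposal is correct and follows essentially the same approach as the paper: descend the groupoid structure from $T^*G\big\vert_S$ by checking normality of $\H$ (your well-definedness of multiplication is exactly the paper's check that $gH_\xi g^{-1} \subseteq H_{\Ad_g^*\xi}$), verify isotropy of the reduced graph by pulling back to $\Gamma_{T^*G}$, and then confirm the induced Poisson structure on $S$ is $\sigma_S$.

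Two small remarks. First, your explicit dimension count $\dim\mathfrak{M}_{G,S} = 2\dim S$ is a useful addition: the paper passes silently from ``isotropic'' to ``Lagrangian,'' and your computation is what justifies that step. Second, in the final paragraph you invoke the convention that the \emph{target} map of a symplectic groupoid is Poisson; the paper uses the opposite convention (source Poisson, target anti-Poisson), so your argument should be run with $\sss$ in place of $\ttt$. The paper handles this step differently anyway: it observes that $\H$ acts trivially on $S$ (since $H_\xi \subseteq G_\xi$), so $S/\H = S$, and then invokes Theorem~\ref{omd5dx3w} to conclude directly that the descended source map $\mathfrak{M}_{G,S} \to S$ is Poisson with respect to $\sigma_S$. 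This is slightly cleaner than your commutative-diagram argument, which as written needs an extra sentence to explain why the relation $\ttt_{T^*G} \circ i = j \circ \ttt \circ \pi$ actually pins down the Poisson structure.
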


\begin{proof}
Note that $\H$ is a Lie subgroupoid of $G \times S = (T^*G)\big\vert_S$.
Hence, to give
\[
\mathfrak{M} \coloneqq \mathfrak{M}_{G, S} = (G \times S) / \H
\]
the structure of a Lie groupoid over $S$, it suffices to show that $\H$ is normal in $G \times S$.
This amounts to checking that $(ghg^{-1}, \Ad_g^*\xi) \in \H$ for all $g \in G$ and $(h, \xi) \in \H$. 
In other words, we want to show that
\begin{equation}\label{51ynu8qy}
gH_\xi g^{-1} \s H_{\Ad_g^*\xi} \quad \text{for all $g \in G$ and $\xi \in S$.}
\end{equation}
Note that $G$-invariance of $S$ yields $\Ad_g^*(T_\xi S) = T_{\Ad_g^*\xi} S$, so $$\Ad_g((T_\xi S)^\circ) = (T_{\Ad_g^*\xi}S)^\circ.$$
We conclude that $\Ad_g \h_\xi = \h_{\Ad_g^*\xi}$, and \eqref{51ynu8qy} now results from connectedness of $H_\xi$. It follows that $\mathfrak{M}$ is a Lie groupoid with source and target maps given by \eqref{2vbq8gis}.

Let $\Gamma_{\mathfrak{M}} \s \mathfrak{M} \times \mathfrak{M} \times \mathfrak{M}$ be the graph of multiplication in $\mathfrak{M}$.
Similarly, let $\G \coloneqq T^*G$ and let $\Gamma_\G$ be its graph of multiplication.
Recall that $\Gamma_\G$ is Lagrangian in $\G \times \G \times \G^-$ by assumption.
We have $$\Gamma_{\mathfrak{M}} = \pi(\Gamma_\G \cap (G \times S)^3),$$ where $\pi : G \times S \longrightarrow {\mathfrak{M}}$ is the quotient map.
On the other hand, let $\bar{\omega}$ and $\omega$ be the symplectic forms on ${\mathfrak{M}} \times {\mathfrak{M}} \times {\mathfrak{M}}^-$ and $\G \times \G \times \G^-$, respectively.
Note that for all $u_1, u_2 \in T\Gamma_{\mathfrak{M}}$, we have $u_i = d\pi(v_i)$ for some $v_i \in T\Gamma_\G$, and hence $$\bar{\omega}(u_1, u_2) = \omega(v_1, v_2) = 0.$$
It follows that $\Gamma_{\mathfrak{M}}$ is Lagrangian in $\mathfrak{M} \times \mathfrak{M} \times \mathfrak{M}^-$, so that $\mathfrak{M}$ is a symplectic groupoid.

Note also that $\H$ acts trivially on $S$, as $H_\xi$ is contained in the $G$-stabilizer of $\xi$ for all $\xi \in S$.
Theorem \ref{omd5dx3w} or Theorem \ref{6hr3szsv} then implies that the source map $\sss : T^*G \longrightarrow \g^*$ descends to a Poisson map ${\mathfrak{M}} \longrightarrow S$. We conclude that ${\mathfrak{M}}$ integrates $(S, \sigma_S)$.
\end{proof}

\begin{rem}
The assumption that the groups $H_\xi$ are connected can be omitted if \eqref{51ynu8qy} holds.
\end{rem}

\begin{ex}
Let $G$ be a compact Lie group with Lie algebra $\g$ and choose a $G$-invariant inner product $\langle\cdot,\cdot\rangle:\g\otimes_{\mathbb{R}}\g\longrightarrow\mathbb{R}$. This induces an inner product on $\g^*$, in which context one has the \dfn{Lie-Poisson sphere} \cite{marcut-2014}
\[
\mathbb{S}(\g^*) \coloneqq \{\xi \in \g^* : \|\xi\| = 1\}.
\]
Note that $(T_\xi \mathbb{S}(\g^*))^\circ = \R \xi_*$ for all $\xi \in \mathbb{S}(\g^*)$, where $\xi_*$ is the unique element of $\g$ satisfying $\langle\xi_*, \cdot\rangle = \xi$.
It follows that the stabilizer subgroupoid \eqref{qbmbd5d8} of $\mathbb{S}(\g^*)$ is of the form
\[
\H = \{(e^{t\xi_*}, \xi) \in G \times \g^* : \|\xi\| = 1, t \in \R\}.
\]
Note that $\H$ is closed in $T^*G|_{\mathbb{S}(\g^*)}$ if and only if $\g = \mathfrak{su}(2)$.
Indeed, in rank higher than one, almost all fibres of $\H$ are dense lines on tori.
This is consistent with the fact that $\mathbb{S}(\g^*)$ is integrable to a symplectic groupoid if and only if $\g = \mathfrak{su}(2)$ \cite{marcut-2014}.
If $G = \mathrm{SU}(2)$, then $\mathbb{S}(\g^*) = S^2$ is a symplectic leaf and $$T^*G \sll{\mathbb{S}(\g^*)} G = S^2 \times (S^2)^-$$ is its canonical integration. This is a straightforward consequence of Lemma \ref{0c3nmzej} in the next subsection.
\end{ex}

\subsection{Symplectic reduction along a coadjoint orbit}\label{Subsection: Symplectic reduction along a coadjoint orbit}

Let $(M, \omega)$ be symplectic manifold and $G$ a Lie group acting on $M$ in a Hamiltonian way with moment map $\mu : M \longrightarrow \g^*$.
We now describe the symplectic reduction of $M$ by $G$ along a coadjoint orbit $\O \s \g^*$.

\begin{lem}\label{zc31vfge}
The coadjoint orbit $\O\subseteq\g^*$ is pre-Poisson and
\begin{equation}\label{v6vzwjx9}
\H \coloneqq \{ (g, \xi) \in G \times \O : \Ad_g^*\xi = \xi\}
\end{equation}
is a stabilizer subgroupoid of $\O$ in $T^*G$.
\end{lem}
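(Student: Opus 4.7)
The plan is to apply Theorem \ref{amc0wisd}(ii) with $S = \O$ and $H_\xi \coloneqq G_\xi$. I will first identify the stabilizer subalgebroid of $\O$, then verify the smoothness condition (C1), and finally verify the isotropy condition (C2).

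\emph{Pre-Poisson property and stabilizer subalgebroid.}
Since $\O$ is $G$-invariant, Lemma \ref{5r34bsl9} implies that $\O$ is a Poisson submanifold of $\g^*$ and in particular stable. For $\xi \in \O$, the tangent space $T_\xi \O$ is the image of $\g \longrightarrow \g^*$, $x \mtoo -\ad_x^*\xi$. A brief calculation using $\ad_x^*\xi(y) = -\xi([x, y])$ shows that the annihilator of $T_\xi \O$ inside $\g$ is $\g_\xi$, so Theorem \ref{amc0wisd}(i) produces $\h_\xi = (T_\xi \O)^\circ \cap \g_\xi = \g_\xi$ and
\[
L_\O = \{(x, \xi) \in \g \times \O : x \in \g_\xi\}.
\]
Each $G_\xi$ integrates $\g_\xi$, so the $\H$ in \eqref{v6vzwjx9} has the correct fibrewise Lie algebras.

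\emph{Smoothness condition (C1).}
The set $\H$ is closed in $G \times \O$ as the preimage of $\{0\}$ under the continuous map $(g, \xi) \mtoo \Ad_g^* \xi - \xi$. To invoke Remark \ref{Remark: Useful}, I will produce local sections of the target map $\ttt : \H \longrightarrow \O$ through arbitrary points. Given $\xi_0 \in \O$ and $h_0 \in G_{\xi_0}$, fix a local section $\tau$ of the principal $G_{\xi_0}$-bundle $G \longrightarrow \O$ with $\tau(\xi_0) = 1$ and define $s(\xi) \coloneqq \tau(\xi) h_0 \tau(\xi)^{-1}$. A direct computation using $\Ad_{h_0}^* \xi_0 = \xi_0$ and $\Ad_{\tau(\xi)}^* \xi_0 = \xi$ shows that $\Ad_{s(\xi)}^* \xi = \xi$, i.e.\ $s(\xi) \in G_\xi$. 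Remark \ref{Remark: Useful} then yields that $\H$ is a submanifold of $G \times \O$.

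\emph{Isotropy condition (C2).}
Differentiating the defining equation $\Ad_{g(t)}^* \xi(t) = \xi(t)$ of $\H$ along a curve through $(g_0, \xi_0)$, and setting $X \coloneqq g_0^{-1} g'(0)$ (in the left trivialization) and $v \coloneqq \xi'(0)$, I obtain
\[
T_{(g_0, \xi_0)} \H = \{(X, v) \in \g \oplus T_{\xi_0}\O : \ad_X^*\xi_0 = \Ad_{g_0^{-1}}^* v - v\}.
\]
Writing each $v_i = \ad_{w_i}^*\xi_0$ for some $w_i \in \g$, the constraint becomes $X_i - \Ad_{g_0^{-1}} w_i + w_i \in \g_{\xi_0}$. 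Substituting these expressions into the formula \eqref{lcnmyorx} for $\Omega$ and simplifying by repeated use of $\ad_{X_i}^*\xi_0 = 0$ and the identity $\xi_0([\Ad_{g_0^{-1}} w_1, \Ad_{g_0^{-1}} w_2]) = \xi_0([w_1, w_2])$ (both consequences of $g_0 \in G_{\xi_0}$), all terms cancel and $\Omega$ vanishes on $T_{(g_0, \xi_0)} \H$. I expect this to be the main obstacle of the proof, as it is where the hypothesis $g_0 \in G_{\xi_0}$ is used in an essential way. With (C1) and (C2) in hand, Theorem \ref{amc0wisd}(ii) concludes that $\H$ is a stabilizer subgroupoid of $\O$ in $T^*G$.
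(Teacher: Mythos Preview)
Your approach mirrors the paper's: both identify $\h_\xi = \g_\xi$ via Theorem \ref{amc0wisd}(i) and then verify (C1) and (C2) to invoke Theorem \ref{amc0wisd}(ii). You are in fact more careful about (C1) than the paper, which passes directly to the isotropy check; your use of Remark \ref{Remark: Useful} with the conjugation section $s(\xi) = \tau(\xi) h_0 \tau(\xi)^{-1}$ is a clean way to establish smoothness of $\H$.

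There is, however, a slip in your isotropy sketch. You claim to simplify using ``$\ad_{X_i}^*\xi_0 = 0$'' as a consequence of $g_0 \in G_{\xi_0}$, but this is false: the tangent-space constraint you correctly derived one line earlier is $X_i - \Ad_{g_0^{-1}} w_i + w_i \in \g_{\xi_0}$, i.e.\ $\ad_{X_i}^*\xi_0 = \ad_{\Ad_{g_0^{-1}} w_i - w_i}^*\xi_0$, which is generally nonzero. The paper carries out the computation by substituting this constraint directly into \eqref{lcnmyorx} and expanding each of the three terms $-v_2(X_1)$, $v_1(X_2)$, $-\xi_0([X_1, X_2])$ as expressions in $\xi_0$, $w_i$, and $\Ad_{g_0}$; they cancel after invoking $\Ad_{g_0}^*\xi_0 = \xi_0$. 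If you intended instead to substitute $X_i = X_i' + \Ad_{g_0^{-1}} w_i - w_i$ with $X_i' \in \g_{\xi_0}$ and then use $\ad_{X_i'}^*\xi_0 = 0$, that route also works, but it should be stated explicitly.
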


\begin{proof}
Lemma \ref{5r34bsl9} implies that $\mathcal{O}$ is a stable pre-Poisson submanifold of $\g^*$.
Note that for all $\xi \in \O$ we have $(T_\xi \O)^\circ = \g_\xi$, so the Lie subalgebra $\h_\xi \coloneqq (T_\xi \O)^\circ \cap \g_\xi$ of $\g$ considered in \eqref{4mlgs031} is $\g_\xi$.
By Theorem \ref{amc0wisd}(ii), it suffices to check that $\H$ is isotropic in $T^*G$.
To this end, we first compute the tangent space of $\H$ at a general point.
Consider the map
\[
\varphi : G \times \O \too \g^*, \quad (g, \xi) \mtoo \Ad_g^*\xi - \xi,
\]
noting that $T_{(g, \xi)}\H = \ker d\varphi_{(g, \xi)}$.
Note also that any vector in $T_{(g, \xi)}(G \times \O)$ is of the form $(u, \ad_v^*\xi)$ for some $u, v \in \g$.
We then have
\[
d\varphi_{(g, \xi)}(u, \ad_v^* \xi) = \frac{d}{dt}\Big\vert_{t = 0} \varphi(g e^{tu}, \Ad_{e^{tv}}^*\xi) = \Ad_g^*(\ad_u^*\xi + \ad_v^*\xi) - \ad_v^*\xi,
\]
so that
\[
T_{(g, \xi)}\H = \{(u, \ad_v^*\xi) : u, v \in \g \text{ and }\ad_u^*\xi = (\Ad_{g^{-1}}^* - 1)\ad_v^*\xi\}.
\]
Let $(u_i, \ad_{v_i}^*\xi)$ for $i \in \{1, 2\}$ be two such vectors.
By \eqref{lcnmyorx}, we have
\begin{equation}\label{z9dej986}
\Omega_{(g, \xi)}((u_1, \ad_{v_1}^*\xi), (u_2, \ad_{v_2}^*\xi)) = -(\ad_{v_2}^*\xi)(u_1) + (\ad_{v_1}^*\xi)(u_2) - \xi([u_1, u_2]).
\end{equation}
The first term of \eqref{z9dej986} is
\[
-(\ad_{v_2}^*\xi)(u_1) = (\ad_{u_1}^*\xi)(v_2) = (\Ad_{g^{-1}}^* - 1)(\ad_{v_1}^*\xi)(v_2) = \xi([\Ad_gv_2, v_1] + [v_1, v_2]),
\]
and a similar computation shows that the second term is
\[
(\ad_{v_1}^*\xi)(u_2) = \xi([v_2, \Ad_gv_1] + [v_1, v_2]).
\]
The third term of \eqref{z9dej986} is 
\begin{align*}
-\xi([u_1, u_2]) &= (\ad_{u_1}^*\xi)(u_2) \\
&= (\Ad_{g^{-1}}^* - 1)(\ad_{v_1}^*\xi)(u_2) \\
&= \xi([\Ad_g u_2, v_1]) + \xi([v_1, u_2]) \\
&= \xi([u_2, \Ad_{g^{-1}}v_1]) - \xi([u_2, v_1]) \\
&= (\ad_{u_2}^*\xi)(-\Ad_{g^{-1}}v_1 + v_1) \\
&= (\Ad_{g^{-1}}^* - 1)(\ad_{v_2}^*\xi)(-\Ad_{g^{-1}}v_1 + v_1) \\
&= -\xi([v_1, v_2] + [v_2, \Ad_g v_1] + [v_2, \Ad_{g^{-1}}v_1] + [v_1, v_2]) \\
&= -\xi(2[v_1, v_2] + [v_2, \Ad_g v_1] + [\Ad_g v_2, v_1]).
\end{align*}
By combining the last three expressions with \eqref{z9dej986}, we get
\[
\Omega_{(g, \xi)}((u_1, \ad_{v_1}^*\xi), (u_2, \ad_{v_2}^*\xi)) = 0.\qedhere
\]
\end{proof}

Note that $\H$ is closed in $T^*G\big\vert_\O$.
Proposition \ref{4ywv9zyt} therefore implies that
\[
\mathfrak{M}_{G, \O, \H} \coloneqq T^*G \sll{\O, \H} G
\]
is a Hamiltonian $G$-space.

\begin{lem}\label{0c3nmzej}
There is a canonical symplectomorphism
\[
\mathfrak{M}_{G, \O, \H} \overset{\cong}\too \O \times \O^-,
\]
and it is equivariant for the Hamiltonian $G$-action on $\mathfrak{M}_{G, \O, \H}$ and the $G$-action on the first factor of $\O \times \O^-$.
\end{lem}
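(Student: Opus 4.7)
My plan is to produce the symplectomorphism as the descent of an explicit map that encodes the ``source--target'' data of the double moment map for $(G \times G)$ on $T^*G$. Concretely, I consider
\[
\Phi : G \times \O \too \O \times \O^-, \quad (g, \xi) \mto (\Ad_g^*\xi, \xi),
\]
(with the factors swapped if the convention for $\O^-$ demands it, to make the symplectic-form sign check below come out correctly). This map is $\H$-invariant because every $(h, \xi) \in \H$ satisfies $\Ad_h^*\xi = \xi$ by the defining equation \eqref{v6vzwjx9}, so $\Ad_{gh^{\pm 1}}^*\xi = \Ad_g^*\xi$; it therefore descends to a map $\bar{\Phi} : \mathfrak{M}_{G, \O, \H} \too \O \times \O^-$.

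Bijectivity of $\bar{\Phi}$ is a consequence of transitivity of the coadjoint action on $\O$: any $(\alpha, \beta) \in \O \times \O$ admits some $g \in G$ with $\Ad_g^*\beta = \alpha$, and any two such $g$ differ by an element of $G_\beta$, which is precisely the $\H$-fibre over $\beta$. Proposition \ref{4ywv9zyt}(i), together with the fact that $L_\O$ has rank $\dim \g_\xi$, gives $\dim \mathfrak{M}_{G, \O, \H} = 2 \dim \O$, matching the dimension of $\O \times \O^-$, and an elementary tangent-space computation shows that $\bar{\Phi}$ is a diffeomorphism. The $G$-equivariance for the left-translation action on $\mathfrak{M}_{G, \O, \H}$ follows from the calculation $\Phi(ag, \xi) = (\Ad_a^* \Ad_g^*\xi, \xi)$, confirming that $G$ acts by the coadjoint action on the first factor of $\O \times \O^-$.

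The principal obstacle lies in verifying that $\bar{\Phi}$ is a symplectomorphism, which reduces to the pointwise identity
\[
\Phi^*(\omega_\O \oplus (-\omega_\O)) = i^*\Omega
\]
on $G \times \O$, where $i$ denotes the inclusion into $T^*G$ and $\Omega$ the canonical symplectic form. I plan to evaluate both sides on tangent vectors of the form $(u_j, \ad_{v_j}^*\xi) \in \g \oplus T_\xi\O$ at a point $(g, \xi)$. Equation \eqref{lcnmyorx} rewrites $i^*\Omega$ as a trilinear form in $\xi$, the $u_j$, and the $v_j$. On the other side, the Kirillov--Kostant--Souriau identity $\omega_\O(\ad_x^*\alpha, \ad_y^*\alpha) = \langle\alpha, [x, y]\rangle$, combined with the intertwining relation $\Ad_g^*\ad_x^* = \ad_{\Ad_g x}^* \Ad_g^*$, lets me rewrite the contribution at $\Ad_g^*\xi$ in the same shape; the internal $\langle\xi, [v_1, v_2]\rangle$ term arising there cancels against the contribution from $-\omega_\O$ at $\xi$, and the remaining terms reproduce the expression from \eqref{lcnmyorx} after reorganization, completing the verification.
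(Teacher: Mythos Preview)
Your approach is essentially identical to the paper's: the same map $\Phi(g,\xi)=(\Ad_g^*\xi,\xi)$, the same descent argument via $\H$-invariance and transitivity of the coadjoint action, and the same pointwise verification of $\Phi^*(\omega_\O,-\omega_\O)=i^*\Omega$ using \eqref{lcnmyorx} together with the KKS identity and the intertwining $\Ad_g^*\ad_x^*=\ad_{\Ad_gx}^*\Ad_g^*$. The paper carries out the final computation explicitly (both sides equal $-\xi([u_1,u_2]+[u_1,v_2]+[v_1,u_2])$), which confirms the cancellation you describe.
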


\begin{proof}
We have $\mathfrak{M}_{G, \O, \H} = (G \times \O) / {\sim}$, where $(g, \xi) \sim (gh^{-1}, \xi)$ for all $(g, \xi) \in G \times \O$ and $h \in G_\xi$.
This implies that the map
\[
\psi : G \times \O \too \O \times \O, \quad (g, \xi) \mtoo (\Ad_g^*\xi, \xi),
\]
descends to a diffeomorphism $\mathfrak{M}_{G, \O, \H} \cong \O \times \O^-$.
To show that it is a symplectomorphism, we need to show that $\psi^*(\beta, -\beta) = i^*\Omega,$ where $\beta \in \Omega_{\O}^2$ is the Kirillov--Kostant--Souriau symplectic form on $\O$, $i$ is the inclusion $G \times \O \hookrightarrow T^*G = G \times \g^*$, and $\Omega$ is the canonical symplectic form on $T^*G$.

Observe that we have a short exact sequence $$0 \longrightarrow \g_\xi \longrightarrow \g \longrightarrow T_\xi\O \longrightarrow 0,$$ where the map $\g \longrightarrow T_\xi\O$ is given by $x \mtoo \ad_x^*\xi$. Observe also that $\beta$ is characterized by the condition $$\beta_\xi(\ad_x^*\xi, \ad_y^*\xi) = -\xi([x, y])$$ for all $x,y\in\g$.
We then find that
\[
d\psi_{(g, \xi)}(x, \ad_y^*\xi) = (\ad_{\Ad_g(x+y)}^* \Ad_g^*\xi, \ad_y^*\xi),
\]
so
\begin{align*}
\psi^*(\beta, -\beta)_{(g, \xi)} &((x, \ad_y^*\xi), (u, \ad_v^*\xi)) \\
&= -(\Ad_g^*\xi)([\Ad_g(x+y), \Ad_g(u+v)]) + \xi([y, u]) \\
&= -\xi([x+y,u+v]) + \xi([y,v]) \\
&= -\xi([x, u] + [x, v] + [y, u]).
\end{align*}
On the other hand,
\begin{align*}
\Omega_{(g, \xi)}((x, \ad_y^*\xi), (u, \ad_v^*\xi)) &= -\ad_v^*\xi(x) + \ad_y^*\xi(u) - \xi([x, u]) \\
&= -\xi([x, u] + [x, v] + [y, u]).
\end{align*}
It follows that $\psi^*(\beta, -\beta) = i^*\Omega$.
\end{proof}

The symplectic reduction $M \sll{\O, \H} G$ of $M$ by $G$ along $\O$ with respect to \eqref{v6vzwjx9} is not necessarily $\mu^{-1}(\O) / G$ \cite{kazhdan-kostant-sternberg}, but the two are closely related: 

\begin{prop}[Theorem \ref{Theorem: General examples}(i)]\label{l73ftj1i}
Let $\H$ be defined by \eqref{v6vzwjx9} and suppose that $(\O, \H)$ is a clean reduction datum for the action of $G$ on $M$.
Then there is a canonical symplectomorphism
\[
M \sll{\O, \H} G \cong \mu^{-1}(\O) / G \times \O^-.
\]
\end{prop}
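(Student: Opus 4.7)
The plan is to combine Theorem \ref{universality} with the explicit description of $\mathfrak{M}_{G,\O,\H}$ furnished by Lemma \ref{0c3nmzej} and to conclude via the classical Kazhdan--Kostant--Sternberg shifting trick.

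First, I would verify that the hypotheses of Theorem \ref{universality} are satisfied for the right-translation action of $G$ on $T^*G$. The subgroupoid $\H$ defined by \eqref{v6vzwjx9} is closed in $T^*G\big\vert_{\O}$, and $G$ acts freely and properly on $T^*G$ by right translations; Lemma \ref{4z1djzwm} therefore guarantees that $(\O, \H)$ is a clean reduction datum for this action. Combined with the assumed cleanness of $(\O, \H)$ for the action on $M$, Theorem \ref{universality} produces a canonical symplectomorphism
\[
M \sll{\O, \H} G \cong (M \times \mathfrak{M}_{G, \O, \H}^-) \sll{0} G.
\]

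Next, I would apply Lemma \ref{0c3nmzej} to identify $\mathfrak{M}_{G, \O, \H}$ with $\O \times \O^-$ as a Hamiltonian $G$-space, with $G$ acting by the coadjoint representation on the first factor and trivially on the second. Negating the symplectic form gives a corresponding identification of $\mathfrak{M}_{G, \O, \H}^-$ with $\O^- \times \O$, under which $G$ continues to act trivially on one of the two factors. Substituting into the previous display and using the fact that a trivial-action factor carries no moment-map contribution, that factor decouples from the Marsden--Weinstein--Meyer level set at $0$ and splits off the reduction as a symplectic direct factor.

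Finally, the surviving reduction takes the form $(M \times \O^-) \sll{0} G$, which the Kazhdan--Kostant--Sternberg shifting trick identifies with the Marsden--Weinstein--Meyer orbit reduction $\mu^{-1}(\O)/G$. Assembling these identifications yields the stated symplectomorphism. The main technical obstacle is bookkeeping: one must track the symplectic forms and moment-map normalizations carefully through Lemma \ref{0c3nmzej} to see that the decoupled coadjoint-orbit factor is naturally recorded as $\O^-$. Once the identification of $\mathfrak{M}_{G, \O, \H}^-$ as a product of two coadjoint orbits is in place, the remainder of the proof is a routine manipulation of Marsden--Weinstein--Meyer reductions.
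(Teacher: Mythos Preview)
Your proposal is correct and follows essentially the same route as the paper's proof: apply Theorem \ref{universality}, invoke Lemma \ref{0c3nmzej} to identify $\mathfrak{M}_{G,\O,\H}$ with $\O\times\O^-$, and finish with the shifting trick. The paper's version compresses all of this into a single sentence, while you spell out why $(\O,\H)$ is clean for the right-translation action (via Lemma \ref{4z1djzwm}) and how the trivially-acted-upon orbit factor decouples; these elaborations are accurate and helpful but do not alter the strategy.
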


\begin{proof}
By Theorem \ref{universality}, this is a consequence of Lemma \ref{0c3nmzej} and the ``shifting trick'' $(M \times \O) \sll{0} G \cong \mu^{-1}(\O) / G.$
\end{proof}

\subsection{Symplectic implosion}\label{nxuzyzm0}
The following subsection only applies to the smooth category.

Let $K$ be a compact connected Lie group with Lie algebra $\k$.
Let us also fix a maximal torus $T\subseteq K$ with Lie algebra $\mathfrak{t}\subseteq\mathfrak{k}$.
Write $\Phi\subseteq (i\mathfrak{t})^*$ for the associated set of roots and let $\alpha^{\vee}\in i\mathfrak{t}$ denote the coroot determined by $\alpha\in\Phi$.
Choose a closed fundamental Weyl chamber $\t_+^* \subseteq \t^*$ and let $\Delta\subseteq\Phi$ be the induced set of simple roots, i.e.
\[
\t_+^*=\{\xi\in\mathfrak{t}^*:\xi(\alpha^{\vee})\geq 0\text{ for all }\alpha\in\Delta\}.
\]
The faces of $\t_+^*$ are in one-to-one correspondence with the subsets $\sigma$ of $\Delta$ via
\[
\sigma \mtoo S_\sigma \coloneqq \{\xi\in\t^*:\xi(\alpha^{\vee})=0\text{ for all }\alpha \in \sigma \text{ and }\xi(\alpha^{\vee})>0\text{ for all }\alpha\in\Delta\setminus \sigma\}.
\]
If $S \s \t_+^*$ is a face, then the $K$-stabilizers $K_{\xi}$ and $K_{\eta}$ coincide for all $\xi,\eta\in S$; we write $K_S$ for this common stabilizer.

\begin{lem}\label{96kseenm}
Let $S \subseteq \t_+^*$ be a face.
Then $S$ is a pre-Poisson submanifold of $\mathfrak{k}^*$ and
\begin{equation}\label{u5v8dab4}
[K_S,K_S] \times S
\end{equation}
is a stabilizer subgroupoid of $S$ in $T^*K = K \times \k^*$.
\end{lem}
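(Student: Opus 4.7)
The plan is to reduce everything to a direct application of Theorem \ref{amc0wisd}. First I would fix $\xi\in S_\sigma$ and unpack the linear algebra. Since $S$ is a relatively open subset of the affine subspace $V_\sigma\coloneqq\{\eta\in\mathfrak{t}^*:\eta(\alpha^\vee)=0\text{ for all }\alpha\in\sigma\}$, one has $T_\xi S=V_\sigma$. Using an $\mathrm{Ad}$-invariant inner product to embed $\mathfrak{t}^*\hookrightarrow\mathfrak{k}^*$ as the annihilator of $\mathfrak{t}^\perp\subseteq\mathfrak{k}$, I would compute
\[
(T_\xi S)^\circ = \mathfrak{t}_\sigma \oplus \mathfrak{t}^\perp,
\]
where $\mathfrak{t}_\sigma\subseteq\mathfrak{t}$ is spanned by the coroots corresponding to $\sigma$. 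Next I would identify $\mathfrak{k}_\xi$; the constancy $K_\xi=K_S$ on $S$ lifts to $\mathfrak{k}_\xi=\mathfrak{k}_S=\mathfrak{t}\oplus\bigoplus_{\alpha\in\Phi_\sigma}\mathfrak{k}_\alpha$, where $\Phi_\sigma$ is the root subsystem generated by $\sigma$.

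The key computation is then that
\[
\mathfrak{h}_\xi \;\coloneqq\; (T_\xi S)^\circ\cap\mathfrak{k}_\xi \;=\; \mathfrak{t}_\sigma\oplus\bigoplus_{\alpha\in\Phi_\sigma}\mathfrak{k}_\alpha \;=\; [\mathfrak{k}_S,\mathfrak{k}_S],
\]
the last equality coming from the standard reductive decomposition $\mathfrak{k}_S=\mathfrak{z}(\mathfrak{k}_S)\oplus[\mathfrak{k}_S,\mathfrak{k}_S]$ together with the fact that $\mathfrak{z}(\mathfrak{k}_S)\subseteq\mathfrak{t}$ is precisely the $\langle\cdot,\cdot\rangle$-orthogonal complement of $\mathfrak{t}_\sigma$ inside $\mathfrak{t}$. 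Since $\mathfrak{h}_\xi$ is independent of $\xi\in S$ and is in particular a Lie subalgebra contained in $\mathfrak{k}_\xi=\ker\sigma_\xi$, this simultaneously shows that $S$ is stable, that $L_S=[\mathfrak{k}_S,\mathfrak{k}_S]\times S$ has constant rank (so $S$ is pre-Poisson), and that $L_S$ is integrated by $[K_S,K_S]\times S$ at the Lie-algebra level.

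It then remains to promote this to a genuine stabilizer subgroupoid via Theorem \ref{amc0wisd}. The subset $\mathcal{H}=[K_S,K_S]\times S$ is a submanifold of $T^*K=K\times\mathfrak{k}^*$ as a straight product, verifying (C1). For the isotropy condition (C2), I would invoke Theorem \ref{amc0wisd}(iii): $K_S$ is the centralizer of a torus in the compact connected group $K$, hence connected, and its commutator subgroup $[K_S,K_S]$ is therefore also connected (and closed, being the commutator subgroup of a compact Lie group). Thus the fibres $H_\xi=[K_S,K_S]$ are connected on all of $S$, and Theorem \ref{amc0wisd}(iii) upgrades (C1) to (C2); Theorem \ref{amc0wisd}(ii) then concludes that $\mathcal{H}$ is a stabilizer subgroupoid of $S$ in $T^*K$.

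The only mildly subtle step is the identification $(T_\xi S)^\circ\cap\mathfrak{k}_S=[\mathfrak{k}_S,\mathfrak{k}_S]$: one must carefully track the embedding $\mathfrak{t}^*\hookrightarrow\mathfrak{k}^*$ and avoid confusing $\mathfrak{t}_\sigma$ with its orthogonal complement, but this is a purely linear-algebraic matching of decompositions and poses no real obstacle. Everything else is immediate from results already established in the paper.
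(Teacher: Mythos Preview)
Your overall strategy via Theorem~\ref{amc0wisd} is sound and your computation of $(T_\xi S)^\circ\cap\mathfrak{k}_\xi=[\mathfrak{k}_S,\mathfrak{k}_S]$ is correct, but there is a circularity in your verification that $S$ is stable. You define $\mathfrak{h}_\xi\coloneqq(T_\xi S)^\circ\cap\mathfrak{k}_\xi$ and then argue that ``$\mathfrak{h}_\xi\subseteq\mathfrak{k}_\xi$ shows $S$ is stable.'' But $\mathfrak{h}_\xi\subseteq\mathfrak{k}_\xi$ is tautological from your definition. Stability means $(L_S)_\xi\subseteq\mathfrak{k}_\xi$, where by Lemma~\ref{subalgebroid-lemma}
\[
(L_S)_\xi=\{x\in(T_\xi S)^\circ:\ad_x^*\xi\in T_\xi S\},
\]
and Theorem~\ref{amc0wisd}(i) only identifies $(L_S)_\xi$ with $\mathfrak{h}_\xi$ \emph{after} stability is known. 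So as written you are assuming what you need to prove.

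The missing step is short but genuine: if $y\in\mathfrak{t}$ corresponds to $\xi$ under the invariant inner product, then $T_\xi S$ corresponds to a subspace of $\mathfrak{t}$, so the condition $\ad_x^*\xi\in T_\xi S$ forces $[x,y]\in\mathfrak{t}$. But $[x,y]\in[\mathfrak{k},\mathfrak{t}]\subseteq\mathfrak{t}^\perp$, hence $[x,y]=0$ and $x\in\mathfrak{k}_\xi$. This is exactly the observation the paper makes (``$[x,y]\in V_y\cap[\mathfrak{k},\mathfrak{t}]\subseteq\mathfrak{t}\cap[\mathfrak{k},\mathfrak{t}]=0$''), and it is what closes the gap between $(L_S)_\xi$ and your $\mathfrak{h}_\xi$. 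Once you insert this line, your argument goes through and is essentially the paper's proof, organized slightly differently around Theorem~\ref{amc0wisd} rather than a direct computation of $L_y$.
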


\begin{proof}
Note that $[K_S, K_S]$ is the connected Lie subgroup of $G$ with Lie algebra $[\k_S, \k_S]$.
This combines with Lemma \ref{m0mdb7qp}(iii) and reduces us to showing the following: for all $\xi \in S$, the Lie algebroid fibre
\[
	(L_S)_\xi \coloneqq \{x \in \k : x \in (T_\xi S)^\circ \text{ and } \ad_x^*\xi \in T_\xi S\}
\]
is equal to $[\k_\xi, \k_\xi]$.

Identify $\k$ with $\k^*$ via a $K$-invariant inner product $\langle\cdot, \cdot\rangle:\mathfrak{k}\otimes_{\mathbb{R}}\mathfrak{k}\longrightarrow\mathbb{R}$, and let $y \in \k$ be the element corresponding to $\xi \in S$.
Then $T_\xi S$ corresponds to the intersection of all root hyperplanes containing $y$, i.e.\ to
\[
V_y \coloneqq \bigcap_{\substack{\alpha \in \Phi \\ \alpha(y) = 0}} \ker \alpha.
\]
It therefore suffices to show that
\[
L_y \coloneqq V_y^\perp \cap \ad_y^{-1}(V_y)
\]
is equal to $[\k_y, \k_y]$.

Note that if $x \in L_y$, then $$[x, y] \in V_y \cap [\k, \t] \s \t \cap [\k, \t] = 0,$$ so in fact $L_y = V_y^\perp \cap \k_y$.
On the other hand, consider the complexifications $\g \coloneqq \k_\C$ and $\h \coloneqq \t_\C$ of $\k$ and $\t$, respectively.
Then $(V_y)_\C$ is the intersection of $\ker \alpha$ for all roots $\alpha : \h \longrightarrow \C$ such that $\alpha(y) = 0$.
In light of this, it suffices to show that $$(V_y)_\C^\perp \cap \g_y = [\g_y, \g_y].$$

Let
\[
\Psi \coloneqq \{\alpha \in \Phi : \alpha(y) = 0\}.
\]
One sees that $\Psi$ is a closed root subsystem of $\Phi$.
It therefore determines a semisimple subalgebra \cite{dynkin-1952}
\[
\g_\Psi \coloneqq \h_\Psi \oplus \bigoplus_{\alpha \in \Psi} \g_\alpha,
\]
where $\h_\Psi$ is the span of the coroots $\alpha^{\vee}$ for all $\alpha \in \Psi$ (see e.g.\ \cite[Chapter 6, \S1]{onishchik-vinberg}).
Note that $$\g_y = \h \oplus \bigoplus_{\alpha \in \Psi} \g_\alpha,$$ so $[\g_y, \g_y] = \g_\Psi$.
We also have $$(V_y)_\C^\perp = \h_\Psi \oplus \bigoplus_{\alpha \in \Phi} \g_\alpha,$$ so $(V_y)_\C^\perp \cap \g_y = \g_\Psi = [\g_y, \g_y].$ Our proof is therefore complete.
\end{proof}

Now let $K$ act on a symplectic manifold $(M, \omega)$ in a Hamiltonian way with moment map $\mu : M \longrightarrow \k^*$.
Recall that the \dfn{imploded cross-section} of $M$ \cite{guillemin-jeffrey-sjamaar} is the quotient topological space
\[
M_{\mathrm{impl}} \coloneqq \mu^{-1}(\t_+^*) / {\sim},
\]
where $p \sim q$ if $p = k \cdot q$ for some $k \in [K_{\mu(p)}, K_{\mu(p)}]$.
This space can also be written as the disjoint union
\[
M_{\mathrm{impl}} = \bigcup_{\sigma \subseteq \Delta}\mu^{-1}(S_\sigma)/[K_{S_\sigma}, K_{S_\sigma}],
\]
where each piece $\mu^{-1}(S_\sigma) / [K_{S_\sigma}, K_{S_\sigma}]$ is a symplectic manifold.
By Lemma \ref{96kseenm} and the fact that the groups $[K_S, K_S]$ in \eqref{u5v8dab4} are connected, we obtain the following result.

\begin{prop}[Theorem \ref{Theorem: General examples}(ii)]\label{0dxzp0g8}
The symplectic reduction of $M$ by $K$ along $\t_+^*$ is the imploded cross-section of $M$ by $K$, i.e.\
\[
M \sll{\t_+^*} {\mkern-5mu} K = M_{\mathrm{impl}}.\tag*{\qed}
\]
\end{prop}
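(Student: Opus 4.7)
The plan is to reduce the statement directly to Lemma~\ref{96kseenm} by using the stratification of $\t_+^*$ into its open faces $S_\sigma$ indexed by subsets $\sigma\subseteq\Delta$. The closed Weyl chamber $\t_+^*$ is of course not a submanifold of $\k^*$, but it fits precisely into the framework of Remark~\ref{zlyzkwmo}: it is stratified by the $S_\sigma$, which by Lemma~\ref{96kseenm} are pre-Poisson submanifolds of $\k^*$. Thus the notation $M\sll{\t_+^*}K$ is to be interpreted via Remark~\ref{zlyzkwmo} as the stratified reduction.

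First, I would record that for each $\sigma\subseteq\Delta$, Lemma~\ref{96kseenm} provides the stabilizer subgroupoid $\H_\sigma\coloneqq [K_{S_\sigma},K_{S_\sigma}]\times S_\sigma$ of $S_\sigma$ in $T^*K\tto\k^*$. Since $K_{S_\sigma}$ is connected and compact, its commutator subgroup $[K_{S_\sigma},K_{S_\sigma}]$ is likewise connected; hence $\H_\sigma$ is source-connected, and the abbreviated notation $M\sll{S_\sigma}K\coloneqq M\sll{S_\sigma,\H_\sigma}K$ is legitimate. Applying Remark~\ref{zlyzkwmo} stratum-by-stratum then gives
\[
M\sll{\t_+^*}K \;=\; \bigcup_{\sigma\subseteq\Delta}\mu^{-1}(S_\sigma)\big/[K_{S_\sigma},K_{S_\sigma}]
\]
as a topological quotient of $\mu^{-1}(\t_+^*)$.

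Second, I would compare this with the stratified description of $M_{\mathrm{impl}}$ recalled immediately before the proposition, namely
\[
M_{\mathrm{impl}} \;=\; \bigcup_{\sigma\subseteq\Delta}\mu^{-1}(S_\sigma)\big/[K_{S_\sigma},K_{S_\sigma}].
\]
The two expressions are manifestly identical as sets, and carry the same quotient topology inherited from $\mu^{-1}(\t_+^*)$: both are the quotient by the equivalence relation $p\sim q$ if and only if $p=k\cdot q$ for some $k\in[K_{\mu(p)},K_{\mu(p)}]$. This is a tautology once one observes that $K_{\mu(p)}=K_{S_\sigma}$ whenever $\mu(p)\in S_\sigma$, since the stabilizers are constant along each face.

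There is essentially no obstacle in this proof; all the substantive work has been carried out in Lemma~\ref{96kseenm}, where the identification of $[\k_\xi,\k_\xi]$ with the Lie algebroid fibre $(L_{S_\sigma})_\xi$ is established via the root-system computation. The remaining content of the proposition is purely definitional, amounting to matching the stratified quotient of Remark~\ref{zlyzkwmo} with the Guillemin--Jeffrey--Sjamaar definition of the imploded cross-section.
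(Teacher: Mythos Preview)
Your proposal is correct and follows exactly the approach the paper takes: the proposition carries a \qed tag and is stated as an immediate consequence of Lemma~\ref{96kseenm} together with the connectedness of the groups $[K_S,K_S]$, interpreted via the stratified quotient of Remark~\ref{zlyzkwmo}. You have simply spelled out in detail what the paper leaves implicit.
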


\begin{rem}
While the subset $\t_+^* \s \k^*$ is not a submanifold, we define the quotient as in Remark \ref{zlyzkwmo}.
\end{rem}

\begin{rem}
Proposition \ref{0dxzp0g8} shows that the universal reduced space $\mathfrak{M}_{K, \mathfrak{t}_+^*}$ is the universal imploded cross-section $(T^*K)_{\mathrm{impl}}$ \cite[\S4]{guillemin-jeffrey-sjamaar}.
The universal property of $(T^*K)_{\mathrm{impl}}$ \cite[Theorem 4.9]{guillemin-jeffrey-sjamaar} is then a special case of Theorem \ref{universality}.
\end{rem}

\subsection{Symplectic cutting}\label{jyt6wx64}
The following subsection only applies to the smooth category.

The symplectic cut construction of Lerman \cite{lerman} and its generalization to torus actions \cite{lerman-meinrenken-tolman-woodward} can be viewed as symplectic reduction along a polyhedral set, as we now explain.

Let $(M, \omega)$ be a symplectic manifold with an effective action of a compact torus $T$ with moment map $\mu : M \longrightarrow \t^*$.
Following \cite[\S2]{lerman-meinrenken-tolman-woodward}, let $P \s \t^*$ be a convex rational polyhedral set of the form
\[
P = \{\xi \in \t^* : \xi(v_i) \ge b_i \text{ for all } 1 \le i \le N\}
\]
for some $N \in \mathbb{N}$, $v_1, \ldots, v_N \in \t$ in the integral lattice of $T$, and $b=(b_1,\ldots,b_N) \in \R^N$.
The vectors $v_i$ define a Hamiltonian action of $(S^1)^N$ on $M \times \C^N$ with moment map $\nu : M \times \C^N \longrightarrow \mathbb{R}^N$, where the $i^{\text{th}}$ component of $\nu$ is $\nu_i(p, z) = \mu(p)(v_i) - |z_i|^2$.
The \dfn{symplectic cut of $M$ with respect to $P$} is the Marsden--Weinstein--Meyer reduction of $M \times \C^N$ by $(S^1)^N$ at level $b$. We denote this reduced space by $M_P$.

While $M_P$ is singular in general, it is stratified into symplectic manifolds.
A more precise statement is that
\[
M_P = \bigcup_F \mu^{-1}(F) / T_F,
\]
where the union ranges over the open faces $F$ of $P$ and $T_F \s T$ is the torus whose Lie algebra is $(T_\xi F)^\circ \s \t$ for any $\xi \in F$.
Each manifold $\mu^{-1}(F) / T_F$ is given a symplectic structure $\omega_F$ characterized by $$\pi_F^*\omega_F = i_F^*\omega,$$ where $\pi_F : \mu^{-1}(F) \longrightarrow \mu^{-1}(F) / T_F$ is the quotient map and $i_F : \mu^{-1}(F) \longrightarrow M$ the inclusion map.

Since the Poisson structure on $\t^*$ is trivial, every face $F$ is Poisson and hence stable (Lemma \ref{3uj9s7s9}).
Theorem \ref{amc0wisd}(ii) then shows that $T_F \times F$ is a source-connected stabilizer subgroupoid of $F$ in $T^*T$.
The symplectic manifold $\mu^{-1}(F) / T_F$ is then the symplectic reduction $M \sll{F} T$ of $M$ by $T$ along $F$.
These considerations allow us to deduce the following result.

\begin{prop}[Theorem \ref{Theorem: General examples}(iii)]\label{Proposition: Cut}
The symplectic reduction of $M$ by $T$ along a polyhedral set $P$ is the symplectic cut of $M$ with respect to $P$, i.e.\
\[
M \sll{P} T = M_P.\tag*{\qed}
\]
\end{prop}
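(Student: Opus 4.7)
The plan is to identify the two spaces stratum by stratum, invoking the stratified extension of symplectic reduction along a submanifold from Remark \ref{zlyzkwmo}. First I would note that since $T$ is abelian, the Kirillov--Kostant--Souriau Poisson structure on $\t^*$ is identically zero, so every submanifold of $\t^*$ is a Poisson submanifold, and hence stable by Lemma \ref{3uj9s7s9}. In particular each open face $F$ of $P$ is a pre-Poisson (in fact Poisson) submanifold of $\t^*$, so that the reduction machinery of \S\ref{smooth-case} applies.

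Next, for each open face $F$, I would identify its stabilizer subgroupoid. The vanishing of the Poisson bracket forces $\t_\xi = \t$ for every $\xi \in \t^*$, so Theorem \ref{amc0wisd}(i) specializes to $\h_\xi = (T_\xi F)^\circ$; because $F$ is open in an affine subspace, $(T_\xi F)^\circ$ is independent of $\xi \in F$ and coincides with $\Lie(T_F)$. The subtorus $T_F \s T$ is connected and $T_F \times F$ is closed in $T \times F$, so Theorem \ref{amc0wisd}(iv) produces $T_F \times F$ as a source-connected stabilizer subgroupoid of $F$ in $T^*T \tto \t^*$.

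I would then apply Theorem \ref{main-theorem-smooth} to the reduction datum $(F, T_F \times F)$. The smoothness of the topological quotient $\mu^{-1}(F)/T_F$ is the content of the symplectic cut construction recalled from \cite{lerman-meinrenken-tolman-woodward}, so the hypotheses of clean reduction are satisfied stratum-wise. The theorem then identifies $M \sll{F} T$ with $\mu^{-1}(F)/T_F$, equipped with the unique symplectic form $\omega_F$ satisfying
\[
\pi_F^*\omega_F = i_F^*\omega,
\]
where $i_F : \mu^{-1}(F) \hookrightarrow M$ is the inclusion and $\pi_F : \mu^{-1}(F) \twoheadrightarrow \mu^{-1}(F)/T_F$ the quotient map.

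Finally I would assemble the strata. By Remark \ref{zlyzkwmo}, $M \sll{P} T$ is by definition the disjoint union of the quotients $M \sll{F} T$ over all open faces $F$ of $P$, taken as a topological quotient of $\mu^{-1}(P)$, with the stratum-wise symplectic structure just obtained. The Lerman--Meinrenken--Tolman--Woodward description of $M_P$ exhibits exactly the same decomposition $M_P = \bigsqcup_F \mu^{-1}(F)/T_F$ with the same characterization of the symplectic form on each piece, and with the same global quotient topology inherited from $\mu^{-1}(P) \subseteq M$. Hence $M \sll{P} T = M_P$. There is no real obstacle here: once one recognizes that the abelianness of $T$ collapses all of the general machinery to the statement that each face is a Poisson submanifold with stabilizer subgroupoid $T_F \times F$, the proposition follows by a direct comparison of stratifications; the only point requiring care is matching the global quotient topology, which is immediate from the stratum-wise definition.
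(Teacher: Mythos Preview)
Your proposal is correct and follows essentially the same route as the paper: the paragraph preceding the proposition already establishes that each open face $F$ is Poisson (hence stable) because $\sigma=0$ on $\t^*$, that $T_F\times F$ is a source-connected stabilizer subgroupoid of $F$, and that the resulting stratum-wise reduction $\mu^{-1}(F)/T_F$ with $\pi_F^*\omega_F=i_F^*\omega$ matches the symplectic-cut stratification; the proposition is then stated with a \qed. Your write-up is slightly more careful in invoking Theorem~\ref{amc0wisd}(iv) rather than (ii) to get (C1)--(C2), but the argument is the same.
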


\subsection{Symplectic reduction along a decomposition class}\label{Subsection: Decomposition class}
Let $G$ be a connected complex semisimple linear algebraic group with Lie algebra $\g$, and let $\g_x$ denote the $\g$-centralizer of $x\in\g$. Write $x=x_s+x_n$ for the Jordan decomposition of any $x\in\mathfrak{g}$ into a semisimple element $x_s\in\mathfrak{g}$ and a nilpotent element $x_n\in\mathfrak{g}$. This gives rise to an equivalence relation $\sim$ on $\mathfrak{g}$, in which $x\sim y$ if and only if $\mathfrak{g}_{x_s}=\mathrm{Ad}_g(\mathfrak{g}_{y_s})$ and $x_n=\mathrm{Ad}_g y_n$ for some $g\in G$. The resulting equivalence classes are called \textit{decomposition classes} (a.k.a.\ \textit{Jordan classes}) and play an important role in Lie-theoretic geometry \cite{borho-kraft,popov,broer,broer2,spaltenstein,moreau,imhof}. Among other things, each decomposition class $\mathcal{D}\subseteq\mathfrak{g}$ is a smooth \cite[Corollary 3.8.1(i)]{broer2}, locally closed \cite[Corollary 39.1.7(ii)]{tauvel}, $G$-invariant subvariety of $\mathfrak{g}$. It follows from Lemma \ref{5r34bsl9} that $\mathcal{D}$ is a stable subvariety of $\mathfrak{g}$, allowing us to concretely study stabilizer subgroupoids of $\mathcal{D}$ in the cotangent groupoid $T^*G\tto\mathfrak{g}$. On the other hand, suppose that $x\in\mathcal{D}$ and consider the $[G_{x_s},G_{x_s}]$-stabilizer $[G_{x_s},G_{x_s}]_{x_n}$ and $[\mathfrak{g}_{x_s},\mathfrak{g}_{x_s}]$-centralizer $[\mathfrak{g}_{x_s},\mathfrak{g}_{x_s}]_{x_n}$ of the nilpotent part $x_n\in[\mathfrak{g}_{x_s},\mathfrak{g}_{x_s}]$. Let us also consider the identity component $[G_{x_s},G_{x_s}]_{x_n}^{\circ}$ of $[G_{x_s},G_{x_s}]_{x_n}$ for each $x\in\mathcal{D}$. The group scheme
\begin{equation}\label{Equation: Stabilizer subgroupoid}\mathcal{H}\coloneqq\{(g,x)\in G\times\mathcal{D}:g\in [G_{x_s},G_{x_s}]_{x_n}^{\circ}\}\overset{\pi}\longrightarrow\mathcal{D}\end{equation} is a subgroupoid of $T^*G\tto\mathfrak{g}$ if we use the Killing form and left trivialization to identity $T^*G$ with $G\times\mathfrak{g}$. 

\begin{rem} 
The conjugacy classes of $[G_{x_s},G_{x_s}]_{x_n}$ in $G$ and $[\mathfrak{g}_{x_s},\mathfrak{g}_{x_s}]_{x_n}$ in $\mathfrak{g}$ are easily seen to be independent of $x\in\mathcal{D}$. It follows that any two fibres of \eqref{Equation: Stabilizer subgroupoid} are isomorphic as algebraic groups.
\end{rem}

\begin{rem}
If $x\in\g$ is semisimple, then $[G_{x_s},G_{x_s}]_{x_n}^{\circ}=[G_x,G_x]$. The stabilizer subgroupoid \eqref{Equation: Stabilizer subgroupoid} thereby takes a simpler form if $\mathcal{D}$ consists of semisimple elements.
\end{rem}

\begin{prop}\label{Proposition: Subgroupoid for decomposition class}
	If $\mathcal{D}\subseteq\mathfrak{g}$ is a decomposition class, then $\pi:\mathcal{H}\longrightarrow\mathcal{D}$ is a stabilizer subgroupoid of $\mathcal{D}$ in $T^*G\tto\mathfrak{g}$. Furthermore, $(\mathcal{D},\mathcal{H})$ is a clean reduction datum for the action of $G$ on $T^*G$ by right translations.
\end{prop}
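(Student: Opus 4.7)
The plan is to invoke Theorem \ref{amc0wisd}(ii) for the subgroupoid claim and Lemma \ref{4z1djzwm} for cleanness. Throughout I identify $\mathfrak{g}^* \cong \mathfrak{g}$ via the Killing form, as is done in the statement. Since $\mathcal{D}$ is $G$-invariant, Lemma \ref{5r34bsl9} gives that it is stable, so Theorem \ref{amc0wisd}(ii) is available. The first assertion reduces to checking: (a) $\mathrm{Lie}(H_x) = (T_x\mathcal{D})^\circ \cap \mathfrak{g}_x$, where $H_x \coloneqq [G_{x_s}, G_{x_s}]_{x_n}^\circ$; (b) $\mathcal{H}$ is a submanifold of $T^*G$; and (c) $\mathcal{H}$ is isotropic in $T^*G$.

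For (a), I would start from the standard Borho--Kraft tangent-space formula $T_x \mathcal{D} = [\mathfrak{g}, x] + \mathfrak{z}(\mathfrak{g}_{x_s})$, obtained by differentiating the parametrization $\mathcal{D} = G \cdot (\mathfrak{z}(\mathfrak{g}_{x_s})^{\mathrm{reg}} + x_n)$ at $x$. Passing to Killing-orthogonals and using the identities $[\mathfrak{g}, x]^\perp = \mathfrak{g}_x$ and $\mathfrak{g}_{x_s} \cap \mathfrak{z}(\mathfrak{g}_{x_s})^\perp = [\mathfrak{g}_{x_s}, \mathfrak{g}_{x_s}]$ (the latter from non-degeneracy of the Killing form of $\mathfrak{g}$ restricted to the Levi $\mathfrak{g}_{x_s}$, combined with the $\mathrm{ad}$-invariance-based orthogonality of $\mathfrak{z}(\mathfrak{g}_{x_s})$ and $[\mathfrak{g}_{x_s}, \mathfrak{g}_{x_s}]$) one obtains
\[
(T_x\mathcal{D})^\circ \cap \mathfrak{g}_x \;=\; \mathfrak{g}_x \cap \mathfrak{z}(\mathfrak{g}_{x_s})^\perp \;=\; [\mathfrak{g}_{x_s}, \mathfrak{g}_{x_s}]_{x_n} \;=\; \mathrm{Lie}(H_x),
\]
using also $\mathfrak{g}_x = (\mathfrak{g}_{x_s})_{x_n} \subseteq \mathfrak{g}_{x_s}$.

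For (b), together with the closedness of $\mathcal{H}$ in $T^*G\big\vert_{\mathcal{D}} = G \times \mathcal{D}$ that will be needed for cleanness, I would realize $\mathcal{H}$ as a $G$-equivariant associated bundle over $\mathcal{D}$. Fix a base point $x_0 = s_0 + e_0 \in \mathcal{D}$ and write $\mathfrak{l}_0 \coloneqq \mathfrak{g}_{s_0}$, $L_0 \coloneqq G_{s_0}$, $H_0 \coloneqq [L_0, L_0]_{e_0}^\circ$. Along the Jordan slice $\mathfrak{z}(\mathfrak{l}_0)^{\mathrm{reg}} + e_0$ through $x_0$ one has $H_x = H_0$ for every $x$ (since $\mathfrak{g}_{s'} = \mathfrak{l}_0$ for all $s' \in \mathfrak{z}(\mathfrak{l}_0)^{\mathrm{reg}}$), while the identity $H_{\mathrm{Ad}_g x} = g H_x g^{-1}$ supplies the $G$-equivariance. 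These assemble $\mathcal{H}$ into an associated bundle of the form $G \times_N \bigl((\mathfrak{z}(\mathfrak{l}_0)^{\mathrm{reg}} + e_0) \times H_0\bigr)$ over $\mathcal{D}$ for an appropriate subgroup $N \subseteq N_G(L_0) \cap G_{e_0}$, exhibiting it as a smooth closed subvariety of $G \times \mathcal{D}$. Condition (c) is then immediate from Theorem \ref{amc0wisd}(iii) because every $H_x$ is connected by construction.

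Putting it all together, Theorem \ref{amc0wisd}(ii) yields that $\mathcal{H}$ is a stabilizer subgroupoid of $\mathcal{D}$, and Lemma \ref{4z1djzwm} then produces cleanness, since $G$ acts freely and properly on $T^*G$ by right translations and $\mathcal{H}$ is closed in $T^*G\big\vert_{\mathcal{D}}$. The main technical hurdle is step (b): globalizing the pointwise family $\{H_x\}_{x \in \mathcal{D}}$ into a smooth, closed subgroupoid requires genuine input from the structure theory of Jordan decomposition classes, namely the constancy of $\mathfrak{g}_{x_s}$ along the slice together with the $G$-equivariance. The Lie-algebra computation in (a), the isotropy in (c), and cleanness itself are then formal consequences of the machinery assembled earlier in the section.
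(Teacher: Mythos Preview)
Your proof is correct and follows essentially the same strategy as the paper's: compute $(T_x\mathcal{D})^\perp = [\mathfrak{g}_{x_s},\mathfrak{g}_{x_s}]_{x_n}$ from $T_x\mathcal{D} = \mathfrak{z}(\mathfrak{g}_{x_s}) + [\mathfrak{g},x]$, establish closedness of $\mathcal{H}$ in $G\times\mathcal{D}$, then invoke Theorem~\ref{amc0wisd} (using source-connectedness for isotropy) and the closedness of $\mathcal{H}$ for cleanness---you cite Lemma~\ref{4z1djzwm}, the paper cites the equivalent Proposition~\ref{4ywv9zyt}(ii).

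The one place the executions differ is your step~(b). The paper argues closedness by defining a map $\phi:\mathcal{D}\to G/R$, $x\mapsto[H_x]$ (with $R$ the normalizer of a fixed $Q = H_{x_0}$), choosing local holomorphic sections of $G\to G/R$, and thereby obtaining local trivializations $\pi^{-1}(V)\cong V\times Q$. Your slice-based observations---constancy of $H_x$ along $\mathfrak{z}(\mathfrak{l}_0)^{\mathrm{reg}}+e_0$ and the equivariance $H_{\mathrm{Ad}_g x}=gH_xg^{-1}$---are precisely what make the paper's $\phi$ well-defined and holomorphic, so the two arguments are the same idea packaged differently. Your global associated-bundle framing does work, though the point left implicit in your sketch is that any $g\in G$ sending one slice element to another necessarily lies in $N_G(\mathfrak{l}_0)\cap G_{e_0}$ (by uniqueness of Jordan decomposition) and therefore preserves the entire slice; this is what makes $G\times\text{slice}\to\mathcal{D}$ a genuine principal $N$-bundle and justifies the quotient description.
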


\begin{proof}
We begin by verifying that $\mathcal{H}$ is closed in $G\times\mathcal{D}$. To this end, fix any $y\in\mathcal{D}$, set $Q\coloneqq[G_{y_s},G_{y_s}]_{y_n}^{\circ}$, and let $R$ be the normalizer of $Q$ in $G$. We may $G$-equivariantly identify $G/R$ with the set of closed subgroups in $G$ conjugate to $Q$, i.e.\ via the unique $G$-equivariant bijection sending the identity coset $[1]\in G/R$ to $Q$. Associating the subgroup $[G_{x_s},G_{x_s}]_{x_n}^{\circ}$ to each $x\in\mathcal{D}$ then defines a continuous map $\phi:\mathcal{D}\longrightarrow G/R$. On the other hand, we may choose an open neighbourhood $U\subseteq G/R$ of $[1]$ and a holomorphic section $s:U\longrightarrow G$ of $G\longrightarrow G/R$ satisfying $s([1])=1$. Now set $V\coloneqq\phi^{-1}(U)\subseteq\mathcal{D}$ and note that 
$$t:V\longrightarrow G,\quad x\mtoo s(\phi(x))$$ is a holomorphic map satisfying $$t(x)Qt(x)^{-1}=[G_{x_s},G_{x_s}]_{x_n}^{\circ}$$ for all $x\in V$. The map 
$$V\times Q\overset{\cong}\longrightarrow\pi^{-1}(V),\quad (x,q)\mapsto (t(x)qt(x)^{-1},x)$$  
then defines a trivialization of $\pi:\mathcal{H}\longrightarrow\mathcal{D}$ over $V$. Since $Q$ is closed in $G$, we conclude that $\mathcal{H}$ is closed in $G\times\mathcal{D}$.

We now note that
$$\mathcal{D}=G(\mathfrak{z}(\mathfrak{g}_{x_s})^{\text{min}}+x_n),$$ where $$\mathfrak{z}(\mathfrak{g}_{x_s})^{\text{min}}\coloneqq\{y\in\mathfrak{z}(\mathfrak{g}_{x_s}):\dim(\mathfrak{g}_y)\leq\dim(\mathfrak{g}_z)\text{ for all }z\in\mathfrak{z}(\mathfrak{g}_{x_s})\}$$
is the locus of elements in $\mathfrak{z}(\mathfrak{g}_{x_s})$ having $\mathfrak{g}$-centralizers of minimal dimension \cite[Corollary 39.1.7]{tauvel}. This description of $\mathcal{D}$ implies that
$$T_x\mathcal{D}=\mathfrak{z}(\mathfrak{g}_{x_s})+[\mathfrak{g},x]$$ for all $x\in\mathcal{D}$. Writing $V^{\perp}\subseteq\g$ for the annihilator of any subspace $V\subseteq\g$ under the Killing form, we conclude that
\begin{align*}
(T_x\mathcal{D})^{\perp} & = \mathfrak{z}(\mathfrak{g}_{x_s})^{\perp}\cap[\mathfrak{g},x]^{\perp}\\
& = \mathfrak{z}(\mathfrak{g}_{x_s})^{\perp} \cap\mathfrak{g}_x\\
& = (\mathfrak{z}(\mathfrak{g}_{x_s})^{\perp}\cap\mathfrak{g}_{x_s})\cap\mathfrak{g}_{x_n}\\
& = [\mathfrak{g}_{x_s},\mathfrak{g}_{x_s}]\cap\mathfrak{g}_{x_n}\\
& = [\mathfrak{g}_{x_s},\mathfrak{g}_{x_s}]_{x_n}
\end{align*}
for all $x\in\mathcal{D}$. This combines with the source-connectedness of $\mathcal{H}\longrightarrow\mathcal{D}$, the closedness of $\mathcal{H}$ in $G\times\mathcal{D}$, and Theorem \ref{amc0wisd} to imply that $\mathcal{H}$ is a stabilizer subgroupoid of $\mathcal{D}$ in $T^*G\tto\g^*$.

It remains only to prove that $(\mathcal{D},\mathcal{H})$ is a clean reduction datum. But this is an immediate consequence of Proposition \ref{4ywv9zyt}(ii).
\end{proof} 

The clean reduction datum $(\mathcal{D},\mathcal{H})$ in Proposition \ref{Proposition: Subgroupoid for decomposition class} determines a universal reduced space
$\mathfrak{M}_{G,\mathcal{D},\mathcal{H}}=\mathfrak{M}_{G,\mathcal{D}}$. An examination of \eqref{Equation: Stabilizer subgroupoid} reveals that
$$\mathfrak{M}_{G,\mathcal{D}}=(G\times\mathcal{D})/\mathcal{H}=\bigsqcup_{x\in\mathcal{D}}G/[G_{x_s},G_{x_s}]_{x_n}^{\circ},$$ where $G$ acts on the third space via left multiplication on each factor. Given any $x\in\mathcal{D}$, this observation and the proof of Proposition \ref{Proposition: Subgroupoid for decomposition class} allow us to describe the tangent space of $([1],x)\in\mathfrak{M}_{G,\mathcal{D}}$ as  $$T_{([1],x)}\mathfrak{M}_{G,\mathcal{D}}= T_{[1]}(G/[G_{x_s},G_{x_s}]_{x_n}^{\circ})\oplus T_x\mathcal{D}=\g/[\g_{x_s},\g_{x_s}]_{x_n}\oplus [\g_{x_s},\g_{x_s}]_{x_n}^{\perp}.$$ One immediate consequence is that
\begin{equation}\label{Equation: Dimension}\dim\mathfrak{M}_{G,\mathcal{D}}=2\dim\mathcal{D}=2\dim G-2\dim([G_{x_s},G_{x_s}]_{x_n})\end{equation} for all $x\in\mathcal{D}$. Another consequence is that the symplectic form $\omega$ on $\mathfrak{M}_{G,\mathcal{D}}$ is characterized by being $G$-invariant and defined as follows on $T_{(x,[1])}\mathfrak{M}_{G,\mathcal{D}}=\g/[\g_{x_s},\g_{x_s}]_{x_n}\oplus [\g_{x_s},\g_{x_s}]_{x_n}^{\perp}$ for all $x\in\mathcal{D}$:
$$\omega_{([1],x)}(([u_1],\zeta_1),([u_2],\zeta_2))=-\langle u_1,\zeta_2\rangle+\langle u_2,\zeta_1\rangle-\langle x,[u_1,u_2]\rangle$$
for all $[u_1],[u_2]\in \g/[\g_{x_s},\g_{x_s}]_{x_n}$ and $\zeta_1,\zeta_2\in[\g_{x_s},\g_{x_s}]_{x_n}^{\perp}$, where $\langle\cdot,\cdot\rangle$ denotes the Killing form on $\g$; this formula is a consequence of \eqref{lcnmyorx} and Theorem \ref{ytazg95b}(iv). We also know that $\mathfrak{M}_{G,\mathcal{D}}$ has the structure of a holomorphic symplectic groupoid integrating $\mathcal{D}$, as follows from Theorem \ref{Theorem: invariant} and Proposition \ref{Proposition: Subgroupoid for decomposition class}. 

\begin{ex}[Theorem \ref{Theorem: Specific examples}(ii)]\label{Example: g_irr}
We now discuss a curious application of this discussion. Consider the subsets $$\g_{\text{irr}}\coloneqq\g\setminus\g_{\text{reg}},\quad\g_{\text{s}}\coloneqq\{x\in\g:x\text{ is semisimple}\},\quad\text{and}\quad\g_{\text{subreg}}\coloneqq\{x\in\g:\dim\g_x=\ell+2\}$$ of irregular, semisimple, and subregular elements in $\g$, respectively.
The intersection $$\g_{\text{irr}}^{\circ}\coloneqq\g_{\text{s}}\cap\g_{\text{subreg}}$$ is an open dense subset of $\g_{\text{irr}}$ equal to the disjoint union of finitely many decomposition classes \cite[Remark 3.7(a)]{popov}. Each of these decomposition classes consists of semisimple elements and has codimension three in $\g$ \cite[Lemma 3.6]{popov}. These considerations and Proposition \ref{Proposition: Subgroupoid for decomposition class} imply that $\g_{\text{irr}}^{\circ}\subseteq\g$ is a smooth, $G$-invariant, stable subvariety of $\g$ admitting $$\mathcal{H}\coloneqq\{(g,x)\in G\times\g_{\text{irr}}^{\circ}:g\in [G_x,G_x]\}\overset{\pi}\longrightarrow\mathfrak{g}_{\text{irr}}^{\circ}$$ as a source-connected stabilizer subgroupoid. It follows that
$$\mathfrak{M}_{G,\g_{\text{irr}}^{\circ}}=\bigsqcup_{x\in \g_{\text{irr}}^{\circ}}G/[G_x,G_x],$$ where $G$ acts on the right-hand side by left multiplication on each factor. Each of the subgroups $[G_x,G_x]$ appearing above is three-dimenional and has Lie algebra $[\g_x,\g_x]\cong\mathfrak{sl}_2$. This combines with \eqref{Equation: Dimension} to yield
$$\dim\mathfrak{M}_{G,\g_{\text{irr}}^{\circ}}=2\dim G-6.$$
The symplectic form and symplectic groupoid structure on $\mathfrak{M}_{G,\g_{\text{irr}}^{\circ}}$ are described analogously to those on $\mathfrak{M}_{G,\mathcal{D}}$ in the previous paragraph. 
\end{ex}

\section{Main construction: algebraic version}
\label{algebraic-case}  

We now develop, prove, and explore the consequences of Theorem \ref{Theorem: Algebraic category}, a counterpart of Theorems \ref{Theorem: Smooth category} and \ref{Theorem: Complex analytic category} in complex algebraic geometry. This undertaking begins with some algebro-geometric preliminaries in \S\ref{Subsection: Algebro-geometric preliminaries}. Theorem \ref{Theorem: Algebraic category} is proved in \S\ref{Subsection: Complex algebraic symplectic reduction along a subvariety} and followed by examples in \S\ref{Subsubsection: Poisson and hyperkahler slices} and \S\ref{Subsection: The Moore-Tachikawa TQFT}. The last of these sections uses Theorem \ref{Theorem: Algebraic category} to derive the Ginzburg--Kazhdan construction of Moore--Tachikawa varieties.    

\subsection{Algebro-geometric preliminaries}\label{Subsection: Algebro-geometric preliminaries}
In the interest of clarity, we now briefly outline the algebro-geometric counterparts of some definitions in \S\ref{phj103sm}. We work exclusively with algebraic varieties over $\mathbb{C}$.
\subsubsection{Quotients}
Suppose that an algebraic groupoid $\mathcal{H} \tto S$ acts algebraically on a variety $N$. An $\mathcal{H}$-invariant variety morphism $\pi:N \longrightarrow Q$ then determines a sheaf $(\pi_*\mathcal{O}_N)^{\mathcal{H}}$ on $Q$, i.e.\ $(\pi_*\mathcal{O}_N)^{\mathcal{H}}(U)$ is the algebra of $\mathcal{H}$-invariant elements of $\mathcal{O}_N(\pi^{-1}(U))$ for each open subset $U \subseteq Q$. Observe that pulling back along $\pi$ defines a sheaf morphism
\begin{equation}\label{Equation: Sheaf morphism}\mathcal{O}_Q\longrightarrow(\pi_*\mathcal{O}_N)^{\mathcal{H}}.\end{equation}

\begin{defn}\label{Definition: algebraic quotient}
Suppose that an algebraic groupoid $\mathcal{H}\tto S$ acts algebraically on an algebraic variety $N$. We define an \dfn{algebraic quotient} of $N$ by $\mathcal{H}$ to be an $\mathcal{H}$-invariant morphism $\pi:N\longrightarrow Q$ for which \eqref{Equation: Sheaf morphism} is an isomorphism.
\end{defn}

\begin{ex}
If $N$ is an affine variety and the algebra $\C[N]^{\mathbb{\H}}$ of $\mathcal{H}$-invariant regular functions is finitely generated, then the morphism $N \too \operatorname{Spec}\C[N]^{\mathcal{\H}}$ is an algebraic quotient.
\end{ex}

\subsubsection{Stabilizer subgroupoids}
We shall use the term \dfn{symplectic variety} in reference to a smooth algebraic variety endowed with an algebraic symplectic form.  The term \dfn{Poisson variety} shall be used for a potentially singular algebraic variety $X$ whose structure sheaf $\mathcal{O}_X$ is a sheaf of Poisson algebras. Now suppose that $X$ is a smooth Poisson variety. The Poisson bracket can then be encoded by a Poisson bivector field $\sigma:T^*X\longrightarrow TX$. A smooth locally-closed subvariety $S\subseteq X$ shall be called \dfn{pre-Poisson} if $\sigma^{-1}(TS) \cap TS^\circ$ has constant rank over $S$.

Now let $\mathcal{G}\tto X$ be an algebraic groupoid for which $\mathcal{G}$ and $X$ are smooth varieties. If $\mathcal{G}$ comes equipped with an algebraic symplectic form $\Omega$ such that the graph of multiplication is Lagrangian in $\G \times \G \times \G^-$, then $\mathcal{G}\tto X$ shall be called an \dfn{algebraic symplectic groupoid}. Each pre-Poisson subvariety $S\subseteq X$ then determines a Lie subalgebroid
\[
L_S\coloneqq \sigma^{-1}(TS) \cap TS^\circ
\]
of $T^*X\cong\mathrm{Lie}(\mathcal{G})$, to be called the \dfn{stabilizer subalgebroid} of $S$. We define a \dfn{stabilizer subgroupoid of} $S$ \dfn{in} $\mathcal{G}$ to be any smooth algebraic groupoid $\mathcal{H}\tto S$  integrating $L_S$, together with an algebraic Lie groupoid homomorphism $j : \H \longrightarrow \G$ such that $j^*\Omega = 0$.

\subsubsection{Algebraic Hamiltonian systems}
Definition \ref{Definition: Hamiltonian system} has an obvious algebro-geometric analogue, i.e.\ a notion of an \dfn{algebraic Hamiltonian action} of an algebraic symplectic groupoid on a symplectic variety. In turn, this gives rise to the definition of an \dfn{algebraic Hamiltonian system}. 

\subsection{Complex algebraic symplectic reduction along a subvariety}\label{Subsection: Complex algebraic symplectic reduction along a subvariety} 
Given any algebraic variety $(Z,\mathcal{O}_Z)$, let $\mathcal{O}^{\text{an}}_Z$ denote the structure sheaf that realizes $Z$ as a complex analytic space. 
For a point $z \in Z$, we may identify the stalk $\O_{Z,z}$ with a subalgebra of $\O_{Z, z}^{\text{an}}$.

\begin{thm}[Theorem \ref{Theorem: Algebraic category}]\label{Theorem: Algebraic theorem}
Let $((M, \omega), \G \tto X, \mu)$ be an algebraic Hamiltonian system and $S\subseteq X$ a pre-Poisson subvariety for which $N\coloneqq\mu^{-1}(S)$ is reduced. Suppose that $\pi : N \longrightarrow Q$ is an algebraic quotient of $N$ by a stabilizer subgroupoid $\mathcal{H}\tto S$ of $S$.
\begin{enumerate}
\item[\textup{(i)}]
For all $p \in N$ and $f \in \O_{Q, \pi(p)}$, there exists $F \in \O_{M, p}^{\mathrm{an}}$ such that $\pi^*f = F\big\vert_N \in \O_{N, p}^{\mathrm{an}}$ and $dF(TN^\omega) = 0$.
\item[\textup{(ii)}]
There is a unique algebraic Poisson structure $\{\cdot, \cdot\}_Q$ on $Q$ such that 
\[
\pi^*\{f, g\}_Q = \{F, G\}\big\vert_N
\]
for all $f, g \in \O_{Q, \pi(p)}$ and $F, G \in \O_{M, p}^{\mathrm{an}}$ related to $f, g$ as in \textup{Part (i)}.
\item[\textup{(iii)}]
Assume that there exists $p \in N$ such that $d\pi_p$ is surjective, $\pi^{-1}(\pi(p))$ is an $\mathcal{H}$-orbit, and $\pi(p)$ is a smooth point of $Q$.
The Poisson structure in \textup{Part (ii)} is then non-degenerate on a Zariski-open subset of the smooth locus of $Q$ containing $\pi(p)$.
\end{enumerate}
\end{thm}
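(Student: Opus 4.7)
The plan is to deduce the algebraic theorem from its complex analytic counterpart (Theorem \ref{Theorem: Complex analytic category}) and then upgrade the analytic regularity produced there to algebraic regularity. For Part (i), I would view all the data analytically: the pullback $\pi^*f \in \O_{N,p}$ of an algebraic germ $f \in \O_{Q,\pi(p)}$ is $\H$-invariant, so Proposition \ref{kwi1nwnj} forces $d(\pi^*f)$ to vanish on $T_pN \cap T_pN^\omega$, and Proposition \ref{8fzyp3dy} then produces the desired analytic extension $F \in \O_{M,p}^{\mathrm{an}}$ with $F|_N = \pi^*f$ and $dF(TN^\omega) = 0$.

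For Part (ii), uniqueness is immediate from the defining formula. For existence, I would first invoke Theorem \ref{ytazg95b}(ii) to obtain an analytic Poisson bracket $\{\cdot,\cdot\}_Q^{\mathrm{an}}$ on $Q$ satisfying the prescribed condition, and then argue that this bracket sends algebraic germs to algebraic germs. Given algebraic $f, g \in \O_Q(U)$, it suffices to check that $\{F, G\}|_N = \pi^*\{f, g\}_Q^{\mathrm{an}}$ is algebraic on $\pi^{-1}(U) \s N$, since the algebraic isomorphism $\pi^* : \O_Q \too (\pi_*\O_N)^{\H}$ will then force $\{f, g\}_Q^{\mathrm{an}}$ itself to lie in $\O_Q(U)$.

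The algebraicity of $\{F, G\}|_N$ would be extracted from an intrinsic characterization of the relevant Hamiltonian vector field. The condition $dF(TN^\omega) = 0$ forces $X_F|_N$ to be tangent to $N$, so $\{F, G\}(p) = d(\pi^*g)_p(X_F(p))$. The vector $X_F(p) = \tau_M(dF_p)$ depends only on $dF_p \in T_p^*M$, which in turn is determined modulo $(T_pN + T_pN^\omega)^\circ$ by the algebraic conditions $dF_p|_{T_pN} = d(\pi^*f)_p$ and $dF_p|_{T_pN^\omega} = 0$. Applying $\tau_M$, the vector $X_F(p)$ becomes well-defined modulo $D_p \coloneqq T_pN \cap T_pN^\omega$, and the pairing with $d(\pi^*g)_p$ is unambiguous because $d(\pi^*g)$ vanishes on $D$ by $\H$-invariance. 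Since all of the ingredients here --- the one-forms $d(\pi^*f)$ and $d(\pi^*g)$, the coherent subsheaves $TN$ and $TN^\omega$ of $TM|_N$, and the algebraic Poisson tensor $\tau_M$ --- are algebraic, the resulting function on $N$ is algebraic. Part (iii) then follows by combining Theorem \ref{ytazg95b}(iii) with Part (ii): non-degeneracy is a Zariski-open condition on an algebraic Poisson bivector over a smooth variety, and the analytic non-degeneracy near $\pi(p)$ provided by the analytic theorem forces this Zariski-open locus to contain $\pi(p)$.

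The main obstacle I anticipate is making the pointwise characterization of $X_F(p) \bmod D_p$ into a globally coherent algebraic construction, particularly across the singular locus of $N$ where $TN$ is only a coherent sheaf of possibly non-constant rank. Phrasing the characterization modulo $D$ is what makes the pairing well-defined pointwise everywhere on $N$, but verifying that this descends to a regular function on $N$ --- as opposed to merely a function on the smooth locus that happens to extend analytically across the singular stratum --- will require careful work with the coherent subsheaves of $TM|_N$ and their annihilators.
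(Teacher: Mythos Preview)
Your plan for Parts (i) and (iii) is correct and matches the paper's proof. For Part (ii), your overall strategy---obtain the analytic bracket from Theorem~\ref{zogb0t7j}/\ref{ytazg95b} and then show that $\{F,G\}\big\vert_N$ is an algebraic function---is also the paper's strategy, and your pointwise observation that $X_F(p)$ is well-defined modulo $D_p = T_pN \cap T_pN^\omega$ from the algebraic data is correct and is implicitly what the paper exploits.

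The gap is precisely where you anticipate it, and your proposal does not close it. Knowing that a function is determined pointwise by algebraic data does not make it regular; you need an actual algebraic formula. Your suggestion to work with ``the coherent subsheaves $TN$ and $TN^\omega$'' does not get you there: these fibrewise-defined subsets need not have constant rank, and the lifting step---producing from $d(\pi^*f)$ a section of $T^*M\big\vert_N$ vanishing on $TN^\omega$---cannot in general be performed algebraically (Remark~\ref{hfs7e921} gives an explicit example in which no algebraic $F$ exists). The paper resolves this by constructing, not an algebraic $F$, but an algebraic \emph{representative} of $X_F \bmod D$. One chooses (locally on an affine chart of $X$) an algebraic complement $R$ to $TS + \sigma(TS^\circ)$ in $TX\big\vert_S$; then $E \coloneqq d\mu^{-1}(TS \oplus R)$ is a constant-rank \emph{algebraic} subbundle of $TM\big\vert_N$ with $TM\big\vert_N = E \oplus E^\omega$, so the projection $\theta$ onto $E$ is algebraic. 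Taking arbitrary algebraic extensions $\tilde F, \tilde G \in \O_{M,p}$ of $\pi^*f, \pi^*g$, one checks that $\theta(X_{\tilde F}) - X_F \in D$ and hence $\{F,G\}\big\vert_N = \omega(\theta(X_{\tilde F}),\theta(X_{\tilde G}))\big\vert_N$, which is manifestly algebraic. The pre-Poisson hypothesis on $S$ is exactly what guarantees that $E$ has constant rank, and this is the missing ingredient in your argument.
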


\begin{proof}
To prove Part (i), let $D \coloneqq TN \cap TN^\omega$.
By Proposition \ref{kwi1nwnj}, we have $D_q = T_q(\H \cdot q)$ for all $q \in N$.
It follows that $$\pi^*f \in \O_{N, p} \s \O_{N, p}^{\text{an}}$$ and $d(\pi^*f)(D) = 0$, so we can find such an $F$ by Proposition \ref{8fzyp3dy}.

To prove Part (ii), let $U \s Q$ be Zariski-open.
Theorem \ref{zogb0t7j} provides a Poisson bracket $\{\cdot, \cdot\}'$ on $\O_N^{\mathrm{an}}(\pi^{-1}(U))^\H$.
It suffices to show that the subalgebra $\O_N(\pi^{-1}(U))^\H \cong \O_Q(U)$ of algebraic functions is closed under this bracket.

Let $f, g \in \O_Q(U)$, let $p \in \pi^{-1}(U)$, and let $F, G \in \O_{M, p}^{\text{an}}$ be related to the germs $f_{\pi(p)}, g_{\pi(p)} \in \O_{Q,\pi(p)}$ as in Part (i).
By definition, the germ of $\{\pi^*f, \pi^*g\}'$ at $p$ is $\{F, G\}\big\vert_N \in \O_{N, p}^{\mathrm{an}}$.
It therefore suffices to show that $\{F, G\}\big\vert_N$ is algebraic, i.e.\ that it lies in the image of the map $\O_{N, p} \longrightarrow \O_{N, p}^{\mathrm{an}}$.

Suppose that $X$ is affine for the moment.
Since $TS + \sigma(TS^\circ)$ has constant rank, one may choose an algebraic subbundle $R \subseteq TX\big\vert_{S}$ such that
\[
TX\big\vert_S = (TS + \sigma(TS^{\circ})) \oplus R.
\]
Remark \ref{Remark: Stein} and the proof of Proposition \ref{Proposition: Transversal germ} imply that there is a complex analytic Poisson transversal $Y \subseteq X$ such that
\[
TS \oplus R = TY\big\vert_S.
\]
By \cite[Lemma 7]{frejlich-marcut}, $Y$ and $\mu$ are transverse and $P \coloneqq \mu^{-1}(Y)$ is a complex symplectic submanifold of $M$.
It follows that
\begin{equation}\label{Equation: Definition of Q}
E \coloneqq d\mu^{-1}(TS \oplus R) = TP\big\vert_N
\end{equation}
is an algebraic subbundle of $TM\big\vert_N$ satisfying
\begin{equation}\label{Equation: Decomposition} 
TM\big\vert_N = E \oplus E^{\omega}.
\end{equation}
Since $E$ is locally trivial, we may extend the projection $ E \oplus E^\omega \longrightarrow E$ to an algebraic section $\theta$ of $\mathrm{End}(TM\big\vert_W)$ for some affine-open neighbourhood $W$ of $p$.

Noting that $F\big\vert_N = \pi^*f$ and $G\big\vert_N = \pi^*g$ are algebraic, we may choose $\tilde{F}, \tilde{G} \in \O_{M, p}$ satisfying $\tilde{F}\big\vert_N = F\big\vert_N$ and $\tilde{G}\big\vert_N = G\big\vert_N$.
We claim that
\begin{equation}\label{Equation: Preliminary identity}
\{F, G\}\big\vert_{N} = \omega(\theta(X_{\tilde{F}}),\theta(X_{\tilde{G}}))\big\vert_N
\end{equation}
as germs in $\O_{N, p}^{\mathrm{an}}$.
Since the right-hand side is in $\O_{N, p}$, this would prove Part (ii) in the case of an affine $X$.

To verify \eqref{Equation: Preliminary identity}, note that $N$ is coisotropic in $P$. 
We therefore have $TN^\omega \cap E \s TN$, which in turn forces $TN^\omega = D \oplus E^\omega$ to hold. It follows that
\[
\omega(\theta(X_{\tilde{F}}), TN^\omega) = \omega(\theta(X_{\tilde{F}}), D) = \omega(X_{\tilde{F}}, D) = d\tilde{F}(D) = d(\pi^*f)(D)=0,
\]
i.e.\ $\theta(X_{\tilde{F}})$ is tangent to $N$.
Since $dF(TN^\omega) = 0$, we get that $\theta(X_{\tilde{F}}) - X_F$ is also tangent to $N$.
On the other hand, the inclusion $TN \s E$ tells us that
\[
\omega(\theta(X_{\tilde{F}}) - X_F, TN) = \omega(X_{\tilde{F}} - X_F, TN) = d(\tilde{F} - F)(TN) = 0.
\]
We conclude that $\theta(X_{\tilde{F}}) - X_F$ takes values in $TN \cap TN^\omega = D$, and hence
\[
d\tilde{G}(\theta(X_{\tilde{F}}) - X_F)\big\vert_N = d(\pi^*g)(\theta(X_{\tilde{F}}) - X_F)\big\vert_N = 0.
\]
One then deduces that
\begin{align*}
\{F, G\}\big\vert_N &= \omega(X_{F}, X_{G})\big\vert_N = -dG(X_F)\big\vert_N = -d\tilde{G}(X_F)\big\vert_N = -d\tilde{G}(\theta(X_{\tilde{F}}))\big\vert_N = \omega(\theta(X_{\tilde{F}}), X_{\tilde{G}})\big\vert_N \\
&= \omega(\theta(X_{\tilde{F}}),\theta(X_{\tilde{G}}))\big\vert_N.
\end{align*}
This proves Part (ii) if $X$ is affine.
For the general case, one simply chooses an affine-open neighbourhood $V$ of $\mu(p)$ in $X$ and repeats the argument with $S$ and $X$ replaced by $S \cap V$ and $V$, respectively.

The proof of Part (iii) is entirely analogous to that of Theorem \ref{ytazg95b}(iii).
\end{proof}

\begin{defn}
The Poisson variety $Q$ in Theorem \ref{Theorem: Algebraic theorem} is called the \dfn{symplectic reduction of} $M$ \dfn{by} $\mathcal{G}$ \dfn{along} $S$ \dfn{with respect to} $\mathcal{H}$ \dfn{and} $\pi$. In this case, we denote $Q$ by $M\sll{S,\mathcal{H},\pi}\mathcal{G}$.
\end{defn}

\begin{rem}\label{hfs7e921}
Although the Poisson structure on $Q$ is algebraic, it may not always be possible to write $\pi^*\{f, g\}_Q = \{F, G\}\big\vert_N$ for \emph{algebraic} functions $F, G$ satisfying (i). To be more explicit, let $((M, \omega), \G \tto X, \mu)$, $(S, \H)$, and $\pi : N \too Q$ be as in Theorem \ref{Theorem: Algebraic theorem}.
Part (i) tells us that for all $p \in N$ and $f \in \O_{Q, \pi(p)}$, there exists an \emph{analytic} germ $F \in \O_{M, p}^{\mathrm{an}}$ such that $\pi^*f = F\big\vert_N$ and $dF(TN^\omega) = 0$.
It is natural to ask if $F$ can be taken to be algebraic.
This would certainly afford a more direct proof of the algebraicity of the Poisson bracket in Part (ii) (cf.\ Remark \ref{hfs7e921}).
While this approach works if $N$ is coisotropic, it fails in the general case.
Indeed, finding $F$ such that $dF(TN^\omega) = 0$ is essentially solving a first-order partial differential equation; its solutions have no reason to be algebraic.
To see this, consider the following example.

Take $M=\C^4$ with coordinates $(x, y, u, v)$ and symplectic form $\omega = dx \wedge dy + du \wedge dv$.
Consider the pair groupoid $\G \coloneqq M \times M^-$ acting on $M$ with moment map the identity $M \longrightarrow M$ (cf.\ Example \ref{ty6jqjxq}). 
Then $((M, \omega), \G \tto M, \mathrm{Id})$ is an algebraic Hamiltonian system.
We claim that the subvariety
\[
S \coloneqq \{(x, y, u, v) \in \C^4 : x^2 = u \ne 0, y = 0\}
\]
is pre-Poisson.

Observe that $TS$ is spanned by $\partial_x + 2x \partial_u$ and $\partial_v$.
We then find that $TS^\circ$ is spanned by $du - 2x dx$ and $dy$, and $\omega(TS)$ is spanned by $dx + 2xdv$ and $du$.
It follows that the stabilizer subalgebroid $L_S \coloneqq \omega(TS) \cap TS^\circ$ of $S$ is trivial, so that $S$ is pre-Poisson.
The trivial groupoid $\H$ over $S$ is then a stabilizer subgroupoid of $S$ in $\G$, and the identity map $S \too S$ is an algebraic quotient of $S$ by $\H$.
We are therefore in the context of Theorem \ref{Theorem: Algebraic theorem}.

On the other hand, we claim that there exist $p \in S$ and $f \in \O_{S, p}$ such that there is no algebraic germ $F \in \O_{M, p}$ with $F\big\vert_S = f$ and  $dF(TS^\omega) = 0$. To this end, let $f : S \too \C$ be given by $f(x, y, u, v) = x$.
Note that $TS^\omega$ is spanned by $2x \partial_y - \partial_v$ and $\partial_x$.
It follows that a germ $F \in \O_{M, p}$ for $p \in S$ satisfies $dF(TS^\omega) = 0$ if and only if
\[
2x \frac{\partial F}{\partial y} - \frac{\partial F}{\partial v} = 0 \quad\text{and}\quad \frac{\partial F}{\partial x} = 0
\]
on $S$.
Solving this partial differential equation with the constraint $F\big\vert_S = f$, e.g. by the method of characteristics, we find that $$F = \sqrt{u} + g$$ for any $g \in \I_S^2$, where $\I_S$ is the analytic ideal of $S$ and the square root is defined by a branch of the logarithm such that $\sqrt{x^2} = x$ near $p$.
Note that $u \ne 0$ on $S$, so $F$ is well-defined as a germ at $p$.
But there is no $g \in \I_S^2$ such that $\sqrt{u} + g$ is algebraic.
\end{rem}

\subsection{Poisson and hyperk\"ahler slices}\label{Subsubsection: Poisson and hyperkahler slices}
Suppose that $G$ is a connected complex semisimple linear algebraic group with Lie algebra $\mathfrak{g}$. Use the Killing form to freely identify $\mathfrak{g}$ and $\mathfrak{g}^*$ as $G$-modules. The Lie algebra $\mathfrak{g}$ thereby inherits a Poisson variety structure from $\mathfrak{g}^*$. At the same time, recall that $\tau=(e,h,f)\in\mathfrak{g}^{3}$ is called an $\mathfrak{sl}_2$-triple if the identities $$[e,f]=h,\quad [h,e]=2e, \quad\text{and}\quad [h,f]=-2f$$ hold in $\mathfrak{g}$. One then has an associated Slodowy slice
$$\mathcal{S}_\tau\coloneqq e+\mathfrak{g}_f\subseteq\mathfrak{g},$$ which is known to be a Poisson transversal in $\mathfrak{g}$ \cite[Section 3.1]{gan-ginzburg}.

Fix an $\mathfrak{sl}_2$-triple $\tau\in\mathfrak{g}^{3}$ and suppose that $\mu:M\longrightarrow\mathfrak{g}$ is a Hamiltonian $G$-variety, i.e.\ an algebraic Hamiltonian system $((M, \omega), \G\tto X, \mu)$ with $(\mathcal{G}\tto X)=(T^*G\tto\mathfrak{g})$. The previous paragraph and Example \ref{6sap623o} tell us that
$$M_{\tau}\coloneqq\mu^{-1}(\mathcal{S}_{\tau})$$ is a symplectic subvariety of $M$, and that the trivial groupoid is a stabilizer subgroupoid of $\mathcal{S}_{\tau}$. It follows that $M_{\tau}$ is the symplectic reduction of $M$ by $T^*G\tto\g^*$ with respect to $S_{\tau}$ and the trivial stabilizer subgroupoid $\mathcal{H}\tto\mathcal{S}_{\tau}$.  

The symplectic variety $M\sll{\mathcal{S}_{\tau}}\mathcal{G}=M_{\tau}$ has been studied in each of the following contexts.
\begin{itemize}
	\item[(i)] One obtains $M_{\tau}$ as a special case of the \textit{Poisson slice} construction in \cite{crooks-roeser}.
	\item[(ii)] If $M$ is hyperk\"ahler and satisfies a mild condition, then $M_{\tau}$ is hyperk\"ahler \cite[Theorem 1]{bielawski97} and called a \textit{hyperk\"ahler slice} \cite{bielawski}. The study of hyperk\"ahler slices originates in Bielawski's works \cite{bielawski97,bielawski} and occurs in the subsequent papers \cite{crooks-rayan,crooks-vanpruijssen}.
	\item[(iii)] Consider the symplectic manifold $T^*G$ equipped with the action of $G$ by right translations. One then finds that $(T^*G)_\tau = \mathfrak{M}_{G, \mathcal{S}_\tau} = G \times \mathcal{S}_\tau$. This symplectic subvariety of $T^*G$ is sometimes called a \textit{Nahm pole}, and it receives considerable attention in \cite{bielawski97,bielawski,crooks-rayan,moore-tachikawa}. It enjoys a universal property, i.e.\ $M_\tau \cong (M \times (G \times \mathcal{S}_\tau) \sll{0} G$ under certain hypotheses (see \cite[Theorem 1(ii)]{bielawski} and \cite[Proposition 4.2]{crooks-vanpruijssen}). This can now be viewed as a special case of Theorem \ref{universality}.
\end{itemize}

\subsection{The Moore--Tachikawa TQFT}\label{Subsection: The Moore-Tachikawa TQFT}
Retain the Lie-theoretic objects and conventions established in the first paragraph of \S\ref{Subsubsection: Poisson and hyperkahler slices}. Consider the locus of regular elements
$$\mathfrak{g}_{\text{reg}}\coloneqq\{x\in\mathfrak{g}:\dim(\mathfrak{g}_x)=\ell\}\subseteq\mathfrak{g},$$ where $\ell$ is the rank of $\mathfrak{g}$ and $\mathfrak{g}_x\subseteq\mathfrak{g}$ denotes the $\mathfrak{g}$-centralizer of $x\in\mathfrak{g}$. Let us also fix a principal $\sl_2$-triple, i.e.\ an $\mathfrak{sl}_2$-triple $\tau=(e, h, f)\in\mathfrak{g}^{3}$ for which $e,h,f\in\mathfrak{g}_{\text{reg}}$. The Slodowy slice  $$\mathcal{S}\coloneqq\mathcal{S}_{\tau}$$ is a fundamental domain for the action of $G$ on $\mathfrak{g}_{\text{reg}}$ \cite[Theorem 8]{kostant}, and it is a section of the adjoint quotient
$$\chi:\mathfrak{g}\longrightarrow\mathrm{Spec}(\mathbb{C}[\mathfrak{g}]^G)\eqqcolon\mathfrak{c}$$
\hspace{-0.00001pt}\cite[Theorem 7]{kostant}.
This gives context for a conjecture of Moore--Tachikawa \cite{moore-tachikawa} and its appearances in the literature \cite{arakawa,bielawskipreprint,braverman-finkelberg-nakajima, calaque-2015}. 

Moore and Tachikawa \cite{moore-tachikawa} have conjectured the existence of certain topological quantum field theories (TQFTs) valued in $\mathrm{HS}$, the category of affine symplectic varieties with Hamiltonian actions. This category has complex semisimple linear algebraic groups as its objects. A morphism $H_1\longrightarrow H_2$ is an equivalence class of an affine symplectic $(H_1\times H_2\times\mathbb{C}^{\times})$-variety\footnote{In this one case, ``symplectic variety" refers to a potentially singular Poisson variety that is symplectic on an open dense subset of its smooth locus. Symplectic varieties are otherwise understood to be smooth and everywhere symplectic in our paper.} for which the symplectic form has $\mathbb{C}^{\times}$-weight $-2$ and the action of $H_1\times H_2$ is Hamiltonian; two such varieties are declared to be equivalent if they are $(H_1\times H_2\times\mathbb{C}^{\times})$-equivariantly symplectomorphic. In this context, one defines morphism composition as follows:
$$X_2\circ X_1\coloneqq(X_1\times X_2)\sll{} H_2\in\mathrm{Hom}_{\mathrm{HS}}(H_1,H_3)$$ for all $X_1\in\mathrm{Hom}_{\mathrm{HS}}(H_1,H_2)$ and $X_2\in\mathrm{Hom}_{\mathrm{HS}}(H_2,H_3)$, where $(X_1\times X_2)\sll{} H_2$ is the affine GIT quotient by $H_2$ of the zero-level set of the $H_2$-moment map on $X_1\times X_2$. The identity object in $\mathrm{Hom}_{\mathrm{HS}}(H,H)$ is then $T^*H$, equipped with its usual Hamiltonian ($H\times H$)-action and its fibrewise dilation action of $\mathbb{C}^{\times}$. 
    
It turns out that $\mathrm{HS}$ is a symmetric monoidal category with duality; the symmetric monoidal structure is with respect to the usual products of linear algebraic groups and the usual products of affine symplectic varieties. The dual of $H\in\mathrm{Ob}(\mathrm{HS})$ is defined to be $H$ itself, while the corresponding evaluation and coevaluation morphism are $$T^*H\in\mathrm{Hom}_{\mathrm{HS}}(H\times H,\{e\})\quad\text{and}\quad T^*H\in\mathrm{Hom}_{\mathrm{HS}}(\{e\},H\times H),$$ respectively. On the other hand, one knows that two-dimensional cobordisms form a symmetric monoidal category $\mathrm{BO}_2$ with duality. These considerations lead to the following definition of Moore--Tachikawa \cite{moore-tachikawa}.

\begin{defn}\label{Definition: TQFT}
A \textit{two-dimensional $\mathrm{HS}$-valued TQFT} is a symmetric monoidal functor $\eta:\mathrm{BO}_2\longrightarrow\mathrm{HS}$ that respects duality.  
\end{defn}

Given a positive integer $n$, let $$C_n\in\mathrm{Hom}_{\mathrm{BO}_2}(\underbrace{S^1\sqcup\cdots\sqcup S^1}_{n\text{ times}},\emptyset)$$ denote basic two-dimensional cobordism with $n$ incoming circles. The Moore--Tachikawa conjecture \cite{moore-tachikawa} is then formulated as follows.

\begin{conj}\label{Conjecture: Moore--Tachikawa}
If $G$ is a connected complex simple linear algebraic group, then there exists a two-dimensional $\mathrm{HS}$-valued TQFT $\eta_G:\mathrm{BO}_2\longrightarrow\mathrm{HS}$ satisfying $\eta_G(S^1)=G$ and $\eta_G(C_1)=G\times\mathcal{S}$.
\end{conj}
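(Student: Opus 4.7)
\emph{Strategy.} The plan is to construct the functor $\eta_G$ by specifying its values on a generating set of $\mathrm{BO}_2$ and verifying the relations, following the Ginzburg--Kazhdan programme but expressed in terms of the symplectic-reduction-along-a-submanifold machinery developed in this paper. Monoidality and the condition $\eta_G(S^1)=G$ force $\eta_G(\bigsqcup_{i=1}^n S^1)=G^n$. For the basic cobordism $C_n$ (a sphere with $n$ incoming circles and no outgoing ones), I would set
\[
\eta_G(C_n) \coloneqq \mathrm{Spec}\,\mathbb{C}\!\left[\mathfrak{M}_{G_{\mathrm{ad}}^n,\,\Delta_n\mathcal{S}}\right],
\]
the affinization of the universal reduced space of Theorem \ref{Theorem: Specific examples}(i), where one passes to the adjoint form via the canonical projection $G\to G_{\mathrm{ad}}$ and pulls back the Hamiltonian structure through the finite cover. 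The case $n=1$ reproduces $G\times\mathcal{S}$ because a principal Slodowy slice is a Poisson transversal, so that the universal reduced space coincides with $\mu^{-1}(\mathcal{S})$ for the right translation action on $T^*G$ (Example \ref{6sap623o}). The Kazhdan $\mathbb{C}^\times$-action on $\mathcal{S}$ induced by the Jacobson--Morozov $\mathrm{SL}_2$-triple extends to each $\mathfrak{M}_{G_{\mathrm{ad}}^n,\Delta_n\mathcal{S}}$ and rescales the symplectic form with weight $-2$, while the residual diagonal $G_{\mathrm{ad}}^n$-action is Hamiltonian by Proposition \ref{4ywv9zyt}. For the identity cylinder I would use $T^*G$ with its standard $(G\times G)$-structure, and higher-genus cobordisms would be obtained by decomposing into pants and caps and using the self-duality of each $G$ in $\mathrm{HS}$.

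\emph{The gluing formula via universality.} The central functoriality check is that categorical composition in $\mathrm{HS}$ matches the geometric gluing of cobordisms along boundary circles: gluing $C_{n+1}$ to $C_{m+1}$ along one circle produces $C_{n+m}$, so one must establish
\[
\bigl(\eta_G(C_{n+1})\times\eta_G(C_{m+1})\bigr)\sll{0}G_{\mathrm{ad}} \;\cong\; \eta_G(C_{n+m}).
\]
Theorem \ref{universality} provides the analogous identity at the level of the universal reduced spaces: $M\sll{S,\mathcal{H}}G\cong (M\times\mathfrak{M}_{G,S,\mathcal{H}}^-)\sll{0}G$. Applying this to $M=\mathfrak{M}_{G_{\mathrm{ad}}^{n+m},\Delta_{n+m}\mathcal{S}}$ with respect to the action of a single $G_{\mathrm{ad}}$-factor, and noting that cutting a diagonal Slodowy-slice constraint into two copies corresponds to factoring one $G_{\mathrm{ad}}$-reduction through its universal reduced space, should yield the desired identity at the level of the (not necessarily affine) reduced varieties. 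Passing to global sections then gives the gluing formula for $\eta_G$, provided affinization intertwines with Marsden--Weinstein--Meyer reduction at level $0$.

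\emph{Main obstacle.} The principal difficulty, and the step on which the whole argument rests, is the commutation of affinization with symplectic reduction. The universal reduced spaces $\mathfrak{M}_{G_{\mathrm{ad}}^n,\Delta_n\mathcal{S}}$ are essentially never affine, so one cannot form GIT quotients of them directly; instead one must verify at the level of invariant rings that the natural homomorphism
\[
\mathbb{C}\bigl[\mathfrak{M}_{G_{\mathrm{ad}}^{n+m},\Delta_{n+m}\mathcal{S}}\bigr] \longrightarrow \Bigl(\mathbb{C}\bigl[\mathfrak{M}_{G_{\mathrm{ad}}^{n+1},\Delta_{n+1}\mathcal{S}}\bigr]\otimes\mathbb{C}\bigl[\mathfrak{M}_{G_{\mathrm{ad}}^{m+1},\Delta_{m+1}\mathcal{S}}\bigr]\Bigr)^{G_{\mathrm{ad}}}\!\Big/\,I_{\mu}
\]
induced by the universality symplectomorphism is an isomorphism, where $I_\mu$ is the ideal generated by the components of the moment map for the $G_{\mathrm{ad}}$-action being reduced. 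This requires extending invariant regular functions across the locus where the reduction fails to be clean or free, a problem that goes beyond the smooth machinery of \S\ref{smooth-case}--\ref{lie-group-case} and sits squarely in the algebraic realm of \S\ref{algebraic-case}; in effect one needs a robust version of the Hartogs extension principle for these reductions. A secondary obstacle is to verify, uniformly in $n$, that the pertinent stabilizer subgroupoid of $\Delta_n\mathcal{S}$ in $T^*G_{\mathrm{ad}}^n\tto\mathfrak{g}^{*\,n}$ is closed (so that the reduction datum is clean by Proposition \ref{4ywv9zyt}(ii)) and that $\mu^{-1}(\Delta_n\mathcal{S})$ is reduced (so that Theorem \ref{Theorem: Algebraic theorem} applies and gives the Poisson structure on the affinization). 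Granting these two points, compatibility of $\eta_G$ with duality and the symmetric monoidal structure follows formally from the naturality of symplectic reduction in the Hamiltonian group.
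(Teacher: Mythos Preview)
The statement you are attempting to prove is labelled a \emph{Conjecture} in the paper, and the paper does not claim to prove it. What the paper does is recast the Ginzburg--Kazhdan construction in the language of reduction along a submanifold (Theorem~\ref{Theorem: Moore-Tachikawa}) and record that, \emph{conditionally on the finite generation of $\C[\mathcal{N}_n]^{\mathcal{J}_n}$ for all $n$}, the resulting varieties $\mathcal{Z}_n$ verify the conjecture. There is therefore no ``paper's own proof'' against which to compare your proposal.

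Your strategy is essentially the Ginzburg--Kazhdan programme rephrased through the paper's universal reduced spaces, and the paper itself makes exactly this identification. The obstacles you flag are real, but you have under-emphasised the most basic one: before worrying about whether affinization commutes with Marsden--Weinstein reduction, you need the affinizations $\mathrm{Spec}\,\C[\mathfrak{M}_{G_{\mathrm{ad}}^n,\Delta_n\mathcal{S}}]$ to be varieties at all, i.e.\ the invariant rings must be finitely generated. This is precisely the hypothesis the paper isolates in Theorem~\ref{Theorem: Moore-Tachikawa}(iii) and does not resolve. Your ``Hartogs-type'' extension problem for the gluing formula is downstream of this, and neither issue is addressed by the smooth or algebraic machinery of \S\ref{smooth-case}--\S\ref{algebraic-case}; those sections produce Poisson structures on quotients that are \emph{assumed} to exist, they do not establish finite generation or normality. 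In short, your outline correctly tracks the known approach, but the gaps you identify are exactly why the statement remains a conjecture rather than a theorem.
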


\begin{rem}
The condition $\eta_G(C_1)=G\times\mathcal{S}$ is imposed for physical reasons, as is explained in \cite[Section 3.3]{moore-tachikawa}.
\end{rem}

Let $G$ be a connected complex simple linear algebraic group and $\eta_G:\mathrm{BO}_2\longrightarrow\mathrm{HS}$ a TQFT satisfying Conjecture \ref{Conjecture: Moore--Tachikawa}. The Hamiltonian $G^n$-varieties
$$\eta_G(C_n),\quad n\geq 1$$ are sometimes called \dfn{Moore--Tachikawa varieties}. Examples include $\eta_G(C_1)=G\times\mathcal{S}$ and $\eta_G(C_2)=T^*G$, the second assertion being a straightforward consequence of Definition \ref{Definition: TQFT} and Conjecture \ref{Conjecture: Moore--Tachikawa}.
One also knows that the third Moore--Tachikawa variety $\eta_G(C_3)$ determines $\eta_G$ \cite[Section 3.2]{moore-tachikawa}. Proving Conjecture \ref{Conjecture: Moore--Tachikawa} thereby amounts to finding a suitable candidate for $\eta_G(C_3)$.

\begin{rem}
The conjecture is known to be true for $G = \mathrm{SL}(2, \C)$ and $G = \mathrm{SL}(3, \C)$ \cite{moore-tachikawa}. One has $\eta_{\mathrm{SL}(2, \C)}(C_3) = \C^2 \otimes \C^2 \otimes \C^2$, while $\eta_{\mathrm{SL}(3, \C)}(C_3)$ is the closure of the minimal nilpotent orbit in the exceptional Lie algebra $E_6$.
\end{rem}

Ginzburg and Kazhdan \cite{ginzburg-kazhdan} construct candidates for the Moore--Tachikawa varieties as follows.   
Recall our discussion in \S\ref{Subsubsection: Poisson and hyperkahler slices}(iii) of the symplectic subvariety $$\mathcal{N}\coloneqq G\times\mathcal{S}\subseteq G\times\g=T^*G$$ and adjoint quotient morphism $\chi:\mathfrak{g}\longrightarrow\c$. One then defines the \dfn{universal centralizer} of $G$ to be the closed subvariety
\[
\mathcal{J}\coloneqq\{(g,x)\in\mathcal{N}:g\in G_x\}\subseteq\mathcal{N},
\]
where $G_x\subseteq G$ denotes the $G$-stabilizer of $x\in\mathfrak{g}$. We have the morphism
\[
\mathcal{N}\longrightarrow\c,\quad (g,x)\mtoo\chi(x)
\]
and its restriction
\[
\mathcal{J}\longrightarrow\c,\quad (g,x)\mtoo\chi(x)
\]
to $\mathcal{J}$. The latter is an abelian group scheme over $\c$, and it acts on $\mathcal{N}$ via the map $$\mathcal{J} \times_\c \mathcal{N} \longrightarrow \mathcal{N},\quad ((g,x),(h,x))\mtoo (hg^{-1},x),\quad x\in\mathcal{S},\text{ }g\in G_x,\text{ }h\in G.$$ The kernel
\[
\mathcal{J}_n \coloneqq \ker(\underbrace{\mathcal{J} \times_\c \cdots \times_\c \mathcal{J}}_{n\text{ times}} \longrightarrow \mathcal{J})
\]
of multiplication is then a group scheme over $\c$ acting on
\[
\mathcal{N}_n\coloneqq\underbrace{\mathcal{N} \times_\c \cdots \times_\c \mathcal{N}}_{n\text{ times}}\subseteq\mathcal{N}^n.
\]
Ginzburg--Kazhdan \cite{ginzburg-kazhdan} prove that $\mathcal{N} \times_\c \cdots \times_\c \mathcal{N}$ is coisotropic in $\mathcal{N}^n$, and that the corresponding null-foliation is formed by the $\mathcal{J}_n$-orbits. They also show the geometric quotient
\[
\mathcal{Z}_n^\circ \coloneqq \mathcal{N}_n / \mathcal{J}_n
\]
to be a smooth algebraic variety, so that $\mathcal{Z}_n^\circ$ is necessarily symplectic.  The left translation action of $G^n$ on $T^*G^n$ then descends to a Hamiltonian $G^n$-action on $\mathcal{Z}_n^\circ$. 

Ginzburg and Kazhdan subsequently define $\mathcal{Z}_n$ to be the affinization of $\mathcal{Z}_n^\circ$, i.e.\ the $G^n$-scheme
\[
\mathcal{Z}_n \coloneqq (\mathcal{Z}_n^\circ)^{\mathrm{aff}} = \operatorname{Spec}(\C[\mathcal{N}_n]^{\mathcal{J}_n}).
\]
If $\C[\mathcal{N}_n]^{\mathcal{J}_n}$ is finitely generated for all $n\geq 1$, then the $\mathcal{Z}_n$ are Moore--Tachikawa varieties that verify Conjecture \ref{Conjecture: Moore--Tachikawa} \cite{ginzburg-kazhdan}.

Fix an integer $n\geq 1$ and let
$$\Delta_n\mathcal{S}\subseteq\g^{n}$$ denote the diagonally embedded copy of $\mathcal{S}$ in $\g^{n}$. We may identify $\c$ with $\mathcal{S}$ via the restricted adjoint quotient $$\chi\big\vert_{\mathcal{S}}:\mathcal{S}\overset{\cong}\longrightarrow\c,$$ and $\mathcal{S}$ with $\Delta_n\mathcal{S}$ in the obvious way. The three varieties $\c$, $\mathcal{S}$, and $\Delta_n\mathcal{S}$ are thereby treated interchangeably in what follows. One immediate consequence is our ability to view $\mathcal{J}_n$ as a group scheme over $\Delta_n\mathcal{S}$.
At the same time, our identification $T^*G=G\times\mathfrak{g}$ induces a further identification $T^*G^n=(G\times\g)^n$. The set-theoretic inclusion $\mathcal{J}_n\subseteq (G\times\mathfrak{g})^n$ then renders $\mathcal{J}_n\longrightarrow\Delta_n\mathcal{S}$ a subgroupoid of $T^*G^n\tto\mathfrak{g}^{n}$.

\begin{thm}[Theorem \ref{Theorem: Specific examples}(i)]\label{Theorem: Moore-Tachikawa}
The following statements hold for all integers $n\geq 1$.
\begin{itemize}
\item[\textup{(i)}] The diagonal $\Delta_n\mathcal{S}\subseteq\g^{n}$ is stable\footnote{The algebro-geometric definition of ``stable" is identical to the manifold-theoretic one in \S\ref{peg44up7}.} and admits $\mathcal{J}_n$ as a stabilizer subgroupoid in $T^*G^n\tto\g^{n}$.
\item[\textup{(ii)}] We have 
\[
\mathcal{Z}_n^{\circ} = T^*G^n \sll{(\Delta_n\mathcal{S},\mathcal{J}_n,\pi_n)} G^n,
\]
where $\pi_n:\mathcal{N}_n\longrightarrow \mathcal{Z}_n^{\circ}$ is the geometric quotient map and $G^n$ acts on $T^*G^n$ by right translations.
\item[\textup{(iii)}] Assume that $\C[\mathcal{N}_n]^{\mathcal{J}_n}$ is finitely generated. We have
\[
\mathcal{Z}_n = T^*G^n \sll{(\Delta_n\mathcal{S},\mathcal{J}_n,\bar{\pi}_n)} G^n,
\]
where $\bar{\pi}_n:\mathcal{N}_n\longrightarrow \mathcal{Z}_n$ is the affine GIT quotient map, $G^n$ acts on $T^*G^n$ by right translations.
\end{itemize}
\end{thm}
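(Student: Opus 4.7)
The plan is to prove the three parts in turn, with part~(i) carrying most of the technical weight. To prove~(i), I would fix $\xi_0 \in \mathcal{S}$, set $\xi = \Delta_n\xi_0 \in \Delta_n\mathcal{S}$, and compute the stabilizer subalgebroid fibre $(L_{\Delta_n\mathcal{S}})_\xi$ via Lemma~\ref{subalgebroid-lemma}. Writing $x = (x_1,\ldots,x_n)$, the annihilator condition unwinds to $\sum_i x_i \in (T_{\xi_0}\mathcal{S})^\circ$, while the transversality condition reads $[x_i,\xi_0] = [x_j,\xi_0] \in T_{\xi_0}\mathcal{S}$ for all $i,j$. The crucial input is Example~\ref{6sap623o} applied to the Slodowy slice: since $\mathcal{S}$ is a Poisson transversal in $\g$, we have $L_\mathcal{S} = 0$, which forces $T_{\xi_0}\mathcal{S}\cap[\g,\xi_0] = 0$ and $(T_{\xi_0}\mathcal{S})^\circ\cap\g_{\xi_0} = 0$. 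These collapse the conditions to $x_i \in \g_{\xi_0}$ for each $i$ and $\sum_i x_i = 0$, exhibiting $(L_{\Delta_n\mathcal{S}})_\xi$ as the Lie algebra of the fibre $(\mathcal{J}_n)_\xi$. Constant rank follows, so $\Delta_n\mathcal{S}$ is pre-Poisson; the anchor vanishes, so it is in fact stable. Smoothness of $\mathcal{J}_n$ is built into the Ginzburg--Kazhdan construction, so the inclusion $\mathcal{J}_n \hookrightarrow G^n\times\Delta_n\mathcal{S} = T^*G^n\big\vert_{\Delta_n\mathcal{S}}$ is an embedded algebraic subgroupoid integrating $L_{\Delta_n\mathcal{S}}$. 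Isotropy will follow from Remark~\ref{2g4d9b8v}(ii): the regular semisimple locus inside $\mathcal{S}$ is Zariski-open and dense, and over it $G_{\xi_0}$ is a maximal torus and $(\mathcal{J}_n)_\xi \cong T^{n-1}$ is connected, making $\mathcal{J}_n$ source-connected over a dense open subset of $\Delta_n\mathcal{S}$.

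To prove~(ii), I would use equation~\eqref{1h6um459} to identify the moment map for the right-translation action of $G^n$ on $T^*G^n$ as the projection $(g_i,x_i)_i \mapsto (x_1,\ldots,x_n)$, so that $\mu^{-1}(\Delta_n\mathcal{S}) = \mathcal{N}_n$. Tracing through the Mikami--Weinstein action described in \S\ref{fz9catpk}, the stabilizer-subgroupoid action of $\mathcal{J}_n$ on $\mathcal{N}_n$ sends $((h_1,\ldots,h_n),(g_1,x,\ldots,g_n,x))$ to $(g_1h_1^{-1},x,\ldots,g_nh_n^{-1},x)$ whenever $h_i \in G_x$ and $h_1\cdots h_n = e$, coinciding with the action used by Ginzburg--Kazhdan. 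Since $\mathcal{N}_n$ is smooth (hence reduced) and $\pi_n$ is a geometric quotient, the sheaf isomorphism in Definition~\ref{Definition: algebraic quotient} is immediate, so $\pi_n$ is an algebraic quotient. Theorem~\ref{Theorem: Algebraic theorem}(ii) then endows $\mathcal{Z}_n^\circ$ with a canonical algebraic Poisson structure. For non-degeneracy, I would note that at every $p \in \mathcal{N}_n$ the differential $d\pi_{n,p}$ is surjective, $\pi_n^{-1}(\pi_n(p))$ is a single $\mathcal{J}_n$-orbit, and $\pi_n(p)$ is smooth; the proof of Theorem~\ref{Theorem: Algebraic theorem}(iii) is pointwise, so non-degeneracy holds at every point of $\mathcal{Z}_n^\circ$. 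This produces the claimed symplectic identification $\mathcal{Z}_n^\circ = T^*G^n\sll{(\Delta_n\mathcal{S},\mathcal{J}_n,\pi_n)}G^n$.

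Part~(iii) is then a formal consequence. Under the finite generation hypothesis, the affinization morphism $\bar{\pi}_n : \mathcal{N}_n \too \mathcal{Z}_n = \operatorname{Spec}\C[\mathcal{N}_n]^{\mathcal{J}_n}$ satisfies $\bar{\pi}_n^*\mathcal{O}_{\mathcal{Z}_n} \cong ((\bar{\pi}_n)_*\mathcal{O}_{\mathcal{N}_n})^{\mathcal{J}_n}$ by the universal property of $\operatorname{Spec}$ applied to the invariant algebra, so $\bar{\pi}_n$ is an algebraic quotient in the sense of Definition~\ref{Definition: algebraic quotient}. A second application of Theorem~\ref{Theorem: Algebraic theorem}(ii) yields the stated Poisson identification $\mathcal{Z}_n = T^*G^n\sll{(\Delta_n\mathcal{S},\mathcal{J}_n,\bar{\pi}_n)}G^n$, compatible with~(ii) via the open embedding $\mathcal{Z}_n^\circ \hookrightarrow \mathcal{Z}_n$.

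The main obstacle I anticipate is the isotropy assertion in~(i): one must extract connectedness of $(\mathcal{J}_n)_\xi$ over a dense open subset, which I expect to handle through the regular semisimple stratum of $\mathcal{S}$ where centralizers are maximal tori. A secondary subtlety is ensuring that the identification of the $\mathcal{J}_n$-action coming from the cotangent groupoid matches the one in \cite{ginzburg-kazhdan} exactly (as opposed to its inverse), which is a bookkeeping matter traceable to the conventions fixed in \eqref{1h6um459}.
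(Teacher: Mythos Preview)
Your proposal is correct and follows essentially the same route as the paper: compute $(L_{\Delta_n\mathcal{S}})_\xi$ via Lemma~\ref{subalgebroid-lemma}, collapse the conditions using transversality of the Slodowy slice, match with $\mathrm{Lie}(\mathcal{J}_n)_\xi$, and invoke connectedness over the regular semisimple locus for isotropy; parts~(ii) and~(iii) are then unwound exactly as you describe.

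One small imprecision to tighten: the implication ``$L_{\mathcal{S}}=0$ forces $T_{\xi_0}\mathcal{S}\cap[\g,\xi_0]=0$'' is not valid for an arbitrary Poisson transversal. From $L_{\mathcal{S}}=0$ you get $T_{\xi_0}\mathcal{S}+[\g,\xi_0]=\g$ and $(T_{\xi_0}\mathcal{S})^{\circ}\cap\g_{\xi_0}=0$, but zero intersection additionally requires the dimension equality $\dim\mathcal{S}=\dim\g_{\xi_0}$, which holds here because $\mathcal{S}\subseteq\g_{\mathrm{reg}}$ and $\dim\mathcal{S}=\ell$. The paper avoids this subtlety by citing the explicit transverse-slice decomposition $\g=\g_f\oplus[\g,x]$ for all $x\in\mathcal{S}$ and reading off both vanishing statements directly.
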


\begin{proof}
Our arguments make extensive use of the fact that $\mathcal{S}$ is a transverse slice to the adjoint action \cite[Section 3.1]{gan-ginzburg}. This amounts to the statement that
\begin{equation}\label{qrv7yz97}
\g = \g_f \oplus [\g, x] \quad \text{for all }x \in \mathcal{S},
\end{equation}
where we have identified $T_x\mathcal{S}$ with $\g_f$ and $T_x(G \cdot x)$ with $[\g, x]$.

Lemma \ref{subalgebroid-lemma} implies that
\[
(L_{\Delta_n\mathcal{S}})_{\underline{x}} = \{(y_1,\ldots,y_n)\in\mathfrak{g}^{n}: y_1 + \cdots + y_n \in \g_f^\perp \text{ and } [x, y_1] = \cdots = [x, y_n] \in \g_f\},
\]
for all $\underline{x} = (x, \ldots, x) \in \Delta_n \mathcal{S}$, where $\g_f^\perp$ is the annihilator of $\g_f$ under the Killing form.
On the other hand \eqref{qrv7yz97} tells us that $\g_f \cap [\g, x] = 0$ for all $x\in\mathcal{S}$. One consequence is as follows: if $[x, y_i] \in \g_f$, then $[x, y_i] = 0$, i.e.\ $y_i \in \g_x$.
Equation \eqref{qrv7yz97} also implies that $\g_f^\perp \cap \g_x = (\g_f + [\g, x])^\perp = 0$.
These considerations and the observation that
\begin{equation}\label{Equation: Explicit description}\mathcal{J}_n=\bigg\{\bigg((g_1,x),\ldots,(g_n,x)\bigg):x\in\mathcal{S},\text{ }g_1,\ldots,g_n\in G_x,\text{ and }g_1\cdots g_n=1\bigg\}\end{equation}
give
\begin{equation}\label{9hqn8a8y}
(L_{\Delta_n\mathcal{S}})_{\underline{x}} = \{(y_1,\ldots,y_n) \in (\g_x)^{n}: y_1 + \cdots + y_n = 0\} = \Lie((\mathcal{J}_n)_{\underline{x}})
\end{equation}
for all $x \in \mathcal{S}$, where $(\mathcal{J}_n)_{\underline{x}}$ is the fibre of $\mathcal{J}_n$ over $\underline{x} \in \Delta_n\mathcal{S}$.
We also know that $\dim \g_x = \dim \g_y$ for all $x, y \in \mathcal{S}$, as $\mathcal{S} \s \g_{\mathrm{reg}}$.
It follows that $\dim(L_{\Delta_n\mathcal{S}})_{\underline{x}}$ is independent of $x \in \mathcal{S}$, i.e.\ $\Delta_n\mathcal{S}$ is pre-Poisson.
We also see that $(L_{\Delta_n\mathcal{S}})_{\underline{x}} \s (\g_x)^n$ for all $x \in \mathcal{S}$, so that $\Delta_n\mathcal{S}$ is stable.

In light of \eqref{9hqn8a8y} and Theorem \ref{amc0wisd}(i)--(iii), showing $(\mathcal{J}_n)_{\underline{x}}$ to be connected for all $x$ in an open dense subset of $\mathcal{S}$ will force $\mathcal{J}_n$ to be a stabilizer subgroupoid of $\Delta_n\mathcal{S}$. To this end, \eqref{Equation: Explicit description} tells us that 
$(\mathcal{J}_n)_{\underline{x}} \cong (G_x)^{n - 1}$ for all $x\in\mathcal{S}$. We also know $G_x$ to be a maximal torus if $x\in\mathcal{S}$ is semisimple. It follows that $(\mathcal{J}_n)_{\underline{x}}$ is indeed connected for all $x$ in an open dense subset of $x$, completing the proof of Part (i).

To establish the remaining parts, consider Equation \eqref{1h6um459}. This implies that the moment map for the action of $G^n$ on $T^*G^n$ by right translations is
\[
\mu:(G\times\mathfrak{g})^n\longrightarrow\mathfrak{g}^{n},\quad ((g_1,x_1),\ldots,(g_n,x_n))\mtoo (x_1,\ldots,x_n).
\]
A further observation is that
\[
\mu^{-1}(\Delta_n\mathcal{S}) = \mathcal{N}\times_{\mathcal{S}}\cdots\times_{\mathcal{S}}\mathcal{N} = \mathcal{N} \times_\c \cdots \times_\c \mathcal{N} = \mathcal{N}_n.
\]
Parts (ii) and (iii) then follow immediately from the definitions of $\mathcal{Z}_n^{\circ}$ and $\mathcal{Z}_n$, respectively.
\end{proof}

\begin{rem}
If $G$ is of adjoint type, then the $G$-stabilizers of regular elements are connected.
It follows that $\mathcal{J}_n$ is a source-connected stabilizer subgroupoid of $\Delta_n\mathcal{S}$ in $T^*G^n \tto \g^n$.
We thereby obtain the simplified expressions
\[
\mathcal{Z}^\circ_n = \mathfrak{M}_{G^n, \Delta_n\mathcal{S}} \quad\text{and}\quad \mathcal{Z}_n = (\mathfrak{M}_{G^n, \Delta_n\mathcal{S}})^{\mathrm{aff}},
\]
where $(\mathfrak{M}_{G^n, \Delta_n\mathcal{S}})^{\mathrm{aff}}$ is the affinization of $\mathfrak{M}_{G^n, \Delta_n\mathcal{S}}$.
\end{rem}

\section{Shifted symplectic interpretation}\label{95hfl28k}

We now interpret the notion of symplectic reduction along a submanifold in the context of shifted symplectic geometry \cite{ptvv}.
More precisely, we view our construction as a derived intersection of two Lagrangians in a 1-shifted symplectic stack and obtain a $0$-shifted symplectic structure from \cite[Theorem 2.9]{ptvv}.
There are similar statements in the literature for Marsden--Weinstein--Meyer reduction \cite[\S2.1.2]{calaque} and quasi-Hamiltonian reduction \cite{safronov-2016}.
We refer the reader to \cite{calaque, getzler, ptvv, pym-safronov} for the relevant background on shifted symplectic geometry.

Given a symplectic groupoid $\G \tto X$, recall that the quotient stack $[X/\G]$ inherits a $1$-shifted symplectic structure (see e.g.\ \cite[\S1.2.3]{calaque}, \cite{getzler} and \cite[Proposition 3.31]{safronov-2021}).
Recall also that a Hamiltonian action of $\G \tto X$ on a symplectic manifold $(M, \omega)$ with moment map $\mu : M \longrightarrow X$ gives rise to a Lagrangian structure on the map $[\mu] : [M/\G] \longrightarrow [X/\G]$ \cite[Example 1.31]{calaque}.

Consider a Lie subgroupoid $\H \tto S$ of $\G \tto X$ with morphism $f : \H \longrightarrow \G$.
An isotropic structure on $[f] : [S / \H] \longrightarrow [X/\G]$ is a closed 2-form $\beta$ on $S$ such that $f^*\Omega = \sss^*\beta - \ttt^*\beta,$ where $\Omega$ is the symplectic form on $\G$ (see \cite[Definition 2.7]{ptvv} or \cite[\S1.3.2]{calaque}).
The isotropic structure $\beta$ is called Lagrangian if the induced morphism $\mathbb{T}_f \longrightarrow \mathbb{L}_{[S/\H]}$ is a quasi-isomorphism \cite[Definition 2.8]{ptvv}, where $\mathbb{T}$ and $\mathbb{L}$ denote the tangent and cotangent complexes, respectively, and $$\mathbb{T}_f \coloneqq f^*\mathbb{T}_{[X/\G]}[-1] \oplus \mathbb{T}_{[S/\H]}$$ is the relative tangent complex.

\begin{prop}[Theorem \ref{Theorem: Shifted}(i)]\label{yrb59dpc}
Let $\H \tto S$ be a Lie subgroupoid of a symplectic groupoid $\G \tto X$.
The zero 2-form on $S$ is a Lagrangian structure on $[S / \H] \longrightarrow [X / \G]$ if and only if $\H$ is the stabilizer subgroupoid of a pre-Poisson submanifold.
\end{prop}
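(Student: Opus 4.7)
The plan is to translate the proposition into a pointwise linear-algebra statement via the standard description of tangent and cotangent complexes of quotient stacks. First, I would unpack what it means for the zero $2$-form on $S$ to be an isotropic structure. By definition, an isotropic structure on $[f] : [S/\H] \too [X/\G]$ is a closed $2$-form $\beta$ on $S$ satisfying $f^*\Omega = \sss^*\beta - \ttt^*\beta$, where $f : \H \too \G$ is the subgroupoid immersion. Setting $\beta = 0$ reduces this to $f^*\Omega = 0$, i.e.\ $\H$ is isotropic in $\G$; this is one of the two defining properties of a stabilizer subgroupoid (Definition \ref{Definition: Subgroupoid}). Combined with the condition that $\H \tto S$ is a subgroupoid of $\G \tto X$ over $S$, Lemma \ref{1hpyffiz} together with the identification $\varphi : \mathrm{Lie}(\G) \cong T^*X$ from \eqref{r6nwj6xb} yields the pointwise containment $\mathfrak{h}_x \coloneqq \mathrm{Lie}(\H)_x \s \sigma^{-1}(T_xS) \cap T_xS^\circ = (L_S)_x$. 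The Lagrangian condition will upgrade this containment to equality and thereby force $S$ to be pre-Poisson, since $\mathfrak{h}$ has constant rank.

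The second step is to compute the induced map $\phi : \mathbb{T}_f \too \mathbb{L}_{[S/\H]}$ at a point $x \in S$. Viewing $\mathbb{T}_f$ as the fiber of $df : \mathbb{T}_{[S/\H]} \too f^*\mathbb{T}_{[X/\G]}$, its pointwise form is the three-term complex
\[
\mathbb{T}_{f, x} \;=\; \bigl[\, \mathfrak{h}_x \xrightarrow{(\rho_\H,\, \iota)} T_xS \oplus \mathfrak{g}_x \xrightarrow{\; j_* - \rho_\G\;} T_xX \,\bigr]
\]
in degrees $[-1, 0, 1]$, where $\mathfrak{g}_x \coloneqq \mathrm{Lie}(\G)_x$, $\iota : \mathfrak{h}_x \hookrightarrow \mathfrak{g}_x$, and $j : S \hookrightarrow X$. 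Dually, $\mathbb{L}_{[S/\H], x} = [\, T_x^*S \xrightarrow{\rho_\H^*} \mathfrak{h}_x^*\,]$ in degrees $[0, 1]$. Using $\varphi$, which sends $\rho_\G$ to $\sigma$ and $\mathfrak{h}_x$ to a subspace of $T_x^*X$, the map produced by the zero isotropic structure is identically zero in degree $-1$; in degree $0$ it is the projection onto $\mathfrak{g}_x$ composed with restriction, $(u, v) \mto \varphi(v)\big\vert_{T_xS}$; and in degree $1$ it is the pairing $w \mto (\xi \mto \xi(w))$ against $\mathfrak{h}_x \s T_x^*X$. A direct check using skew-symmetry of $\sigma$ shows that $\phi$ is a chain map precisely under the isotropy condition $\mathfrak{h}_x \s T_xS^\circ$.

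Finally, I would read off the cohomology. Injectivity of $\iota$ gives $H^{-1}(\mathbb{T}_{f, x}) = 0$ automatically, and a short calculation produces
\[
H^0(\mathbb{T}_{f, x}) \cong \sigma^{-1}(T_xS) / \mathfrak{h}_x, \qquad H^1(\mathbb{T}_{f, x}) \cong T_xX / \bigl(T_xS + \sigma(T_x^*X)\bigr),
\]
while $H^0(\mathbb{L}_{[S/\H], x}) \cong \bigl(T_xS / \sigma(\mathfrak{h}_x)\bigr)^*$ and $H^1(\mathbb{L}_{[S/\H], x}) \cong \bigl(\mathfrak{h}_x \cap \ker \sigma\bigr)^*$. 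Under the inclusion $\mathfrak{h}_x \s (L_S)_x$ established above, the kernel of $\phi_*$ on $H^0$ is exactly $(L_S)_x / \mathfrak{h}_x$, so injectivity forces the reverse containment $\mathfrak{h}_x = (L_S)_x$. Conversely, granted $\mathfrak{h}_x = (L_S)_x$, the linear-algebra identity
\[
\dim\bigl(T_xS \cap \sigma(T_x^*X)\bigr) + \dim \ker \sigma_x \;=\; \dim T_xS + \dim\bigl(T_xS^\circ \cap \ker \sigma_x\bigr),
\]
which follows from skew-symmetry of $\sigma$ together with $(V + W)^\circ = V^\circ \cap W^\circ$, makes both $\phi_*$ on $H^0$ and $\phi_*$ on $H^1$ into isomorphisms on the basis of dimension count. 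Running this argument in both directions yields the claimed equivalence, with $S$ being pre-Poisson inherited from constant rank of $\mathfrak{h}$.

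The main obstacle I anticipate is the careful bookkeeping in setting up the differentials of $\mathbb{T}_f$ and in verifying that $\phi$ is the map produced by the PTVV/Calaque formalism; this relies on the identification $\mathrm{Lie}(\G) \cong T^*X$ and on the compatibility $(\ker d\ttt)^\Omega = \ker d\sss$ together with the Lagrangian-ness of the identity bisection $X \s \G$. Once $\phi$ is in hand, the cohomology computation and the dimension identity reduce the Lagrangian condition transparently to $\mathfrak{h}_x = (L_S)_x$.
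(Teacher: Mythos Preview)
Your approach is correct and essentially the same as the paper's: both translate the Lagrangian condition into a pointwise map of two-term complexes and compute its effect on cohomology, identifying $\ker(\phi_*$ on $H^0)$ with $(L_S)_x/\mathfrak{h}_x$. Two small imprecisions to tighten: Lemma \ref{1hpyffiz} is stated for stabilizer subgroupoids (it already assumes $\mathrm{Lie}(\H)=L_S$), so you should instead argue directly that being a subgroupoid over $S$ gives $\mathfrak{h}_x\subseteq\sigma^{-1}(T_xS)$ and isotropy gives $\mathfrak{h}_x\subseteq T_xS^\circ$; and for the $H^1$ step, note that the degree-$1$ map is induced by restriction of linear functionals to $\mathfrak{h}_x$, hence surjective, before invoking the dimension identity---the paper handles this converse via the annihilator computation $L^\circ=\sigma(TS^\circ)+TS$ rather than a dimension count, but that is a minor variation.
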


\begin{proof}
Note that $\beta = 0$ is an isotropic structure if and only if $\H$ is isotropic in $\G$.
We may therefore assume that $\H$ is isotropic, and aim to show that the trivial isotropic structure on $[f] : [S/\H] \longrightarrow [X/\G]$ is Lagrangian if and only if the Lie algebroid $L$ of $\H$, viewed as a subalgebroid of $T^*X = \operatorname{Lie}(\G)$, is equal to $\sigma^{-1}(TS) \cap TS^\circ$, where $\sigma : T^*X \longrightarrow TX$ is the Poisson structure.

Recall that the tangent complex of the quotient stack of a groupoid is the two-term complex given by the anchor map in degrees $-1$ and $0$ (see e.g.\ \cite[\S1.2.3]{calaque}).
It follows that the map of complexes $\mathbb{T}_f \longrightarrow \mathbb{L}_{[S/\H]}$ is given by
\[
\begin{tikzcd}
L \arrow{r}{\alpha} \arrow{d}
& T^*X\big\vert_S \oplus TS \arrow{r}{\beta} \arrow{d}{\gamma}
& TX\big\vert_S \arrow{d}{\delta} \\
0 \arrow{r} & T^*S \arrow{r}{\varepsilon} & L^*,
\end{tikzcd}
\]
where the maps are defined as follows:
\begin{itemize}
\item
$\alpha$ is the sum of the inclusion $L \longrightarrow T^*X\big\vert_S$ and the anchor map $\sigma_L : L \longrightarrow TS$;
\item
$\beta$ is the sum of $\sigma$ and the inclusion $TS \longrightarrow TX\big\vert_S$;

\item
$\gamma$ is the sum of the restriction map $T^*X\big\vert_S \longrightarrow T^*S$ and the map $TS \longrightarrow T^*S$ given by the isotropic structure (which is zero in this case);
\item
$\delta$ is the restriction map;
\item
$\varepsilon$ is the dual of the anchor map $\sigma_L$ of $L$.
\end{itemize}
Since $\H$ is isotropic in $\G$, we have $L \s TS^\circ$ and this diagram commutes.

Passing to the map on cohomologies, we get
\[
\begin{tikzcd}
0 \arrow{d} & \sigma^{-1}(TS)/L \arrow{d}{\varphi} & TX\big\vert_S/(TS + \sigma(T^*X\big\vert_S)) \arrow{d}{\psi}\\
0 & \mathrm{Ann}_{T^*S}(\sigma(L)) & L^*/\sigma_L^*(T^*S).
\end{tikzcd}
\]
The trivial isotropic structure is Lagrangian if and only if $\varphi$ and $\psi$ are isomorphisms.
Note that
\begin{equation}\label{lr86m34x}
\ker \varphi = (\sigma^{-1}(TS) \cap TS^\circ) / L.
\end{equation}
Hence, if the isotropic structure is Lagrangian, then $L = \sigma^{-1}(TS) \cap TS$. 
Conversely, suppose that $L = \sigma^{-1}(TS) \cap TS^\circ$.
We then have $\ker \varphi = 0$ by \eqref{lr86m34x}.
By identifying $\mathrm{Ann}_{T^*S}(\sigma(L))$ with $\sigma(L)^\circ / TS^\circ$, the image of $\varphi$ is $\sigma^{-1}(TS) / TS^\circ$.
But $$\sigma(L)^\circ = (TS \cap \sigma(TS^\circ))^\circ = TS^\circ + \sigma^{-1}(TS),$$ so $\sigma(L)^\circ / TS^\circ = \sigma^{-1}(TS) / TS^\circ$ and hence $\varphi$ is surjective.
On the other hand, note that $$L^* / \sigma_L^*(T^*S) = TX\big\vert_S / (\sigma(T^*X\big\vert_S) + L^\circ).$$ It follows that $\psi$ is an isomorphism if and only if $$\sigma(T^*X\big\vert_S) + L^\circ = \sigma(T^*X\big\vert_S) + TS.$$
The latter equality holds since $L^\circ = (\sigma^{-1}(TS) \cap TS^\circ)^\circ = \sigma(TS^\circ) + TS.$
\end{proof}

By combining Proposition \ref{yrb59dpc} and \cite[Theorem 2.9]{ptvv}, we obtain the following result.

\begin{thm}[Theorem \ref{Theorem: Shifted}(ii)]\label{pw00lbch}
Let $((M, \omega), \G \tto X, \mu)$ be an algebraic Hamiltonian system, $S \s X$ a pre-Poisson subvariety, and $\H \tto S$ a stabilizer subgroupoid of $S$ in $\G$.
Then the derived fibre product
\[
[M \sll{S} \G] \coloneqq [M / \G] \times_{[X  / \G]}^h [S / \H]
\]
has a canonical $0$-shifted symplectic structure.\qed
\end{thm}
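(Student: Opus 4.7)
The proof plan is to realize the derived fibre product as an intersection of two Lagrangians in the $1$-shifted symplectic stack $[X/\G]$, and then invoke the fundamental theorem of Pantev--To\"en--Vaqui\'e--Vezzosi \cite[Theorem 2.9]{ptvv}: the derived intersection of two Lagrangians in an $n$-shifted symplectic stack carries a canonical $(n-1)$-shifted symplectic structure. Since the ambient stack is $1$-shifted symplectic, this immediately yields the desired $0$-shifted symplectic structure on $[M\sll{S}\G]$.

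Concretely, the first step is to recall that $[X/\G]$ inherits a canonical $1$-shifted symplectic structure from the symplectic groupoid $\G\tto X$, as explained in the paragraph preceding Proposition \ref{yrb59dpc} (see \cite[\S1.2.3]{calaque} or \cite{getzler}). The second step is to exhibit two Lagrangian morphisms into $[X/\G]$. On the one hand, the moment map of the Hamiltonian action induces a morphism $[\mu] : [M/\G] \too [X/\G]$, and the symplectic form $\omega$ on $M$ provides it with a Lagrangian structure in the sense of \cite[Definition 2.8]{ptvv}; this is a standard fact, as noted in \cite[Example 1.31]{calaque}. On the other hand, Proposition \ref{yrb59dpc} gives exactly what we need on the other side: the inclusion $\H\too\G$ of the stabilizer subgroupoid of the pre-Poisson subvariety $S$ descends to a morphism $[S/\H]\too[X/\G]$, and the zero $2$-form on $S$ defines a canonical Lagrangian structure on this morphism.

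Having assembled this data, the final step is to apply \cite[Theorem 2.9]{ptvv} to conclude that the derived fibre product
\[
[M\sll{S}\G] \coloneqq [M/\G]\times_{[X/\G]}^h [S/\H]
\]
carries a canonical $0$-shifted symplectic structure, as the Lagrangian intersection of two Lagrangian morphisms into the $1$-shifted symplectic stack $[X/\G]$. There is essentially no obstacle here once Proposition \ref{yrb59dpc} is in place, as both Lagrangian structures are available off the shelf and PTVV's machinery handles the rest; the only subtlety worth flagging is a consistency check that the two Lagrangian structures are compatible with the orientation conventions used to define the induced form on the derived intersection, but this amounts to bookkeeping rather than substantive argument.
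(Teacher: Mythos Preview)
Your proposal is correct and follows exactly the same approach as the paper: the paper's proof consists of the single sentence ``By combining Proposition \ref{yrb59dpc} and \cite[Theorem 2.9]{ptvv}, we obtain the following result,'' and your write-up simply unpacks this into its constituent ingredients (the $1$-shifted symplectic structure on $[X/\G]$, the two Lagrangian morphisms from \cite[Example 1.31]{calaque} and Proposition \ref{yrb59dpc}, and the PTVV Lagrangian intersection theorem).
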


\begin{ex}
If $\G \tto X$ in Theorem \ref{pw00lbch} is a cotangent groupoid $T^*G \tto \g^*$ and $S = \{0\}$, we recover the derived version of Marsden--Weinstein--Meyer reduction at level zero \cite[\S2.1.2]{calaque}.
\end{ex}

\section*{Notation}
{\renewcommand{\arraystretch}{1.0}
\begin{flushleft}
\begin{longtable}[l]{cp{0.85\textwidth}}
$\mathcal{O}_X$ & structure sheaf of $X$\\[5pt]
$\mathcal{O}_X^E$ & subsheaf of all $f\in\mathcal{O}_X$ satisfying $df(E)=0$\\[5pt] 
$\mathcal{F}_x$ & stalk of a sheaf $\mathcal{F}$ at a point $x$\\[5pt]
$f_x$ & class of $f\in\mathcal{F}$ in the stalk $\mathcal{F}_x$ \\[5pt]
$M^{\text{an}}$ & complex analytic space associated to a complex algebraic variety $M$\\[5pt]
$M^{-}$ & manifold $M$ endowed with the negated symplectic form\\[5pt]
$E_x$ & fibre of a vector bundle $E$ over a point $x$\\[5pt]
$E\big\vert_S$ & pullback of a vector bundle $E\longrightarrow X$ to a submanifold $S\subseteq X$\\[5pt]
$V^{\circ}$ & annihilator of a vector space $V$ in $W^*$, where $W$ is some ambient vector space containing $V$ as a subspace\\[5pt]
$E^{\circ}$ & annihilator of a vector bundle $E\longrightarrow S$ in $F^*\big\vert_S$, where $F\longrightarrow X$ is some ambient vector bundle, $S\subseteq X$ is a submanifold, and $E$ is a subbundle of $F\big\vert_S$\\[5pt]
$E^{\omega}$ & annihilator of a subbundle $E\longrightarrow N$ of $TM\big\vert_N$ with respect to $\omega$, where $(M,\omega)$ is some ambient symplectic manifold and $N\subseteq M$ is a submanifold\\[5pt]
$\mathcal{G}\tto X$ & Lie groupoid\\[5pt]
$\sss$ & source $\sss:\mathcal{G}\longrightarrow X$\\[5pt]
$\ttt$ & target $\ttt:\mathcal{G}\longrightarrow X$\\[5pt]
$\mathcal{G}\big\vert_S$ & restriction $\sss^{-1}(S)\cap\ttt^{-1}(S)$ of $\mathcal{G}$ to a submanifold $S\subseteq X$\\[5pt]
$\mathcal{G}_x$ & fibre $\ttt^{-1}(x)$ over $x\in X$\\[5pt]
$\mathcal{G}\cdot p$ & orbit of a point $p$ in a space equipped with an action of a Lie groupoid $\mathcal{G}\tto X$\\[5pt]
$\mathrm{Lie}(\mathcal{G})$ & Lie algebroid of a Lie groupoid $\mathcal{G}\tto X$\\[5pt]
$[X/\G]$ & quotient stack of a Lie groupoid $\G \tto X$\\[5pt]
$\mathbb{T}_{[X/\G]}$ & tangent complex of a quotient stack $[X / \G]$\\[5pt]
$\mathbb{L}_{[X/\G]}$ & cotangent complex of a quotient stack $[X/\G]$\\[5pt]
$\mathbb{T}_f$ & relative tangent complex of a morphism of stacks $f$\\[5pt]
$\times^h$ & derived fibre product\\[5pt]
$(X,\sigma)$ & Poisson manifold $X$ with Poisson bivector field $\sigma:T^*X\longrightarrow TX$\\[5pt]
$X_f$ & Hamiltonian vector field of a function $f$\\[5pt]
$L_S$ & the subset $\sigma^{-1}(TS)\cap TS^{\circ}\subseteq T^*X$ for $(X,\sigma)$ a Poisson manifold and $S\subseteq X$ a submanifold\\[5pt]
$\mathcal{G}_S\tto S$ & unique source-connected, source-simply-connected stabilizer subgroupoid of a pre-Poisson submanifold $S\subseteq X$ in a symplectic groupoid $\mathcal{G}\tto X$\\[5pt]
$M \sll{\xi} G$ & Marsden--Weinstein--Meyer reduction of $M$ by $G$ at level $\xi$\\[5pt]
$M\sll{S,\mathcal{H}}\mathcal{G}$ & symplectic reduction of $M$ by $\mathcal{G}$ along $S$ with respect to $\mathcal{H}$\\[5pt]
$M\sll{S}\mathcal{G}$ & symplectic reduction of $M$ by $\mathcal{G}$ along $S$ with respect to any source-connected $\mathcal{H}$\\[5pt]
$\mathfrak{M}_{G,S,\mathcal{H}}$ & symplectic reduction of $M$ by $T^*G$ along $S$ with respect to $\mathcal{H}$\\[5pt]
$\mathfrak{M}_{G,S}$ & symplectic reduction of $M$ by $T^*G$ along $S$ with respect to any source-connected $\mathcal{H}$\\[5pt]
$(S,\mathcal{H})$ & reduction datum of a pre-Poisson submanifold $S$ and a stabilizer subgroupoid $\mathcal{H}\tto S$\\[5pt]
$M\sll{S,\mathcal{H},\pi}\mathcal{G}$ & symplectic reduction of $M$ by $\mathcal{G}$ along $S$ with respect to $\mathcal{H}$ and $\pi$\\[5pt]
$\mathrm{Ad}$ & adjoint representation of a Lie group\\[5pt]
$\mathrm{Ad}^*$ & coadjoint representation of a Lie group\\[5pt]
$\mathrm{ad}$ & adjoint representation of a Lie algebra\\[5pt]
$\mathrm{ad}^*$ & coadjoint representation of a Lie algebra\\[5pt]
$G_x$ & stabilizer of a point $x$ in a space equipped with an action of a Lie group $G$\\[5pt]
$\g_x$ & centralizer of a point $x\in\g$, where $\g$ is a Lie algebra\\[5pt]
$\g_\xi$ & centralizer of a point $\xi\in\g^*$, where $\g$ is a Lie algebra\\[5pt]
$\g_{\text{reg}}$ & set of regular elements in a complex semisimple Lie algebra $\g$\\[5pt]
$\g_{\text{irr}}$ & complement of $\g_{\text{reg}}$ in $\g$\\[5pt]
$\g_{\text{s}}$ & set of semisimple elements in $\g$\\[5pt]
$\g_{\text{subreg}}$ & set of subregular elements in $\g$\\[5pt]
$\g_{\text{irr}}^{\circ}$ & the intersection $\g_\text{s}\cap\g_{\text{subreg}}$\\[5pt]
$\c$ & the affine space $\mathrm{Spec}(\mathbb{C}[\g]^G)$, where $G$ is a connected complex semisimple linear algebraic group with Lie algebra $\g$\\[5pt]
$\chi:\g\longrightarrow\c$ & adjoint quotient of a complex semisimple Lie algebra $\g$\\[5pt]
$\mathcal{N}$ & Nahm pole $G\times\mathcal{S}$, where $G$ is a connected complex semisimple linear algebraic group with Lie algebra $\g$ and $\mathcal{S}\subseteq\g$ is a principal Slodowy slice\\[5pt]
$\mathcal{N}_n$ & $n$-fold fibred product $\mathcal{N} \times_\mathcal{S} \cdots \times_\mathcal{S} \mathcal{N}$\\[5pt]
$\mathcal{J}$ & universal centralizer $\mathcal{J}\longrightarrow\mathcal{S}$, where $\mathcal{S}$ is a principal Slodowy slice\\[5pt]
$\mathcal{J}_n$ & kernel of multiplication $\mathcal{J} \times_\mathcal{S} \cdots \times_\mathcal{S} \mathcal{J}\longrightarrow \mathcal{J}$ in the $n$-fold fibred product of $\mathcal{J}$\\[5pt]
$\mathcal{Z}_n^{\circ}$ & geometric quotient of $\mathcal{N}_n$ of by $\mathcal{J}_n$\\[5pt]
$\mathcal{Z}_n$ & affinization $\mathrm{Spec}(\C[\mathcal{N}_n]^{\mathcal{J}_n})$ of $\mathcal{Z}_n^{\circ}$\\[5pt]
\end{longtable}
\end{flushleft}

\bibliographystyle{acm} 
\bibliography{symplectic-reduction-along-a-submanifold}

\end{document}